\newcommand{\fa}{\mathfrak{a}}
\newcommand{\fg}{\mathfrak{g}}
\newcommand{\fX}{\mathfrak{X}}
\newcommand{\bA}{\mathbb{A}}
\newcommand{\bH}{\mathbb{H}}
\newcommand{\G}{\mathbb{G}}
\newcommand{\bG}{\mathbb{G}}
\newcommand{\C}{\mathbb{C}}
\newcommand{\Q}{\mathbb{Q}}
\newcommand{\R}{\mathbb{R}}
\newcommand{\bW}{\mathbb{W}}
\newcommand{\bV}{\mathbb{V}}
\newcommand{\Z}{\mathbb{Z}}
\newcommand{\cA}{\mathcal{A}}
\newcommand{\cB}{\mathcal{B}}
\newcommand{\cC}{\mathcal{C}}
\newcommand{\cF}{\mathcal{F}}
\newcommand{\cG}{\mathcal{G}}
\newcommand{\cH}{\mathcal{H}}
\newcommand{\cL}{\mathcal{L}}
\newcommand{\cM}{\mathcal{M}}
\newcommand{\cO}{\mathcal{O}}
\newcommand{\cP}{\mathcal{P}}
\newcommand{\cS}{\mathcal{S}}
\newcommand{\cT}{\mathcal{T}}
\newcommand{\cU}{\mathcal{U}}
\newcommand{\cW}{\mathcal{W}}
\newcommand{\cZ}{\mathcal{Z}}
\newcommand{\rd}{\mathrm{d}}
\newcommand{\sfc}{\mathsf{c}}
\newcommand{\tS}{\mathtt{S}}
\newcommand{\tT}{\mathtt{T}}
\newcommand{\tp}[1]{\prescript{t}{}{#1}}
 \DeclareMathOperator{\vol}{vol}
\DeclareMathOperator{\Gal}{Gal}
\providecommand{\aabs}[1]{\lVert#1\rVert}
\newcommand{\valP}[1]{\left|#1\right|}
\DeclareMathOperator{\Tr}{Tr}
\DeclareMathOperator{\rR}{R}
\DeclareMathOperator{\GL}{GL}
\DeclareMathOperator{\U}{U}
\DeclareMathOperator{\Res}{Res}
\DeclareMathOperator{\Hom}{Hom}
\DeclareMathOperator{\disc}{disc}
\DeclareMathOperator{\BC}{BC}
\DeclareMathOperator{\As}{As}
\DeclareMathOperator{\Temp}{Temp}
\DeclareMathOperator{\Pet}{Pet}
\DeclareMathOperator{\cusp}{cusp}
\DeclareMathOperator{\otimeshat}{\widehat{\otimes}}
\DeclareFontFamily{U}{mathx}{\hyphenchar\font45}
\DeclareFontShape{U}{mathx}{m}{n}{
      <5> <6> <7> <8> <9> <10>
      <10.95> <12> <14.4> <17.28> <20.74> <24.88>
      mathx10
      }{}
\DeclareSymbolFont{mathx}{U}{mathx}{m}{n}
\DeclareMathAccent{\widecheck}{\mathalpha}{mathx}{"71}
\def\Ddots{\mathinner{\mkern1mu\raise\p@
\vbox{\kern7\p@\hbox{.}}\mkern2mu
\raise4\p@\hbox{.}\mkern2mu\raise7\p@\hbox{.}\mkern1mu}}
\theoremstyle{definition}
\newtheorem{definition}{Definition}[section]
\theoremstyle{plain}
\newtheorem{theorem}[definition]{Theorem}
\newtheorem{prop}[definition]{Proposition}
\newtheorem{lemma}[definition]{Lemma}
\newtheorem{coro}[definition]{Corollary}
\theoremstyle{remark}
\newtheorem{remark}[definition]{Remark}
\numberwithin{equation}{section}
\begin{document}

\title{The global Gan--Gross--Prasad conjecture for Fourier--Jacobi periods on unitary groups \\ III: Proof of the main theorems}

\author{Paul Boisseau}

\author{Weixiao Lu}

\author{Hang Xue}

\address{Paul Boisseau, Max Planck Institute for Mathematics, Vivatsgasse 7, 53111 Bonn, Germany}
\email{boisseau@mpim-bonn.mpg.de}

\address{Weixiao Lu, Aix Marseille Univ, CNRS, I2M, Marseille, 13009, France}
\email{weixiao.lu@univ-amu.fr}

\address{Hang Xue, Department of Mathematics, The University of Arizona, Tucson, AZ, 85721, USA}

\email{xuehang@arizona.edu}

\date{\today}

\begin{abstract} 
This is the third and the last of a series of three papers where we prove the Gan--Gross--Prasad conjecture for Fourier--Jacobi periods on unitary groups and an Ichino--Ikeda type refinement. Our strategy is based on the comparison of relative trace formulae formulated by Liu. In this paper, we compute the spectral expansions of these formulae and end the proof of the conjectures via a reduction to the corank zero case.
\end{abstract}

\setcounter{tocdepth}{1}

\maketitle

\tableofcontents

\section{Introduction}

In the 1990s, Gross and Prasad~\cites{GP1,GP2} formulated some conjectures on the
restriction problems for orthogonal groups. These conjectures were later
extended to all classical groups by Gan, Gross and Prasad in their
book~\cite{GGP}. Ichino and Ikeda~\cite{II} gave a refinement of the
conjecture of Gross and Prasad, and this refinement was further extended to
other cases~\cites{NHarris,Liu2,Xue2,Xue7}. They are usually
referred to as the Gan--Gross--Prasad (GGP) conjectures and have attracted
a significant amount of research. They describe the relation
between certain period integrals of automorphic forms on classical groups and
the central values of some $L$-functions. 

There are two kinds of period integrals in the GGP conjecture: Bessel periods
and Fourier--Jacobi periods. Fourier--Jacobi periods usually involve a theta
function while Bessel periods do not. Specifying to the case of unitary
groups, Bessel periods are on the unitary groups $\U(n) \times \U(m)$ where
$n-m$ is odd, while the Fourier--Jacobi case is when $n-m$ is even. The
cases of Bessel periods on unitary groups, together with their refinements,
are now completely settled through the work of many people,
cf.~\cites{JR,Yun,Zhang1,Zhang2,Xue3,BP,BP1,BPLZZ,BPCZ,BPC22} for an
incomplete list. The proof follows the approach of Jacquet and Rallis
via the comparison of relative trace formulae.

Inspired by the work of Jacquet and Rallis, Liu~\cite{Liu} proposed a
relative trace formula approach towards the Fourier--Jacobi case of the conjecture. Following
this approach, the third author proved in~\cites{Xue1,Xue2} some cases of the
GGP conjecture for $\U(n) \times \U(n)$ under various local conditions. 

The goal of this series of three papers (\cite{BLX1}, \cite{BLX2} and the current text) is to work out the comparison of these relative trace formulae in general, and obtain the GGP and Ichino--Ikeda conjectures for arbitrary Fourier--Jacobi periods
on unitary groups. This is the last paper in this series. Its objective is to compare the two spectral expansions of Liu's relative trace formulae and to prove the conjectures. We now precisely state our main results.

\subsection{The GGP conjecture for Fourier--Jacobi periods}
\label{subsec:GGP_intro}

We first recall the construction of the Fourier--Jacobi period from \cite{GGP}. They are integrals of automorphic forms along a Fourier--Jacobi subgroup against a theta series built using a Heisenberg--Weil representation.

\subsubsection{Fourier--Jacobi groups, Heisenberg--Weil representations}
\label{subsubsec:HW_reps}

Let $E/F$ be a quadratic extension of number fields, and write $\bA$ (resp. $\bA_E$) for the ring of adeles of $F$ (resp. of $E$). If $\bG$ is an algebraic group over $F$, we write $[\bG]$ for the automorphic quotient $\bG(F) \backslash \bG(\bA)$.

We fix $n$ and $m$ two integers such that $n=m+2r$ with $r$ a non-negative integer. Let $W \subset V$ be two non-degenerate skew-Hermitian spaces of respective dimensions $m$ and $n$. We assume that $W^\perp$ is split, i.e. that $W^\perp=X \oplus X^*$ where $X$ is an isotropic subspace of dimension $r$. Let $\U(V)$ and $\U(W)$ be the $F$-algebraic groups of $E$-linear unitary transformations of $V$ and $W$. Let $\{0\} \subset X_1 \subset \hdots \subset X_r=X$ be a complete flag of $X$, and let $U_r$ be the unipotent radical of the parabolic subgroup of $\U(V)$ stabilizing the flag $\{0\} \subset X_1 \subset \hdots \subset X \subset X \oplus W \subset V$. Note that its Levi subgroup stabilizing $W$ is isomorphic to $(\Res_{E/F}\GL_1)^r \times \U(W)$. We define 
\begin{equation*}
    \cU_W:=\U(V) \times \U(W), \quad \cH_W:=\U(W) \ltimes U_r,
\end{equation*}
where the latter is embedded in the former by taking the product of the natural inclusion and the projection on $\U(W)$. The group $\cH_W$ is the \emph{Fourier--Jacobi group}.

Let $q(\cdot,\cdot)_W$ be the skew-Hermitian form on $W$, and let $\bW$ be the $F$-symplectic space with underlying $F$-vector space $W$ and symplectic form $\Tr_{E/F} \circ q(\cdot,\cdot)_W$. We take a polarization $\bW=Y \oplus Y^\vee$, so that $Y^\vee$ is a $F$-vector space of dimension $m$. We have the Heisenberg group $S(W)$ associated to $\bW$, whose points are $\bW \times F$ and whose group law is described in \eqref{eq:Heisenberg_group} below. If we write $U_{r-1}$ for the unipotent radical of the parabolic subgroup of $\U(V)$ stabilizing the flag $\{0\} \subset X_1 \subset \hdots \subset X_{r-1} \subset X \oplus W \subset V$, we have the alternative description $\cH_W=\U(W) \ltimes S(W) \ltimes U_{r-1}$. 

Let $\psi$ be a non-zero automorphic character of $F \backslash \bA_F$, and let $\mu$ be an automorphic character of $E^\times \backslash \bA_E^\times$ whose restriction to $\bA_F^\times$ is $\eta_{E/F}$ the quadratic character associated to $E/F$ by global class field theory. They decompose as $\psi=\otimes_v \psi_v$ and $\mu=\otimes_v \mu_v$. For each place $v$ of $F$, by the Stone--Von-Neumann theorem, there exists a unique irreducible smooth infinite-dimensional representation of $S(W)(F_v)$ with central character $\psi_v^{-1}$, which we denote by $\rho_{v}^\vee$. It is realized on the local Schwartz space $\cS(Y^\vee(F_v))$. Moreover, $\mu^{-1}_v$ induces a splitting of the $S^1$-metaplectic cover $\mathrm{Mp}(\bW)_v \to \mathrm{Sp}(\bW)(F_v)$ over $\U(W)(F_v)$ which yields a Weil representation $\omega^\vee_{v}$ of $\U(W)(F_v)$ associated to $(\psi_v^{-1},\mu_v^{-1})$. We refer to \cite{MVW} for proofs of these statements. Finally, we take the $F$-morphism $\lambda : U_{r-1} \to \Res_{E/F} \bG_a$ defined in \eqref{eq:lambda_N}. It is stable under the action of $\U(W) \ltimes S(W)$. Consider $\psi_U:=\psi \circ \Tr_{E/F} \circ \lambda$ which is an automorphic character of $U_{r-1}$. We can now define $\nu_v^\vee$ the \emph{local Heisenberg--Weil} representation of $\cH_W(F_v)$ by the rule 
\begin{equation*}
    \nu_v^\vee(g h u)=\omega_v^\vee(g) \rho_v^\vee(h) \overline{\psi}_{U,v}(u), \quad g \in \U(W)(F_v), \quad h \in S(W)(F_v), \quad u \in U_{r-1}(F_v).
\end{equation*}
It is realized on $\cS(Y^\vee(F_v))$. As recalled in \cite{GGP}*{Section~22}, the \emph{global Heisenberg--Weil representation} $\nu^\vee:=\otimes'_v \nu_v^\vee$ realized on $\cS(Y^\vee(\bA_F))$ is an automorphic representation of $\cH_W(\bA_F)$ via the Theta series
\begin{equation*}
    \theta_{\cH_W}^\vee(h,\phi)=\sum_{y \in Y^\vee(F)} (\nu^\vee(h)\phi)(y), \quad h \in \cH_W(\bA_F), \quad \phi \in \nu^\vee.
\end{equation*}

\subsubsection{Global Fourier--Jacobi periods}

We give $\cH_W(\bA)$ the Tamagawa measure from \S\ref{subsec:tamagawa_measure_new}. Let $\sigma$ be a cuspidal automorphic representation of $\cU_W(\bA)$. The \emph{Fourier--Jacobi period} is the absolutely convergent integral
    \begin{equation}
        \label{eq:global_FJ_intro}
         \mathcal{P}_{\cH_W}(\varphi, \phi) :=
    \int_{[\cH_W]} \varphi(h) \theta^\vee_{\cH_W}(h,\phi) \rd h, \quad \varphi \in \sigma, \quad \phi \in \nu^\vee.
    \end{equation}

\subsubsection{Base-change}
For any $k \geq 1$, we write $G_k$ for the restriction of scalars $\Res_{E/F} \GL_k$. We say that $\sigma$ admits a \emph{weak base-change} if there exists a discrete automorphic representation $\BC(\sigma)$ of $G_n \times G_m$ such that, at almost all split places $v$, $\BC(\sigma)_v$ is the split base change of $\sigma_v$. By \cite{Ram}, such a representation is unique.

There is also a notion of \emph{strong base-change}. By local Langlands functoriality, for all place $v$ the representation $\sigma_v$ admits a base-change $\mathrm{BC}(\sigma_v)$ which is a smooth irreducible representation of $G_n(F_v) \times G_m(F_v)$. If we assume that all the $\mathrm{BC}(\sigma_v)$ are generic, then by \cite{Mok} and \cite{KMSW}, $\mathrm{BC}(\sigma)$ always exists and is a strong base-change, i.e. it satisfies $\mathrm{BC}(\sigma)_v=\mathrm{BC}(\sigma_v)$ for all $v$.

Write $\BC(\sigma)=\BC(\sigma)_n \boxtimes \BC(\sigma)_m$. In the generic case, for $k \in \{n,m\}$ \cite{Mok} and \cite{KMSW} also tell us that $\BC(\sigma)_k$  can be written as a parabolic induction $\pi_{k,1} \times \hdots \times \pi_{k,r_k}$ where the $\pi_{k,i}$ are cuspidal self-dual automorphic representations of some $G_l$'s, mutually non-isomorphic, and such that the Asai $L$-functions $L(s,\pi_{k,i},\As^{(-1)^{k+1}})$ have a pole at $s=1$. The $\pi_{k,i}$ are unique up to permutation, and we set $ S_\sigma=(\Z / 2 \Z)^{r_n} \times (\Z / 2 \Z)^{r_m}$.

Conversely, any automorphic representation $\Pi$ of $G_n \times G_m$ that can be written as an induction with the above desiderata is called a \emph{discrete Hermitian Arthur parameter}. 

\subsubsection{The GGP conjecture}

The first main result of this paper is the Gan--Gross--Prasad conjecture for cuspidal automorphic representation with generic base-change as stated in \cite{GGP}. The statement we give is independent of \cite{Mok} and \cite{KMSW} as it only relies on the notion of weak base-change.

\begin{theorem}
    \label{thm:GGP}
Let $\Pi$ be a discrete Hermitian Arthur parameter of $G_n \times G_m$. Then the following assertions are equivalent:
    \begin{enumerate}
        \item the complete Rankin--Selberg $L$-function satisfies $L(\frac{1}{2}, \Pi \otimes \overline{\mu}) \neq 0$;
        \item there exist two non-degenerate skew-Hermitian spaces $W\subset V$ of respective dimensions $m$ and $n$ and $\sigma$ a cuspidal automorphic representation of $\cG_{W}$ such that $W^\perp$ is split, the weak base-change of $\sigma$ is $\Pi$, and $\cP_{\cH_{W}}$ does not vanish identically on $\sigma \otimes \nu^\vee$.
    \end{enumerate}
\end{theorem}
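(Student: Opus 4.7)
The plan is to deduce Theorem~\ref{thm:GGP} from a comparison of Liu's two relative trace formulae, following the Jacquet--Rallis strategy adapted to the Fourier--Jacobi setting. The previous two papers \cite{BLX1} and \cite{BLX2} in the series have set up these formulae and handled the geometric sides (transfer of orbital integrals, fundamental lemma, absolute convergence of the coarse geometric expansions). What remains for the present paper is to compute the spectral expansions of both sides, match them using the geometric identity, and extract Theorem~\ref{thm:GGP} from the resulting identity.

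First I would write down the two spectral decompositions. On the unitary side, Liu's RTF attached to the pair $(V,W)$ decomposes along cuspidal data of $\cU_W$; the contribution of a cuspidal $\sigma$ with weak base change $\Pi$ should be an Euler product of explicit local Fourier--Jacobi functionals on $\sigma_v \otimes \nu_v^\vee$, weighted by the square $|\cP_{\cH_W}(\varphi,\phi)|^2$. On the general-linear side, the RTF on $G_n \times G_m$ decomposes along discrete Hermitian Arthur parameters; the contribution of $\Pi$ is proportional to $L(\tfrac{1}{2},\Pi\otimes\overline{\mu})$ times an Euler product of local Rankin--Selberg/Whittaker factors. Summing over the pairs $(V,W,\sigma)$ in the global Vogan packet of $\Pi$ and isolating $\Pi$ by spectral multipliers, the geometric comparison of \cite{BLX1} and \cite{BLX2} forces an identity of the form
\begin{equation*}
\sum_{(W,\sigma)}\prod_v I_{\sigma_v}^\natural(f_v,\phi_v) \;=\; c\,L(\tfrac{1}{2},\Pi\otimes\overline{\mu})\,\prod_v J_{\Pi_v}^\natural(f'_v),
\end{equation*}
valid for all matching test data, where the two sets of local factors can be normalised so that they are individually nonnegative and nonzero for suitable choices.

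The main difficulty, and the reason for the ``reduction to the corank zero case'' announced in the abstract, lies in controlling the non-cuspidal part of the spectral expansion on the unitary side: one has to handle Eisenstein and residual contributions coming from proper parabolic subgroups of $\cU_W$, prove absolute convergence of the full spectral expansion, and identify each Eisenstein contribution with a Fourier--Jacobi problem of strictly smaller corank on a Levi subgroup. Iterating this identification, everything reduces to the corank-zero case $n=m$, where the theta series can be partially unfolded and the period becomes accessible through Whittaker-model methods; in that base case, the GGP and Ichino--Ikeda statements are already available in broad generality from earlier work of the third author \cite{Xue1, Xue2}, providing the input for the induction. This inductive spectral identification is the technical heart of the argument.

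Once the spectral identity is in hand, both implications of Theorem~\ref{thm:GGP} follow by now-standard arguments. The direction (2)$\Rightarrow$(1) is immediate: test data detecting a non-vanishing $\cP_{\cH_W}$ render the left-hand side nonzero, forcing $L(\tfrac{1}{2},\Pi\otimes\overline{\mu})\neq 0$. For (1)$\Rightarrow$(2) one selects test data making the right-hand side nonzero, invokes the local Fourier--Jacobi GGP conjecture (known unconditionally) to guarantee that the local functionals $I_{\sigma_v}^\natural$ can be made nonzero for exactly one member of each local Vogan packet, and deduces from the Euler product that there exists a global skew-Hermitian pair $W\subset V$ with $W^\perp$ split and a cuspidal $\sigma$ of $\cU_W$ whose weak base change is $\Pi$ and on which $\cP_{\cH_W}$ does not vanish identically.
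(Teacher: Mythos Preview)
Your overall architecture has the reduction running in the wrong direction, and this changes what the RTF must actually accomplish. In the paper there is \emph{no} relative trace formula for the positive-corank pair $(\cU_W,\cH_W)$; Liu's RTF, as developed in \cite{BLX1,BLX2} and used here, is set up \emph{only} in corank zero, i.e.\ for $(\U(V)\times\U(V),\U(V))$ against $(G_n\times G_n,G_n,G')$. The passage from positive corank to corank zero is not an inductive decomposition of a corank-$r$ RTF into smaller-corank pieces; it is an \emph{unfolding} of the single global period $\cP_{\cH_W}$ on a cuspidal $\sigma=\sigma_V\boxtimes\sigma_W$ into the corank-zero period $\cP_{\U_V'}$ evaluated on $\sigma_V\boxtimes E_{P(X)}^{\U(V)}(\tau_s\boxtimes\sigma_W)$, where $\tau$ is an auxiliary generic representation of $G_r$ (Proposition~\ref{prop:global_corank_reduction} and Corollary~\ref{cor:non_vanishing_of_periods}). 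The Eisenstein series is thus not a nuisance term to be eliminated from an RTF but the vehicle by which the corank-$r$ problem is embedded into a corank-$0$ one.

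This reversal has a crucial consequence for the base case. What is needed is GGP in corank zero \emph{for Eisenstein series}, specifically for the $(G,H,\overline{\mu})$-regular Hermitian Arthur parameters $\widetilde{\Pi}_s$ of $G_n\times G_n$ obtained by inducing $\Pi_m$ with auxiliary characters. That is Theorem~\ref{thm:ggp_intro}, and it is \emph{not} supplied by \cite{Xue1,Xue2}, which only treated the cuspidal corank-zero case under local hypotheses; establishing it is the main content of Part~\ref{part:corank_zero}, requiring the regularised Rankin--Selberg and Flicker--Rallis periods on Eisenstein series (\S\ref{sec:spec_GLN}), the truncated Fourier--Jacobi period and its stability under $(\U_V',\overline{\mu})$-regularity (\S\ref{sec:spec_U}), and the isolation and comparison of relative characters in Theorem~\ref{thm:global_identity}. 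So your sketch misidentifies both the mechanism of the reduction and which statement is input versus which must be proved from scratch.
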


\begin{remark}
By the local Gan--Gross--Prasad conjecture from \cite{GI2} and \cite{Xue6}, the spaces $W \subset V$ and the representation $\sigma$ given by (2) in the theorem are unique if they exist.
\end{remark}

\subsection{Local Fourier--Jacobi periods}

The second main result of this paper is of local nature. We keep the notation of \S\ref{subsec:GGP_intro}, and henceforth take $v$ a place of $F$ (which is allowed to be split in $E$). Let $\sigma_v$ be a smooth irreducible representation of $\cU_W(F_v)$. We can consider the space of local Fourier--Jacobi functionals $\Hom_{\cH_W(F_v)}(\sigma_v \otimes \nu^\vee_v,\C)$. By \cite{GGP} and \cite{LiuSun}, it is of dimension at most $1$.

If we now assume that $\sigma_v$ is tempered, we can produce an element in $\Hom_{\cH_W(F_v)}(\sigma_v \otimes \nu^\vee_v,\C)$ by means of integration. Let $\langle \cdot,\cdot\rangle_v$ be an invariant inner-product on $\sigma_v$, and write $\langle \cdot, \cdot \rangle_{L^2,v}$ for the invariant inner-product on $\nu_v^\vee$ obtained by integration along $Y^\vee(F_v)$. Then we define the \emph{local Fourier--Jacobi period} 
\begin{equation*}
    \cP_{\cH_W,v}(\varphi_v,\phi_v):=\int_{\cH_W(F_v)}^{\mathrm{reg}} \langle \sigma_v(h_v) \varphi_v,\varphi_v \rangle_v \langle \nu_v^\vee(h_v) \phi_v,\phi_v \rangle_{L^2,v} \rd h_v, \quad \varphi_v \in \sigma_v, \quad \phi_v \in \nu_v^\vee.
\end{equation*}
This integral is not absolutely convergent if $r \geq 1$ and needs to be suitably regularized (see \S\ref{subsec:local_FJ_periods_defi}). We prove that \emph{tempered intertwining} holds for Fourier--Jacobi functionals. This was previously known in the $r=0$ case by \cite{Xue2}.

\begin{theorem}
    \label{thm:temp_int}
    Assume that $\sigma_v$ is tempered. Then the space $\Hom_{\cH_W(F_v)}(\sigma_v \otimes \nu^\vee_v,\C)$ is non-zero if and only if the sesquilinear form $\cP_{\cH_W,v}$ is non-zero on $\sigma_v \otimes \nu^\vee_v$.
\end{theorem}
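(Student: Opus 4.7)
The direction ``$\cP_{\cH_W,v} \not\equiv 0 \Rightarrow \Hom_{\cH_W(F_v)}(\sigma_v \otimes \nu_v^\vee,\C) \neq 0$'' is the easy one: the regularized matrix-coefficient integral produces a diagonal-$\cH_W(F_v)$-invariant Hermitian form on $\sigma_v \otimes \nu_v^\vee$ which, by Hermitian duality, corresponds to a $\cH_W(F_v)$-equivariant endomorphism of $\sigma_v \otimes \nu_v^\vee$; the image of such a non-zero endomorphism yields a non-trivial Fourier--Jacobi functional. For the reverse direction, the multiplicity-one result of \cite{LiuSun} gives $\dim \Hom_{\cH_W(F_v)}(\sigma_v \otimes \nu_v^\vee, \C) \leq 1$, and if $\ell$ spans this space, then $|\ell|^2$ spans the (at most one-dimensional, by the same duality together with temperedness) space of $\cH_W(F_v)$-invariant Hermitian forms. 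Consequently there is a scalar $c(\sigma_v) \in \C$ with $\cP_{\cH_W,v} = c(\sigma_v) \cdot |\ell|^2$, and the theorem reduces to proving $c(\sigma_v) \neq 0$.

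To establish $c(\sigma_v) \neq 0$ I would reduce to the corank-zero case already treated in \cite{Xue2}. Exploiting the decomposition $\cH_W = \U(W) \ltimes S(W) \ltimes U_{r-1}$, one can formally unfold $\cP_{\cH_W,v}$ by first integrating over $U_{r-1}$ against $\psi_U$, then over the Heisenberg group $S(W)$ against the standard model of $\rho_v^\vee$, and finally over $\U(W)$. The first two integrations compute a degenerate Whittaker--Jacquet type functional on $\sigma_v$ along the maximal parabolic of $\U(V)$ stabilizing $X_1$, while the remaining integral over $\U(W)$ becomes a corank-zero Fourier--Jacobi period for a smaller pair of skew-Hermitian spaces of dimensions $(n-2,m)$, multiplied by an explicit non-zero factor arising from local zeta integrals on $(\Res_{E/F}\GL_1)(F_v) = E_v^\times$. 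Iterating this descent $r$ times reduces the problem to $r = 0$, at which point \cite{Xue2} gives the non-vanishing of the corresponding scalar; tracking the accumulated constants yields $c(\sigma_v) \neq 0$.

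The principal obstacle is that the defining integral of $\cP_{\cH_W,v}$ is only conditionally convergent when $r \geq 1$, so the unfolding cannot be executed term by term in the naive sense. My plan would be to introduce a complex deformation $s$ along the modulus character of the parabolic of $\U(V)$ stabilizing $X_1$, perform the entire reduction in the region $\re(s) \gg 0$ of absolute convergence, and analytically continue back to $s = 0$. The delicate point is to verify that no spurious zero is introduced at $s = 0$ by the intertwining operators, the standard local zeta integrals attached to the $\GL_1$-parts, or the Heisenberg--Weil normalizations; this should follow from temperedness of $\sigma_v$ together with the standard holomorphicity of intertwining operators on tempered representations, but requires careful bookkeeping through the iterated reduction. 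A secondary technical point is that temperedness of the descended representation must be preserved so that Xue's theorem applies at the base case, which should be automatic from the parabolic-induction structure and the Langlands classification.
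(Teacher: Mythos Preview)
Your reduction strategy has a genuine gap. You propose to descend from corank $r$ to corank $r-1$ by ``peeling off'' the parabolic stabilizing $X_1$: integrating over $U_{r-1}$ against $\psi_U$ should produce a Whittaker--Jacquet functional, and what remains should be a Fourier--Jacobi period for a pair of dimensions $(n-2,m)$, times a local zeta integral on $E_v^\times$. But for this to yield a well-posed period on a \emph{tempered irreducible} representation of $\U(V_{n-2})\times\U(W)$, the representation $\sigma_{V,v}$ would have to be parabolically induced from the Levi $E_v^\times \times \U(V_{n-2})$. An arbitrary tempered $\sigma_{V,v}$ has no such structure --- think of a supercuspidal or a discrete series --- so the descent does not land in a situation where the inductive hypothesis applies. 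Your remark that ``temperedness of the descended representation must be preserved \ldots\ automatic from the parabolic-induction structure'' is precisely where this breaks: there is no parabolic-induction structure to exploit, and Jacquet modules of tempered representations are not tempered in general.

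The paper resolves this by going in the opposite direction: rather than shrinking $V$ down to $W$, it enlarges $W$ up to $V$. One picks an auxiliary tempered generic representation $\tau$ of $G_r$, forms the tempered induced representation $\Sigma = I_{P(X)}^{\U(V)}(\sigma_W \boxtimes \tau)$, and applies the corank-zero result of \cite{Xue2} to the pair $\sigma_V \otimes \Sigma$ on $\U(V)\times\U(V)$. An unfolding identity (the analogue of your descent, but now in the direction where it is purely a matter of parabolic induction on the side one controls) expresses $\cP_{\U_V',v}$ on $\sigma_V\otimes\Sigma\otimes\omega_V^\vee$ as an integral of $\cP_{\cH_L,v}=\cP_{\cH_W,v}\otimes\cP_{N_r,v}$. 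Thus $\cP_{\cH_W,v}\equiv 0$ forces $\cP_{\U_V',v}\equiv 0$, whence by Xue $\Hom_{\U_V'}(\sigma_V\otimes\Sigma\otimes\omega_V^\vee,\C)=0$, and a companion unfolding of functionals (together with genericity of $\tau$) gives $\Hom_{\cH_W}(\sigma_v\otimes\nu_v^\vee,\C)=0$. This is a single-step reduction requiring no hypothesis on $\sigma_{V,v}$ beyond temperedness, and it sidesteps the analytic-continuation bookkeeping you anticipated.
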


\subsection{The global Ichino--Ikeda conjecture}
\label{subsec:global_II}

We go back to the global setting and let $\sigma$ be a cuspidal automorphic representation of $\cU_W$. We henceforth assume that $\sigma_v$ is tempered at all places $v$ so that $\BC(\sigma)$ exists and is generic. Note that under the Ramanujan conjecture, the genericity of $\BC(\sigma)$ should be equivalent to the temperedness of the $\sigma_v$'s. Our last main result is a factorization of $\valP{\cP_{\cH_W}}^2$ in terms of $L$-functions and of the local periods $\cP_{\cH_W,v}$.

We equip the automorphic quotients $[\cU_W]$ and $[\cH_W]$ with the invariant Tamagawa measure. We have the Petersson inner-product $\langle \cdot,\cdot \rangle$ on $\sigma$, and the inner-product $\langle \cdot,\cdot,\rangle_{L^2}$ on $\nu^\vee$ given by integration along $Y^\vee(\bA_F)$ against the Tamagawa measure. We take a factorization $\langle \cdot,\cdot \rangle=\prod_v \langle \cdot,\cdot \rangle_v$ and we have $\langle \cdot,\cdot \rangle_{L^2}=\prod_v \langle \cdot,\cdot \rangle_{L^2,v}$. We also take a factorization $\rd h =\prod_v \rd h_v$ of the Tamagawa measure on $\cH_W(\bA)$.

We consider the product of completed $L$-functions
\begin{equation*}
    \cL(s,\sigma)=\prod_{i=1}^n L(i+s-\frac{1}{2},\eta_{E/F}^i)\frac{L(s,\mathrm{BC}(\sigma) \otimes \overline{\mu})}{L(s+\frac{1}{2},\mathrm{BC}(\sigma),\mathrm{As}^{n,m})},
\end{equation*}
with $\mathrm{As}^{n,m}=\mathrm{As}^{(-1)^n} \boxtimes \mathrm{As}^{(-1)^m}$. The function $\cL(s,\sigma)$ is regular at $1/2$ (see Remark~\ref{rk:L_pole} for a more general statement). For each place $v$ we denote by $\cL(s,\sigma_v)$ the corresponding local factor. We define the \emph{normalized local Fourier--Jacobi period} 
\begin{equation*}
    \cP^\sharp_{\cH_W,v}(\varphi_v,\phi_v):=\cL(\frac{1}{2},\sigma_v)^{-1}  \cP_{\cH_W,v}(\varphi_v,\phi_v).
\end{equation*}
It follows from the unramified Ichino--Ikeda conjecture of \cite{Boi} that if
$\varphi=\otimes_v \varphi_v$ and $\phi=\otimes_v \phi_v$ are
factorizable vectors, then $\cP^\sharp_{\cH_W,v}(\varphi_v,\phi_v)=1$ for almost all $v$. Our second main result is the following factorization expression. It is the Fourier--Jacobi version of a conjecture of Ichino and Ikeda in \cite{II} for Bessel periods on orthogonal groups, and of Harris in \cite{NHarris} for Bessel periods on unitary groups.

\begin{theorem}
    \label{thm:II}
    Let $\sigma$ be as above. For every factorizable vectors $\varphi =
\otimes_v \varphi_v \in \sigma$ and $\phi=\otimes_v \phi_v \in
\nu^\vee$ we have
    \begin{equation*}
        |\cP_{\cH_W}(\varphi,\phi)|^2=|S_\sigma|^{-1} \cL(\frac{1}{2},\sigma) \prod_v \cP^\sharp_{\cH_W,v}(\varphi_v,\phi_v).
    \end{equation*}
\end{theorem}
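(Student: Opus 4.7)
The approach I would take follows the Jacquet--Rallis paradigm as adapted by Liu to Fourier--Jacobi periods, with the key idea of reducing Theorem \ref{thm:II} to the corank-zero case $r = 0$, which is already established in \cite{Xue2}. The starting point is the pair of relative trace formulae together with their geometric matching developed in the first two papers of this series, along with Theorems \ref{thm:GGP} and \ref{thm:temp_int}. The overall plan is to compare the spectral expansions on both sides of the comparison, use the geometric matching to identify them, isolate the contribution of a single cuspidal $\sigma$ and its base-change $\Pi = \BC(\sigma)$, and extract the factorization.

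The first main step is to establish a cuspidal expansion of Liu's RTF $J^{\cU_W}(f,\phi)$ for factorizable test data $(f,\phi)$ on $\cU_W(\bA) \times \nu^\vee$. After a suitable truncation of the theta series and of the unipotent integration over $U_{r-1}$, the $\sigma$-component should unfold to a sum $\sum_i \cP_{\cH_W}(\sigma(f)\varphi_i,\phi)\overline{\cP_{\cH_W}(\varphi_i,\phi)}$ over an orthonormal basis of $\sigma$, expressing Liu's relative character as a bilinear form built from Fourier--Jacobi periods. Dually, on the linear side $G_n \times G_m$, I would use the spectral expansion of the Jacquet--Rallis RTF following the approach of \cite{BPCZ}, producing for each discrete Hermitian Arthur parameter $\Pi$ a distribution $I_\Pi(f')$ proportional to $L(1/2, \Pi \otimes \overline{\mu})/L^S(1, \Pi, \As^{n,m})$ weighted by local factors. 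Combining this with the geometric matching of the previous papers and a spectral separation argument then yields a spectral identity $J^{\cU_W}_\sigma(f,\phi) = I_\Pi(f')$ for matching data when $\Pi = \BC(\sigma)$.

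Theorem \ref{thm:temp_int} plays a pivotal role at this point: since $\sigma$ is tempered, the local Fourier--Jacobi periods $\cP_{\cH_W,v}$ are non-degenerate whenever the corresponding Hom-space is non-zero, so the unitary-side relative character factors as a product of local distributions built from them. Likewise, the GL-side character factors as a product of local distributions times the central $L$-value. The unramified formula of \cite{Boi} identifies these local factors at almost all places, and what remains is a global constant $c(\sigma)$ in the candidate identity $|\cP_{\cH_W}(\varphi,\phi)|^2 = c(\sigma) |S_\sigma|^{-1} \cL(1/2,\sigma) \prod_v \cP^\sharp_{\cH_W,v}(\varphi_v,\phi_v)$. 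To pin down $c(\sigma) = 1$, I would reduce to the corank-zero case through the parabolic structure $\cH_W = \U(W) \ltimes S(W) \ltimes U_{r-1}$: by choosing test data concentrated near the Levi $\U(W) \times \U(W) \hookrightarrow \cU_W$ and carefully handling the Heisenberg--Weil contribution, the spectral identity restricts to its analog for the pair $(W,W)$ at rank $r = 0$, where the corank-zero result of \cite{Xue2} supplies the constant.

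The main obstacle is twofold. First, for $r \geq 1$ the kernel of Liu's RTF is only conditionally convergent due to the non-compact integration over $U_{r-1}$ and the theta series; devising a truncation that is compatible both with the geometric matching of \cite{BLX1, BLX2} and with the spectral decomposition is significantly more delicate than the analogous Bessel truncation of \cite{BPCZ}. Second, the reduction to corank zero is itself nontrivial: the parabolic descent from $\cU_W$ to $\U(W) \times \U(W)$ must be matched coherently on the GL side with a compatible descent that preserves the central value of $L(s, \Pi \otimes \overline{\mu})$, and controlling this compatibility globally --- while avoiding contamination from non-$\sigma$ cuspidal data and from non-discrete Arthur parameters --- is likely the most technical point of the argument.
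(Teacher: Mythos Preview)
Your proposal diverges from the paper's approach in a structural way, and the divergence points to a genuine gap.

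The paper does \emph{not} develop or use a relative trace formula on $\cU_W = \U(V)\times\U(W)$ for positive corank, nor a Jacquet--Rallis-type formula on $G_n\times G_m$. All of the RTF machinery from \cite{BLX1,BLX2} that this paper takes as input is built exclusively for the corank-zero pair $\U_V=\U(V)\times\U(V)$ versus $G=G_n\times G_n$. Your proposed ``cuspidal expansion of Liu's RTF $J^{\cU_W}$'' and its comparison with a $G_n\times G_m$ formula would require geometric expansions, transfer, and a fundamental lemma in positive corank that are simply not available here.

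More importantly, the reduction to corank zero runs in the \emph{opposite} direction from what you sketch. Rather than descending from $\cU_W$ toward some ``Levi $\U(W)\times\U(W)$'' (which is not a Levi of $\cU_W$ in any useful sense), the paper inducts \emph{upward}: keep $\sigma_V$ on $\U(V)$, choose an auxiliary tempered $\tau$ on $G_r$, and form $\Sigma_s=I_{P(X)}^{\U(V)}(\tau_s\boxtimes\sigma_W)$. Then $\sigma_V\boxtimes\Sigma_s$ lives on $\U(V)\times\U(V)$, i.e.\ corank zero. The global unfolding identity (Proposition~\ref{prop:global_corank_reduction}) expresses $\cP_{\U_V'}$ on $\sigma_V\otimes E(\varphi_\Sigma,s)$ as an integral of $\cP_{\cH_L}=\cP_{\cH_W}\otimes\cP_{N_r}$, and the local unfolding (Proposition~\ref{prop:tempered_computations}) does the same for $\cP_{\U_V',v}$. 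Applying the corank-zero Ichino--Ikeda formula to the Eisenstein family and then dividing by the known Whittaker factorization of $|\cP_{N_r}|^2$ (via \cite{FLO12,CS80}) yields Theorem~\ref{thm:II} directly; no undetermined constant $c(\sigma)$ ever appears.

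This also explains why citing \cite{Xue2} for the corank-zero input is insufficient: since $\Sigma_s$ is an Eisenstein series, one needs the corank-zero Ichino--Ikeda formula for $(\U_V,\U_V',\overline{\mu})$-regular Eisenstein series (Theorem~\ref{thm:II_regular_intro}), and proving \emph{that} --- via the RTF comparison of Part~\ref{part:corank_zero} --- is where the bulk of the work actually lies. The truncation difficulties you anticipate are confined to that corank-zero Eisenstein analysis; the passage to positive corank is pure unfolding and involves no new truncation.
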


\subsection{The Gan--Gross--Prasad conjecture for generic Eisenstein series}

In our approach, we first prove generalizations of Theorems \ref{thm:GGP} and \ref{thm:II} in the corank-zero situation (i.e. $V=W$) for a certain class of generic Eisenstein series. This implies the result in the general case by unfolding. We now explain the main steps of this strategy.

\subsubsection{Reduction to corank-zero}
\label{subsubsec:reduc_zero}

The first part of the argument is a reduction to the corank-zero situation where $r=0$ and therefore $V=W$, $\cU_W=\U(V) \times \U(V)$, $\cH_W=\U(V)$ embedded diagonally and $\nu^\vee$ is simply the Weil representation of $\U(V)$. In this setting, we will write $\U_V$ for $\U(V) \times \U(V)$ and $\U_V'$ for $\U(V)$. 

If $\sigma$ is a representation of $\cU_W$ (either in the global or local context), we can consider the parabolic subgroup $P(X)$ of $\U(V)$ stabilizing the flag $X \subset X \oplus W \subset V$. Its Levi subgroup stabilizing $W$ is $G_r \times \U(W)$. For any representation $\tau$ of $G_r$, we can consider the parabolic induction $\Sigma:=I_{\U(V) \times P(X)}^{\U_V} \sigma \boxtimes \tau$. We show in \S\ref{sec:local_FJ} and \S\ref{sec:global_FJ} that the global and local periods $\cP_{\cH_W}$ and $\cP_{\cH_W,v}$ on $\sigma$ can be unfolded in terms of $\cP_{\U_V'}$ and $\cP_{\U_V',v}$ on $\Sigma$ respectively. For the first case, this relies on the unramified computations carried out in \cite{Boi}. In the local situation, if we assume that $\sigma$ and $\tau$ are tempered, so will $\Sigma$, and this reduces Theorem~\ref{thm:temp_int} to the $r=0$ case which was proved in \cite{Xue2}. In the global situation, $\Sigma$ will not be cuspidal anymore. This leads us to consider the Gan--Gross--Prasad and Ichino--Ikeda conjectures for certain Eisenstein series. 

\subsubsection{Regular parameters}

We now describe the class of Eisenstein series we are interested in. Let $Q$ be a standard (with respect to the upper triangular matrices) parabolic subgroup of $\U_V$ with standard Levi $M_Q:=(G_{n_1} \times \hdots \times G_{n_k} \times \U(V_0)) \times (G_{n'_1} \times \hdots \times G_{n'_{k'}} \times \U(V_0'))$, where $V_0$ and $V_0'$ are skew-Hermitian spaces included in $V$. Let $\sigma$ be a cuspidal representation of $M_Q$ which we decompose accordingly as $\sigma=(\tau_1 \boxtimes \hdots \boxtimes \tau_k \boxtimes \sigma_0) \boxtimes (\tau_1' \boxtimes \hdots \boxtimes \tau'_{k'} \boxtimes \sigma_0')$. Set $\Sigma=I_Q^{\U_V} \sigma$. Then we say that $\Sigma$ is \emph{$(\U_V,\U_V',\overline{\mu})$-regular with generic base-change} if
\begin{itemize}
    \item the base-changes $\mathrm{BC}(\sigma_0)$ and $\mathrm{BC}(\sigma_0')$ are generic;
    \item the representations $\tau_1,\hdots,\tau_k, \tau_1^*, \hdots, \tau_k^*$ are all distinct, where we write $\tau^*$ for the conjugate dual of $\tau$;
    \item the representations $\tau'_1,\hdots,\tau'_{k'}, \tau_1^{',*}, \hdots, \tau_{k'}^{',*}$ are all distinct;
    \item for all $1 \leq i \leq k$ and $1 \leq j \leq k'$, $\overline{\mu} \tau_i$ is not isomorphic to the contragredient of $\tau_j'$.
\end{itemize}
The induced representation $\Sigma$ considered in \S\ref{subsubsec:reduc_zero} is $(\U_V,\U_V',\overline{\mu})$-regular with generic base-change for suitable choice of $\tau$ as soon as $\BC(\sigma)$ is generic.

\subsubsection{Fourier--Jacobi periods for Eisenstein series}

Let $\Sigma=I_Q^{\U_V} \sigma$ be $(\U_V,\U_V',\overline{\mu})$-regular. For $\varphi \in \Sigma$ and $\lambda \in i \fa_Q^*$ we have the Eisenstein series $E(\varphi,\lambda)$. For $\phi \in \nu^\vee$, the Fourier--Jacobi period $\cP_{\U_V'}$ defined in \eqref{eq:global_FJ_intro} is not absolutely convergent. In \cite{BLX1}, we defined a truncation operator $\Lambda^T_u$. It is an adaptation of the work of \cite{IY2}, our situation being slightly more complicated due to the theta series $\theta_{\U_V'}$. We show in Proposition~\ref{prop:IY_stability} that, because $\Sigma$ is $(\U_V,\U_V',\overline{\mu})$-regular, the truncated period 
\begin{equation*}
    T \mapsto \int_{[\U_V']} \Lambda^T_u \left(E(\varphi,\lambda) \theta_{\U_V'}^\vee(\phi) \right)(h) \rd h
\end{equation*}
is constant equal to a number $\cP_{\U_V'}(E(\varphi,\lambda),\phi)$. This is the \emph{regularized Fourier--Jacobi period}.

\subsubsection{The conjectures for Eisenstein series}

Our main result on Fourier--Jacobi periods for Eisenstein series is the following.

\begin{theorem}
    \label{thm:GGP-IY}
    Assume that $\Sigma$ is $(\U_V,\U_V',\overline{\mu})$-regular with generic base-change, and let $\lambda \in i \fa_Q^*$. Then the Gan--Gross--Prasad and Ichino--Ikeda conjectures hold for the periods $\cP_{\U_V'}(E(\varphi,\lambda),\phi)$.
\end{theorem}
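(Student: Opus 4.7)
The plan is to deduce Theorem~\ref{thm:GGP-IY} from a comparison of the spectral expansions of two relative trace formulae: Liu's RTF on the unitary side, which encodes the regularized Fourier--Jacobi periods $\cP_{\U_V'}(E(\varphi,\lambda),\phi)$, and its Jacquet--Rallis counterpart on the general linear side, which encodes the relevant Rankin--Selberg and Asai $L$-values. The equality of the geometric sides (after smooth transfer and matching of test functions) is assumed to be already in place from \cite{BLX1} and \cite{BLX2}. The regularity assumption on $\Sigma$ is crucial: it is exactly what allows the contribution of a single cuspidal datum to be extracted in both spectral expansions.

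First I would compute the spectral expansion of Liu's RTF, building on the truncation operator $\Lambda^T_u$ of \cite{BLX1} and Proposition~\ref{prop:IY_stability}. The fact that $\Sigma$ is $(\U_V,\U_V',\overline{\mu})$-regular ensures that the only intertwining operator on the parameter $(Q,\sigma)$ that survives in the truncated spectral distribution is the identity, so that the distribution attached to $(Q,\sigma)$ reduces to an integral over $i\fa_Q^*$ of $|\cP_{\U_V'}(E(\varphi,\lambda),\phi)|^2$ against the test function's spectral projection. In parallel, I would carry out the analogous spectral expansion of the Jacquet--Rallis RTF; here the regularity of the base-change parameter separates the contribution of the discrete Hermitian Arthur parameter $\mathrm{BC}(\sigma)$, and standard Rankin--Selberg/Flicker--Rallis computations (as in \cites{Zhang1,BPCZ}) express this contribution in terms of $\cL(\tfrac{1}{2},\sigma_\lambda)$ and certain explicit local integrals.

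Next I would invoke the geometric matching to equate the two regular spectral contributions, thereby obtaining a global identity of tempered distributions in the parameter $\lambda \in i\fa_Q^*$. Specializing to factorizable test data and using the unramified Ichino--Ikeda factorization from \cite{Boi} to identify the normalized local factors at almost every place, the identity becomes the Euler product of Theorem~\ref{thm:II} for $\cP_{\U_V'}(E(\varphi,\lambda),\phi)$ up to a universal constant, which is pinned down by varying over enough test data. The GGP part, i.e.\ the equivalence of non-vanishing of the central $L$-value and of the period, then follows from the factorization, the local GGP results of \cite{GI2} and \cite{Xue6}, and Theorem~\ref{thm:temp_int}, which together guarantee that each local factor can be made non-zero by a suitable choice of local vectors exactly when the corresponding local Hom-space is non-zero.

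The hardest step is the spectral comparison itself, in particular the isolation of the regular contribution on the unitary side. The Fourier--Jacobi setting introduces the Weil representation $\nu^\vee$ into the truncation procedure, so one must carefully control the mixing between the Eisenstein series $E(\varphi,\lambda)$ and the theta series $\theta^\vee_{\U_V'}(\phi)$ when applying $\Lambda^T_u$, show that boundary terms vanish for regular parameters, and verify that the remaining distribution depends only on the value of $\lambda$ rather than on its orbit under intertwining operators. A secondary delicate point is the precise identification of the local spectral factors with the normalized local periods $\cP^\sharp_{\U_V',v}$ in a way that is compatible between the two sides of the comparison, for which one uses the local transfer statements and local character identities established in \cite{BLX2}.
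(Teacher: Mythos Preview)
Your broad strategy is correct and matches the paper's architecture: spectral expansions on both sides of Liu's RTF, geometric matching from \cite{BLX1,BLX2}, and then extraction of the Ichino--Ikeda and GGP statements. However, there are two genuine gaps in your sketch that are the heart of the argument.

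First, you write that regularity ``allows the contribution of a single cuspidal datum to be extracted in both spectral expansions,'' but regularity by itself does no such thing. Regularity ensures that the spectral contribution \emph{at} the cuspidal datum $\chi$ has a clean form (Theorems~\ref{thm:Ichi_spectral} and~\ref{thm:Jchi_spectral}: just an integral over $i\fa_Q^*$ of relative characters, with no residues or intertwining operators). It does not isolate $\chi$ from the rest of the coarse spectral expansion $\sum_{\chi'} I_{\chi'}=I$. The paper accomplishes this isolation via the multiplier machinery of \cite{BPLZZ}: one constructs $m\in\cM^{\tT}(G(\bA))$ and $m_V\in\cM^{\tT}(\U_V(\bA))$ that kill all but the desired cuspidal datum (Lemmas~\ref{lem:existence_mult_gln}, \ref{lem:existence_mult_unitary}). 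The crucial and nonobvious point is that applying these multipliers \emph{preserves matching} of test functions (Lemma~\ref{lemma:multiplier_transfer}); this is where the spectral characterization of transfer from \cite{BLX2} (Theorem~\ref{thm:spectral_transfer}) enters, not as a cosmetic normalization at the end but as the mechanism that makes the isolation compatible with the geometric comparison.

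Second, even after isolating the cuspidal datum you are left with an identity of the form $\int_{i\fa_P^{L_\pi,*}} I_{\Pi_\lambda}(f_+)\,\rd\lambda = |S_\Pi|\sum_V\int_{i\fa_P^{L_\pi,*}} J_{V,\Pi}(f_{V,+},\lambda)\,\rd\lambda$, and you still need to pass to a \emph{pointwise} identity at each $\lambda_0$. The paper does this by convolving with spherical Hecke functions at a well-chosen finite set of split places $\tS^\circ$ and invoking Stone--Weierstrass together with strong multiplicity one (\cite{Ram}) to separate $\lambda_0$ from all other $\lambda$; see the end of the proof of Theorem~\ref{thm:global_identity}. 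Your phrase ``global identity of tempered distributions in the parameter $\lambda$'' glosses over this entirely. Two smaller points: the general-linear side is still Liu's RTF, not Jacquet--Rallis (the latter is for Bessel periods); and in corank zero the unramified computation comes from \cite{Xue2}, not \cite{Boi}. Finally, in the Ichino--Ikeda direction the sum over the global Arthur packet $\fX_\Pi(\U_V)$ could a priori have several nonzero terms, and it is the local GGP conjecture (\cite{GI2,Xue6}) that collapses it to a single $(Q,\sigma)$ --- you mention local GGP only for the nonvanishing direction, but it plays this structural role in the factorization as well.
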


We defer to Theorems~\ref{thm:ggp_intro} and \ref{thm:II_regular_intro} for precise statements. In fact, the versions we prove do not rely on the existence of $\BC(\sigma)$ and are independent of \cite{Mok} and \cite{KMSW}. As claimed above, this result implies Theorems \ref{thm:GGP} and \ref{thm:II} by unfolding.

\subsection{The spectral expansion of Liu's trace formula}

To prove Theorem~\ref{thm:GGP-IY}, we perform a comparison of the spectral sides of Liu's relative trace formulae. This analysis takes the first half of this paper and relies on the results of \cite{BLX1} and \cite{BLX2}.

\subsubsection{Two RTFs}
The first RTF we consider is the \emph{unitary} one, and is associated to the double quotient $\U_V' / \U_V \backslash \U_V'$. More precisely, for a Schwartz function $f_V$ on $\U_V$ with kernel $K_{f_V}$,  and for $\phi_1,  \phi_2 \in \nu^\vee$, we tentatively define
\begin{equation*}
    J_V(f_V \otimes \phi_1 \otimes \phi_2)=\int_{[\U_V']}^{\mathrm{reg}} \int_{[\U_V']} K_{f_V}(h_1,h_2) \theta^\vee(h_1,\phi_1) \theta^\vee(h_2,\phi_2) \rd h_1 \rd h_2.
\end{equation*}
This integral is not absolutely convergent in general and needs to be regularized.

The second RTF is defined on general linear groups. Set $G=G_n \times G_n$, $H=G_n$ (embedded diagonally into $G$) and $G'=\GL_n \times \GL_n$ (seen as a subgroup of $G$). For $\Phi$ a Schwartz function in $n$ variables over $\bA_E$, we have the Epstein Eisenstein series $\Theta(\Phi)$ from \cite{JS}. For $f$ a Schwartz function on $G$, we define
\begin{equation*}
    I(f \otimes \Phi)=\int_{[H]}^{\mathrm{reg}} \int_{[G']} K_f(h,g') \Theta(h,g') \overline{\mu}(\det h) \valP{\det h}^{1/2} \eta(g') \rd g' \rd h,
\end{equation*}
where $\eta$ is some character (see \S\ref{sec:coarse_spectral_u} for the precise definition). Once again, this is not convergent in general.

\subsubsection{Main inputs from \cite{BLX1}}

In \cite{BLX1}, we build regularizations of $J_V$ and $I$ using truncation methods inspired by \cite{Zydor3}. Moreover, we compute the geometric expansions of these RTFs in terms of orbital integral. They also need to be regularized. Finally, we show that $J_V$ and $I$ admit coarse spectral expansions in terms of cuspidal data.

\subsubsection{Main inputs from \cite{BLX2}}

In \cite{BLX2}, we define a notion of matching for tuples $(f_V,\phi_1,\phi_2)$ and $(f,\Phi)$ as a correspondence between orbital integrals. We also provide a version of the fundamental lemma, and prove that the spaces of transferable test functions are dense. This gives all the tools needed for the comparison of the geometric expansions of $J_V$ and $I$.

We also study a local analogue of Liu's trace formula which mirrors the work of \cite{BP} in the Bessel case. We show that local matching is equivalent to an equality of local relative characters. On the unitary side, they are attached to the local Fourier--Jacobi period $\cP_{\U_V',v}$. This spectral characterization of matching will be used in the proof of the Ichino--Ikeda conjecture.

\subsubsection{This paper: comparison of the spectral sides}

In this paper, we compute the spectral expansions of $J_V$ and $I$. More precisely, on the unitary side we derive the spectral contributions of $(\U_V,\U_V',\overline{\mu})$-regular automorphic representations with generic base-change. These contributions take the form of a relative characters which involve the regularized period $\cP_{\U_V'}(E(\varphi,\lambda),\phi)$. 

On the side of general linear groups, we compute the contributions coming from base-changes of these representations. In this paper, they are called $(G,H,\overline{\mu})$-regular Hermitian Arthur parameters. We defer to \S\ref{subsec:reg_herm_art} for a precise definition. The expansion in that case is written in terms of Rankin--Selberg and Flicker--Rallis periods, respectively denoted by $\cP_H$ and $\cP_{G'}$. The first computes the central value of the Rankin--Selberg $L$-function by \cite{JPSS83}, and the second detects the image of $\mathrm{BC}$ by \cite{Fli}. However, in both cases these periods also need to be regularized to allow for integrations of Eisenstein series. For the $\cP_{G'}$ this was already done in \cite{BPC22}, and for $\cP_H$ we use a process of analytic continuation in the spirit of Tate's thesis.

Once the spectral expansions of $J_V$ and $I$ are computed, we relate them via the equality of the geometric expansions for suitable test functions. To isolate representations, we crucially use the results of \cite{BPLZZ} on multipliers. This step requires the spectral characterization of matching from \cite{BLX2}, as it allows us to ensure that spectral projections of matching test functions still match.

The outcome of the comparison is stated as Theorem~\ref{thm:global_identity} in the text and says that the relative character associated to $\BC(\Sigma)$ can be compared to the sum of those associated to $\Sigma'$, as $\Sigma'$ varies in the global Arthur packet of $\Sigma$. This readily implies Theorem~\ref{thm:GGP-IY}.

\subsection{Organization of the paper}

In Section~\ref{sec:preliminaries}, we fix some notation pertaining to automorphic forms with a special emphasis on spaces of functions. We also prove a technical result on the convergence of relative characters (Proposition~\ref{prop:strong_K_basis}). The rest of the paper is divided in two parts.

In Part~\ref{part:corank_zero}, we only work in the corank-zero situation and prove Theorem~\ref{thm:GGP-IY}. We compute in Section~\ref{sec:spec_GLN} and \ref{sec:spec_U} the spectral expansions of Liu's RTFs, first on the general linear side and then on the unitary side. We then proceed in Section~\ref{sec:comparison} to compare them using the comparisons of the geometric sides from \cite{BLX1} and \cite{BLX2}. The final result is Theorem~\ref{thm:global_identity}. We finally prove Theorem~\ref{thm:GGP-IY} in Section~\ref{sec:proof_Eisenstein}

In Part~\ref{part:positive}, we prove our results in positive corank. In Section~\ref{sec:local_FJ}, we study local Fourier--Jacobi periods and prove that they satisfy certain unfolding relations. We also obtain the non-vanishing result of Theorem~\ref{thm:temp_int}. In Section~\ref{sec:global_FJ}, we switch to the global setting and write an unfolding equality for $\cP_{\cH_W}$. Finally, in Section~\ref{sec:proofs_positive} we reduce Theorems~\ref{thm:GGP} and \ref{thm:II} to the corank-zero situation which we proved in Theorem~\ref{thm:GGP-IY}.

\subsection{Acknowledgments}
We thank Rapha\"el Beuzart-Plessis and Wei Zhang for many helpful discussions.
PB was partly funded by the European Union ERC Consolidator Grant, RELANTRA, project number 101044930. Views and opinions expressed are however those of the author only and do not necessarily reflect those of the European Union or the European Research Council. Neither the European Union nor the granting authority can be held responsible for them. WL was partially supported by the National Science Foundation under Grant No. 1440140, while he was in residence at the Mathematical Sciences Research Institute in Berkeley, California, during the semester of Spring 2023. HX is partially supported by the NSF grant DMS~\#2154352.

\section{Preliminaries on spaces of functions and automorphic representations}

\label{sec:preliminaries}

In this section, we collect some general notation on reductive groups and their automorphic forms. We give a special emphasis on spaces of functions on automorphic quotients.

\subsection{Reductive groups}

Let $F$ be a field of characteristic zero. All algebraic groups are defined over $F$.

\subsubsection{General notation}

Let $G$ be an algebraic group. Let $Z_G$ be the center of $G$. Let $N_G$ be the unipotent radical of $G$ and let $X^*(G)$ be the group of $F$-algebraic characters of $G$. Set $\fa^*_G=X^*(G) \otimes_{\Z} \R$ and $\fa_G=\Hom_\Z(X^*(G),\R)$. 

Assume that $G$ is connected reductive. Let $P_0$ be a minimal parabolic subgroup of $G$. Let $M_0$ be a Levi factor of $P_0$. We say that a parabolic subgroup of $G$ is standard (resp. semi-standard) if it contains $P_0$ (resp. if it contains $M_0$). If $P$ is a semi-standard parabolic subgroup of $G$, we will denote by $M_P$ its unique Levi factor containing $M_0$. We have a decomposition $P=M_P N_P$. 

Let $A_G$ be the maximal central $F$-split torus of $G$. If $P$ is a standard parabolic subgroup of $G$, set $A_P=A_{M_P}$. Write $A_0$ for $A_{P_0}$. The restriction maps $X^*(P) \to X^*(M_P) \to X^*(A_P)$ induce isomorphisms $\fa_P^* \simeq \fa^*_{M_P} \simeq \fa_{A_P}^*$. If $P=P_0$, we write $\fa_0$ and $\fa_0^*$ for $\fa_{P_0}$ and $\fa_{P_0}^*$. For $P \subset Q$, we have the subspaces $\fa_P^Q$ and $\fa_P^{Q,*}$ induced by the restriction maps $X^*(Q)\to X^*(P)$ and $X^*(A_P) \to X^*(A_Q)$ respectively. We will also write $\fa_P^{M_Q,*}$.

Set $\fa_{P,\C}^Q=\fa_{P}^Q \otimes_\R \C$ and $\fa_{P,\C}^{Q,*}=\fa_{P}^{Q,*} \otimes_\R \C$. We have decompositions $\fa_{P}^{Q}=\fa_{P}^{Q} \oplus i \fa_{P}^{Q}, \quad \fa_{P,\C}^{Q,*}=\fa_{P}^{Q,*} \oplus i \fa_{P}^{Q,*}$ where $i^2=-1$. We denote by $\Re$ and $\Im$ the real and imaginary parts associated to these decompositions, and by $\overline{\lambda}$ the complex conjugate of any $\lambda \in \fa_{P,\C}^{Q,*}$.

Let $W$ be the Weyl group of $(G,A_0)$, which is by definition the quotient of the normalizer $N_{G(F)}(A_0(F))$ by the centralizer $Z_{G(F)}(A_0(F))$. It acts on $\fa_0$ and $\fa_0^*$. If $w \in W$, we will write again $w$ for a representative in $G(F)$.

Let $P$ be a standard parabolic subgroup of $G$, and write $\Delta_0^P$ for the set of roots of $A_0$ in $M_P \cap P_0$. If $Q$ is another parabolic subgroup of $G$, we write $W(P,Q)$ for the set of $w \in W$ such that $w \in \Delta_0^P=\Delta_0^Q$. In particular, if $w \in W(P,Q)$ we have $w M_P w^{-1}=M_Q$. 

\subsection{Automorphic quotients and Haar measures}

We now assume that $F$ is a number field. Let $G$ be a connected reductive group over $F$.

\subsubsection{Automorphic quotients} Let $\bA$ be the adele ring of $F$, let $\bA_f$ be its ring of finite adeles. Set $F_\infty=F \otimes_\Q \R$. Let $V_F$ be the set of places of $F$ and let $V_{F,\infty} \subset V_F$ be the subset of Archimedean places. For $v \in V_F$, let $F_v$ be the completion of $F$ at $v$. If $v$ is non-Archimedean, let $\cO_{v}$ be its ring of integers. Let $\valP{\cdot}$ be the absolute value $\bA^\times \to \R_+^\times$ given by taking the product of the normalized absolute values $\valP{\cdot}_v$ on each $F_v$. If $\tS$ is a finite set of places of $F$, set $F_\tS=\prod_{v \in \tS} F_v$. If $\tS$ only contains non-Archimedean places, write $\cO_F^{\tS}=\prod_{v \notin \tS} \cO_{v}$.

Let $P=M_P N_P$ be a semi-standard parabolic subgroup of $G$. Set 
\begin{equation*}
    [G]_{P}=M_P(F) N_P(\bA) \backslash G(\bA).
\end{equation*}
Let $A_{P,\Q}$ be the maximal $\Q$-split subtorus of the Weil restriction $\Res_{F/\Q}A_P$, and let $A_P^\infty$ be the neutral component of $A_{P,\Q}(\R)$. Set 
\begin{equation*}
    [G]_{P,0}=A_P^\infty M_P(F) N_P(\bA) \backslash G(\bA).
\end{equation*}
If $P=G$, we simply write $[G]$ and $[G]_0$ for $[G]_G$ and $[G]_{G,0}$ respectively.
\subsubsection{Maximal compact subgroups} \label{subsubsec:max_compact_prelim}Let $K=\prod_{v \in V_F} K_v \subset G(\bA)$ be a "good" maximal compact subgroup in good position relative to $M_0$. We write $K=K_\infty K^\infty$ where $K_\infty=\prod_{v \in V_{F,\infty}} K_v$ and $K^\infty=\prod_{v \in V_F \setminus V_{F,\infty}} K_v$.

\subsubsection{Harish-Chandra map} Let $P$ be a semi-standard parabolic subgroup of $G$. There is a canonical morphism $ H_P : P(\bA) \to \fa_P$ such that $\langle \chi, H_P(g) \rangle=\log \valP{\chi(g)}$ for any $g \in P(\bA)$ and $\chi \in X^*(P)$. We extend it to the Harish--Chandra map $H_P : G(\bA) \to \fa_P$ which satisfies: for any $g \in G(\bA)$ we have $H_P(g)=H_P(p)$ whenever $g \in pK$ with $p \in P(\bA)$.

\subsubsection{Tamagawa measure}
\label{subsec:tamagawa_measure_new}

Let $\psi$ be a non-trivial automorphic character of $\bA$. For every place $v$ of $F$, let $d_{\psi_v} x_v$ be the unique Haar measure on $F_v$ which is autodual with respect to $\psi_v$ the local component of the additive character $\psi$ of $\bA$.

The choice of a left-invariant rational volume form $\omega$ on $G$ together with the measure $d_{\psi_v} x_v$ determine a left-invariant Haar measure $d_{\psi_v} g_v$ on each $G(F_v)$ (\cite{Wei}). By~\cite{Gro97}, there is an Artin--Tate $L$-function $L_{G}(s)=\prod_v L_{G,v}(s)$. More generally for $\tS$ a finite set of places we can consider $L_{G}^{\tS}(s)=\prod_{v \notin \tS} L_{G,v}(s)$. Define $\Delta_{G}^*$ and $\Delta_{G}^{\tS,*}$ to be the leading coefficient of the Laurent expansion at $s=0$ of $L_{G}(s)$ and $L_{G}^{\tS}(s)$ respectively. For each place $v$, set $\Delta_{G,v}:=L_{G,v}(0)$. We equip $G(\bA)$ with the Tamagawa measure $d g$ defined as $\rd g=\rd_\psi g_{\tS} \times \rd_\psi g^{\tS}$ where $\rd_\psi g_{\tS}=\prod_{v \in \tS} \rd_{\psi_v} g_v$ and $\rd_\psi g^{\tS}=(\Delta_{\G}^{\tS,*})^{-1}\prod_{v \notin \tS} \Delta_{G,v} \rd_{\psi_v} g_v$. Note that for any model of $G$ over $\cO_F^{\tS}$ we have for almost all $v$
\begin{equation}
\label{eq:Delta}
    \vol(G(\cO_v),\rd_{\psi_v} g_v)=\Delta_{G,v}^{-1}.
\end{equation}
Although the $\rd_{\psi_v} g_v$ depend on various choices, the Tamagawa measure $\rd g$ does not. This construction is valid as soon as $G$ is a linear algebraic group over $F$ (not necessarily reductive).

\subsubsection{Measures on subgroups} \label{subsubsec:first_measure} 
Let $P$ be a standard parabolic subgroup of $G$. We equip $\fa_P$ with the Haar measure that gives covolume $1$ to the lattice $\Hom(X^*(P),\Z)$. We give $i \fa_P^*$ the dual Haar measure. If $P \subset Q$, we equip $\fa_P^Q=\fa_P/\fa_Q$ with the quotient measure. We equip $A_P^\infty$ with the Haar measure compatible with the isomorphism $A_P^\infty \simeq \fa_P$ induced by the Harish-Chandra morphism (see \cite{BPCZ}*{Section~2.2.5}).

We give $[G]_P$ the quotient of our measure on $G(\bA)$ by the product of the counting measure on $M_P(F)$ with our measure on $N_P(\bA)$, and $[G]_{P,0}$ the "semi-invariant" quotient measure by the one of $A_P^\infty$ (defined for functions on $[G]_P$ which transform by $\delta_P$ under left-translation by $A_P^\infty$).

\subsection{Spaces of functions}

We keep the assumption that $F$ is a number field and that $G$ is connected reductive over $F$. 

\subsubsection{Comparison of functions}
\label{subsubsec:functions_comp}

Let $X$ be a set, let $f$ and $g$ be two positive functions on $X$. We write $f(x) \ll g(x), \; x \in X$ if there exists $C>0$ such that $f(x) \leq C g(x)$ for all $x \in X$. If we want to emphasize that the constants depend on some additional data $y$, we will write $\ll_y$.

\subsubsection{Topological vector spaces} All the topological $\C$-vector spaces that we will consider in this paper are locally convex. We will use the notation $\widehat{\otimes}$ to denote the completed tensor product between two such spaces.

\subsubsection{Smooth functions} Let $\fg_\infty$ be the Lie algebra of $G(F_\infty)$, let $\cU(\fg_\infty)$ be the enveloping algebra of its complexification and let $\cZ(\fg_\infty)$ be the center of $\cU(\fg_\infty)$. 

By a level $J$ we mean a normal open compact subgroup of $K^\infty$. If $V$ is a representation of $G(\bA)$, we denote by $V^J$ its subspace of vectors invariant by $J$.

Let $V$ be a Fréchet space. We say that a function $\varphi : G(\bA) \to V$ is smooth if it is right-invariant by some level $J$ and if for every $g_f \in G(\bA_f)$, the function $g_\infty \in G(F_\infty) \mapsto \varphi(g_f g_\infty)$ is smooth in the usual sense (i.e. belongs to $C^\infty(G(F_\infty))$). We write $\rR$ (resp $\mathrm{L}$) for the actions by right-translation (resp. left-translation) of $G(\bA)$ and $\cU(\fg_\infty)$ on such smooth functions.

\subsubsection{Heights, growth of functions} \label{subsec:heights} We take a height $\aabs{\cdot}$ on $G(\bA)$ as in \cite{BPCZ}*{Section~2.4.1}. If $P$ is a standard parabolic subgroup of $G$, for any $g \in G(\bA)$ we define
\begin{equation*}
    \aabs{g}_P= \inf_{\delta \in M_P(F) N_P(\bA)} \aabs{\delta g}.
\end{equation*}

If $x=(x_1,\hdots,x_n) \in \bA^n$, we also have the height $\aabs{x}_{\bA^n}:=\prod_v \max(1,\valP{x_1}_v,\hdots,\valP{x_n}_v)$.

For all $N \in \R$, $X \in \cU(\fg_\infty)$ and any smooth function $\varphi : [G]_P \to \C$ we define
\begin{equation*}
    \aabs{\varphi}_{N,X}=\sup_{x \in [G]_P} \aabs{x}_P^N \valP{(\rR(X)\varphi)(x)}.
\end{equation*}
If $X=1$, we simply write $\aabs{\varphi}_{N}$.

\subsubsection{Schwartz functions} Let $\cS(G(\bA))$ be the space of Schwartz functions on $G(\bA)$ defined in \cite{BPCZ}*{Section~2.5.2}. It is a topological vector space, and an algebra for the convolution product $*$. For every level $J$, we denote by $\cS(G(\bA))^J$ its subalgebra of $J$-bi-invariant functions. For any $f \in \cS(G(\bA))$, we set $ f^\vee(g)=\overline{f(g^{-1})}$.

For every place $v$ (or more generally every finite subset $\tS \subset V_F$), we have the space of local Schwartz functions $\cS(G(F_v))$ (or $\cS(G(F_\tS)))$ defined in \cite{BP}*{Section~2.4}.

\subsubsection{Petersson inner-product} 
\label{subsec:petersson_innter}

Let us fix a standard parabolic subgroup $P$ of $G$ for the reminder of this section. We have the Hilbert space $L^2([G]_P)$ of square-integrable functions on $[G]_P$. More generally, for $N \in \R$ we denote by $L^2_N([G]_P)$ the space of functions on $[G]_P$ that are square-integrable with respect to the measure $\aabs{g}_P^N \rd g$. We denote by $L^2_N([G]_{P})^\infty$ its space of smooth vectors. It is given the topology described in \cite{BPCZ}*{Section~2.5.3}.

We will also consider $L^2([G]_{P,0})$ the space of functions on $[G]_P$ that transform by $\delta_P^{1/2}$ under left-translation by $A_P^\infty$ and such that the Petersson-norm
\begin{equation*}
    \aabs{\varphi}_{P,\Pet}^2=\langle \varphi,\varphi \rangle_{P,\Pet}:=\int_{[G]_{P,0}} \valP{\varphi(g)}^2 \rd g,
\end{equation*}
is finite. It is a Hilbert space.

\subsubsection{Functions of uniform moderate growth} For every $N \in \R$, let $\cT_N([G]_P)$ be the space of smooth functions $\varphi : [G]_P \to \C$ such that for every $X \in \cU(\fg_\infty)$ we have $\aabs{\varphi}_{-N,X} < \infty$. For every level $J$, we equip $\cT_N([G]_P)^J$ with the topology of Fréchet space induced by the family of semi-norms $(\aabs{\cdot}_{-N,X})_X$. Set
\begin{equation*}
    \cT([G]_P)=\bigcup_{N>0} \cT_N([G]_P).
\end{equation*}
This is the \emph{space of functions of uniform moderate growth on $[G]_P$}. It is equipped with a natural topology of locally Fréchet space.

We have a constant term map $ \varphi \in \cT([G]) \mapsto \varphi_P \in \cT([G]_P)$
defined by
\begin{equation}
\label{eq:constant_term_defi}
    \varphi_P(g)= \int_{[N_P]} \varphi(ng) \rd n , \quad \varphi \in \cT([G]), \quad g \in [G]_P.
\end{equation}

\subsubsection{Schwartz space of $[G]_P$}

Let $\cS([G]_P)$ be the space of smooth functions $\varphi : [G]_P \to \C$ such that for every $N \geq 0$ and $X \in \cU(\fg_\infty)$ we have $\aabs{\varphi}_{N,X} < \infty$. For every level $J$, we equip $\cS([G]_P)^J$ with the Fréchet topology induced by the family of semi-norms $(\aabs{\cdot}_{N,X})_{N,X}$, and $\cS([G]_P)$ with the locally convex direct limit topology. It is the \emph{Schwartz space} of $[G]_P$. 

\subsubsection{Harish-Chandra Schwartz space}
\label{subsec:HC_Schwartz}
Let $\cC([G]_P)$ be the Harish-Chandra Schwartz space of $[G]_P$ defined in \cite{BPCZ}*{Section~2.5.8}. The precise definition is not relevant to us, but we will make use of the following property.

\begin{lemma}[{\cite{Lap2}*{Equation~(9)}}]
    \label{lem:integration}
    The linear form
        \begin{equation*}
            \varphi \in \cC([G \times G]_{P \times P}) \mapsto \int_{[G]_P}\varphi(g) \rd g
        \end{equation*}
        is well-defined and continuous, where $[G]_P$ is embedded in $[G \times G]_{P \times P}$ diagonally.
\end{lemma}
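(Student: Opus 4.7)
The plan is to deduce the integrability and continuity from the pointwise Harish-Chandra type majorants built into the definition of $\cC([G \times G]_{P \times P})$, combined with a standard convergence estimate on $[G]_P$. First, I would unwind the definition of the Harish-Chandra Schwartz space from \cite{BPCZ}*{Section~2.5.8}. Up to equivalence of seminorms, this space is characterized by pointwise bounds of the following shape: for every $N \geq 0$ there is a continuous seminorm $p_N$ on $\cC([G \times G]_{P \times P})$ such that
\begin{equation*}
|\varphi(x_1,x_2)| \leq p_N(\varphi) \cdot \Xi^G_P(x_1) \Xi^G_P(x_2) \cdot \aabs{(x_1,x_2)}_{P \times P}^{-N}
\end{equation*}
for every $\varphi$, where $\Xi^G_P$ denotes the Harish-Chandra $\Xi$-function on $[G]_P$. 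The key point is that the Harish-Chandra majorant on a direct product splits as a product of the individual majorants, which is formal given the product structure of the reduction data.

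Second, I would restrict this bound to the diagonal. Since $\aabs{(g,g)}_{P \times P} \asymp \aabs{g}_P$, one obtains
\begin{equation*}
|\varphi(g,g)| \leq C \cdot p_N(\varphi) \cdot \Xi^G_P(g)^2 \cdot \aabs{g}_P^{-N}, \qquad g \in [G]_P.
\end{equation*}
Both absolute convergence and continuity therefore follow from the integrability statement $\int_{[G]_P} \Xi^G_P(g)^2 \aabs{g}_P^{-N} \rd g < \infty$ for $N$ large enough. This is a classical consequence of reduction theory: on a Siegel set, $\Xi^G_P$ grows only polynomially in the Harish-Chandra map $H_P$, while $\aabs{\cdot}_P^{-N}$ provides enough decay in the direction of positive Weyl chambers, and the convergence follows from a standard sum-over-roots estimate.

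The main obstacle is the first step, namely establishing the required pointwise majorant for $\cC([G \times G]_{P \times P})$ in product form. One option is to invoke directly \cite{Lap2}*{Equation~(9)}, which isolates exactly this statement in close generality; another is to trace through the Schwartz space definitions in \cite{BPCZ}*{Section~2.5.8} and verify that the defining seminorms are compatible with the projections onto the two factors. Once this product factorization is in hand, the lemma is immediate.
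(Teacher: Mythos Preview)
The paper does not actually supply a proof of this lemma: it is stated with the citation \cite{Lap2}*{Equation~(9)} in the header and no proof environment follows. Your proposal correctly reconstructs the standard argument underlying that citation—namely, reducing to the integrability of $\Xi^G_P(g)^2 \aabs{g}_P^{-N}$ over $[G]_P$, which is precisely the content of Lapid's Equation~(9)—so your approach is both correct and aligned with what the paper is implicitly invoking.
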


\subsubsection{Relations between spaces of functions}

We summarize in a lemma the main properties of the spaces $\cT([G]_P)$, $\cS([G]_P)$ and $\cC([G]_P)$ that we will use.

\begin{lemma}
    \label{lem:functional_analysis}
    The following properties hold.
    \begin{enumerate}
        \item \cite{BPCZ}*{Equation~(2.5.7.6)}, \cite{BPCZ}*{Equation~(2.5.10.8)}. We have 
        \begin{equation}
            \cT([G]_P)=\bigcup_{N>0}  L^2_{-N}([G]_P)^\infty, \quad     \cS([G]_P) = \bigcap_{N>0} L^2_N([G]_P)^\infty.
        \end{equation}
        \item \cite{BPCZ}*{Equation~(2.5.5.4)} For every $N>0$ there exists $N'>0$ such that we have a continuous inclusion $L^2_{-N'}([G])^\infty \subset \cT_N([G])$.
        \item \cite{BPCZ}*{Section~2.5.10}, \cite{BPCZ}*{Section~2.5.8}, \cite{BPCZ}*{Section~2.5.7}. The space $\cS([G]_P)$ is dense in $\cT([G]_P)$, $\cC([G]_P)$ and $L^2_N([G]_P)^\infty$ for every $N \in \R$ (but in general not in $\cT_N([G]_P))$.
        \item \cite{BPCZ}*{Equation~(2.4.5.25)} The inclusion $\cC([G]_P) \subset \cT([G]_P)$ is continuous.
    \end{enumerate}
\end{lemma}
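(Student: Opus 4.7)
The statement is essentially a compendium of functional-analytic facts about automorphic function spaces, and every item is accompanied by a precise citation to \cite{BPCZ}. So the proof I would write is a bookkeeping argument: for each of (1)--(4), I would identify the cited equation or section in \cite{BPCZ} and verify that its statement there matches ours after translating notation (in particular, that the height $\aabs{\cdot}_P$ and the semi-norms $\aabs{\cdot}_{N,X}$ used here coincide with those used in loc. cit., which follows from the way they are set up in \S\ref{subsec:heights} and \S\ref{subsec:petersson_innter}). Since \cite{BPCZ} is the reference from which the definitions of $\cT([G]_P)$, $\cS([G]_P)$ and $\cC([G]_P)$ are lifted verbatim, this bookkeeping is unambiguous.

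If I were to reprove these items from scratch, the underlying mechanism in each case is Sobolev theory on $[G]_P$. The key input is a local Sobolev inequality on $G(F_\infty)$ combined with a uniform trivialization of the automorphic quotient by left translates of a compactum, giving rise to the inequality behind item~(2): for sufficiently large $N'$ (depending on $N$ and on $\dim G$), applying $\mathrm{R}(X)$ for $X$ ranging in a finite subset of $\cU(\fg_\infty)$ of bounded degree controls the sup-norm by the weighted $L^2$-norm. Item~(1) then falls out by letting $N$ vary on both sides: the inclusion $\cT([G]_P) \subset \bigcup_N L^2_{-N}^\infty$ is elementary (boundedness of $\aabs{x}_P^{-N}\varphi(x)$ makes $\varphi$ lie in $L^2_{-N-\varepsilon}$ once one notes $\int \aabs{x}_P^{-N'} \rd x < \infty$ for $N'$ large enough), and the reverse is (2). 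The Schwartz identity is the intersection of these statements across all $N$.

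For item~(3), the standard approximation scheme would be to truncate $\varphi$ by a smooth cutoff in the $H_P$-direction and then mollify by convolution against a Dirac sequence in $G(\bA)$; a computation of semi-norms shows this approximates in the $\cT$, $\cC$ and $L^2_N^\infty$ topologies, but crucially \emph{not} in any individual $\cT_N$, which explains the parenthetical warning. Item~(4) is immediate from the Harish-Chandra inequality: Harish-Chandra Schwartz functions on $[G]_P$ are by design bounded by the Harish-Chandra $\Xi^P$ times the inverse of an arbitrary power of $\aabs{\cdot}_P$, and $\Xi^P$ itself has moderate growth, so the inclusion is continuous once the two families of semi-norms are compared.

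The only step that is genuinely delicate, and the one I would treat as the main obstacle, is the reverse Sobolev direction of item~(1), i.e. showing that a function whose all (right) derivatives are in a suitably weighted $L^2$ is in fact of uniform moderate growth. In the presence of unipotent directions in $P$ and of the quotient structure of $[G]_P$, the Sobolev embedding must be applied locally and then patched by controlling how the heights $\aabs{\cdot}_P$ behave under left translation by $P(F)$; this is exactly what is done in \cite{BPCZ}*{\S2.5}, and I would simply defer to that argument rather than redo it here.
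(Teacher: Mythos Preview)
Your proposal is correct and matches the paper's own treatment: the paper states the lemma with the indicated citations to \cite{BPCZ} and gives no proof at all, so the intended argument is precisely the bookkeeping you describe. Your additional sketch of the Sobolev-theoretic mechanisms behind each item is accurate and goes beyond what the paper provides.
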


\subsection{Automorphic representations}

We keep the assumption that $F$ is a number field and that $G$ is connected reductive over $F$. We take $P$ a standard parabolic subgroup of $G$

\subsubsection{Automorphic forms}

We define the space of automorphic forms $\cA_P(G)$ to be the subspace of $\cZ(\fg_\infty)$-finite functions in $\cT([G]_P)$. Let $\cA_P^0(G)$ be the subspace of $\varphi \in \cA_P(G)$ such that
\begin{equation*}
    \varphi(ag)=\delta_P(g)^{1/2} \varphi(g), \quad a \in A_P^\infty, \quad g \in [G]_P.
\end{equation*}
If $P=G$ we simply write $\cA(G)$ and $\cA^0(G)$. These spaces receive the topology of \cite{BPCZ}*{Section~2.7.1}.

Let $\cA_{P,\disc}(G) \subset \cA_P^0(G)$ be the subspace of $\varphi$ such that the Petersson norm $\aabs{\varphi}_{P,\Pet}$ is finite. The spaces $\cA_{P}^0(G)$ and $\cA_{P,\disc}(G)$ receive actions of $G(\bA)$ and $\cS(G(\bA))$ by right-convolution.

\subsubsection{Automorphic representations} We define a discrete automorphic representation of $G(\bA)$ to be a topologically irreducible subrepresentation of $\cA_{\disc}(G)$. Let $\Pi_{\disc}(G)$ be the set of such representations. For $\pi \in \Pi_{\disc}(G)$, let $\cA_\pi(G)$ be the $\pi$-isotypic component of $\cA_{\disc}(G)$. Note that $\pi$ always has trivial central character on $A_G^\infty$. 

For $\pi \in \Pi_{\disc}(M_P)$, let $\cA_{P,\pi}(G)$ be the subspace of $\varphi \in \cA_{P,\disc}(G)$ such that for all $g \in G(\bA)$ the map $ m \in [M_P] \mapsto \delta_P(m)^{-1/2} \varphi(mg)$ belongs to $\cA_\pi(M_P)$. For any $\lambda \in \fa_{P,\C}^*$, set $\pi_\lambda=\pi \otimes \exp( \langle \lambda, H_{P}(\cdot) \rangle )$ and for $\varphi \in \cA_{P,\pi}(G)$ define $    \varphi_\lambda(g)=\exp( \langle \lambda, H_{P}(g) \rangle ) \varphi(g)$. The map $\varphi \mapsto \varphi_\lambda$ identifies $\cA_{P,\pi}(G)$ with a subspace of $\cA_P(G)$ denoted by $\cA_{P,\pi,\lambda}(G)$. We denote by $I_P(\lambda)$ the actions of $G(\bA)$ and $\cS(G(\bA))$ we obtain on $\cA_{P,\pi,\lambda}(G)$ by transporting those on $\cA_P(G)$. 

Let $\cA_{\cusp}(G) \subset \cA^0(G)$ be the subspace of $\varphi$ such that $\varphi_P=0$ for all standard proper parabolic subgroups $P$. Let $\Pi_{\mathrm{cusp}}(G)$ be the set of topologically irreducible subrepresentations of $\cA_{\mathrm{cusp}}(G)$. It is a subset of $\Pi_{\disc}(G)$. 

\subsubsection{Eisenstein series}
\label{subsubsec:Eisenstein}
Let $P \subset Q$ be standard parabolic subgroups of $G$. For any $\varphi \in \cA_{P,\disc}(G)$ and $\lambda \in \fa_{P,\C}^*$ we define
\begin{equation}
\label{eq:partial_Eisenstein}
    E^Q(g,\varphi,\lambda)=\sum_{\gamma \in P(F) \backslash Q(F)} \varphi_\lambda(\gamma g), \quad g \in G(\bA).
\end{equation}
This sum is absolutely convergent for $\Re(\lambda)$ in a suitable cone. It admits a meromorphic continuation to $\fa_{P,\C}^*$ by \cite{Lap}. If $Q=G$, we drop the exponent.

If $\varphi \in \cA_{P,\cusp}(G)$, we have the usual formula for the constant term of Eisenstein series
\begin{equation}
    \label{eq:constant_term_Eisenstein}
    E_Q(\varphi,\lambda)=\sum_{w \in W(P;Q)} E^Q(M(w,\lambda) \varphi,w \lambda),
\end{equation}
where $W(P;Q)$ is the subset of the Weyl group defined in \cite{MW95}*{Section~II.1.7}, and $M(w,\lambda)$ is the intertwining operator defined in \cite{MW95}*{Section~II.1.6}.

\subsubsection{Orthonormal bases}
\label{subsec:bases}
Let $\pi \in \Pi_{\disc}(M_P)$ such that $\cA_{P,\pi}(G)^J \neq \{0\}$. Let $\widehat{K}_\infty$ be the set of isomorphism classes of irreducible unitary representations of $K_\infty$. For any $\tau \in \widehat{K}_\infty$, let $\cA_{P,\pi}(G)^\tau$ be $\tau$-isotypic component of $\cA_{P,\pi}(G)$. For any level $J$, set $    \cA_{P,\pi}(G)^{\tau,J}=\cA_{P,\pi}(G)^{\tau} \cap \cA_{P,\pi}(G)^{J}$.

Let $\cB_{P,\pi}(\tau,J)$ be an orthonormal basis of $\cA_{P,\pi}(G)^{\tau,J}$ with respect to $\langle \cdot,\cdot \rangle_{P,\Pet}$. We then define $\cB_{P,\pi}(J)$ to be the union over $\tau \in \widehat{K}_\infty$ of the $\cB_{P,\pi}(\tau,J)$.

\subsubsection{Relative characters}
In this section, we prove a bound for sums along the bases $\cB_{P,\pi}(J)$. It will be used to define our relative characters.

    \begin{prop}
        \label{prop:strong_K_basis}
        Let $J$ be a level of $G$, let $f \in \cS(G(\bA))^J$. There exists $N>0$ such that the sum 
        \[
            \sum_{\pi \in \Pi_{\disc}(M_P)} \sum_{\varphi \in \cB_{P, \pi}(J)} I_P(\lambda, f) \varphi
            \otimes \overline{\varphi}
            \]
            is absolutely convergent in $\cT_N([G]_P) \otimeshat \overline{\cT_N([G]_P)}$. The convergence is uniform in $\lambda$ when $\Re(\lambda)$ lies in a fixed compact subset.
    \end{prop}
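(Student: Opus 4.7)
The strategy is to realize the sum as the spectral expansion of the automorphic kernel of $I_P(\lambda, f)$ and to bound it using Dixmier--Malliavin factorization, Cauchy--Schwarz, and Hilbert--Schmidt type estimates.

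The projective tensor topology on $\cT_N([G]_P) \otimeshat \overline{\cT_N([G]_P)}$ is generated by the family of seminorms $\aabs{\cdot}_{-N,X} \otimes \aabs{\cdot}_{-N,Y}$ for $X, Y \in \cU(\fg_\infty)$. Therefore, fixing such $X, Y$, it suffices to prove the finiteness and uniform boundedness in $\lambda$ of
\begin{equation*}
    S(X, Y; \lambda) := \sum_\pi \sum_{\varphi \in \cB_{P, \pi}(J)} \aabs{\rR(X) I_P(\lambda, f) \varphi}_{-N} \cdot \aabs{\rR(Y) \varphi}_{-N}.
\end{equation*}
The identity $\rR(X) \circ I_P(\lambda, \cdot) = I_P(\lambda, X \cdot)$, where $X$ acts on $f$ as a left-invariant differential operator preserving $\cS(G(\bA))^J$, together with Dixmier--Malliavin to absorb $Y$ into a Schwartz convolution factor, reduces the problem to the case $X = Y = 1$ upon replacing $f$ with an appropriate Schwartz function whose seminorms depend continuously on $\lambda$.

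I next factor $f = f_1 * f_2$ with $f_i \in \cS(G(\bA))^J$ via Dixmier--Malliavin, so that $I_P(\lambda, f) \varphi = I_P(\lambda, f_1)(I_P(\lambda, f_2) \varphi)$. A standard pointwise smoothing estimate (cf.\ \cite{BPCZ}*{Section~2.5}) provides an exponent $A$ and a continuous seminorm $c(\cdot)$ on $\cS(G(\bA))^J$ such that
\begin{equation*}
    |(I_P(\lambda, f_1) \psi)(x)| \leq c(f_1) \aabs{x}_P^{A} \aabs{\psi}_{L^2([G]_{P,0})}, \quad x \in [G]_P.
\end{equation*}
Choosing $N \geq A$ and applying Cauchy--Schwarz to the sum over $(\pi, \varphi)$ reduces to bounding
\begin{equation*}
     \Bigl( \sum_\pi \sum_\varphi \aabs{I_P(\lambda, f_2) \varphi}_{L^2}^2 \Bigr)^{1/2} \cdot \Bigl( \sum_\pi \sum_\varphi \aabs{\varphi}_{-N}^2 \Bigr)^{1/2}.
\end{equation*}

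The first factor is the Hilbert--Schmidt norm of $I_P(\lambda, f_2)$ restricted to the discrete level-$J$ part of $L^2([G]_{P,0})$. Since $f_2 \in \cS(G(\bA))^J$ the associated automorphic kernel lies in $\cS([G]_P \times [G]_P)$, so this norm is finite, with polynomial dependence on $\lambda$ uniform on compacts. For the second factor one repeats the same scheme: by Dixmier--Malliavin on an auxiliary convolution, and using that a suitable positive element $z \in \cZ(\fg_\infty)$ acts on each $\pi$ by a scalar whose absolute value grows with the infinitesimal character, one gains arbitrary polynomial decay in the spectral parameter of $\pi$, which is balanced against the polynomial Weyl-law-type growth of $\dim \cA_{P,\pi}(G)^J$ in spectral windows. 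The main obstacle is precisely this last bound: it requires simultaneously controlling the Arthur multiplicities of $M_P$ at level $J$, the polynomial dependence on $\lambda$, and the absorption of smoothing factors. This is handled in the spirit of the classical convergence arguments of the Arthur--Selberg trace formula, cf. \cite{BPCZ}*{Sections~2.7--2.8}.
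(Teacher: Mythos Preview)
Your approach has two genuine gaps.

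First, the assertion that for $f_2 \in \cS(G(\bA))^J$ the automorphic kernel of $I_P(\lambda,f_2)$ lies in $\cS([G]_P\times[G]_P)$ is false: the kernel $K_{f_2}$ is only of uniform moderate growth in each variable, not of rapid decay. The Hilbert--Schmidt property of $I_P(\lambda,f_2)$ on the discrete spectrum at level $J$ is true, but it is a deep fact (M\"uller's trace class theorem \cite{Mu}), not a consequence of the kernel being Schwartz.

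Second, and more seriously, your unweighted Cauchy--Schwarz step kills the argument. After splitting, the factor $\bigl(\sum_\pi\sum_\varphi \aabs{\varphi}_{-N}^2\bigr)^{1/2}$ no longer carries any trace of $f$, so there is nothing to produce decay in the spectral parameters. Each $\aabs{\varphi}_{-N}$ is bounded by a fixed polynomial in the Casimir eigenvalues $(\lambda_\pi,\lambda_\tau)$ (this is the content of \cite{Ch}*{Lemma~3.8.1.1}), so the sum behaves like $\sum_{\pi,\tau}\dim\cA_{P,\pi}(G)^{\tau,J}\,(1+\lambda_\pi^2+\lambda_\tau^2)^{r}$, which diverges. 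Your proposed fix (``Dixmier--Malliavin on an auxiliary convolution'' plus a central element $z$) does not apply here: there is no convolution acting on the bare basis vectors $\varphi$, and inverting $z$ does not produce a Schwartz function. A weighted Cauchy--Schwarz could be made to work, but that is not what you wrote.

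The paper avoids this by never separating $f$ from the side that needs decay. It uses \cite{Ch}*{Lemma~3.8.1.1} to dominate both $\aabs{I_P(\lambda,f)\varphi}_{-N,X}$ and $\aabs{\varphi}_{-N,X}$ by finitely many Petersson norms $\aabs{\rR(\Delta^i)\cdot}_{\Pet}$ with $\Delta=\mathrm{Id}-\Omega_G+2\Omega_{K_\infty}$. On the $\varphi$ side this gives a fixed polynomial $(1+\lambda_\pi^2+\lambda_\tau^2)^r$; on the $I_P(\lambda,f)\varphi$ side one inserts $\Delta_R^{-2N'}\Delta_R^{2N'}$ and pushes $\Delta_R^{2N'}$ onto $f$, extracting $(1+\lambda_\pi^2+\lambda_\tau^2)^{-N'}$ for $N'$ arbitrarily large. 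The remaining sum $\sum_{\pi,\tau}\dim\cA_{P,\pi}(G)^{\tau,J}\,(1+\lambda_\pi^2+\lambda_\tau^2)^{r-N'}$ is then finite by M\"uller's Weyl-law estimate \cite{Mu}*{Corollary~0.3}. The key point is that the arbitrary decay must come from $f$ and must stay multiplicatively attached to the growing factor before any Cauchy--Schwarz is applied.
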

    
    \begin{proof}
        By \cite{Ch}*{Lemma~3.8.1.1}, there exists $N>0$ such that $\cA_{P,\disc}(G)^J \subset \cT_N([G]_P)$. Let $\aabs{\cdot}$ be a continuous semi-norm on this space. We can take it of the form $\aabs{\cdot}_{-N,X}$ for some $X \in \cU(\fg_\infty)$. Let $\pi \in \Pi_{\disc}(M_P)$ such that $\cA_{P,\pi}(G)^J \neq \{0\}$. By definition of our basis we want to bound 
        \begin{equation}
            \label{eq:to_bound}
            \sum_{\tau \in \widehat{K}_\infty} \sum_{\varphi \in \cB_{P, \pi}(\tau,J)} \aabs{I_P(\lambda,f)\varphi}\aabs{\varphi}.
        \end{equation}
        Let $\Omega_G$ and $\Omega_{K_\infty}$ are the Casimir operator of $G$ and $K_\infty$ respectively associated to the standard Killing form on $\fg_\infty$ (see \cite{Ch}*{Section~3.2.2}). Set $\Delta=\mathrm{Id}-\Omega_G+2\Omega_K$. 
        By \cite{Ch}*{Lemma~3.8.1.1}, there exist absolute constants $r \geq 1$ and $C>0$ (independent of $\pi$) such that \eqref{eq:to_bound} is bounded above by
        \begin{equation*}
            \sum_{\tau \in \widehat{K}_\infty} \sum_{\varphi \in \cB_{P, \pi}(\tau,J)} C\sum_{i,j=1}^r \aabs{\rR(\Delta^i)I_P(\lambda,f)\varphi}_{P,\Pet}\aabs{\rR(\Delta^j)\varphi}_{P,\Pet}.
        \end{equation*}
        Let $\lambda_\tau$ be the Casimir eigenvalue of $\tau$, and $\lambda_\pi$ of $\pi_\infty$. By \cite{Kn}*{Proposition~8.22}, we know that $\Delta$ acts on $\cA_{P,\pi}(G)^{\tau,J}$ by the scalar $1-\lambda_\pi+2\lambda_\tau$. It follows that there exists an absolute constant $B>0$ such that, for any $\tau \in \widehat{K}_\infty$ and $\varphi \in \cB_{P, \pi}(\tau,J)$, we have 
        \begin{equation*}
            \aabs{\rR(\Delta^i)\varphi}_{P,\Pet}\leq B^r(1+\lambda_\pi^2+\lambda_\tau^2)^r.
        \end{equation*}
        Assume that $\Re(\lambda)$ belongs to some compact set $\omega$. For any $R>0$, consider the operator $\Delta_R=R \times \mathrm{Id}-\Omega_G+2\Omega_{K_\infty}$. It acts on $\cA_{P,\pi_\lambda}(G)^{\tau,J}$ by the scalar $R-(\lambda,\lambda)-\lambda_\pi+2\lambda_\tau$, where we denote by $(\cdot,\cdot)$ the standard real quadratic form on $\fa_{0,\C}^*$. We can assume that $\cA_{P,\pi}(G)^{\tau,J} \neq \{0\}$, and by \cite{Mu2}*{Lemma~6.1} this implies that $\lambda_\tau \geq \lambda_\pi$. It follows that, up to increasing $B$, we can choose $R$ so that for any $\Re(\lambda) \in \omega$ we have $(1+\aabs{\lambda}^2+\lambda_\pi^2 + \lambda_\tau^2) \leq B\valP{R-(\lambda,\lambda)-\lambda_\pi+2\lambda_\tau}^2$. For any $N'$, for $f_{i,N'}=\rR(\Delta_R^{2N'})\mathrm{L}(\Delta^i)f$ we get
        \begin{align*}
            \aabs{\rR(\Delta^i)I_P(\lambda,f)\varphi}_{P,\Pet} &\leq B^{N'} (1+\aabs{\lambda}^2+\lambda_\pi^2+\lambda_\tau^2)^{-N'} \aabs{I_P(\lambda,f_{i,N'})\varphi}_{P,\Pet} \\
            &\leq B^{N'} (1+\lambda_\pi^2+\lambda_\tau^2)^{-N'} \aabs{I_P(\lambda,f_{i,N'})\varphi}_{P,\Pet}.
        \end{align*}
        But there exists a continuous semi-norm on $\cS(G(\bA))^J$ such that, for any $\Re(\lambda) \in \omega$, any $\varphi \in \cA_{P,\disc}(G)^J$, and any $1 \leq i \leq r$, we have $\aabs{I_P(\lambda,f_{i,N'})\varphi}_{P,\Pet} \leq \aabs{f} \aabs{\varphi}_{P,\Pet}$. To conclude, it remains to say that for $N'$ large enough we have a bound
        \begin{equation*}
            \sum_{\pi \in \Pi_{\disc}(M_P)} \sum_{\tau \in \widehat{K}_\infty} \dim(\cA_{P,\pi}(G)^{\tau,J}) (1+\lambda_\pi^2+\lambda_\tau^2)^{r-N'} < \infty.
        \end{equation*}
        This is \cite{Mu}*{Corollary~0.3}.
    \end{proof}

\subsection[Spectral decompositions]{Automorphic kernels and spectral decompositions}

\subsubsection{Cuspidal data and coarse Langlands decomposition}
\label{subsec:cuspi_data}

Let $\underline{\fX}(G)$ be the set of pairs $(M_P, \pi)$ where 
\begin{itemize}
    \item $P = M_P N_P$ is a standard parabolic subgroup of $G$,
    \item $\pi \in \Pi_{\cusp}(M_P)$ whose central character is trivial on $A_P^\infty$.
\end{itemize}
We say that two elements $(M_P, \pi)$ and $(M_Q, \tau)$ of $\underline{\fX}(G)$ are equivalent if there exists $w \in W(P, Q)$ such that $w.\pi = \tau$. We define a cuspidal datum to be an equivalence class of such $(M_P, \pi)$ and denote by $\fX(G)$ the set of all cuspidal data. If $\chi \in \fX(G)$ is represented by $(M_P, \pi)$ we define $\chi^\vee$ to be the cuspidal datum represented by $(M_P, \pi^\vee)$. 

Let $P$ be a standard parabolic subgroup of $G$. In \cite{BPCZ}*{Section~2.9.2}, to each $\chi \in \fX(G)$, a space $L^2_\chi([G]_P)$ is associated. It is the closure in $L^2([G]_P)$ of the space of pseudo-Eisenstein series induced from $\cA_{Q,\pi}(G)$ where the $(M_Q,\sigma)$ lie in the finite inverse image of $\chi$ under $\underline{\fX}(M_P) \to \fX(G)$. More generally, for $N \in \R$, $L_{N,\chi}^2([G]_P)$ is the closure of this same space in $L_N^2([G]_P)$. One can also extend these definitions to subsets $\fX \subset \fX(G)$ (see \cite{BPCZ}*{Section~2.9.2}). For any subset $\fX \subset \fX(G)$, set $\cS_{\fX}([G]_P)=\cS([G]_P) \cap L^2_{\fX}([G]_P)$, and for any $\cF \in \{L^2_N, \cT_N, \cT, \cC\}$, define $\cF_{\fX}([G]_P)$ to be the orthogonal of $\cS_{\fX^c}([G]_P)$ in $\cF([G]_P)$, where $\fX^c$ is the complement of $\fX$ in $\fX(G)$. We have a density result.
\begin{prop}[\cite{BPCZ}*{Section~2.9.5}]
\label{prop:density_schwartz}
    Let $\fX$ be a subset of $\fX(G)$. The space $\cS_{\fX}([G]_P)$ is dense in $L^2_{N,\fX}([G]_P)^\infty$ and $\cT_{\fX}([G]_P)$.
\end{prop}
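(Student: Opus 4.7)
The plan is to produce, for each subset $\fX \subset \fX(G)$, a continuous spectral projector $P_\fX$ on every relevant function space and show that it preserves $\cS([G]_P)$ with image in $\cS_\fX([G]_P)$. Density then follows from the density of the Schwartz space in each larger space (Lemma~\ref{lem:functional_analysis}(3)) and continuity of $P_\fX$. Concretely, given $\varphi$ in $L^2_{N,\fX}([G]_P)^\infty$ or $\cT_\fX([G]_P)$, I would pick a sequence $\varphi_n \in \cS([G]_P)$ converging to $\varphi$ in the ambient space, then the projected sequence $P_\fX \varphi_n$ would lie in $\cS_\fX([G]_P)$ and converge to $P_\fX \varphi$. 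The fact that $P_\fX \varphi = \varphi$ is built into the definition of $\cF_\fX$ as the annihilator of $\cS_{\fX^c}$ inside $\cF$, via the natural pairing between $\cS$ and $\cT$ coming from integration over $[G]_P$.

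To construct $P_\fX$ as an endomorphism of $\cS([G]_P)$, I would start from the Langlands coarse decomposition of $L^2([G]_P)$ and express the orthogonal projection onto $\hat{\bigoplus}_{\chi \in \fX} L^2_\chi([G]_P)$ through its spectral kernel. For cuspidal data this projection is realized by convolution with finite sums of matrix coefficients of cuspidal automorphic representations; for the non-cuspidal part, by integrals of Eisenstein series over the unitary axis against the cuspidal projections on proper Levi subgroups, together with the residual contributions extracted from Langlands' theory. Applied to a Schwartz function $\varphi$, each of these pieces produces a smooth function whose $(N,X)$-seminorms can be controlled by integrating along the unitary axis $i\fa_Q^*$ the Eisenstein expansion of $\varphi$. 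The pointwise and growth estimates for such integrals are exactly of the form furnished by Proposition~\ref{prop:strong_K_basis}, which bounds sums over orthonormal bases $\cB_{P,\pi}(J)$ in the topology of $\cT_N([G]_P) \widehat{\otimes} \overline{\cT_N([G]_P)}$ uniformly in spectral parameters.

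Once $P_\fX$ is shown to be continuous on $\cS([G]_P)$ with image in $\cS_\fX([G]_P)$, I would extend it by density to $L^2_N([G]_P)^\infty$ and $\cT([G]_P)$. Continuity in $L^2_N$ follows from the fact that $P_\fX$ is already a bounded self-adjoint projector on the unweighted $L^2$ and commutes with the smoothing actions of $K_\infty$ and $\cU(\fg_\infty)$, hence passes to the subspace of smooth vectors and to the weighted version via the estimates above. Continuity on $\cT_N$, and hence on $\cT$, follows from Lemma~\ref{lem:functional_analysis}(2), which embeds $L^2_{-N'}([G])^\infty$ continuously into $\cT_N([G])$ for large $N'$, combined with the continuity on $L^2_N([G]_P)^\infty$. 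The image is characterized by the annihilation of pairings with elements of $\cS_{\fX^c}([G]_P)$, so it lands in $L^2_{N,\fX}([G]_P)^\infty$ and $\cT_\fX([G]_P)$ respectively.

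The hard part is the first step, namely the explicit construction of $P_\fX$ as a continuous operator on $\cS([G]_P)$. It requires uniform control of the cuspidal projection at all Levi subgroups, combined with absolute convergence of the full Langlands spectral expansion applied to a Schwartz function. The crucial inputs are the estimates underlying Proposition~\ref{prop:strong_K_basis} together with fine trace-class estimates for the regularized Eisenstein intertwiners; these are precisely the technical results assembled in \cite{BPCZ}*{Section~2.9}, and the argument above mirrors the one given there.
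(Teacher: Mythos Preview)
The paper does not prove this proposition; it is imported directly from \cite{BPCZ}*{Section~2.9.5}. Your sketch accurately outlines the argument given there: construct the spectral projector $P_\fX$ as a continuous endomorphism of each of the function spaces in question and then invoke the density of $\cS([G]_P)$ from Lemma~\ref{lem:functional_analysis}(3).

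One clarification is worth making. In your second paragraph you pass from continuity of $P_\fX$ on $\cS$ to continuity on $L^2_N([G]_P)^\infty$ by appealing to boundedness on unweighted $L^2$ together with commutation with the smoothing action, ``and to the weighted version via the estimates above''. The first two ingredients alone are not sufficient: for $N<0$ the space $L^2_N([G]_P)^\infty$ strictly contains $L^2([G]_P)$, so $P_\fX$ is not even defined there a priori, and in any case the weight $\aabs{\cdot}_P^N$ does not commute with the spectral projection. What is actually needed is to verify directly, via the Langlands spectral expansion and the uniform estimates you cite, that $P_\fX$ is continuous for the $L^2_N^\infty$-seminorms on the dense subspace $\cS$, and only then extend by density. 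This is exactly the content of \cite{BPCZ}*{Sections~2.9.3--2.9.5}, so your sketch is on target; just be aware that the ``hard part'' you flag is continuity on \emph{all} of these spaces, not on $\cS$ alone.
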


Let $\cA_{P,\chi,\disc}(G)$ be the closed subspace of $\cA_{P,\disc}(G)$ of forms whose class belong to $L_\chi^2([G]_{P,0})$. We have an isotypical decomposition 
\begin{equation}
    \label{eq:chi_isotypic}
    \cA_{P,\chi,\disc}(G)=\widehat{\bigoplus_\pi} \cA_{P,\pi}(G).
\end{equation}
Let $J$ be a level of $G$. For any $\tau \in \widehat{K}_\infty$, let $\cB_{P,\chi}(\tau,J)$ be the union of the bases $\cB_{P,\pi}(\tau,J)$ where $\pi$ appears in \eqref{eq:chi_isotypic}. We then define $\cB_{P,\chi}(J)$ to be the union of the $\cB_{P,\chi}(\tau,J)$. 

\subsubsection{Regular cuspidal data}
\label{subsec:reg_cusp_data}
Let $\chi \in \fX(G)$ be a cuspidal datum represented by $(M_P,\pi)$. We say that $\chi$ is \emph{regular} if the only element in $w \in W(P,P)$ satisfying $w \pi \simeq \pi$ is $w=1$. It follows from the Langlands spectral decomposition theorem \cite{Langlands}, which describes the decomposition of $L^2([G])$ in terms of residues of Eisenstein series, that if $\chi$ is regular then the discrete representations $\pi$ appearing in \eqref{eq:chi_isotypic} are cuspidal.

\subsubsection{Decomposition of kernel functions}
\label{subsec:kernel_functions}

Let $f \in \cS(G(\bA))$. The right convolution $\rR(f)$ on $L^2([G])$ gives rise to a kernel $K_f$. For any $\chi \in \fX(G)$, the action of $\rR(f)$ on $L_\chi^2([G])$ is also an integral operator whose kernel is denoted by $K_{f,\chi}$. We now state a version of Langlands' spectral decomposition theorem for kernel functions. It is an extension of a result of \cite{Arthur3}*{Section~4} to Schwartz functions. In its statement, we add a subscript $x$ or $y$ to $\rR$ to indicate that this operator is applied in the variable $x$ or $y$. 

\begin{theorem}[{\cite{BPCZ}*{Lemma~2.10.2.1}}]   \label{thm:kernel_spectral_expansion}
    Let $J$ be a level and assume that $G$ has anisotropic center. There exist a continuous semi-norm $\aabs{\cdot}$ on $\cS(G(\bA))^J$ and an integer
    $N$ such that for all $X, Y \in \cU(\fg_{\infty})$, all $x, y \in G(\bA)$,
    and all $f \in \cS(G(\bA))^J$ we have 
        \[
        \begin{aligned}
        &\sum_{\chi \in \fX(G)} \sum_{P_0 \subset P}
        \int_{i \fa_{P}^{*}} \sum_{\tau \in \widehat{K}_\infty}
        \valP{\sum_{\varphi \in \cB_{P, \chi}(\tau,J)}
        \left( \mathrm{R}_x(X) E(x, I(\lambda, f) \varphi, \lambda) \right)
        \left( \mathrm{R}_y(Y) E(y, \varphi, \lambda) \right)} \rd  \lambda\\
        \leq &\aabs{\mathrm{L}(X) \mathrm{R}(Y)f}
        \aabs{x}_G^N \aabs{y}_G^N.
        \end{aligned}
        \]
        Moreover for all $x, y \in G(\bA)$ and all regular $\chi \in \fX(G)$ we have
    \begin{equation}
    \label{eq:spectral_expansion_kernel}
          K_{f, \chi}(x, y) = 
        \int_{i \fa_P^{*}} \sum_{\varphi \in \cB_{P, \chi}(J)}
        E(x, I(\lambda, f) \varphi, \lambda)
        \overline{E(y, \varphi, \lambda)} \rd \lambda.
    \end{equation}
      \end{theorem}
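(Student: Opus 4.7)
The plan is to reduce this to Langlands' spectral decomposition of $L^2([G])$ combined with uniform bounds of the type established in Proposition~\ref{prop:strong_K_basis}, extended from bare basis vectors to Eisenstein series and to the full sum over cuspidal data. For $f$ smooth of compact support, the identity \eqref{eq:spectral_expansion_kernel} for regular $\chi$ is the classical Arthur--Langlands spectral expansion of the kernel; the content of the theorem is upgrading this to Schwartz $f$ together with a quantitative, uniform estimate on the joint absolute convergence over $\chi$, $P$, $\tau$, $\varphi$ and the integration over $\lambda$.

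For the convergence bound I would first control the Eisenstein series by a pointwise estimate of the form
\[
    \valP{\mathrm{R}_x(X) E(x, \varphi, \lambda)} \ll (1+\aabs{\lambda})^{A} \aabs{x}_G^{N_0} \aabs{\varphi}_{P,\Pet},
\]
valid uniformly for $\varphi \in \cB_{P,\chi}(\tau, J)$ with a polynomial dependence on the Casimir eigenvalue $\lambda_\tau$, and uniformly for $\Re(\lambda)$ in a compact set. Such bounds follow from standard moderate growth of automorphic forms and boundedness of intertwining operators on the unitary axis. This reduces the statement to a scalar estimate of the form
\[
    \sum_{\chi} \sum_{P_0 \subset P} \int_{i \fa_{P}^*} \sum_{\tau \in \widehat{K}_\infty} \sum_{\varphi \in \cB_{P,\chi}(\tau,J)} p(\aabs{\lambda}, \lambda_\pi, \lambda_\tau) \cdot \aabs{I_P(\lambda, f) \varphi}_{P,\Pet} \cdot \aabs{\varphi}_{P,\Pet} \rd \lambda < \infty,
\]
with $p$ a fixed polynomial. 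I would then rerun precisely the Casimir argument from the proof of Proposition~\ref{prop:strong_K_basis}: convolve $f$ by a large power of $\Delta_R = R \cdot \mathrm{Id} - \Omega_G + 2 \Omega_{K_\infty}$, which acts on each $\cA_{P,\pi_\lambda}(G)^{\tau,J}$ by $R - (\lambda,\lambda) - \lambda_\pi + 2 \lambda_\tau$, to absorb $p$ and produce polynomial decay in all of $\aabs{\lambda}$, $\lambda_\pi$, $\lambda_\tau$. Müller's multiplicity estimate \cite{Mu}*{Corollary~0.3} then yields finiteness of the resulting sum, with an explicit continuous semi-norm $\aabs{\mathrm{L}(X) \mathrm{R}(Y) f}$ coming out of the process.

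For the equality in the regular case, I would use the fact recalled in \S\ref{subsec:reg_cusp_data} that for regular $\chi$ the discrete spectrum underlying $\cA_{P,\chi,\disc}$ is actually cuspidal, so the Eisenstein series $E(\varphi, \lambda)$ are holomorphic on $i \fa_P^*$ and Langlands' theorem identifies $L^2_\chi([G])$ with the corresponding direct integral. Projecting the kernel of $\rR(f)$ onto the $\chi$-isotypic component gives \eqref{eq:spectral_expansion_kernel} formally; the bound of the first part applied with $X = Y = 1$ then justifies pointwise evaluation of the integrand and exchange of sum and integral. Extension from test functions for which the decomposition is classical to $f \in \cS(G(\bA))^J$ is obtained by density using Proposition~\ref{prop:density_schwartz}.

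The main obstacle I anticipate is not the formal identity but the \emph{uniformity} of the bound: one needs a single continuous semi-norm on $\cS(G(\bA))^J$ controlling the entire expression, with the integer $N$ depending only on $J$ and not on $\chi$, $P$, or $\lambda$. This requires sharp control of how $\mathrm{L}(\Delta^i) \mathrm{R}(\Delta_R^{N'})$ acts continuously on $\cS(G(\bA))^J$, combined with Müller's bound in its precise quantitative form. The assumption that $G$ has anisotropic center simplifies this bookkeeping by removing the central character parameter from the indexing of $\fX(G)$.
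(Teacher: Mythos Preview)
The paper does not give a proof of this theorem at all: it is stated with the attribution \cite{BPCZ}*{Lemma~2.10.2.1} in its heading and is imported verbatim as a black box from that reference. There is therefore no proof in the present paper to compare your proposal against.

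That said, your sketch is a reasonable outline of how the result is actually established in \cite{BPCZ}, and the ingredients you list (uniform Casimir decay, M\"uller's multiplicity bound, Langlands' spectral decomposition, cuspidality for regular $\chi$) are the right ones. One caution: your pointwise Eisenstein bound $\valP{\mathrm{R}_x(X)E(x,\varphi,\lambda)} \ll (1+\aabs{\lambda})^A \aabs{x}_G^{N_0}\aabs{\varphi}_{P,\Pet}$ with $N_0$ independent of $\chi$ is not entirely innocent; getting a single exponent $N$ that works uniformly across all cuspidal data and all standard $P$ requires some care (this is handled in \cite{BPCZ} via the full machinery of their \S2.10). Your identification of uniformity as the main obstacle is accurate.
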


\part{Comparison of spectral expansions}
\label{part:corank_zero}

In Part~\ref{part:corank_zero}, we only work in the corank-zero situation (i.e. $V=W$). Our main goal is to complete the comparison of Liu's RTFs started in \cite{BLX1} and \cite{BLX2} by carrying out the analysis of the spectral sides. We compute the contributions of regular cuspidal data to the spectral expansions. The precise regularity conditions are given in \S\ref{subsec:reg_conditions_G} and \S\ref{sec:cusp_data_U}, and are tailored to include some cuspidal Eisenstein series and their base-changes. These contributions need to be regularized. This is done in \S\ref{sec:spectral_calculation_GL} for the ones on general linear groups, and \S\ref{sec:spectral_calculation_U} for unitary groups. Then in \S\ref{sec:Global_comparison} we input the results of \cite{BLX2} to compare the global relative characters associated to a (generic regular) Arthur packet on unitary groups to the one associated to its base-change: this is the content of Theorem~\ref{thm:global_identity}. Finally, in \S\ref{sec:proof_Eisenstein} we prove the generic GGP and Ichino--Ikeda conjectures from Theorem~\ref{thm:GGP-IY}.

\section{Groups, regularity conditions on automorphic representations and base-change}
\label{sec:notation}

In this section, we introduce some notations and notions relevant to this paper. The objective of Part~\ref{part:corank_zero} is to compare the two expansions of Liu's trace formula. The expansion on the side of general linear groups will be referred to as the \emph{$G$-side}, while for unitary groups it will be the \emph{$\U$-side}.

\subsection{Groups: $G$-side}

\subsubsection{Fields and automorphic characters}

Let $E/F$ be a quadratic extension of number fields. Write $\sfc$ for the non-trivial element in $\Gal(E/F)$. Let $\eta_{E/F}$ be the quadratic automorphic character of $\bA^\times$ associated to $E/F$ by global class field theory, and let $\mu$ be an automorphic (unitary) character of $\bA_E^\times$ that lifts $\eta_{E/F}$. Let $\tau \in E^-$ be an imaginary element (i.e. $\sfc(\tau)=-\tau$). Consider the automorphic characters of $\bA_E$ defined by $\psi_E(x)=\psi(\Tr_{E/F}(x))$ and $\psi^E(x)=\psi(\Tr_{E/F}(\tau x))$.

\subsubsection{Groups}

We denote by $E^n$ the $E$-vector space of $n$-dimensional column vectors with coordinates in $E$, and by $E_n$ the space of $n$-dimensional line vectors. We define similarly $\bA_E^n$ and $\bA_{E,n}$.

For any $k \geq 1$, set $G_k=\Res_{E/F} \GL_k$. We will use the following groups: 
\begin{equation*}
    G=G_n \times G_n, \quad H=G_n, \quad G'=\GL_{n,F} \times \GL_{n,F},
\end{equation*}
where $H$ is embedded diagonally in $G$, and $G'$ is embedded in $G$. We will also write $G'_n=\GL_{n,F}$. We write $(g_1,g_2)$ for the components of an element in $G$. We denote by $(T,B)$ the standard Borel pair of diagonal and upper triangular matrices in $G$. Let $N$ be the unipotent radical of $B$. We can write $T=T_{n} \times T_{n}$, $B=B_{n} \times B_{n}$ and $N=N_{n} \times N_{n}$. Let $e_n=(0,\hdots,0,1) \in E_n$ and let $P_n$ be the mirabolic subgroup of $G_n$ which consists of matrices whose last row is $e_n$. Set $P_{n,n}=P_n \times P_n$ seen as a subgroup of $G$. We set
\begin{equation*}
    G_+=G \times \Res_{E/F} \mathbf{A}_{E,n},
\end{equation*}
where by $\mathbf{A}_{E,n}$ we mean the $E$-affine space of $n$ dimensional row vectors, so that $G_+$ is an algebraic group over $F$.

If $A$ is any subgroup of $G$, set $A_H:=A \cap H$. In particular, $(T_{H},B_{H})$ is a standard Borel pair of $H$. We also set $A'=A \cap G'$.

Let $K$ be a maximal compact subgroup of $G(\bA)$ as in \S\ref{subsubsec:max_compact_prelim}. Then $K=K_n \times K_n$.

\subsection{Groups: $\U$-side}

\subsubsection{Skew-Hermitian spaces} Let $\cH$ be the set of isomorphism classes of $n$-dimensional non-degenerate skew-Hermitian spaces over $E/F$. If $\tS$ is a finite set of places of $F$, let $\cH_{\tS}$ be the set of isomorphism classes of $n$-dimensional non-degenerate skew-Hermitian spaces over $E_\tS=E \otimes_F F_{\tS}$. Let $\cH^{\tS}$ be the set of $V \in \cH$ such that $V \otimes_E E_v$ admits a self-dual lattice for all non-Archimedean $v \notin \tS$. These sets are both finite. If $V \in \cH$, we denote by $q_V$ its skew-Hermitian form.

We henceforth fix $V \in \cH$. Denote by $\U(V)$ the subgroup of $\GL(V)$ consisting of the $E$-linear unitary transformations of $V$. It is a $F$-algebraic group. Consider the $F$-vector space $\mathbb{V}:=\Res_{E/F} V$ equipped with the symplectic form $q_{\mathbb{V}}:=\Tr_{E/F} \circ q_V$. We fix a polarization $\bV=Y\oplus Y^\vee$, so that $Y$ and $Y^\vee$ are $n$-dimensional affine spaces over $F$.

\subsubsection{Groups}

We define the groups
\begin{equation*}
    \U_V:=\U(V) \times \U(V), \quad \U'_V=\U(V) \subset \U_V,
\end{equation*}
where the last embedding is diagonal inclusion. 

Let $S(V)$ be the \emph{Heisenberg group} of $V$, i.e. the $F$-linear group whose points are $\bV \times F$, with group law is given by the rule
\begin{equation}
    \label{eq:Heisenberg_group}
    (v_1,z_1).(v_2,z_2)=\left(v_1+v_2,z_1+z_2+\frac{1}{2}q_{\bV}(v_1,v_2) \right).
\end{equation}
It receives an action of $\U(V)$ on the first coordinate. We then define the \emph{Jacobi group} of $V$ to be
\begin{equation*}
    J(V):=S(V) \rtimes \U(V).
\end{equation*}

We finally define
\begin{equation*}
    \widetilde{\U}_V:=\U(V) \times J(V), \quad \U_{V,+}=\U_V \times Y^\vee \times Y^\vee.
\end{equation*}
If $g \in \widetilde{\U}_V$, we denote again by $g$ its projection on $\U_V$, and by $\tilde{g}$ its component in $J(V)$.

\label{subsubsec:max_compact}
Let $K_V$ be a maximal compact subgroup of $\U_V$ as in \S\ref{subsubsec:max_compact_prelim}. Assume that $\tS$ contains $V_{F,\infty}$ and the places that ramify in $E$. If $V \in \cH^\tS$, for every $v \notin \tS$, $\U_V$ admits a model over $\cO_{F_v}$ and we can take $K_{V,v}:=\U_{V}(\cO_{F_v})$ (the product of stabilizers of a self-dual lattice in $V \otimes_E E_v$). We write $K_V'$ for $K_V \cap \U_V'$.

\subsection{Measures}
\label{subsec:measure_part1}
For $\bG \in \{G, \U_V\}$, for each place $v$ we define in \cite{BLX2}*{Section~5.2} a local measure $\rd g_v$ on $\bG(F_v)$. It follows from \cite{BP}*{Section~2.5} that they are local components on the global Tamagawa measure on these groups from \S\ref{subsec:tamagawa_measure_new}, in the sense that
     \begin{equation}
     \label{eq:Tamagawa_facto}
        \rd g = (\Delta_{\bG}^*)^{-1} \prod_v \Delta_{\bG,v} \rd g_v.
    \end{equation}
We have the formulae for each place $v$
\begin{equation}
    \label{eq:delta_U_formula}
    \Delta_{\U(V)}=\prod_{i=1}^n L(i,\eta_{E/F}^i), \quad \Delta_{\U(V),v}=\prod_{i=1}^n L(i,\eta_{E_v/F_v}^i).
\end{equation}

If $X$ is an affine space over $F$, we equip $X(\bA)$ with the Tamagawa measure $\rd x=\prod_v \rd x_v$.

\subsection{Conditions on cuspidal data and Arthur parameters: $G$-side}
\label{subsec:reg_conditions_G}
Before diving in the spectral expansion of Liu's trace formula, we introduce some regularity conditions on Arthur parameters, i.e. on constituents (either discrete or continuous) of $L^2([G])$. Our goal here is to describe the spectral contributions attached to base changes of cuspidal representations of unitary groups with generic base-change. In our language, they will be \emph{discrete Hermitian Arthur parameters}. However, to prove Theorem~\ref{thm:GGP-IY} we will also need to deal with more general contributions that will be \emph{$(G,H,\overline{\mu})$-regular Hermitian}.

\subsubsection{Discrete Hermitian Arthur parameters}
\label{subsubsec:discreteHermitianArthur}

Let $\pi$ be an irreducible automorphic
representation of $G_k(\bA)$ for some $k \geq 1$. Let $\pi^\vee$ be the contragredient of
$\pi$, and set $\pi^\sfc = \pi \circ \sfc = \{ \varphi \circ \sfc \mid \varphi \in \pi\}$
and $\pi^* = (\pi^\sfc)^\vee$. We say that $\pi$ is conjugate self-dual if
$\pi\simeq \pi^*$.

We shall say that an irreducible automorphic representation $\Pi$ of $G_n$ is a \emph{discrete Hermitian Arthur parameter} of $G_n$ if there exist a partition $n = n_1 + \cdots + n_r$ and $\pi_i \in \Pi_{\cusp}(G_{n_i})$ for $i = 1, \hdots, r$, such that
\begin{itemize}
    \item $\Pi$ is isomorphic to the full parabolically induced representation
        $I_P^{G_n}(\pi_1 \boxtimes \hdots \boxtimes \pi_r)$ where $P$ is the
        standard parabolic subgroup of $G_n$ with Levi factor $G_{n_1} \times \hdots
        \times G_{n_r}$;
    \item $\pi_i$ is conjugate self-dual and the Asai
        $L$-function $L(s,\pi_i,\mathrm{As}^{(-1)^{n+1}})$ has a pole at
        $s=1$ for all $1 \leq i \leq r$;
    \item the representations $\pi_i$ are mutually non-isomorphic for $1
        \leq i \leq r$.
\end{itemize}
The integer $r$ and the automorphic representations $(\pi_i)_{1 \leq i \leq r}$ are unique (up to permutation), and we define the group $S_\Pi=(\Z/2\Z)^r$.

\subsubsection{Regular Hermitian Arthur parameter}
\label{subsec:reg_herm_art}
    
We shall say that an irreducible automorphic representation $\Pi$ of
$G_n(\bA)$ is a \emph{regular Hermitian Arthur parameter} of $G_n$ if
\begin{itemize}
    \item $\Pi$ is isomorphic to 
        $I_P^{G_n}((\tau_1 \boxtimes \tau_1^*) \boxtimes \hdots \boxtimes (\tau_s \boxtimes \tau_s^*) \boxtimes \Pi_0)$ where $P$ is
        a parabolic subgroup of $G_n$ with Levi factor $G_{m_1}^2 \times
        \hdots \times G_{m_s}^2 \times G_{n_0}$ and $n_0+2(m_1 + \hdots + m_r)=n$;
    \item $\Pi_0$ is a discrete Hermitian Arthur parameter of $G_{n_0}$;
    \item $\tau_i \in \Pi_{\cusp}(G_{m_i})$ for $1 \leq i \leq s$ and the representations $\tau_1, \hdots \tau_s, \tau_1^*, \hdots,  \tau_s^*$ are mutually non-isomorphic.
\end{itemize}
The representation $\Pi_0$ is then uniquely determined by $\Pi$, and is
called the \emph{discrete component} of $\Pi$. 

Let $w$ be the permutation matrix that exchanges the blocks $G_{m_i}$ corresponding to $\tau_i$ and $\tau_i^*$ for all $1 \leq i \leq s$. Set
\begin{equation}
\label{eq:fa_Pi}
    \fa_{\Pi,\C}^*:=\{ \lambda \in \fa_{P,\C}^* \; | \; w \lambda=-\lambda\}, \quad i\fa_{\Pi}^*:=\fa_{\Pi,\C}^* \cap i \fa_P^*.
\end{equation}
For any $\lambda \in \fa_{\Pi,\C}^*$, consider the full induced representation
\begin{equation*}
    \Pi_\lambda:=I_P^{G_n}(((\tau_1 \boxtimes \tau_1^*) \boxtimes \hdots \boxtimes (\tau_s \boxtimes \tau_s^*) \boxtimes \Pi_0)\otimes\lambda).
\end{equation*}
If $\lambda \in i\fa_{\Pi}^*$, then $\Pi_\lambda$ is irreducible.

Finally, if we write $\Pi_0$ as an induction like in \S\ref{subsubsec:discreteHermitianArthur}, we can consider the Levi subgroup of $G_n$
\begin{equation*}
    L_\pi=\prod_{i=1}^s G_{2 m_i} \times \prod_{j=1}^r G_{n_i}.
\end{equation*}
It contains $M_P$. We set
\begin{equation}
\label{eq:discrete_SPI_defi}
    S_{\Pi}:=S_{\Pi_0}=2^{\dim(\fa^*_{L_\pi})-\dim \fa_{P}^{L_\pi,*}}.
\end{equation}

\subsubsection{$(G,H,\overline{\mu})$-regularity}

Let $\Pi=\Pi_n \boxtimes \Pi_n'$ be an automorphic representation of $G$ of the form $\Pi_n=I_{P_n}^{G_n} (\pi_1 \boxtimes \cdots \boxtimes \pi_r)$ and $\Pi_n'=I_{P_n'}^{G_n} (\pi_1'\boxtimes \cdots \boxtimes \pi_{r'}')$, for some parabolic subgroups $P_n,P_n' \subset G_n$ and $\pi_1,\hdots,\pi_r,\pi_1',\hdots,\pi_{r'}'$ some cuspidal representations of smaller $G_l$'s. We shall say that
\begin{itemize}
    \item $\Pi$ is \emph{$G$-regular} if all the $\pi_1, \hdots, \pi_r$ are distinct and if all the $\pi_1', \hdots, \pi_{r'}'$ are distinct;
    \item $\Pi$ is \emph{$(H,\overline{\mu})$-regular} if for all $1 \leq i \leq r$ and $1\leq j \leq r'$ the representation $\overline{\mu} \pi_i$ is not isomorphic to the
contragredient of $\pi'_j$.
\end{itemize}
If $\Pi$ is both $G$ and $(H,\overline{\mu})$ regular, then it is said to be \emph{$(G,H,\overline{\mu})$-regular Arthur parameter}.

If both $\Pi_n$ and $\Pi_n'$ are regular Hermitian Arthur parameters, then $\Pi$ is said to be \emph{regular Hermitian}. In that case, define $i \fa_\Pi^* = i \fa_{\Pi_n} \oplus i \fa_{\Pi_n'}$ and $L_\pi=L_{\pi_n} \boxtimes L_{\pi_{n}'}$. This depends on the choice of the inducing data.

\begin{remark}  \label{rem:semi_discrete_regular}
Note that if $\Pi$ is regular Hermitian with either $\Pi_n$ or $\Pi_n'$ discrete, then it is necessarily $(G,H,\overline{\mu})$-regular as
otherwise some $L(s,\pi_i,\As^{(-1)^l})$ would have a pole at $s=1$ for
$l=n,n+1$ which is not possible. 
\end{remark}

\subsubsection{Conditions on cuspidal data}

We now translate these conditions on cuspidal data. Let $\chi \in \fX(G)$ be a cuspidal datum of $G$, represented by $(M_P,\pi)$. Set $\Pi:=I_P^G \pi$. Then we say that 
\begin{itemize}
    \item $\chi$ is \emph{$(G,H,\overline{\mu})$-regular} if $\Pi$ is a $(G,H,\overline{\mu})$-regular Arthur parameter;
    \item $\chi$ is Hermitian if $\Pi$ is a regular Hermitian Arthur parameter.
\end{itemize}
If $\chi$ is both, then it will be a \emph{$(G,H,\overline{\mu})$-regular Hermitian} cuspidal datum. In this case, we have the Levi subgroup $L_\pi$ and we will always assume that $P$ is of the shape described in \S\ref{subsec:reg_herm_art} so that $M_P \subset L_\pi$.

\subsection{Regularity conditions on cuspidal data: $\U$-side}
\label{sec:cusp_data_U}

Let $\chi_ = (\chi_V, \chi_V') \in \mathfrak{X}(\U_V)$ be a cuspidal datum
represented by $(M_Q, \sigma)$ where $M_Q$ is a Levi factor of a standard parabolic subgroup $Q$ of $\U_V$ and $\sigma \in \Pi_{\cusp}(M_Q)$. We have the following decompositions:
\begin{itemize}
\item $M_Q= M_V \times M'_V$, $M_V = G_{n_1} \times \cdots \times G_{n_s}
    \times \U(V_0)$, $M'_V = G_{n_1'} \times \cdots \times G_{n_{s'}'}
    \times \U(V_0')$;

\item $\sigma = \sigma_V \boxtimes \sigma'_V$, $\sigma_V=\tau_1 \boxtimes
    \cdots \boxtimes \tau_s \boxtimes \sigma_0$, $\sigma'_V=\tau_1' \boxtimes
    \cdots \boxtimes \tau_{s'}' \boxtimes \sigma_0'$;
\end{itemize}
where
\begin{itemize}
\item $V_0$ and $V_0'$ are non-degenerate skew-Hermitian vector spaces of dimension
    $n_0$ and $n_0'$ respectively, $2(n_1+\cdots+n_s)+n_0=
    2(n_1'+\cdots+n_{s'}')+n_0'=n$;

\item $\tau_i$ and $\tau_i'$ are cuspidal automorphic representations of the
    $G_{n_i}$ and $G_{n_i'}$ respectively;

\item $\sigma_0$ and $\sigma_0'$ are cuspidal automorphic representations
    of $\U(V_0)$ and $\U(V_0')$ respectively.
\end{itemize}
We shall say that $\chi$ is \emph{$\U_V$-regular} if $\chi_V$ and $\chi_V'$ are regular (see \S\ref{subsec:reg_cusp_data}), \emph{$(\U'_V,\overline{\mu})$-regular} if $\overline{\mu}\tau_i$ is neither isomorphic to $(\tau_j')^\vee$ nor $(\tau_j')^{\sfc}$, and \emph{$(\U_V,\U'_V,\overline{\mu})$-regular} if it is both. Regularity for $\chi_V$ means that the representations $\tau_1, \hdots, \tau_s, \tau_1^*, \hdots,
\tau_s^*$ are pairwise distinct and the same applies to $\chi_V'$.

\subsection{Base-change}

\subsubsection{Local base-change} Let $v$ be a place of $F$. Let ${}^L G_{n,v}$ and ${}^L \U(V)_v$ be the Langlands dual groups of $G_{n,v}$ and $\U(V)_v$ (seen as groups over $F_v$). We have a base change map $\mathrm{BC} : {}^L \U(V)_v \to {}^L G_{n,v}$ (see e.g. \cites{Mok,KMSW}). Because the local Langlands correspondence is known for general linear groups (by \cite{Langlands} and \cite{HT}) and for unitary groups (by \cite{Langlands}, \cite{Mok} and \cite{KMSW}), to any smooth irreducible representation $\sigma_v$ of $\U(V)(F_v)$ we can assign a smooth irreducible representation $\mathrm{BC}(\sigma_v)$ of $G_n(F_v)$ by functoriality. We extend this notion to representations of $\U_V(F_v)$. Note that $\mathrm{BC}(\sigma_v)$ determines the Langlands parameter of $\sigma_v$. 

If the place $v$ splits in $E$, then $\sigma_v$ is a representation of $\GL_n(F_v)$, and $\mathrm{BC}(\sigma_v)$ is $\sigma_{v} \boxtimes \sigma_{v}^\vee$ as a representation of $G_n(E_v)=\GL_n(F_v)^2$.

If the place $v$ is unramified, the Satake isomorphism induces a morphism of spherical Hecke-algebras 
\begin{equation}
    \label{eq:Hecke_BC}
    \BC : \cH(G(F_v),K_v) \to \cH(\U_V(F_v),K_{V,v}).
\end{equation}

\subsubsection{Weak base-change of automorphic representations}

We go back to the global setting. Let $Q
\subset \U(V)$ be a standard parabolic subgroup with Levi factor $M_Q$ isomorphic to $G_{n_1} \times \hdots \times G_{n_r} \times \U(V_0)$ where $V_0$ is a non-degenerate subspace of $V$. Let $\sigma \in \Pi_{\cusp}(M_Q)$. We write $\sigma=\tau_1 \boxtimes \hdots \boxtimes \tau_s \boxtimes \sigma_0$ according to this decomposition. 

We shall say that a regular Hermitian Arthur parameter $\Pi$ of $G_n$ is a \emph{weak base change} of $(Q,\sigma)$ if there exist a parabolic subgroup $P$ of $G_n$ with Levi factor $M_P=G_{n_1}^2 \times \hdots \times G_{n_s}^2 \times G_{n_0}$, a discrete Hermitian Arthur parameter $\Pi_0$ of $G_{n_0}$, and $\tau_i \in \Pi_{\cusp}(G_{n_i})$ for $1 \leq i \leq s$ such that
\begin{itemize}
    \item $\Pi$ is isomorphic to $I_P^{G_n}((\tau_1 \boxtimes \tau_1^*) \boxtimes \hdots \boxtimes (\tau_s \boxtimes \tau_s^*) \boxtimes \Pi_0)$,
    \item for almost all places $v$ of $F$ that split in $E$, the local component $\Pi_{0,v}$ is the split local base change of
        $\sigma_{0,v}$.
\end{itemize}
This implies that $\Pi_0$ is the discrete component of $\Pi$. If the second condition is satisfied for all place $v$ in a set $\tT$ of $F$, we shall say that $\Pi$ is a $\tT$ weak base change of $(Q,\sigma)$. We naturally extend weak base change to product of unitary groups. In particular, we also have the cuspidal representation $\sigma_0$ in this setting. 

It follows from the definition that if $\Pi$ is a $(G,H,\overline{\mu})$-regular Hermitian Arthur parameter of $G$ that is a weak base change of $(Q,\sigma)$, then $(Q,\sigma)$ represents a $(\U_V,\U_V',\overline{\mu})$-regular cuspidal datum.

\begin{remark}
    If we assume the results of \cite{Mok} and \cite{KMSW}, then by \cite{Ram}, $(G,H,\overline{\mu})$-regular Hermitian Arthur parameters $\Pi$ are exactly the (strong) base-changes of pairs $(Q,\sigma)$ that represent $(\U_V,\U_V',\overline{\mu})$-regular cuspidal data such that $\BC(\sigma_0)$ is generic.
\end{remark}

\section{Spectral expansion of Liu's trace formula on general linear groups}
\label{sec:spectral_calculation_GL}

\label{sec:spec_GLN}

In this section, we regularize the spectral contribution of $(G,H,\overline{\mu})$-regular Hermitian cuspidal data to the $G$-side of Liu's relative trace formula.

\subsection[The coarse spectral expansion on $\GL_n \times \GL_{n+1}$]{The coarse spectral expansion of Liu's trace formula on the \texorpdfstring{$G$}{G}-side}
\label{sec:coarse_spectral_Liu}

We first recall the main results of \cite{BLX1} on the coarse regularization of the RTF.

\subsubsection{Theta series}
\label{subsec:theta_series}

We define an automorphic character of $G'(\bA)$ by
\begin{equation}
    \label{eq:eta_defi}
          \eta_{n+1}(g_1, g_2) = \eta(\det g_1 g_2)^{n+1}, \quad (g_1,g_2) \in G'(\bA).
    \end{equation}
We define a representation $\mathrm{R}_{\overline{\mu}}$ of $H(\bA)$ realized on $\cS(\bA_{E,n})$ by the rule
\begin{equation*}
(\mathrm{R}_{\overline{\mu}}(h) \Phi)(x)=
\valP{ \det h}_E^{\frac{1}{2}} \overline{\mu}(h)
\Phi(xh), \quad h \in H(\bA_F), \quad x \in \bA_{E,n},
\end{equation*}
where we write $\overline{\mu}(h)$ for $\overline{\mu(\det(h))}$. Beware that $\det h$ is the determinant of $h$ seen as an element of $G_n(\bA)$ and not $G(\bA)$.

For every $\Phi \in \cS(\bA_{E,n})$, we define the Epstein--Eisenstein series of \cite{JS}*{Section~4.1}\begin{equation}
    \label{eq:theta_defi}
    \Theta(h, \Phi) =
    \sum_{x \in E_n} (\mathrm{R}_{\overline{\mu}}(h)\Phi)(x), \quad \Theta'(h, \Phi) =
    \sum_{x \in E_n\backslash\{0\}} (\mathrm{R}_{\overline{\mu}}(h)\Phi)(x), \quad h \in [H].
\end{equation}

    \begin{lemma}
        \label{lem:theta_moderate}
        The series $\Theta(h, \Phi)$ and $\Theta'(h, \Phi)$ are absolutely convergent. There exists $N>0$ such that $\Theta$ and $\Theta'$ define continuous mappings $\cS(\bA_{E,n}) \to \cT_{N}([H])$.
    \end{lemma}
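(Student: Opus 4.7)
The plan is to prove convergence and continuity by a classical reduction-theoretic argument combined with Poisson summation. For absolute convergence, observe that for any fixed $h \in H(\bA)$, the translated function $x \mapsto (\mathrm{R}_{\overline{\mu}}(h)\Phi)(x)$ is again Schwartz on $\bA_{E,n}$, and $E_n \subset \bA_{E,n}$ is a discrete cocompact subgroup, so the sums defining $\Theta(h,\Phi)$ and $\Theta'(h,\Phi)$ converge absolutely.

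For continuity into $\cT_N([H])$, I would first reduce the seminorms $\aabs{\cdot}_{-N,X}$ to the case $X = 1$. Since $h \mapsto |\det h|_E^{1/2}\overline{\mu}(h)$ is an eigenfunction of $\cU(\fg_\infty)$ under right translation and differentiation under the sum is legal by uniform convergence, one verifies that for every $X \in \cU(\fg_\infty)$ there is a finite sum expression
\begin{equation*}
\mathrm{R}(X) \Theta(h,\Phi) = \sum_i c_i \, \Theta(h, \mathrm{R}_{\overline{\mu}}(X_i) \Phi), \quad c_i \in \C, \; X_i \in \cU(\fg_\infty).
\end{equation*}
As the derived actions $\mathrm{R}_{\overline{\mu}}(X_i)$ are continuous on $\cS(\bA_{E,n})$, the task reduces to exhibiting $N \geq 0$ and a continuous seminorm $\aabs{\cdot}_{\cS}$ on $\cS(\bA_{E,n})$ such that $|\Theta(h,\Phi)| \leq \aabs{h}^N \aabs{\Phi}_{\cS}$ for all $h \in [H]$ and $\Phi \in \cS(\bA_{E,n})$.

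By $H(F)$-invariance of $\Theta(\cdot,\Phi)$ and reduction theory, this bound need only be verified on a Siegel domain for $H = \Res_{E/F}\GL_n$. Writing $h = nak$ with $n$ in a fixed compact subset of the unipotent radical of the standard Borel of $H$, $a$ in the Weyl chamber of the split torus, and $k$ in the maximal compact, the factors $n$ and $k$ contribute via $\mathrm{R}_{\overline{\mu}}(nk)$, which acts continuously in $(n,k)$ on $\cS(\bA_{E,n})$. Setting $\Phi' = \mathrm{R}_{\overline{\mu}}(nk) \Phi$, the problem becomes to bound $|\det a|_E^{1/2} \sum_{x \in E_n} \Phi'(xa)$ by $\aabs{a}^N$ times a Schwartz seminorm of $\Phi'$. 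The $x = 0$ term is manifestly of moderate growth. For $x \neq 0$, I would split into two regions. On $|\det a|_E \leq 1$, direct Schwartz decay estimates on $\Phi'$ give the bound. On $|\det a|_E \geq 1$, I apply Poisson summation over $E_n \subset \bA_{E,n}$: this rewrites the sum as $|\det a|_E^{-1} \sum_{\xi \in E_n} \widehat{\Phi'}(\xi a^{-t})$ up to harmless factors from self-dual measures, and Schwartz estimates on $\widehat{\Phi'}$ (whose seminorms are controlled by those of $\Phi'$) finish the bound. The assertion for $\Theta'$ follows from that for $\Theta$, since their difference is $|\det h|_E^{1/2} \overline{\mu}(h) \Phi(0)$, which is clearly of moderate growth and continuous in $\Phi$.

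The main technical obstacle is the patching of the two Schwartz estimates across the Weyl chamber to produce a single exponent $N$; this is the classical content of the Epstein--Eisenstein series analysis of~\cite{JS}, and once carried out, the estimate is manifestly continuous in $\Phi$ for the Schwartz topology.
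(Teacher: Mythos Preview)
Your proof is correct but takes a different route from the paper. The paper's argument is much shorter and avoids both Siegel domains and Poisson summation: it uses a single adelic height inequality of the form $\aabs{xh}_{\bA_{E,n}}^{-N'} \ll \aabs{x}_{\bA_{E,n}}^{-M} \aabs{h}_H^N$, together with the summability $\sum_{x \in E_n} \aabs{x}_{\bA_{E,n}}^{-M} < \infty$ for $M$ large (cited from~\cite{BP}*{Proposition~A.1.1.(v)}) and the Schwartz bound $\sup_x |\Phi(x)|\,\aabs{x}_{\bA_{E,n}}^{N'} < \infty$, to obtain $|\Theta(h,\Phi)| \ll \aabs{h}_H^N \aabs{\Phi}$ in one line, valid for all $h \in H(\bA)$ rather than only on a Siegel set. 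Your reduction of the seminorms $\aabs{\cdot}_{-N,X}$ to the case $X=1$ via $\mathrm{R}(X)\Theta(h,\Phi) = \Theta(h, d\mathrm{R}_{\overline{\mu}}(X)\Phi)$ is the same step the paper leaves implicit. What you gain is a self-contained argument not relying on the height machinery of~\cite{BP}; what you pay is the Weyl-chamber patching, which, as you note, is classical but does require care (your dichotomy on $|\det a|_E$ alone is slightly too coarse---one typically applies Poisson summation coordinate-by-coordinate according to whether $|a_j|$ is large or small---but any of the standard refinements works).
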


    \begin{proof}
        For every $M>0$, there exists $N'>0$ and $N>0$ such that $\aabs{xh}_{\bA_{E,n}}^{-N'} \ll \aabs{x}_{\bA_{E,n}}^{-M} \aabs{h}^N$. Moreover, by \cite{BP}*{Proposition~A.1.1.(v)}, there exists $M>0$ such that $\sum_{x \in \bA_{E,n}} \aabs{x}_{\bA_{E,n}}^{-M}$ is finite. This concludes because for all $\Phi \in \cS(\bA_{E,n})$ we have $\sup_{x \in \bA_{E,n}} \valP{\Phi(x)}\aabs{x}_{\bA_{E,n}}^{N'} < \infty$ which defines a continuous norm on $\cS(\bA_{E,n})$.
    \end{proof}

    \subsubsection{Truncated kernels}
    \label{subsec:truncated_kernels_gln}

    For every $f_+ \in \cS(G_+(\bA))$, an automorphic kernel $K_{f_+}$ is defined in \cite[Section~5.3]{BLX1}. It is a function on $H(\bA) \times G(\bA)$, and defines an element in $K_{f_+} \in \cT([H\times G])$.
    
    Let $\chi$ be a cuspidal datum of $G$. In \cite[Section~5.3]{BLX1}, a kernel $K_{f_+,\chi}$ is also defined. It can be described as follows. First, for $f_+=f \otimes \Phi$ we set
    \begin{equation}
        \label{eq:kernel_f_+}
        K_{f \otimes \Phi,\chi}(h,g)=K_{f,\chi}(h,g) \Theta(h,g), \quad (h,g) \in H(\bA) \times G(\bA).
    \end{equation}
    In \cite[Section~5.3]{BLX1}, it is shown that the assignment $f \otimes \Phi \mapsto K_{f \otimes \Phi,\chi}$ extends by continuity to a map $f_+ \in \cS(G_+(\bA)) \to K_{f_+,\chi} \in \cT([H \times G])$. Alternatively, $K_{f_+,\chi}$ can be realized as a kernel of a certain operator by the process explained in \cite[Section~5.3]{BLX1}.

    By a truncation parameter, we mean an element $T \in \fa_{0}^{*}$. We shall say that $T$ is sufficiently positive if for a large enough $M$ (depending on the context) we have $\min_{\alpha \in \Delta_0^G} \langle \alpha,T \rangle >M$. This also defines $\lim_{T \to \infty}$. We take $\aabs{\cdot}$ a norm on $\fa_{0}$.

    In \cite[Section~5.3]{BLX1} and \cite[Section~5.5]{BLX1}, three truncated kernels $K_{f_+}^T$, $K_{f_+,\chi}^T$ and $k_{f_+,\chi}^T$ are defined. They are functions on $H(\bA) \times G'(\bA)$. We will also need two variants of Arthur truncation functions $F^{G'_{n+1}}(\cdot,T)$ and $F^{G_{n+1}}(\cdot,T)$. These are functions on $G'_n$ and $H$ respectively. The precise definitions are not relevant to us, we only need to know that for all $g' \in G'_n(\bA)$ and $h \in H(\bA)$ we have $\lim_{T \to \infty} F^{G'_{n+1}}(g',T)=\lim_{T \to \infty} F^{G_{n+1}}(h,T)=1$.
    
    We sum up the properties of the truncated kernels and of the associated regularized periods in the following theorem. 

    \begin{theorem}
        \label{thm:truncated_prop}
        The following properties hold for $T$ positive enough.
        \begin{enumerate}
            \item \cite[Theorem~5.8]{BLX1} The integrals 
            \begin{equation*}
                I^T(f_+)=\int_{[H]} \int_{[G']} K_{f_+}^T(h,g') \eta_{n+1}(g') \rd g' \rd h, \quad I^T_\chi(f_+)=\int_{[H]} \int_{[G']} K_{f_+,\chi}^T(h,g') \eta_{n+1}(g')\rd g'\rd h,
            \end{equation*}
            are absolutely convergent. Moreover, for every $r>0$ there exists a continuous semi-norm $\aabs{\cdot}$ on $\cS(G_+(\bA))$ such that for all $T$ positive enough
            \begin{equation*}
                \sum_{\chi \in \fX(G)} \int_{[G']} \int_{[H]} \valP{K_{f_+,\chi}^T(h,g')-F^{G'_{n+1}}(g_1') K_{f_+,\chi}(h,g') } \rd h\rd g' \leq \aabs{f_+} e^{-r \aabs{T}}.
            \end{equation*}
            \item \cite[Theorem~5.10]{BLX1} For $T$ sufficiently positive, the functions $I^T(f_+)$ and $I_{\chi}^T(f_+)$ are the
            restrictions of exponential-polynomial functions whose purely polynomial parts
            are constants. We denote them by $I(f_+)$ and $I_{\chi}(f_+)$ respectively. 
            Then $I$ and $I_\chi$ are continuous as distributions on $\cS(G_+(\bA))$ 
            and for any $f_+ \in \cS(G_+(\bA))$ we have
                \[
                \sum_{\chi \in \fX(G)} I_{\chi}(f_+) = I(f_+),
                \]
            where the sum is absolutely convergent.
            \item \cite[Proposition~5.13]{BLX1} The integral 
            \begin{equation*}
                i^T_\chi(f_+)=\int_{[H]} \int_{[G']} k_{f_+}^T(h,g') \eta_{n+1}(g')\rd g'\rd h
            \end{equation*}
            is absolutely convergent. Moreover for every $r>0$ there exists a continuous semi-norm $\aabs{\cdot}$ on $\cS(G_+(\bA))$ such that for all $T$ positive enough
            \begin{equation*}
                \sum_{\chi \in \fX(G)}\int_{[G']} \int_{[H]} \valP{k_{f_+,\chi}^T(h,g')-F^{G_{n+1}}(h) K_{f_+,\chi}(h,g') } \rd h\rd g' \leq \aabs{f_+} e^{-r \aabs{T}}.
            \end{equation*}
            \item \cite[Section~5.14]{BLX1} For $T$ sufficiently positive, the function $i^T_\chi(f_+)$ is the restriction of an exponential-polynomial function whose purely polynomial part
            is constant and equal to $I_\chi(f_+)$.
        \end{enumerate}
    \end{theorem}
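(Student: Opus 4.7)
The plan is to follow the Arthur--Zydor truncation paradigm, suitably adapted to accommodate the Epstein--Eisenstein series $\Theta$ in the kernel. All four parts can be established in a unified manner, building on the machinery already in place.

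First I would establish absolute convergence of each truncated integral in parts~(1) and~(3). For $f \in \cS(G(\bA))$, the kernel $K_f$ has moderate growth on $[H \times G]$, and by Lemma~\ref{lem:theta_moderate} we have $\Theta(\cdot, \Phi) \in \cT_N([H])$ for some $N$, so $K_{f_+}$ itself lives in $\cT([H \times G])$. The truncations $K_{f_+}^T$, $K_{f_+,\chi}^T$, $k_{f_+,\chi}^T$ introduced in \cite{BLX1} are, by construction, obtained from $K_{f_+}$ by subtracting off contributions coming from constant terms along proper parabolic subgroups. Outside a bounded Siegel-like region associated with the truncation parameter $T$, these subtractions force the kernels to be rapidly decreasing, so integrating against $\eta_{n+1}$ over $[H] \times [G']$ converges by Arthur's standard estimates. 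The sum over cuspidal data is handled via Theorem~\ref{thm:kernel_spectral_expansion} together with Proposition~\ref{prop:strong_K_basis}, which provide uniform control of the spectral contributions.

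Second, for the exponential decay bounds with $e^{-r \aabs{T}}$, I would compare each truncated kernel to its natural partial replacement, for instance $F^{G'_{n+1}}(g_1') K_{f_+, \chi}(h, g')$ for part~(1). On the truncation region the two agree; off this region, the correction involves constant terms of $K_{f_+,\chi}$ along proper parabolic subgroups of $G'$, which inherit Schwartz-type decay from $f_+$. A standard parabolic descent then gives the advertised exponential decay rate in $T$, uniformly over $\chi$ thanks again to Proposition~\ref{prop:strong_K_basis}. The argument for part~(3) is analogous, with $F^{G_{n+1}}$ controlling the analysis on the $H$-side instead.

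Third, for the exponential-polynomial structure in parts~(2) and~(4), the $T$-dependence is extracted by integrating Arthur's truncation characteristic functions over chambers in $\fa_0^{G'}$; these integrations produce finite sums $\sum_\eta p_\eta(T) e^{\langle \eta, T \rangle}$. For the cases at hand, the geometric/spectral structure of the expansion forces the purely polynomial part (the $\eta = 0$ contribution) to be a constant, which we define to be $I(f_+)$, resp. $I_\chi(f_+)$. The identity $\sum_\chi I_\chi(f_+) = I(f_+)$ then follows by taking polynomial parts in the coarse spectral decomposition $K_f = \sum_\chi K_{f, \chi}$, using the absolute convergence bound from part~(1) to interchange sum and integral. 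For part~(4), the equality of polynomial parts of $I^T_\chi$ and $i^T_\chi$ reduces to showing that the difference $F^{G'_{n+1}}(g_1') - F^{G_{n+1}}(h)$ integrated against $K_{f_+,\chi}$ contributes only non-constant exponential terms in $T$.

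The main obstacle will be the interaction between the theta series $\Theta$ and the Arthur truncation. Since $\Theta$ is itself a sum over the lattice $E_n$, unfolding it introduces an additional combinatorial layer: different $P_n(F)$-orbits on $E_n$ interact with parabolic constant terms of $K_f$ in ways that must be carefully tracked to avoid spurious growth in $T$. This is precisely the difficulty addressed in \cite{BLX1}, using an adaptation of the Ichino--Yamana truncation from \cite{IY2} to the Fourier--Jacobi context; it is the reason why the truncated kernels must be defined jointly rather than as a product of a truncated $K_f$ with $\Theta$.
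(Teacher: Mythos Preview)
This theorem is not proved in the present paper: each item is an explicit citation to results established in the companion paper \cite{BLX1} (Theorem~5.8, Theorem~5.10, Proposition~5.13, and Section~5.14 respectively). It functions here purely as a summary of inputs, so there is no ``paper's own proof'' to compare against.

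Your outline is a reasonable high-level sketch of the kind of argument carried out in \cite{BLX1}, and you correctly identify the key structural point: the interaction of the Epstein series $\Theta$ with Arthur-type truncation forces one to define the truncated kernels jointly rather than as a product. A couple of minor cautions. First, Proposition~\ref{prop:strong_K_basis} concerns sums over orthonormal bases of discrete representations; the uniform control of $\sum_\chi K_{f,\chi}$ needed here comes more directly from the spectral kernel estimates of Theorem~\ref{thm:kernel_spectral_expansion} and the underlying Langlands decomposition, not from Proposition~\ref{prop:strong_K_basis} per se. Second, your description of parts~(2) and~(4) glosses over the nontrivial step of showing that the purely polynomial part is actually \emph{constant} (degree zero in $T$); this requires a cancellation argument specific to the way the modified kernels are built, not just a generic feature of integrating characteristic functions over chambers. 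These are precisely the points where the detailed work in \cite{BLX1} is needed, and your sketch would not substitute for it.
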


    The second point of the theorem is the \emph{coarse spectral expansion} of Liu's trace formula on general linear groups.

\subsection{Extension of the Rankin--Selberg period}

In this section we consider the \emph{Rankin--Selberg} period, i.e.
\begin{equation}
    \label{eq:RS_period}
    \cP_{H}(f,\Phi)=\int_{[H]} f(h )\Theta'(h,\Phi)\rd h, \quad f \in \cS([G]), \quad \Phi \in \cS(\bA_{E,n}).
\end{equation}
This period plays an important role in \cite{JPSS83}, where the case where $f$ is a cuspidal automorphic form is considered. Our goal is to further extend $\cP_H(\cdot,\Phi)$ to Eisenstein series induced from $(G,H,\overline{\mu})$-regular Arthur parameters. The end result is Corollary~\ref{cor:RS_extension_Eisenstein}. We warn the reader that although we omit $\overline{\mu}$ in the notation $\cP_H$, there really is a dependence.

\subsubsection{Whittaker functionals}
\label{subsubsec:whittaker}

 We have two characters of $[N_{n}]$ given by
    \begin{equation}    \label{eq:psiN}
    \psi_{n,\pm}= \psi^E
    \left( \pm (-1)^n \sum_{i=1}^{n-1} u_{i,i+1} \right), \quad u \in [N_{n}],
    \end{equation}
and we define a character of $[N]$ by $\psi_N = \psi_{n,+} \boxtimes
\psi_{n,-}$. This is a generic character trivial on $[N_{H}]$. 

    For $f \in \mathcal{T}([G])$ set
    \begin{equation}    \label{eq:Whittaker}
    W^\psi(f,g):= \int_{[N]} f(ng) \overline{\psi_N}(n) \rd n, \quad g \in G(\bA).
    \end{equation}
    We write $W^{\overline{\psi}}$ if we integrate againt $\psi_N$ instead. 
    
    \subsubsection{Zeta functions}
    followsr $\Phi \in \cS(\bA_{E,n})$, we set
    \begin{equation}    \label{eq:Z_phi}
    Z(f,\Phi,\overline{\mu}, s)=
    \int_{N_{H}(\bA) \backslash H(\bA)} W^\psi(f,h)
    \valP{\det h}_E^{s} (\mathrm{R}_{\overline{\mu}}(h)\Phi)(e_n) \rd h,
    \end{equation}
    for every $s \in \C$ such that this expression converges absolutely. This is the \emph{Zeta function} associated to $f$ and $\Phi$ defined in \cite{JPSS83}

If $c$ is a real number, set $\mathcal{H}_{>c} := \{ s
\in \C \; | \; \Re(s)>c \}$, and if $c<C$, set $\mathcal{H}_{]c,C[} := \{ s
\in \C \; | \; c<\Re(s)<C \}$.

\begin{lemma}   \label{lem:convTau}
Let $N \geq 0$. There exists $c_N>0$ such that the following holds.
\begin{itemize}
    \item For every $f \in \mathcal{T}_N([G])$, $\Phi \in \cS(\bA_{E,n})$
        and $s \in \mathcal{H}_{>c_N}$, the defining integral of
        $Z(f,\Phi,\overline{\mu}, s)$ converges absolutely.

    \item For every $s \in \mathcal{H}_{>c_N}$, the bilinear form $(f,\Phi)
        \in \mathcal{T}_N([G]) \times \cS(\bA_{E,n})\mapsto
        Z(f,\Phi,\overline{\mu}, s)$ is separately continuous.

    \item For every $f \in \mathcal{T}_N([G])$ and $\Phi \in
        \cS(\bA_{E,n})$, the function $s \in \mathcal{H}_{>c_N} \mapsto
        Z(f,\Phi,\overline{\mu}, s)$ is holomorphic and bounded
        in vertical strips.
\end{itemize}
\end{lemma}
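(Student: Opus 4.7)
The plan is to combine the Iwasawa decomposition with a gauge estimate for Whittaker functions attached to elements of $\cT_N([G])$. First I would apply $H(\bA) = N_H(\bA) T_H(\bA) K_H$. Using that $\psi_N$ is trivial on $[N_H]$ and that $e_n t = a_n e_n$ for $t = \diag(a_1, \ldots, a_n) \in T_H(\bA)$, one rewrites
\[
Z(f, \Phi, \overline{\mu}, s) = \int_{K_H} \int_{T_H(\bA)} W^\psi(f, tk) \, |\det t|_E^{s+\frac{1}{2}} \, \overline{\mu}(tk) \, \Phi(a_n e_n k) \, \delta_{B_H}(t)^{-1} \, dt \, dk,
\]
and reduces absolute convergence to bounding the inner integral over $T_H(\bA)$.

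The key input is a gauge estimate for $W^\psi(f, \cdot)$. Because $\psi_N$ is non-trivial on each simple root subgroup of $N = N_n \times N_n$, iterated integration by parts against $\overline{\psi_N}$ gives, for every simple root $\alpha_i$ of $T_H$ ($1 \leq i \leq n-1$) and every non-negative integer $M$, an identity of the form
\[
\alpha_i(t)^M \, W^\psi(f, tk) = W^\psi(\rR(X_{i,M}) f, tk)
\]
for some $X_{i,M} \in \cU(\fg_\infty)$. Combined with the basic estimate $|W^\psi(f, g)| \ll \aabs{f}_{-N} \aabs{g}^{N''}$ obtained from the definition of the Whittaker integral and the continuous semi-norms on $\cT_N([G])$, this yields that for any $M > 0$ there exist $X \in \cU(\fg_\infty)$ and $N' > 0$ such that
\[
|W^\psi(f, tk)| \ll_M \aabs{f}_{-N, X} \cdot \aabs{tk}^{N'} \cdot \prod_{i=1}^{n-1} \min(1, |\alpha_i(t)|_E^{-M}).
\]

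Next I would plug this into the integral and change variables to $b_i = a_i/a_{i+1}$ for $i < n$ and $b_n = a_n$. In the $b_i$ ($i < n$) directions the integrand has arbitrarily rapid decay for $|b_i|_E \geq 1$ (from the gauge) and at worst polynomial growth as $|b_i|_E \to 0$; in the $b_n$ direction, $\Phi(b_n e_n k)$ is Schwartz on $\bA_E$ and provides rapid decay as $|b_n|_E \to \infty$ together with boundedness as $|b_n|_E \to 0$. Choosing $c_N$ large enough that the exponents of the $|b_i|_E$ ($i < n$) coming from the combination of $|\det t|_E^{s+1/2}$, $\delta_{B_H}(t)^{-1}$ and $\aabs{tk}^{N'}$ are strictly positive whenever $\Re(s) > c_N$ secures absolute convergence. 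The majorant factorizes as a product of a continuous semi-norm on $\cT_N([G])$ at $f$ and a Schwartz semi-norm on $\cS(\bA_{E,n})$ at $\Phi$, whence separate continuity. Holomorphy in $s$ on $\cH_{>c_N}$ follows from Morera's theorem and Fubini since the integrand is pointwise holomorphic in $s$ and the majorant is uniform on compact subsets of $\cH_{>c_N}$; boundedness in vertical strips is automatic because $||\det t|_E^s|$ depends only on $\Re(s)$.

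The main obstacle is establishing the gauge estimate for general $f \in \cT_N([G])$, as opposed to cusp forms or smooth vectors in an irreducible admissible representation. For cusp forms this is classical (Jacquet), but here we must work with functions of uniform moderate growth, so the integration-by-parts identity must be combined carefully with the $\cU(\fg_\infty)$-action on $\cT_N([G])$, keeping track of which semi-norm of $f$ controls the bound after each integration by parts. The remainder of the argument is routine estimation.
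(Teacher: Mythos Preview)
Your proposal is correct and is exactly the approach the paper takes: the paper simply cites \cite{BPCZ}*{Lemma~2.6.1.1.2} for the Whittaker gauge estimate on $\cT_N([G])$ that you identify as the ``main obstacle'', and \cite{BPCZ}*{Lemma~7.1.1.1} for the convergence template, adding only the Schwartz bound $|\Phi(e_n t)| \ll \aabs{t_n}_{\bA_E}^{-N}\aabs{\Phi}$ to control the last torus coordinate. One small refinement: your integration-by-parts identity with $X_{i,M}\in\cU(\fg_\infty)$ gives the decay only at Archimedean places; at finite places the bound comes instead from right $J$-invariance of $f$, which forces $W^\psi(f,tk)$ to vanish once $|\alpha_i(t)|_v$ is large relative to the level, and the two combine to give the global $\min(1,|\alpha_i(t)|_E^{-M})$ you wrote down.
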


\begin{proof}
The proof is an immediate adaptation of~\cite{BPCZ}*{Lemma~7.1.1.1} using~\cite{BPCZ}*{Lemma~2.6.1.1.2} and the fact that, as $\Phi$ is Schwartz, for all $N>0$ there is a seminorm $\aabs{\cdot}$ such that
    \[
    \valP{\Phi(e_n t)} \leq \aabs{t_n}^{-N}_{\bA_E} \aabs{\Phi}, \quad t=(t_1, \hdots, t_n) \in T_n(\bA).
    \]
\end{proof}

\subsubsection{Rankin--Selberg periods}
\label{subsec:RS_FJ}

For every $f \in \cS([G])$ and $\Phi \in \cS(\bA_{E,n})$, we set
    \begin{equation}    \label{eq:Zn}
    Z_{n}(f,\Phi, \overline{\mu}, s) = \int_{[H]} f(h)
    \Theta'(h, \Phi)
    \valP{\det h}^{s} \rd h.
    \end{equation}
Since $f \in \cS([G])$ and $\valP{\Theta'(h, \Phi)} \ll \aabs{h}_H^{N}$ by Lemma~\ref{lem:theta_moderate}, we conclude by \cite{BP}*{Proposition~A.1.1~(vi)} that the
integral converges absolutely for all $s \in \C$ and yields an entire
function bounded on vertical strips. At $s=0$, we obtain $\cP_{H}(f,\Phi)$ the \emph{Rankin--Selberg} from \eqref{eq:RS_period}

\begin{remark}  \label{remark:mirabolic}
Our definition of $\Theta'(\cdot,\Phi)$ in \S\ref{subsec:theta_series} mirrors the Epstein--Eisenstein series
of~\cite{JS}*{Section~4}. We could have also considered $\Theta(\cdot,\Phi)$ which differs by the additional term $\overline{\mu}(h)\valP{\det h}^{\frac{1}{2}} \Phi(0)$. When $f \in \cS_{\chi}([G])$ with $\chi$
a $(H, \overline{\mu})$-regular cuspidal datum, we have for all $s \in \C$
    \[
    \int_{[H]} f(h) \overline{\mu}(h) \valP{\det h}^{s+\frac{1}{2}} \rd h=0
    \]
It follows that for such an $f$ the two definitions coincide.
\end{remark}

\subsubsection{Extension of Rankin--Selberg integrals}

Our main result on Rankin--Selberg periods is the following.

\begin{theorem} \label{thm:RS}
Let $\chi \in \mathfrak{X}(G)$ be a $(G,H,\overline{\mu})$-regular cuspidal datum.

\begin{enumerate}
    \item For every $f \in \mathcal{T}_\chi([G])$ and $\Phi \in
        \cS(\bA_{E,n})$, the function $s \mapsto
        Z(f,\Phi, \overline{\mu}, s)$, a priori defined on some
        right half-plane, extends to an entire function on $\C$ of finite order (independent from $f$ and $\Phi$) in vertical strips.

    \item For all $s \in \C$ and $\Phi \in \cS(\bA_{E,n})$, the restriction to $\cS_\chi([G])$
        of 
        \begin{equation*}
        f \in \cS([G])  \mapsto Z_{n}(f,\Phi,\overline{\mu}, s),
        \end{equation*}
        extends continuously to a linear form on
        $\mathcal{T}_\chi([G])$.
\end{enumerate}
Moreover, if we keep the same notation for these extensions, then for all $f \in
\mathcal{T}_{\chi}([G])$, $\Phi \in \cS(\bA_{E,n})$ and $s \in \C$ we have
    \begin{equation}    \label{eq:RS_equality}
    Z(f, \Phi, \overline{\mu}, s)=Z_n(f, \Phi,\overline{\mu}, s).
    \end{equation}
\end{theorem}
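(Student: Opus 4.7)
The plan is to first establish the entire extension of $s \mapsto Z(f,\Phi,\overline{\mu},s)$ in part~(1) by spectrally decomposing $f$ into Eisenstein-series wave-packets, and then to define the extension of $Z_n$ in part~(2) to be equal to $Z$, with the identification \eqref{eq:RS_equality} forced by the classical Jacquet--Piatetski-Shapiro--Shalika unfolding on a common domain of convergence.

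For part~(1), I would use Theorem~\ref{thm:kernel_spectral_expansion} to write any $f \in \cT_\chi([G])$ as a direct integral over $i\fa_P^*$ of Eisenstein series coming from a representative $(M_P,\pi)$ of $\chi$, and reduce the problem to a single $E(\varphi,\lambda)$. The Whittaker coefficient $W^\psi(E(\varphi,\lambda),\cdot)$ unfolds, via the standard sum over $P(F)\backslash G(F)$, into an expression involving the Whittaker model of the cuspidal data $\pi$ twisted by the Iwasawa factor $\exp\langle \lambda, H_P(\cdot) \rangle$. Plugging this into $Z(E(\varphi,\lambda),\Phi,\overline{\mu},s)$ produces a product of local Rankin--Selberg and Asai integrals attached to the cuspidal components of $\pi$. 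The $(G,H,\overline{\mu})$-regularity of $\chi$ guarantees that all the resulting $L$-factors---of the form $L(s+\mathrm{shift},\pi_i \otimes \overline{\mu})$, $L(s+\mathrm{shift},\pi_i \otimes \pi_j^{\prime,\vee})$, and Asai $L(s+\mathrm{shift},\pi_i,\As^{\pm 1})$---are holomorphic in $s$, so the zeta integral extends to an entire function of $(\lambda,s)$ with polynomial bounds in vertical strips by Phragm\'en--Lindel\"of applied to each $L$-factor. Reintegrating against the wave-packet, with uniform convergence provided by Proposition~\ref{prop:strong_K_basis} and the density statement of Proposition~\ref{prop:density_schwartz}, yields the entire extension for general $f \in \cT_\chi([G])$, with continuous dependence on $f$ and finite order in vertical strips.

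For part~(2), I would define the extension of $Z_n(\cdot,\Phi,\overline{\mu},s)$ from $\cS_\chi([G])$ to $\cT_\chi([G])$ simply as $f \mapsto Z(f,\Phi,\overline{\mu},s)$; continuity is then provided by part~(1). It remains to verify that this agrees with the original $Z_n$ on $\cS_\chi([G])$, which by holomorphicity in $s$ is enough to check for one value (in particular for $\Re s$ large). For such $s$, I would expand $\Theta'(h,\Phi)$ via its Bruhat decomposition under $P_n(F)\backslash H(F)$, collapse the automorphic integral, and apply the Fourier expansion of $f$ along the mirabolic $P_n \subset H$. The non-generic contributions, coming from constant terms of $f$ along proper standard parabolics, vanish thanks to $(H,\overline{\mu})$-regularity of $\chi$, essentially as in Remark~\ref{remark:mirabolic}. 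This yields \eqref{eq:RS_equality} on $\cS_\chi([G])$ for $\Re s$ large, hence everywhere on $\C$ by analytic continuation, and by density and continuity on the whole of $\cT_\chi([G])$.

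The main obstacle will be the uniform analytic control needed to integrate the Eisenstein-series wave-packet in part~(1). One has to combine the standard vertical-strip estimates for the local Rankin--Selberg and Asai $L$-factors---which must be made uniform as $\lambda$ varies in $i\fa_P^*$---with the continuous-semi-norm bound on the spectral expansion from Proposition~\ref{prop:strong_K_basis}. This mirrors the analytic delicacy of \cite{BPCZ}*{Section~7} in the Flicker--Rallis setting; the present case further requires simultaneous holomorphicity of \emph{all} the $L$-factors appearing in the unfolded zeta integrals, which is precisely what the $(G,H,\overline{\mu})$-regularity hypothesis on $\chi$ enforces.
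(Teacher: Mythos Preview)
Your approach to part~(1) has a genuine gap: Theorem~\ref{thm:kernel_spectral_expansion} gives a spectral expansion of \emph{kernel functions} $K_{f,\chi}(x,y)$ for $f \in \cS(G(\bA))$, not of arbitrary elements of $\cT_\chi([G])$. A general $f \in \cT_\chi([G])$ need not lie in $L^2([G])$ and admits no Langlands-type decomposition as an Eisenstein wave-packet; the space $\cT_\chi([G])$ is defined only as the orthogonal of $\cS_{\chi^c}([G])$ inside functions of uniform moderate growth. Even if you first restrict to $\cS_\chi([G])$ and attempt to extend by density, you would need continuity of $f \mapsto Z(f,\Phi,\overline{\mu},s)$ on $\cT_\chi([G])$ for each fixed $s$, which is precisely the content of the theorem---so the argument becomes circular. (Incidentally, the unfolded $Z$-integral for $\Pi_\lambda$ produces only Rankin--Selberg factors $L(s+\text{shift},\pi_i \times \pi'_j \otimes \overline{\mu})$; Asai $L$-functions do not appear on this side.)

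The paper takes a completely different route, in the spirit of Tate's thesis. For $f \in \cS_\chi([G])$ one first proves $Z=Z_n$ for $\Re s \gg 0$ by a step-by-step mirabolic unfolding (Lemma~\ref{lem:RS_Schwartz}), where the $(G,H,\overline{\mu})$-regularity kills the constant-term error at each stage. Then one introduces the dual functional $\widetilde{Z}(f,\Phi,\overline{\mu},s):=Z(\widetilde{f},\widehat{\Phi},\mu,s)$ with $\widetilde{f}(g)=f({}^tg^{-1})$, and Poisson summation on $\Theta$ gives the functional equation $Z(f,\Phi,\overline{\mu},s)=\widetilde{Z}(f,\Phi,\overline{\mu},-s)$ on $\cS_\chi([G])$. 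Since $Z$ and $\widetilde{Z}$ are each continuous on $\cT_{N,\chi}([G])$ for $\Re s$ large (Lemma~\ref{lem:convTau}), the abstract gluing result \cite{BPCZ}*{Corollary~A.0.11.2} yields simultaneously the entire continuation in $s$ and the continuous extension in $f$ to all of $\cT_\chi([G])$. No spectral decomposition or $L$-function computation is needed.
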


The key step in the proof is the following lemma which is the variant of the classical Rankin--Selberg unfolding argument.

\begin{lemma}
    \label{lem:RS_Schwartz}
    For fixed $f \in \cS_{\chi}([G])$ and $\Phi \in \cS(\bA_{E, n})$, for every $\Re s\gg0$ we have
    \begin{equation}   
        \label{eq:RS_equality_Schwartz}
        Z(f, \Phi, \overline{\mu}, s)=Z_n(f, \Phi,\overline{\mu}, s).
        \end{equation} 
    In particular, the function $s \mapsto Z(f,
    \Phi, \overline{\mu}, s)$ has analytic continuation to $s \in \C$.
\end{lemma}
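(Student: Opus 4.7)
The plan is to adapt the classical Rankin--Selberg unfolding of Jacquet--Piatetski-Shapiro--Shalika~\cite{JPSS83} to the Schwartz setting. I first observe that for $\Re(s)$ sufficiently large both integrals converge absolutely: for $Z_n$ this is immediate from the Schwartz decay of $f$ combined with the moderate growth of $\Theta'$ (Lemma~\ref{lem:theta_moderate}), and for $Z$ it is Lemma~\ref{lem:convTau} applied to $f \in \cS_\chi([G]) \subset \cT_N([G])$. These convergences justify the Fubini manipulations below. Once the equality is established, the claimed analytic continuation of $Z(f, \Phi, \overline{\mu}, s)$ to all of $\C$ will follow automatically from $Z = Z_n$ together with the fact that $Z_n$ is entire in $s$.

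The first step is to unfold the theta series. Since $\GL_n(E)$ acts transitively by right multiplication on $E_n \setminus \{0\}$ with stabilizer of $e_n$ equal to the mirabolic $P_n(E)$, one rewrites
\[\Theta'(h, \Phi) = \sum_{\gamma \in P_n(E) \bs \GL_n(E)} (\mathrm{R}_{\overline{\mu}}(\gamma h) \Phi)(e_n),\]
and Fubini then yields
\[Z_n(f, \Phi, \overline{\mu}, s) = \int_{P_n(E) \bs H(\bA)} f(h,h) \, (\mathrm{R}_{\overline{\mu}}(h) \Phi)(e_n) \, |\det h|^s \rd h.\]
The second step is to identify this last expression with $Z(f, \Phi, \overline{\mu}, s)$ by inserting an iterated mirabolic Fourier expansion of $h \mapsto f(h,h)$ of the formal shape
\[f(h, h) = \sum_{\gamma \in N_n(E) \bs P_n(E)} W^\psi(f, \gamma h),\]
and then unfolding the resulting sum to produce an integral over $N_n(\bA) \bs H(\bA)$.

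The main technical obstacle is justifying this Whittaker expansion: since $f$ is Schwartz but not cuspidal, Shalika's classical expansion does not directly apply. I would resolve this by decomposing $f \in \cS_\chi([G])$ as a continuous wave packet of cuspidal Eisenstein series $E(\varphi_\lambda, \lambda)$, which is legitimate because $\chi$ is regular (a consequence of $(G, H, \overline{\mu})$-regularity, cf.~\S\ref{subsec:reg_cusp_data}) and hence by Langlands' spectral theorem the decomposition of $\cS_\chi([G])$ involves only cuspidal Eisenstein series, with no residual contributions. At the Eisenstein level, Rankin--Selberg unfolding is classical and yields the equality pointwise in $\lambda$, the $(G, H, \overline{\mu})$-regularity ensuring the convergence and non-triviality of the underlying Whittaker integrals. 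A final Fubini interchange between the $\lambda$-integration and the Rankin--Selberg manipulations, valid for $\Re(s) \gg 0$ thanks to uniform bounds on the Eisenstein Whittaker integrals, transports the identity back to $f$ and completes the argument.
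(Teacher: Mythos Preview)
Your approach has a genuine gap at the step you yourself flag as the obstacle. Decomposing $f$ as a wave packet of cuspidal Eisenstein series does not help: an Eisenstein series $E(\varphi,\lambda)$ induced from a cuspidal representation of a Levi is \emph{not} a cusp form on $G$ --- it has nontrivial constant terms along every parabolic containing a conjugate of $P$. Hence the Shalika/mirabolic Fourier expansion $f(h,h)=\sum_{\gamma\in N_n\backslash P_n}W^\psi(f,\gamma h)$ fails for $E(\varphi,\lambda)$ just as it fails for $f$, and your claim that ``at the Eisenstein level, Rankin--Selberg unfolding is classical'' is not correct: the argument in~\cite{JPSS83} is for cusp forms only. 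Establishing the unfolding identity for Eisenstein series is precisely the content of Corollary~\ref{cor:RS_extension_Eisenstein}, which the paper derives \emph{from} this lemma, so invoking it here would be circular.

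More fundamentally, your sketch never makes substantive use of the $(H,\overline{\mu})$-regularity hypothesis (you only invoke $G$-regularity to rule out residual spectrum, and mention $(H,\overline{\mu})$-regularity vaguely for ``convergence,'' which is not its role). The paper's proof proceeds by a d\'evissage: it introduces intermediate integrals $Z_r$ for $1\le r\le n$ with $Z_1=Z$ and $Z_n$ as given, and shows $Z_{r+1}=Z_r+F_r$ where $F_r$ is an integral involving the constant term of $f$ along the parabolic $Q_r$ with Levi $(G_r\times G_{n-r})^2$. The error $F_r$ factors through the twisted diagonal period $P_{G_r^\Delta}^{\overline{\mu}}$ on the first block, and $(H,\overline{\mu})$-regularity is exactly the condition $\overline{\mu}\chi_1'\ne(\chi_1'')^\vee$ that forces this period to vanish on $\cC_{\chi_1}([G_r^2])$. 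This is where the hypothesis does its work, and your outline omits the entire mechanism.
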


\begin{proof}
    Let us introduce the following notation. Let $1 \leq r \leq n$ and denote by
$N_{r,n}$ the unipotent radical of the standard parabolic subgroup of $G_n$
with Levi factor $(G_r) \times (G_1)^{n-r}$. Set $N_r^G:=N_{r,n} \times
N_{r,n}$ and $N_{r,H} := N_r^G \cap H$. Let $P_r$ be the mirabolic subgroup of
$G_r$ whose last row is $e_r$ the last vector in the canonical basis of $E_r$.

For $f \in \cS([G])$ we define
    \[
    f_{N_r^G, \psi}(g) =
    \int_{[N_r^G]} f(ug) \overline{\psi_N}(u) \rd u,\quad g \in G(\bA).
    \]
For $s \in \C$ and $\Phi \in \cS(\bA_{E,n})$ we set
    \begin{equation*}
    Z_{r}(f,\Phi, \overline{\mu}, s) =
    \int_{P_r(F) N_{r,H}(\bA) \backslash H(\bA)}
    f_{N_r^G, \psi}(h) (\mathrm{R}_{\overline{\mu}}(h)\Phi)(e_n) |\det h|_E^{s}
    \rd h.
    \end{equation*}
For $r=1$ we get $Z_{1}(f,\Phi, \overline{\mu}, s)=Z(f,
\Phi, \overline{\mu}, s)$. The same argument as in the proof of Lemma~\ref{lem:convTau}
shows that for every $1 \leq r \leq n$ there exists $c_r>0$ such that the
following assertions hold.
\begin{itemize}
    \item For all $f \in \cS([G])$ and $\Phi \in \cS(\bA_{E,n})$, the
        expression defining $Z_{r}(f,\Phi,\overline{\mu},s)$ converges
        absolutely for $s \in \mathcal{H}_{>c_r}$.

    \item For all $s \in \mathcal{H}_{>c_r}$, $(f, \Phi) \mapsto
        Z_{r}(f,\Phi,\overline{\mu},s)$ is separately continuous.

    \item For every $f \in \cS([G])$ and $\Phi \in \cS(\bA_{E,n})$, the
        function $s \in \mathcal{H}_{>c_r} \mapsto
        Z_{r}(f,\Phi,\overline{\mu},s)$ is holomorphic and bounded in
        vertical strips.
\end{itemize}

The desired identity~\eqref{eq:RS_equality_Schwartz} then follows from the following claim: for all $1
\leq r \leq n-1$, for all $f \in \cS_{\chi}([G])$ and $\Phi \in \cS(\bA_{E,n})$
there exists $c>0$ such that for all $s \in \mathcal{H}_{>c}$ we have
    \begin{equation}    \label{eq:devissage}
    Z_{r+1}(f,\Phi, \overline{\mu}, s)=Z_{r}(f,\Phi,\overline{\mu}, s).
    \end{equation}

We now prove identity~\eqref{eq:devissage}. First for $r$, $f$ and $\Phi$, we
pick $c>0$ such that both $Z_{r+1}(f,\Phi,\overline{\mu},s)$ and
$Z_{r}(f,\Phi,\overline{\mu},s)$ are well-defined for $s \in
\mathcal{H}_{>c}$. Let $U_{r}$ be the unipotent radical of $P_r$. Then
$N_{r,n} = U_{r} N_{r+1,n}$. Set $U_{r}^G = U_{r} \times U_{r} \leq
G$ and $U_{r,H} = (U_{r}^G)_H \leq H$. We then compute 
    \begin{equation*}
    Z_{r+1}(f,\Phi,\overline{\mu}, s)
    = \int_{G_r(F) N_{r,H}(\bA) \backslash H(\bA)}
    (\mathrm{R}_{\overline{\mu}}(h)\Phi)(e_n)|\det h|_E^s  \int_{[U_{r,H}]}
    f_{N_{r+1}^G, \psi}(uh)  \rd u \rd h.
    \end{equation*}
By Fourier inversion on the compact abelian group $U_{r}^G(F)
U_{r,H}(\bA) \backslash U^G_{r}(\bA)$ we get
\begin{equation*}
    \int_{[U_{r,H}]}
    f_{N_{r+1}^G, \psi}(uh)  \rd u =\sum_{\gamma \in P_r(F) \backslash G_r(F) } f_{N_{r}^G, \psi}(\gamma h) + \int_{[U_{r}^G]} f_{N_{r+1}^G, \psi}(u h)\rd u.
\end{equation*}
Therefore we have
    \begin{equation*}
    Z_{r+1}(f,\Phi, \overline{\mu}, s)=
    Z_{r}(f,\Phi,\overline{\mu},s)+F_r(s),
    \end{equation*}
where
    \begin{equation*}
    F_r(s)=\int_{G_r(F) N_{r,H}(\bA) \backslash H(\bA)}
    (\mathrm{R}_{\overline{\mu}}(h)\Phi)(e_n) |\det h|_E^s  \int_{[U_{r}^G]}
    f_{N^G_{r+1},\psi}(uh)  \rd u \rd h.
    \end{equation*}

As our functions are holomorphic for $\Re(s) \gg 0$, it remains to show that
$F_r$ vanishes on some right half-plane of $\C$. Let $Q_r$ be the standard
parabolic subgroup of $G$ with Levi factor $L_r=(G_r \times G_{n-r})
\times (G_r \times G_{n-r})$, and set $Q_{r,H}=Q_r \cap H$. Let $\chi^L$ be the inverse
image of $\chi$ in $\mathfrak{X}(L_r)$. We write any element in $\fX(L_r)$
as $(\chi_1, \chi_2)$ where $\chi_1 \in \fX(G_{r}^2)$ and $\chi_2 \in
\fX(G_{n-r}^2)$. Then by~\cite{BPCZ}*{Section~2.9.6.10} we have the
decomposition
    \begin{equation}    \label{eq:decompo_cuspi}
    \cC_{\chi^L}([L_r])=\bigoplus_{(\chi_1,\chi_2) \in \chi^L}
    \mathcal{C}_{\chi_1}([G_r^2])
    \otimeshat \mathcal{C}_{\chi_2}([G_{n-r}^2]).
    \end{equation}
    Define for all $s \in \C$ and $k \in K_n$
    \begin{equation*}
    f_{Q_r,k,s} =
    \delta_{Q_r}^{-\frac{1}{2}+\frac{2s+1}{4n-4r}}
    \mathrm{R}(k) f_{Q_r}\Big|_{[L_r]},
    \end{equation*}
where $f_{Q_r}$ is the constant term of $f$ with respect to $Q_r$. As $\chi$
is $G$-regular,~\cite{BPCZ}*{Corollary~2.9.7.2} applies and for $\Re s>0$ and $k
\in K_n$ we have $(f_{Q_r,k,s})\Big|_{[L_r]} \in \mathcal{C}_{\chi^L}([L_r])$. Finally, for
$k \in K_n$, set $\Phi_{k,n-r}=(\mathrm{R}(k)\Phi)\Big|_{ \{0\} \times
\bA_{E,{n-r}}} \in \cS( \bA_{E,n-r})$. Then using the Iwasawa decomposition
$H(\bA)=Q_{r,H}(\bA)K_n$, we get just as in the proof
of~\cite{BPCZ}*{Proposition~7.2.0.2}:
    \begin{equation*}
    F_r(s)= \int_{K_n} \left( P_{G_r^\Delta}^{\overline{\mu}} \otimeshat
    \cZ_{n-r}
    \left( \cdot , \Phi_{k,n-r}, \overline{\mu},  s+2r\frac{2s+1}{4n-4r}\right)\right)
    (f_{Q_r,k,s})\rd k,
    \end{equation*}
where
\begin{itemize}
    \item $P_{G_r^\Delta}^{\overline{\mu}}$ stands for the period integral over
        the diagonal subgroup $G_r$ of $G_r^2$, against the
        character $\overline{\mu}$, which defines a continuous linear form on $\cC([G_r^2])$ by Lemma~\ref{lem:integration},

    \item $\cZ_{n-r}$ is $Z_{\psi^r}$ for
        $G_{n-r}$, with $\psi^r=\psi((-1)^{r}.)$. By Lemma~\ref{lem:convTau}, for every $\Phi' \in \cS(\bA_{E,n-r})$ it is a continuous linear form on $\mathcal{T}([G_{n-r}^2])$ provided
        that $s$ lies in some right half-plane of $\C$ (that can be chosen independently of $\Phi'$),

    \item the completed tensor product is taken relatively to the
        decomposition (\ref{eq:decompo_cuspi}), recalling the continuous inclusion $\mathcal{C}([G^2_{n-r}]) \subset
        \mathcal{T}([G^2_{n-r}])$ from Lemma~\ref{lem:functional_analysis}~(4).
\end{itemize}

Since $\chi$ is $(H, \overline{\mu})$-regular, for every preimage $(\chi_1,\chi_2)
\in \mathfrak{X}(G_r^2) \times \mathfrak{X}(G_{n-r}^2)$ with
$\chi_1=(\chi_1',\chi_1'')$ we have $\overline{\mu} \chi_1' \neq  \chi_1''^{\vee}$. Thus,
$P_{G_r^\Delta}^{\overline{\mu}}$ vanishes identically on
$\mathcal{C}_{\chi_1}([G_r^2])$, which implies that $F_r(s)=0$ whenever
$\Re(s) \gg 0$. This proves the identity~\eqref{eq:devissage} and hence
finishes the proof of the lemma.
\end{proof}

We can now end the proof of Theorem~\ref{thm:RS}

\begin{proof}[Proof of Theorem~\ref{thm:RS}]
We fix $\Phi$ and consider $Z_n$ and
$Z$ as (families of) linear forms in $f$. Let $w_0$ be the longest element in the Weyl group of $G$, which we identify with an element in $G(F)$. For $f \in \cT([G])$, set
$\widetilde{f}(g) = f(\tp{g}^{-1}) = f(w_0 \tp{g}^{-1}) \in \cT([G])$.

For $s \in \C$, we consider the linear form on $\cT([G])$ given by
    \begin{equation}
    \label{eq:Z_tilde_defi}
         f \mapsto \widetilde{Z}(f, \Phi, \overline{\mu}, s)
    := Z(\widetilde{f}, \widehat{\Phi}, \mu ,s), 
    \end{equation}
where we denote by $\widehat{\Phi}$ the Fourier transform of $\Phi$ defined as  
    \[
    \widehat{\Phi}(y) = \int_{\bA_{E, n}}
    \Phi(x) \psi_E(x \tp{y}) \rd x.
    \]
Note that if $f \in \cT_{\chi}([G])$ then so does $\widetilde{f}$. By
Lemma~\ref{lem:convTau} we see that $Z$ and
$\widetilde{Z}$ enjoy the following properties.
\begin{enumerate}
    \item For all $N>0$ there exists $C_N>0$ such that for all $s \in
        \mathcal{H}_{> C_N}$, $Z(\cdot, \Phi, \overline{\mu}, s)$
        and $\widetilde{Z}(\cdot, \Phi, \overline{\mu}, s)$ are
        continuous linear functionals on $\mathcal{T}_{N,\chi}([G])$.

    \item For all $N>0$ and $f \in \mathcal{T}_{N,\chi}([G])$, $s \mapsto
        Z(f,\Phi, \overline{\mu}, s)$ and $s \mapsto
        \widetilde{Z}(f,\Phi, \overline{\mu}, s)$ are holomorphic
        functions on $\mathcal{H}_{>C_N}$, bounded on vertical strips.

    \item By Lemma~\ref{lem:RS_Schwartz}, for $f \in
        S_\chi([G])$ the functions $s \mapsto Z(f,
        \Phi, \overline{\mu}, s)$ and $s \mapsto \widetilde{Z}(f,
        \Phi, \overline{\mu}, s)$ have analytic continuations to all $s \in \C$.

    \item By Remark~\ref{remark:mirabolic} we may replace $\Theta'$ by
        $\Theta$ and use the Poisson summation formula to obtain for any $f \in \cS_\chi([G])$ the functional equation 
            \[
            Z_n(f, \Phi, \overline{\mu}, s) =
            Z_n(\widetilde{f}, \widehat{\Phi}, \mu, -s).
            \]
        Therefore it follows from Lemma~\ref{lem:RS_Schwartz} that
           \begin{equation}
           \label{eq:functional_equation_Z}
                Z(f,\Phi, \overline{\mu}, s) =
            \widetilde{Z}(f, \Phi, \overline{\mu}, -s).
           \end{equation}
\end{enumerate}

Let $N'>0$. By Lemma~\ref{lem:functional_analysis}~(2), there exist $N>0$ such that we have a continuous inclusion $L^2_{-N'}([G])^\infty \subset \cT_N([G])$. By Proposition~\ref{prop:density_schwartz}, the space $\cS_\chi([G])$ is dense in $L^2_{-N',\chi}([G])^\infty$. It follows that the two functionals $f \mapsto Z(f,\Phi,
\overline{\mu}, s)$ and $f \mapsto \widetilde{Z}_\psi(f,\Phi, \overline{\mu}, s)$ defined on $S_\chi([G])$ satisfy the hypotheses of~\cite{BPCZ}*{Corollary~A.0.11.2}. This corollary shows the analytic continuation of $Z$ to $\C$ for all $f \in L^2_{-N',\chi}([G])^\infty$, and the continuous extension of $Z_n$ to $L^2_{-N',\chi}([G])^\infty$. By Lemma~\ref{lem:functional_analysis}~(1), this remains true if we replace $L^2_{-N',\chi}([G])^\infty$ by $\cT_\chi([G])$. Since
the equality~\eqref{eq:RS_equality} is true for $f \in \cS_\chi([G])$ and $\Re
(s) \gg 0$, it is true for $f \in \mathcal{T}_\chi([G])$ by density (Proposition~\ref{prop:density_schwartz}), and
finally for all $s$ by analytic continuation. That the order of $Z(f,\Phi,\overline{\mu},\cdot)$ is independent from $f$ and $\Phi$ follows from the third point of Lemma~\ref{lem:convTau} and the proof of~\cite{BPCZ}*{Corollary~A.0.11.2}. This concludes the proof.
\end{proof}

The linear form $Z$ given by Theorem~\ref{thm:RS} depends on a fixed Schwartz function $\Phi$, but it is possible to obtain uniform bounds when varying $\Phi$ as asserted by the following lemma.

\begin{lemma}
\label{lem:Z_continuous_Phi}
    Let $N>0$. There exist continuous semi-norms $\aabs{.}_N$ and $\aabs{.}$ on $\cT_{N,\chi}([G])$ and $\cS(\bA_{E,n})$ respectively such that
    \begin{equation*}
    \valP{Z(f,\Phi,\overline{\mu}, 0)} \ll \aabs{f}_N \aabs{\Phi}, \quad f \in \cT_{N,\chi}([G]), \quad \Phi \in \cS(\bA_{E,n}).
\end{equation*}
\end{lemma}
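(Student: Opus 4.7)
The strategy is to revisit the proof of Theorem~\ref{thm:RS} and show that the Phragmén--Lindelöf argument used there can be made uniform in $\Phi$. Equivalently, we need to upgrade the separate continuity of $(f,\Phi) \mapsto Z(f,\Phi,\overline{\mu},0)$ to joint continuity on $\cT_{N,\chi}([G]) \times \cS(\bA_{E,n})$.

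First, I would sharpen Lemma~\ref{lem:convTau}: inspection of its proof (which relies on \cite{BPCZ}*{Lemma~2.6.1.1.2} to bound the Whittaker function and on Schwartz-type decay of $(\mathrm{R}_{\overline{\mu}}(h)\Phi)(e_n)$) shows that the estimates there are in fact \emph{bilinear}. Namely, for every $N\ge 0$ there exist $c_N>0$, continuous semi-norms $\aabs{\cdot}_{N,1}$ on $\cT_N([G])$ and $\aabs{\cdot}_1$ on $\cS(\bA_{E,n})$, and a polynomial $P$ such that
\begin{equation*}
\valP{Z(f,\Phi,\overline{\mu},s)} \leq P(\valP{\Im s})\, \aabs{f}_{N,1}\, \aabs{\Phi}_1,
\end{equation*}
uniformly for $\Re(s)$ in compact subsets of $(c_N, +\infty)$. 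Then the functional equation~\eqref{eq:functional_equation_Z}, which holds on all of $\cT_\chi([G])$ by the density and analytic continuation argument in Theorem~\ref{thm:RS}, combined with the continuity of the maps $f \mapsto \widetilde f$ on $\cT_{N,\chi}([G])$ and $\Phi \mapsto \widehat\Phi$ on $\cS(\bA_{E,n})$, yields a symmetric joint bilinear estimate on the left half-plane $\Re(s) < -c_N$.

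To conclude at $s=0$, I would rerun the Phragmén--Lindelöf argument of \cite{BPCZ}*{Corollary~A.0.11.2}, now viewing $s \mapsto Z(\cdot,\cdot,\overline{\mu},s)$ as a holomorphic function valued in the locally convex space of continuous bilinear forms on $\cT_{N,\chi}([G]) \times \cS(\bA_{E,n})$, equipped with the topology of uniform convergence on products of bounded sets. The entire-of-finite-order property in vertical strips provided by Theorem~\ref{thm:RS}~(1), together with the two lateral joint bounds above, allows one to apply Phragmén--Lindelöf in this setting; equivalently, one applies the scalar argument to each normalized function $s \mapsto Z(f,\Phi,\overline{\mu},s)/(\aabs{f}_{N}\aabs{\Phi})$ and checks that the resulting implicit constants can be taken independent of $(f,\Phi)$, which follows from the lateral joint continuity. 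Specializing the output at $s=0$ gives the desired bilinear inequality. The main technical delicacy lies in justifying the bilinear-form-valued version of Phragmén--Lindelöf, but this is a direct adaptation of the scalar argument of \cite{BPCZ} and presents no genuine new difficulty.
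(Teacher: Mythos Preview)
Your approach is correct but takes a different route from the paper. The paper appeals directly to the explicit integral formula for the analytic continuation of $Z(f,\Phi,\overline{\mu},s)$ given in \cite{BPCZ}*{Lemma~A.0.10.1}: that lemma expresses the continued Zeta function as a finite combination of absolutely convergent integrals (in the spirit of the Tate/Godement--Jacquet splitting of the integral into two pieces related by the functional equation), and one then simply reads off the joint continuity in $(f,\Phi)$ from Lemma~\ref{lem:convTau} applied to each piece. This is a one-line argument once the explicit formula is in hand.

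Your argument instead re-runs the Phragm\'en--Lindel\"of machinery uniformly in $(f,\Phi)$. This is sound: the bilinear bounds on the two boundary lines follow from the sharpened Lemma~\ref{lem:convTau} and the functional equation, the finite order is independent of $(f,\Phi)$ by Theorem~\ref{thm:RS}(1), and the Phragm\'en--Lindel\"of output depends only on the boundary data, not on the interior growth constant $C(f,\Phi)$. The minor technical points (semi-norms vanishing, different semi-norms on the two boundary lines) are routine. Your approach is more abstract and avoids the explicit formula, but is longer; the paper's approach is shorter and more concrete precisely because \cite{BPCZ}*{Lemma~A.0.10.1} already packages the hard work of analytic continuation into an explicit integral representation from which joint estimates are immediate.
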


\begin{proof}
    This follows easily from the explicit formula for the analytic continuation of the Zeta function $Z(f,\Phi,\overline{\mu},s)$ in terms of integrals in \cite{BPCZ}*{Lemma~A.0.10.1}, and from Lemma~\ref{lem:convTau}.
\end{proof}

We now define the extension of the Rankin--Selberg period to $(G,H,\overline{\mu})$-regular Eisenstein series.

\begin{coro}
    \label{cor:RS_extension_Eisenstein}
    Let $\chi \in \fX(G)$ be a $(G,H,\overline{\mu})$-regular cuspidal datum represented by $(M_P,\pi)$. Then the restriction of the Rankin--Selberg period $\cP_{H}(\cdot, \Phi)$ to $\cS_{\chi}([G])$ extends continuously to $\cT_{\chi}([G])$. We denote this extension by $\cP_{H}^*( \cdot ,\Phi)$. This regularized period satisfies
    \begin{equation*}
        \cP_H^*(E(\varphi,\lambda),\Phi)=Z(E(\varphi,\lambda),\Phi,\overline{\mu},0), \quad \varphi \in \cA_{P,\pi}(G), \quad \lambda \in i \fa_P^*, \quad \Phi \in \cS(\bA_{E,n}).
    \end{equation*}
\end{coro}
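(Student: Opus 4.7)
The plan is to deduce the corollary directly from Theorem~\ref{thm:RS} together with the density of Schwartz functions from Proposition~\ref{prop:density_schwartz}; essentially all the hard work has already been carried out in establishing Theorem~\ref{thm:RS}.

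First, I would observe that for every $f \in \cS([G])$ and $\Phi \in \cS(\bA_{E,n})$, the Rankin--Selberg period $\cP_H(f,\Phi)$ is nothing but $Z_n(f,\Phi,\overline{\mu},0)$ by comparison of the definitions~\eqref{eq:RS_period} and~\eqref{eq:Zn}. Since $\chi$ is $(G,H,\overline{\mu})$-regular, Theorem~\ref{thm:RS}~(2) applied at $s=0$ shows that the restriction of $Z_n(\cdot,\Phi,\overline{\mu},0)$ to $\cS_\chi([G])$ extends continuously to a linear form on $\cT_\chi([G])$. I define $\cP_H^*(\cdot,\Phi)$ to be this extension; its uniqueness is immediate from the density of $\cS_\chi([G])$ in $\cT_\chi([G])$ supplied by Proposition~\ref{prop:density_schwartz}.

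For the identity on Eisenstein series, I would first check that $E(\varphi,\lambda) \in \cT_\chi([G])$ for $\varphi \in \cA_{P,\pi}(G)$ and $\lambda \in i\fa_P^*$. The $(G,H,\overline{\mu})$-regularity condition from \S\ref{subsec:reg_conditions_G} in particular forces the cuspidal factors in each $G_n$-component of $\pi$ to be pairwise distinct, which implies that the only $w \in W(P,P)$ with $w\pi \simeq \pi$ is $w=1$; in other words, $\chi$ is regular in the sense of \S\ref{subsec:reg_cusp_data}. By the general theory of Eisenstein series and the construction of $\cT_\chi([G])$, the series $E(\varphi,\lambda)$ is then holomorphic on $i\fa_P^*$ and lies in $\cT_\chi([G])$. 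Applying the defining property of $\cP_H^*$ together with the equality~\eqref{eq:RS_equality} from Theorem~\ref{thm:RS}, I obtain
\[
\cP_H^*(E(\varphi,\lambda),\Phi) = Z_n(E(\varphi,\lambda),\Phi,\overline{\mu},0) = Z(E(\varphi,\lambda),\Phi,\overline{\mu},0),
\]
which is the asserted formula.

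The only delicate point in the argument is invoking Theorem~\ref{thm:RS}, and within its proof the real obstacle was the vanishing of the correction term $F_r(s)$ in the Rankin--Selberg unfolding, which rested on the $(H,\overline{\mu})$-regularity condition killing the diagonal $\overline{\mu}$-period on the cuspidal component $\cC_{\chi_1}([G_r^2])$. Once that is in hand, the present corollary is simply a specialization to $s=0$ combined with Proposition~\ref{prop:density_schwartz} to unambiguously identify the continuous extension.
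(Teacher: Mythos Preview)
Your proof is correct and follows essentially the same approach as the paper's own proof, which simply states that the corollary is a direct application of Theorem~\ref{thm:RS} once one knows $E(\varphi,\lambda)\in\cT_\chi([G])$. You have filled in the details the paper leaves implicit: the identification $\cP_H=Z_n(\cdot,\cdot,\overline{\mu},0)$ on $\cS([G])$, the uniqueness of the extension via Proposition~\ref{prop:density_schwartz}, and the reason $E(\varphi,\lambda)$ lands in $\cT_\chi([G])$ (namely that $(G,H,\overline{\mu})$-regularity entails $G$-regularity, hence regularity in the sense of \S\ref{subsec:reg_cusp_data}).
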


\begin{proof}
    This is a direct application of Theorem~\ref{thm:RS} because $E(\varphi,\lambda) \in \cT_\chi([G])$. Note that both sides are defined by analytic continuation.
\end{proof}

\subsection{Extension of the Flicker--Rallis period}

In this section, we fix $\chi \in \mathfrak{X}(G)$ be a $G$-regular cuspidal datum, represented by
$(M_P,\pi)$. Put $\Pi = I_{P}^{G} \pi$ and for all $\lambda \in i
\mathfrak{a}_{P}^{G,*}$ set $\Pi_\lambda = I_{P}^{G} \pi_\lambda$. We recall the main result of \cite{BPC22} on the Flicker--Rallis period.

\subsubsection{Flicker--Rallis period} The Flicker--Rallis period is defined by 
    \begin{equation}    \label{eq:FR_period}
    \cP_{G'}(f) = \int_{[G']} f(g') \eta_{n+1}(g') \rd g', \quad f \in \cS([G]).
    \end{equation}
This integral is absolutely convergent and defines a continuous functional on
$\cS([G])$ by~\cite{BPCZ}*{Theorem~6.2.6.1}. 

\subsubsection{Spectral expansion of the period}

Let $J$ be a level of $G$. If $f \in \cS([G])^J$ and $\lambda \in i \mathfrak{a}_P^*$ we define 
    \begin{equation}    \label{eq:sumEisenstein}
    f_{\Pi_\lambda} = \sum_{\varphi \in \cB_{P,\pi}(J)}
    \langle f, E(\cdot,\varphi,\lambda) \rangle
    E(\cdot, \varphi, \lambda).
    \end{equation}
By Proposition~\ref{prop:strong_K_basis} and \cite{Lap}*{Theorem~2.2}, there exists an $N>0$ such that this series converges absolutely in $\mathcal{T}_N([G])$. Let $\mathcal{W}(\Pi_\lambda,\overline{\psi}_N)$ 
be the space of Whittaker functions of $\Pi_\lambda$ for $\overline{\psi}_N$ so that $W^{\overline{\psi}}(f_{\Pi_\lambda}) \in \mathcal{W}(\Pi_\lambda,\overline{\psi}_N)$.

For $\tS$ a sufficiently large finite set of places of
$F$ and $\lambda \in i \mathfrak{a}_P^*$, put
    \begin{equation}    \label{eq:defi_beta}
    \beta_{\eta}(W)  = (\Delta_{G'}^{\tS,*})^{-1}L^{\tS,*}(1,\Pi,\mathrm{As}^{n+1,n+1})
    \int_{N'(F_\tS) \backslash P_{n,n}'(F_\tS) }W(p_\tS) \eta_{n+1}(p_\tS) \rd p_\tS, \quad
    W \in \mathcal{W}(\Pi_\lambda, \overline{\psi}_N)
    \end{equation}
where we have set $\mathrm{As}^{n+1,n+1}=\mathrm{As}^{(-1)^{n+1}} \oplus \mathrm{As}^{(-1)^{n+1}}$. This
expression is absolutely convergent and independent of $\tS$ as soon as it is
sufficiently large by \cite{Fli}. 

We equip $i \mathfrak{a}_{M_P}^{L_\pi,*}$ with the measure
described in \S\ref{subsubsec:first_measure}. The following is~\cite{BPC22}*{Corollary~6.2.7.1}.

\begin{theorem}    \label{thm:FR_expansion}
If $\chi$ is Hermitian, then the
function $\lambda \in i \mathfrak{a}_{M_P}^{L_\pi, *} \mapsto
\beta_\eta(W^{\overline{\psi}}(f_{\Pi_\lambda}))$ is Schwartz, and the resulting map
    \[
    \cS_{\chi}([G]) \to \cS(i \mathfrak{a}_{M_P}^{L_\pi, *}), \quad
    f \mapsto \left( \lambda \mapsto \beta_{\eta}(W^{\overline{\psi}}(f_{\Pi_\lambda})) \right)
    \]
is continuous. Moreover, we have
    \begin{equation*}
    \cP_{G'}(f)= \left\{ \begin{aligned}
    & 2^{-\dim \mathfrak{a}_{L_\pi}^*} \int_{i \mathfrak{a}_{M_P}^{L_\pi, *}}
    \beta_{\eta}(W^{\overline{\psi}}(f_{\Pi_\lambda})) \rd \lambda,
    && \text{if $\chi$ is Hermitian}; \\
    &0, && \text{otherwise.}
    \end{aligned}\right.
    \end{equation*}
\end{theorem}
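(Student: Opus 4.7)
The plan is to perform a spectral decomposition of $f$ along Eisenstein series and reduce the computation of $\cP_{G'}$ to a Flicker--Rallis unfolding on cuspidal Eisenstein series. Because $\chi$ is $G$-regular, every representation appearing in the isotypic decomposition \eqref{eq:chi_isotypic} for $M_P$ is cuspidal, so Langlands' spectral decomposition expresses $f$ in terms of the Eisenstein series $E(\cdot,\varphi,\lambda)$ for $\varphi \in \cB_{P,\pi}(J)$ and $\lambda \in i \fa_P^*$, weighted by the assignment $f \mapsto f_{\Pi_\lambda}$ from \eqref{eq:sumEisenstein}. Convergence of this expansion inside $\cT_N([G])$ follows from Proposition~\ref{prop:strong_K_basis}.

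The core computation is the evaluation of the (regularized) Flicker--Rallis period on an individual Eisenstein series. The classical unfolding of \cite{Fli}, applied to cuspidal $\pi$, rewrites this period as a mirabolic integral against $\eta_{n+1}$, which after normalization by the Asai $L$-values becomes exactly $\beta_{\eta}(W^{\overline{\psi}}(f_{\Pi_\lambda}))$. The unfolding is initially valid only on an open subset of parameter space and has to be extended by meromorphic continuation. The Hermitian hypothesis on $\chi$ is what guarantees that the Asai $L$-function $L(s,\Pi,\mathrm{As}^{(-1)^{n+1}})$ has the expected pole at $s=1$, so that the continuation is holomorphic and non-trivial at $\lambda=0$. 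If instead $\chi$ is not Hermitian, some component $\pi_i$ fails to be distinguished by the Flicker--Rallis functional at the relevant local places, forcing $\cP_{G'}(f)=0$ term by term in the spectral expansion.

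The main obstacle is establishing that $\lambda \in i\fa_{M_P}^{L_\pi,*} \mapsto \beta_{\eta}(W^{\overline{\psi}}(f_{\Pi_\lambda}))$ is Schwartz and depends continuously on $f$. This requires fine analytic control, uniform along vertical strips, of the intertwining operators $M(w,\lambda)$, of local Whittaker models, and of local Asai $L$-factors, together with smoothness and rapid decay of the spectral projection $f \mapsto f_{\Pi_\lambda}$ in $\lambda$. The reduction of the parameter space from $i\fa_P^*$ to $i\fa_{M_P}^{L_\pi,*}$ encodes the intertwining identifications between Eisenstein series whose parameters differ by elements of $W(M_P,M_P)$ fixing $\pi$; averaging over these identifications is what produces the prefactor $2^{-\dim \fa_{L_\pi}^*}$.

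With the Schwartz property in hand, Fubini's theorem justifies exchanging $\cP_{G'}$ with the spectral integral, yielding the displayed formula. As all of the analytic estimates above are exactly those proved in \cite{BPC22}, the cleanest route is to simply invoke \cite{BPC22}*{Corollary~6.2.7.1}; reproving them here would be a substantial duplication of effort and would not introduce new ideas beyond what is needed to adapt that corollary to the present notation.
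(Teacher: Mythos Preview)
Your proposal is correct and matches the paper's approach: the paper does not give an independent proof but simply records this theorem as \cite{BPC22}*{Corollary~6.2.7.1}, which is exactly what you conclude after your expository sketch. Your additional discussion of the spectral decomposition, the Flicker--Rallis unfolding, and the role of the Hermitian hypothesis is accurate commentary on what happens inside that reference, but the paper itself contains no argument beyond the citation.
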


\subsection{Expansion of relative characters on the \texorpdfstring{$G$}{G}-side}
\label{sec:expansion_char_gln}

In this section, we let $\chi \in \fX(G)$ be a cuspidal datum of $G$ represented by $(M_P,\pi)$. Let $\lambda \in i
\mathfrak{a}_{P}^{G,*}$. We fix $J$ a level of $G$. We now combine Corollary~\ref{cor:RS_extension_Eisenstein} and Theorem~\ref{thm:FR_expansion} to write the spectral expansion of $I_{\chi}$ in terms of a relative character.

\subsubsection{The relative character \texorpdfstring{$I_{\Pi_\lambda}$}{}}
\label{subsec:I_PI_defi}
Assume that $\chi \in \mathfrak{X}(G)$ is $(G,H,\overline{\mu})$-regular and Hermitian. Thanks to Proposition~\ref{prop:strong_K_basis}, \cite{Lap}*{Theorem~2.2}, \cite{BPCZ}*{Lemma~2.10.1.1}, Theorem~\ref{thm:RS} and Theorem~\ref{thm:FR_expansion}, for every $f \in \cS(G(\bA))^J$ and $\Phi \in \cS(\bA_{E,n})$ we can consider the relative character 

\begin{equation}    \label{eq:IPi}
    I_{\Pi_\lambda}(f \otimes \Phi) =
    \sum_{\varphi \in \mathcal{B}_{P,\pi}(J)}
    Z(E(I_P(f,\lambda)\varphi,\lambda), \Phi, \overline{\mu}, 0)
    \overline{\beta_\eta(W^{\psi}(E(\varphi,\lambda)))}.
    \end{equation}
By continuity, we have the alternative description
\begin{equation}
    \label{eq:I_Pi_lambda_estimate}
        I_{\Pi_\lambda}(f \otimes \Phi)=\cP_{H}^*\left(g \mapsto
        \beta_{\eta}\left(
        W^{\overline{\psi}}\left(K_{f, \chi}(g, \cdot)_{\Pi^\vee_{-\lambda}} \right) \right), \Phi\right).
    \end{equation}
In particular, using Theorem~\ref{thm:RS} and Lemma~\ref{lem:Z_continuous_Phi}, we see that the distribution $I_{\Pi_\lambda}$ extends by continuity to $\cS(G_+(\bA))^J$ and that the linear functional $f_+ \mapsto \int_{i \fa_{M_P}^{L_\pi,*}} I_{\Pi_{\lambda}}(f_+) \rd \lambda$ is well defined and continuous. If $f_+=f \otimes \Phi$, by Theorem~\ref{thm:FR_expansion} we get
\begin{equation}
    \label{eq:integral_character}
    \int_{i \mathfrak{a}_{M_P}^{L_\pi, *}} I_{\Pi_\lambda}(f_+)
    \rd \lambda=\cP_{H}^*\left(g \mapsto \int_{i \mathfrak{a}_{M_P}^{L_\pi, *}}
    \beta_{\eta}\left(
        W^{\overline{\psi}}\left(K_{f, \chi}(g, \cdot)_{\Pi^\vee_{-\lambda}} \right) \right) \rd \lambda,\Phi\right).
\end{equation}

\subsubsection{Spectral expansion of \texorpdfstring{$I_\chi$}{the relative character}} 

Let $f \in \cS(G(\bA))^J$ and $\Phi \in \cS(\bA_E)$. Set
    \begin{equation*}
    K^{H}_{f \otimes \Phi, \chi}(g)
    = \int_{[H]} K_{f \otimes \Phi, \chi}(h, g)  \rd h,
    \quad
    K^{G'}_{f, \chi}(g)
    = \int_{[G']} K_{f, \chi}(g, g') \eta_{n+1}(g') \rd g',
    \quad g \in [G].
    \end{equation*}

\begin{prop}    \label{prop:partialSpect}
Assume that $\chi$ is $(G,H,\overline{\mu})$-regular. We have the following assertions.
\begin{enumerate}
\item If $\chi$ is not Hermitian, then $K^{G'}_{f, \chi}(g) = 0$ for every
    $g \in [G]$. Moreover $I_{\chi}(f \otimes \Phi) = 0$.

\item Assume that $\chi$ is Hermitian. Then
    \begin{equation}    \label{eq:distributionIterated}
    I_{\chi}(f \otimes \Phi) =
    \int_{[G']} K^H_{f \otimes \Phi,\chi}(g') \eta_{n+1}(g')
    \rd g' =\cP_{H}^* \left( K^{G'}_{f, \chi}, \Phi \right),
    \end{equation}
    where the middle integral is absolutely convergent.
\end{enumerate}
\end{prop}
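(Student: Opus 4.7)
The plan is to prove the universal identity $I_\chi(f \otimes \Phi) = \cP_H^*(K^{G'}_{f,\chi}, \Phi)$ for all $(G,H,\overline{\mu})$-regular $\chi$; from this, (1) will follow via Flicker--Rallis vanishing and (2) via Fubini. I would extract $I_\chi(f_+)$ by taking the constant-in-$T$ part of $i^T_\chi(f_+)$ from Theorem~\ref{thm:truncated_prop}(3--4), then identify this constant via the regularized Rankin--Selberg period of Corollary~\ref{cor:RS_extension_Eisenstein}.

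I would first establish the vanishing $K^{G'}_{f,\chi}(g) = 0$ when $\chi$ is not Hermitian. For any Schwartz $f$ and fixed $g \in [G]$, the function $g' \mapsto K_{f,\chi}(g,g')$ is spectrally supported on $\cA_{\chi^\vee}$ by Theorem~\ref{thm:kernel_spectral_expansion}. Since Hermiticity is preserved under contragredient, $\chi^\vee$ is also non-Hermitian, so Theorem~\ref{thm:FR_expansion} gives $\cP_{G'} \equiv 0$ on $\cS_{\chi^\vee}([G])$; by density (Proposition~\ref{prop:density_schwartz}) this extends to an identity on $K_{f,\chi}(g,\cdot)$, yielding $K^{G'}_{f,\chi}(g) = 0$.

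Next, by Theorem~\ref{thm:truncated_prop}(3), $i^T_\chi(f_+)$ differs from
\[
J^T := \int_{[H]} F^{G_{n+1}}(h) \int_{[G']} K_{f_+,\chi}(h,g') \eta_{n+1}(g') \, \rd g' \, \rd h
\]
by $O(e^{-r\aabs{T}})$. The compact support of $F^{G_{n+1}}$ in $[H]$ allows Fubini, and the kernel factorization $K_{f \otimes \Phi,\chi}(h,g') = K_{f,\chi}(h,g') \Theta(h,\Phi)$ from \eqref{eq:kernel_f_+} transforms $J^T$ into $\int_{[H]} F^{G_{n+1}}(h) \Theta(h,\Phi) K^{G'}_{f,\chi}(h) \, \rd h$. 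In the non-Hermitian case this vanishes by the previous step, and since an exponential-polynomial of size $O(e^{-r\aabs{T}})$ must have vanishing polynomial part, Theorem~\ref{thm:truncated_prop}(4) delivers $I_\chi(f_+) = 0$. In the Hermitian case, $K^{G'}_{f,\chi}$ lies in $\cT_\chi([G])$ (being spectrally supported on $\chi$ in its remaining variable), and by Corollary~\ref{cor:RS_extension_Eisenstein} together with Remark~\ref{remark:mirabolic}, the constant-in-$T$ part of $J^T$ coincides with the analytic continuation at $s=0$ of the Zeta integral $Z(K^{G'}_{f,\chi}, \Phi, \overline{\mu}, s)$, which is $\cP_H^*(K^{G'}_{f,\chi}, \Phi)$.

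Finally, the middle equality in \eqref{eq:distributionIterated} follows by Fubini in the absolutely convergent double integral $\int_{[H]} K^{G'}_{f,\chi}(h) \Theta(h,\Phi) \, \rd h = \int_{[G']} K^H_{f \otimes \Phi,\chi}(g') \eta_{n+1}(g') \, \rd g'$, where the required integrability is controlled by uniform estimates from Proposition~\ref{prop:strong_K_basis}. The main obstacle I anticipate is justifying rigorously that the constant-in-$T$ part of the Arthur-truncated integral $J^T$ matches the analytic continuation at $s=0$ of the Rankin--Selberg Zeta function, rather than merely the pointwise limit; this should follow from a contour-shift argument in the spirit of Tate's thesis, where $(H,\overline{\mu})$-regularity ensures that no spurious poles of $Z(\cdot, \Phi, \overline{\mu}, s)$ or of the Epstein--Eisenstein factor $\Theta$ intervene (cf. Remark~\ref{remark:mirabolic}).
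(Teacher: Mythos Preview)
Your argument for part~(1) is correct and essentially the same as the paper's: the vanishing of $K^{G'}_{f,\chi}$ via Theorem~\ref{thm:FR_expansion}, followed by the estimate in Theorem~\ref{thm:truncated_prop}(3), forces the constant part of $i^T_\chi$ to be zero.

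For part~(2), however, there is a genuine gap---and it is precisely the ``main obstacle'' you flag at the end. You truncate in the $h$-variable and obtain
\[
J^T = \int_{[H]} F^{G_{n+1}}(h)\,\Theta(h,\Phi)\,K^{G'}_{f,\chi}(h)\,\rd h.
\]
Since $K^{G'}_{f,\chi}$ lies only in $\cT_\chi([G])$ and $\Theta(\cdot,\Phi)$ has moderate growth, the untruncated integral $\int_{[H]}\Theta(h,\Phi)K^{G'}_{f,\chi}(h)\,\rd h$ is \emph{not} absolutely convergent: this is exactly why $\cP_H$ had to be regularised via analytic continuation in Theorem~\ref{thm:RS}. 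So you cannot pass to the limit, and there is no statement in the paper (certainly not Corollary~\ref{cor:RS_extension_Eisenstein}, which concerns the Zeta integral, not truncated periods) that equates the constant-in-$T$ part of such a truncated integral with $\cP_H^*$. Your last paragraph compounds this: you assert that $\int_{[H]}K^{G'}_{f,\chi}(h)\Theta(h,\Phi)\,\rd h$ is an ``absolutely convergent double integral'' amenable to Fubini, but that is false for the reason just given, and Proposition~\ref{prop:strong_K_basis} (about convergence of relative characters over orthonormal bases) is not relevant to this integrability.

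The paper avoids all of this by exploiting the asymmetry between the two subgroups and truncating in the \emph{other} variable, using $I^T_\chi$ and Theorem~\ref{thm:truncated_prop}(1) rather than $i^T_\chi$. The point is that for each fixed $g'$, the function $K_{f,\chi}(\cdot,g')$ lies in $\cS_\chi([G])$, so the inner integral $K^H_{f\otimes\Phi,\chi}(g') = \int_{[H]}K_{f,\chi}(h,g')\Theta(h,\Phi)\,\rd h$ is the honest, convergent $\cP_H(K_{f,\chi}(\cdot,g'),\Phi)$, hence equals $\cP_H^*(K_{f,\chi}(\cdot,g'),\Phi)$. Now continuity of $\cP_H^*$ on $\cT_\chi([G])$ together with the $\cT_N$-integrability of $g'\mapsto K_{f,\chi}(\cdot,g')$ over $[G']$ (from \cite{BPCZ}*{Lemma~2.10.1.1}) gives absolute convergence of $\int_{[G']}K^H_{f\otimes\Phi,\chi}(g')\eta_{n+1}(g')\,\rd g'$ and lets one swap to obtain $\cP_H^*(K^{G'}_{f,\chi},\Phi)$. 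With absolute convergence in hand, Theorem~\ref{thm:truncated_prop}(1) identifies this with $I_\chi(f\otimes\Phi)$ directly.
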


\begin{proof}
Let us assume first that $\chi$ is not Hermitian. By definition we have
    \[
    K_{f, \chi}^{G'}(g) =
    \cP_{G'}\left( K_{f, \chi}(g, \cdot)\right).
    \]
Let $\chi^\vee \in \mathfrak{X}(G)$ be the $(G,H,\overline{\mu})$-regular cuspidal datum
represented by $(M_P,\pi^\vee)$. Since $\chi^\vee$ is not Hermitian, the linear
form $\cP_{G'}$ vanishes identically on $\cS_{\chi^\vee}([G])$ by Theorem~\ref{thm:FR_expansion}. It follows that $K_{f, \chi}^{G'}(g) = 0$. By
Theorem~\ref{thm:truncated_prop} we conclude that $\valP{i_{\chi}^T(f \otimes \Phi)} \ll e^{-r \aabs{T}}$ for some $r>0$ and all $T$ sufficiently positive. It follows from Theorem~\ref{thm:truncated_prop} that $I_{\chi}(f \otimes \Phi) = 0$.

We now assume that $\chi$ is Hermitian. By \cite{BPCZ}*{Lemma~2.10.1.1}, the family of functions $g' \mapsto K_{f,
\chi}(\cdot, g')$ is absolutely integrable over $[G']$ in
$\mathcal{T}_N([G])$ for some $N$. By Theorem~\ref{thm:RS}, the linear form
$\cP_{H}(\cdot, \Phi)$ extends continuously from $\cS_{\chi}([G])$ to
$\cT_{\chi}([G])$. Since
    \[
    K_{f \otimes \Phi, \chi}^H(g') =
    \cP_{H}\left( K_{f, \chi}(\cdot, g'), \Phi \right) =
    \cP_{H}^*\left( K_{f, \chi}(\cdot, g'), \Phi \right),
    \]
we conclude that the integral
    \begin{equation}    \label{eq:iterated_convergence_GL_kernel}
    \int_{[G']} K_{f \otimes \Phi, \chi}^H(g') \eta_{n+1}(g')\rd g'
    \end{equation}
is absolutely convergent, and equals
    \[
    \cP_{H}^* \left(\int_{[G']} K_{f \otimes \Phi, \chi}(\cdot, g')
    \eta_{n+1}(g') \rd g', \Phi \right)
    = \cP_{H}^*\left(K_{f, \chi}^{G'}, \Phi \right).
    \]
The absolute convergence of~\eqref{eq:iterated_convergence_GL_kernel}
and Theorem~\ref{thm:truncated_prop} imply that this integral
equals $I_{\chi}(f \otimes \Phi)$.
\end{proof}

\begin{theorem}
\label{thm:Ichi_spectral} Assume that $\chi \in \mathfrak{X}(G)$ is
$(G,H,\overline{\mu})$-regular and Hermitian. Then for all $f_+ \in \cS(G_+(\bA))^J$ we have
    \begin{equation}
    \label{eq:I_pi_spectral}
    I_\chi(f_+)
    =2^{-\dim \mathfrak{a}_{L_\pi}^*}
    \int_{i \mathfrak{a}_{M_P}^{L_\pi, *}} I_{\Pi_\lambda}(f_+)
    \rd \lambda,
    \end{equation}
where the integral on the right is absolutely convergent.
\end{theorem}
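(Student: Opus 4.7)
My plan is to reduce the statement to an application of Proposition~\ref{prop:partialSpect}(2) together with the Flicker--Rallis spectral expansion in Theorem~\ref{thm:FR_expansion}, then handle the continuity / density issue to pass from pure tensors $f_+ = f \otimes \Phi$ to arbitrary $f_+ \in \cS(G_+(\bA))^J$.

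First, I would treat the case of a pure tensor $f_+ = f \otimes \Phi$. By Proposition~\ref{prop:partialSpect}(2), since $\chi$ is $(G,H,\overline{\mu})$-regular Hermitian we have
\begin{equation*}
    I_\chi(f \otimes \Phi) = \cP_H^*\bigl(K_{f,\chi}^{G'}, \Phi\bigr),
\end{equation*}
with $K_{f,\chi}^{G'}(g) = \cP_{G'}(K_{f,\chi}(g,\cdot))$. For each fixed $g \in [G]$, the section $K_{f,\chi}(g,\cdot)$ lies in $\cS_\chi([G])$ up to a smoothness/Schwartz check for the second variable (this is the content of \cite{BPCZ}*{Lemma~2.10.1.1}). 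Applying Theorem~\ref{thm:FR_expansion} pointwise in $g$ therefore yields
\begin{equation*}
    K_{f,\chi}^{G'}(g) = 2^{-\dim \fa_{L_\pi}^*} \int_{i \fa_{M_P}^{L_\pi,*}} \beta_\eta\bigl(W^{\overline{\psi}}(K_{f,\chi}(g,\cdot)_{\Pi^\vee_{-\lambda}})\bigr) \rd \lambda.
\end{equation*}

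The main step is now to interchange $\cP_H^*(\cdot,\Phi)$ with the integral over $\lambda$. Theorem~\ref{thm:FR_expansion} provides the Schwartz property of $\lambda \mapsto \beta_\eta(W^{\overline{\psi}}((\cdot)_{\Pi^\vee_{-\lambda}}))$ as a function valued in the continuous dual of $\cS_\chi([G])$, and combined with the continuous extension of $\cP_H^*$ to $\cT_\chi([G])$ given by Corollary~\ref{cor:RS_extension_Eisenstein} and the bounds of Lemma~\ref{lem:Z_continuous_Phi}, this justifies the Fubini-type swap. This is precisely what is packaged in formula~\eqref{eq:integral_character}. Combining with the expression~\eqref{eq:I_Pi_lambda_estimate} for $I_{\Pi_\lambda}(f \otimes \Phi)$, we conclude
\begin{equation*}
    I_\chi(f \otimes \Phi) = 2^{-\dim \fa_{L_\pi}^*} \int_{i\fa_{M_P}^{L_\pi,*}} I_{\Pi_\lambda}(f \otimes \Phi) \rd \lambda,
\end{equation*}
with absolute convergence of the right-hand side.

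Finally, to extend to arbitrary $f_+ \in \cS(G_+(\bA))^J$, I would invoke density of the algebraic tensor product $\cS(G(\bA))^J \otimes \cS(\bA_{E,n})$ in $\cS(G_+(\bA))^J$ together with the continuity of both sides: $I_\chi$ is a continuous distribution on $\cS(G_+(\bA))$ by Theorem~\ref{thm:truncated_prop}(2), while the right-hand side is continuous in $f_+$ by the discussion following~\eqref{eq:I_Pi_lambda_estimate} (which relies on Proposition~\ref{prop:strong_K_basis}, Theorem~\ref{thm:RS} and Lemma~\ref{lem:Z_continuous_Phi} to produce integrable bounds in $\lambda$, uniform in $f_+$ via a continuous seminorm). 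The hardest piece is the Fubini interchange in the middle step; everything else is a bookkeeping exercise, but making that interchange rigorous requires quantitative Schwartz-type bounds in $\lambda$ for $\beta_\eta(W^{\overline{\psi}}(K_{f,\chi}(g,\cdot)_{\Pi^\vee_{-\lambda}}))$ with explicit control (via a continuous seminorm on $g \mapsto K_{f,\chi}(g,\cdot)$ valued in $\cS_\chi([G])$) that matches the domain of continuity of $\cP_H^*$.
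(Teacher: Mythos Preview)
Your proposal is correct and follows essentially the same approach as the paper's proof: reduce to pure tensors by continuity (the paper does this first, you do it last), apply Proposition~\ref{prop:partialSpect}(2), expand $K_{f,\chi}^{G'}$ via Theorem~\ref{thm:FR_expansion}, and invoke \eqref{eq:integral_character} to swap $\cP_H^*$ with the $\lambda$-integral. One small slip: for fixed $g$, the section $K_{f,\chi}(g,\cdot)$ lies in $\cS_{\chi^\vee}([G])$ rather than $\cS_\chi([G])$, but since $\chi^\vee$ is also Hermitian this does not affect the argument.
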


\begin{proof}
By Theorem~\ref{thm:truncated_prop}, the two sides of
\eqref{eq:I_pi_spectral} are continuous in $f_+$, and the RHS is absolutely
convergent. We may therefore assume that $f_+=f \otimes \Phi$. Let us
continue with the notation of Proposition~\ref{prop:partialSpect}. As $\chi^\vee$
is Hermitian and $(G,H,\overline{\mu})$-regular, Theorem~\ref{thm:FR_expansion} applies and yields for
all $g \in [G]$
    \begin{equation}    \label{eq:kernel_beta_expansion}
    K_{f, \chi}^{G'}(g)
    =\cP_{G'}(K_{f, \chi}(g, \cdot))=
    2^{-\dim \mathfrak{a}_{L_\pi}^*} \int_{i \mathfrak{a}_{M_P}^{L_\pi, *}}
   \beta_{\eta}\left(
        W^{\overline{\psi}}\left(K_{f, \chi}(g, \cdot)_{\Pi^\vee_{-\lambda}} \right) \right) \rd \lambda.
    \end{equation}
It now follows from \eqref{eq:integral_character},
Proposition~\ref{prop:partialSpect} and \eqref{eq:kernel_beta_expansion} that
    \begin{equation*}
    I_\chi(f \otimes \Phi) =\cP_{H}^*\left(K^{G'}_{f, \chi}, \Phi\right)
    =2^{-\dim \mathfrak{a}_{L_\pi}^*}
    \int_{i \mathfrak{a}_{M_P}^{L_\pi, *}} I_{\Pi_\lambda}(f_+)
    \rd \lambda.
    \end{equation*}
This proves the theorem.
\end{proof}

\section{Spectral expansion of Liu's trace formula on unitary groups}
\label{sec:spectral_calculation_U}

\label{sec:spec_U}

In this section, we fix $V$ a non-degenerate skew-Hermitian space over $E/F$. We take a polarization $\bV=Y \oplus Y^\vee$. We regularize the spectral contribution of $(\U_V,\U_V',\overline{\mu})$-regular cuspidal data to the $\U$-side of the Liu's relative trace formula.

\subsection{Weil representation and theta series}

\subsubsection{Local Heisenberg-Weil representations} \label{subsubsec:local_Heisenberg} Let $v$ be a place of $F$. We described in \S\ref{subsubsec:HW_reps} the Heisenberg representation $\rho^\vee_{\psi,v}$ of $S(V)(F_v)$ and the Weil representation $\omega^\vee_v$ of $\U(V)(F_v)$. The \emph{Heisenberg--Weil representation} $\nu^\vee_v$ of $J(V)(F_v)$ is defined by the rule
\begin{equation}
\label{eq:nu_defi1}
    \nu^\vee_v(h g)=\rho^\vee_{\psi,v}(h) \omega^\vee_{v}(g) , \quad  g \in \U(V)(F_v), \quad h \in S(V)(F_v).
\end{equation}
It is realized on the local Schwartz space $\cS(Y^\vee(F_v))$, and is unitary for the inner-product $\langle \cdot, \cdot \rangle_{L^2}$ given by integrating along $Y^\vee(F_v)$.

\subsubsection{Automorphic Weil representations and theta series} \label{subsubsec:automorphic_Weil} By choosing a basis, we have an $\cO_F$-integral model $Y^\vee$, so that at each finite place $v$ we have $1_{Y^\vee(\cO_v)} \in \cS(Y^\vee(F_v))$. We may therefore consider the restricted tensor products $\nu^\vee=\otimes_v' \nu^\vee_v$ and $\omega^\vee=\otimes_v' \omega^\vee_v$, both defined with respect to these unramified vectors. They are representations of $J(V)(\bA)$ and $\U(V)(\bA)$ respectively, both realized on $\cS(Y^\vee(\bA))$ and unitary for the global inner product $\langle \cdot, \cdot \rangle_{L^2}$. It decomposes as $\prod_v \langle \cdot, \cdot \rangle_{L^2,v}$ with the choice of measure made in \S\ref{subsec:measure_part1}.

By \cite{GGP}*{Section~23}, these representations are automorphic via the theta series
\begin{equation}
    \label{eq:Theta_unitary_definition}
     \theta^\vee(j, \phi) = \sum_{y^\vee \in Y^\vee(F)}
    \nu^\vee(j) \phi(y^\vee), \quad j \in J(V)(\bA), \quad
    \phi \in \cS(Y^\vee(\bA)).
\end{equation}

In \cite[Section~3.3.5]{BLX1}, we define a space $\cT([J(V)],\psi)$ of functions of uniform moderate growth on $J(V)$ with fixed central character. 

\begin{lemma}
    \label{lem:theta_moderate_u}
    There exists $N>0$ such that $\theta^\vee$ defines a continuous mapping $\cS(Y^\vee(\bA)) \to \cT_N([J(V)],\psi)$.
\end{lemma}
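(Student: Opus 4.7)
The plan is to follow the template of Lemma~\ref{lem:theta_moderate}: bound the individual Heisenberg--Weil translates of $\phi$ by a Schwartz seminorm that decays fast enough to make the summation over $Y^\vee(F)$ absolutely convergent, with losses polynomial in a height of $j$. What must be verified is (i) the fixed central character, (ii) pointwise uniform moderate growth, (iii) control of all right-derivatives by elements of $\cU(\fg_\infty)$, and (iv) continuous dependence on $\phi$.

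Item (i) is immediate from the formula $\rho^\vee(0,z)\phi = \overline{\psi}(z)\phi$ on the center of $S(V)(\bA)$: for $z\in\bA$ one has $\theta^\vee((0,z)j,\phi)=\overline{\psi}(z)\theta^\vee(j,\phi)$. The $J(V)(F)$-invariance of $\theta^\vee$ in $j$ is the classical Poisson summation argument making the theta series automorphic. Items (ii)--(iv) all reduce to the following Sobolev-type estimate for the Heisenberg--Weil representation: for every continuous seminorm $\aabs{\cdot}$ on $\cS(Y^\vee(\bA))$ and every $X \in \cU(\fg_\infty)$, there exist $N>0$ and another continuous seminorm $\aabs{\cdot}'$ on $\cS(Y^\vee(\bA))$ such that
\[
\aabs{\rR(X)\nu^\vee(j)\phi} \leq \aabs{j}^N \aabs{\phi}', \quad j \in J(V)(\bA),\ \phi \in \cS(Y^\vee(\bA)).
\]
I would prove this by splitting $j = sg$ with $s \in S(V)(\bA)$ and $g \in \U(V)(\bA)$. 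The Heisenberg part $\rho^\vee(s)$ acts by translations in $Y^\vee$ and multiplications by additive characters of the form $\overline{\psi}(q_{\bV}(y,\cdot))$; each of these operations preserves $\cS(Y^\vee(\bA))$ with seminorm bounds polynomial in $\aabs{s}$. The Weil part $\omega^\vee(g)$ is controlled via an Iwasawa decomposition $g=pk$ with $p$ in a Siegel parabolic stabilizing $Y$ together with the explicit formulae for $\omega^\vee$ in the Schr\"odinger model (change of variable and multiplication by quadratic characters for $p$, Fourier-type kernel for $k$), giving a bound by $\aabs{g}^N$ times Schwartz seminorms of $\phi$. Differentiation by $X \in \cU(\fg_\infty)$ is handled by noting that the derived representation acts by polynomial differential operators on $Y^\vee(F_\infty)$, so the same estimate applies to $\rR(X)\nu^\vee(j)\phi$ after replacing $\phi$ by finitely many derivatives.

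Granting this bound, I would apply it to the seminorm $\phi \mapsto \sup_{y^\vee \in Y^\vee(\bA)}\aabs{y^\vee}_{Y^\vee(\bA)}^M \valP{\phi(y^\vee)}$ with $M$ chosen large enough that $\sum_{y^\vee \in Y^\vee(F)} \aabs{y^\vee}_{Y^\vee(\bA)}^{-M}<\infty$, which is possible by \cite{BP}*{Proposition~A.1.1.(v)}. This yields
\[
\sum_{y^\vee \in Y^\vee(F)} \valP{(\rR(X)\nu^\vee(j)\phi)(y^\vee)} \ll \aabs{j}^N \aabs{\phi}',
\]
which gives absolute and locally uniform convergence of $\theta^\vee$, justifies differentiation under the sum, and produces the bound $\aabs{\theta^\vee(\cdot,\phi)}_{-N,X} \ll \aabs{\phi}'$. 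Restricting to a fixed level $J$ of $J(V)$ and letting $N$ accommodate a finite generating family of $\cU(\fg_\infty)$, one obtains a continuous map into $\cT_N([J(V)],\psi)^J$ for some $N$ independent of $\phi$, hence the desired continuity $\theta^\vee : \cS(Y^\vee(\bA)) \to \cT_N([J(V)],\psi)$.

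The hard part will be the Sobolev estimate for $\omega^\vee$: controlling the Archimedean Weil representation uniformly in $g$ requires a careful argument via the Iwasawa decomposition with explicit integral formulae, and the finite-place splittings of the metaplectic cover through $\mu^{-1}$ must be tracked to ensure they contribute only bounded unitary factors that do not degrade the Schwartz seminorms.
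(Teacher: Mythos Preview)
Your strategy—bound Schwartz seminorms of $\nu^\vee(j)\phi$ polynomially in a height of $j$, then sum over $Y^\vee(F)$—is the right template and matches the paper's reduction to Lemma~\ref{lem:theta_moderate}. The gap is in your treatment of $\omega^\vee(g)$ for $g\in\U(V)$: there is in general no parabolic of $\U(V)$ stabilizing $Y$. The subspace $Y$ is an $F$-Lagrangian of the symplectic space $\bV=\Res_{E/F}V$, whereas parabolics of the unitary group stabilize $E$-isotropic subspaces of $V$. Your proposed Iwasawa decomposition $g=pk$ with $p$ in a ``Siegel parabolic stabilizing $Y$'' therefore does not exist inside $\U(V)$, and the Schr\"odinger-model formulas you invoke are not available for elements of $\U(V)$ in that form.

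The paper fixes this by passing to the mixed model. Write $V=X\oplus V_0\oplus X^*$ with $X\oplus X^*$ split and take the compatible polarization so that $\cS(Y^\vee)\cong\cS(X^*)\otimes\cS(Y_0^\vee)$. The parabolic $P(X)\subset\U(V)$ stabilizing $X$ then has explicit Weil formulas (those of \S\ref{subsubsec:explicit_Weil}): $m(a)$ acts by rescaling on $X^*$, $n(b)$ and $n(c)$ by characters and Heisenberg translations, and $\U(V_0)$ through its own Weil representation on $\cS(Y_0^\vee)$. Combined with the Iwasawa decomposition $\U(V)(\bA)=P(X)(\bA)K_V$ (with $K_V$ compact, hence acting continuously on the Schwartz space), this reduces the growth estimate on $[J(V)]$ to the argument of Lemma~\ref{lem:theta_moderate} for the $X^*$-variables together with the same statement for $J(V_0)$. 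Taking $X$ maximal makes $V_0$ anisotropic, so $[J(V_0)]$ is compact and the induction closes. Your worry that the Archimedean Weil action is the ``hard part'' dissolves once you use $P(X)$: the formulas there are elementary.
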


\begin{proof}
    If $V$ is anisotropic, then $[J(V)]$ is compact and there is nothing to prove. In general, $V$ can be written as $V=X \oplus V_0 \oplus X^*$ where $X \oplus X^*$ is split. In that case, we can use the formula for the Weil representation given in \S\ref{subsubsec:explicit_Weil}. This reduces to the case of $\U(V_0)$ by using the same arguments as in Lemma~\ref{lem:theta_moderate}.
\end{proof}

\subsection{Fourier--Jacobi periods for regular Eisenstein series}
\label{subsec:reg_FJ}

We now define an extension of the Fourier--Jacobi period for Eisenstein series induced from $(\U'_V,\overline{\mu})$-regular cuspidal data.

In \cite[Section~7.5]{BLX1}, we define a truncation operator $\Lambda_u^T$. It is a mapping from $\cT([\widetilde{\U}_V],\psi)$ the space of functions of uniform moderate growth on $[\widetilde{\U}_V]$ with fixed central character (see \cite[Section~3.3.5]{BLX1} for a precise definition) to a space $\cS^0([\U'_V])$ of continuous functions of rapid decay on $[\U'_V]$. 

Let $Q$ be a standard parabolic subgroup of $\U_V$, let $\sigma \in \Pi_{\cusp}(M_Q)$. If $\varphi \in \cA_{Q,\sigma}(\U_V)$, $\lambda \in i \fa_{Q}^*$ and $\phi \in \cS(Y^\vee(\bA))$, we can consider $ g \in [\widetilde{\U}_V] \mapsto E(g,\varphi,\lambda) \theta^\vee(\widetilde{g},\phi)$. By Lemma~\ref{lem:theta_moderate_u}, this functions belongs to $\cT([\widetilde{\U}_V],\psi)$ and is denoted by $E(\cdot,\varphi,\lambda) \cdot \theta^\vee(\cdot,\phi)$. By \cite{BP}*{Proposition~A.1.1~(vi)}, we may therefore apply $\Lambda_u^T$ and integrate the result over $[\U'_V]$. This defines a truncated Fourier--Jacobi period.

If the cuspidal datum $\chi$ represented by $(M_Q,\sigma)$ is $(\U_V',\overline{\mu})$-regular, then it turns out that the truncated Fourier--Jacobi period is independent of $T$.

\begin{prop}   \label{prop:IY_stability}
Let $T \in \fa_0$ be sufficiently positive. Assume that $\chi$ is $(\U'_V,\overline{\mu})$-regular. Then the integral
\begin{equation} \label{eq:regularized_Fourier_Jacobi}
    \int_{[\U'_V]}  \Lambda_u^T \left( E(\cdot,\varphi,\lambda)
    \cdot \theta^\vee(\cdot,\phi) \right)(h) \rd h
    \end{equation}
    is independent of $T$. We denote it by $\cP_{\U'_V}(\varphi,\phi,\lambda)$. 
\end{prop}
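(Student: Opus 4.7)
The strategy is to apply the mixed-truncation formalism of \cite{BLX1}*{Section~7.5} (which adapts the Ichino--Yamana truncation to the Fourier--Jacobi setting) and use the $(\U'_V, \overline{\mu})$-regularity of $\chi$ to force the $T$-dependent contributions to vanish identically.

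First, I would expand $\Lambda_u^T \left( E(\cdot, \varphi, \lambda) \cdot \theta^\vee(\cdot, \phi) \right)$ as an alternating sum over standard parabolic subgroups $P_u=M_u N_u$ of $\U'_V$, each term being a constant term along $N_u$ multiplied by a characteristic function of $H_{P_u}(\cdot) - T$. Integrating over $[\U'_V]$ and unfolding via the Iwasawa decomposition $\U'_V(\bA) = P_u(\bA) K'_V$, the term corresponding to $P_u = \U'_V$ is manifestly $T$-independent, while each proper-parabolic contribution takes the form
\begin{equation*}
J_{P_u}(T) = \int_{K'_V} \int_{[M_u]_0} \bigl( E(\cdot, \varphi, \lambda) \cdot \theta^\vee(\cdot, \phi) \bigr)_{P_u}(mk) \, \Gamma_{P_u}^T(H_{M_u}(m)) \, dm \, dk,
\end{equation*}
for some explicit $T$-dependent weight $\Gamma_{P_u}^T$. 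For the asserted $T$-independence, it suffices to show that $J_{P_u}(T)=0$ for every proper standard $P_u$.

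Next, I would compute the constant term $(E(\varphi, \lambda) \cdot \theta^\vee(\phi))_{P_u}$. If $P_u$ stabilizes an isotropic flag $Z_\bullet$ in $V$, then the mixed model of the Heisenberg--Weil representation expresses the constant term of $\theta^\vee$ along $N_u$ as an explicit theta series attached to the smaller Jacobi group of $Z_k^\perp/Z_k$, tensored with Schwartz data on the general-linear quotients corresponding to $Z_\bullet$. Simultaneously, by \eqref{eq:constant_term_Eisenstein}, we have $E(\varphi, \lambda)_{P_u} = \sum_{w} E^{M_u}(M(w,\lambda) \varphi, w\lambda)$, a sum of partial Eisenstein series on $M_u$ whose cuspidal supports are the $w$-conjugates of $\sigma=(\tau_1 \boxtimes \cdots \boxtimes \tau_s \boxtimes \sigma_0) \boxtimes (\tau_1' \boxtimes \cdots \boxtimes \tau_{s'}' \boxtimes \sigma_0')$. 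Substituting these two expansions into $J_{P_u}(T)$ and unfolding the resulting Eisenstein series on $[M_u]_0$, one is left with a product of linear functionals on the cuspidal support: a Flicker--Rallis-type or Rankin--Selberg-type period (twisted by a character built from $\mu$ and $\eta_{E/F}$) on each general-linear factor of $M_u$, paired with a smaller Fourier--Jacobi period on the residual unitary factor. Each such functional is nonzero only if some component of the cuspidal support is conjugate-self-dual, or if two such components among the $\tau_i$ and $\tau_j'$ are $\overline{\mu}$-conjugate-dual to one another. The $(\U'_V, \overline{\mu})$-regularity of $\chi$ (together with the regularity of $\chi_V$ and $\chi_V'$) explicitly excludes all such coincidences, so every term $J_{P_u}(T)$ vanishes.

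The main obstacle will be the precise identification, for each proper standard $P_u \subset \U'_V$, of the constant term of $\theta^\vee(\phi)$ via the mixed model, and its compatibility with the Eisenstein constant term formula. This requires careful bookkeeping of the interaction between the flag $Z_\bullet$ stabilized by $P_u$ and the polarization $\bV = Y \oplus Y^\vee$ used to realize $\nu^\vee$, so that the Weyl sum indexing $E(\varphi, \lambda)_{P_u}$ can be matched cleanly with the decomposition of the theta constant term, allowing the regularity hypothesis to be invoked factor by factor. This is the Fourier--Jacobi analogue of the arguments of \cite{IY2} and of their unitary Bessel counterpart in \cite{BPCZ}, with the additional subtlety coming from the Heisenberg variable inside $\theta^\vee$.
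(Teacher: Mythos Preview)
Your overall strategy---show that the proper-parabolic contributions to the truncated integral vanish by regularity---is correct, and matches the paper's approach (which follows \cite{BPCZ}*{Proposition~5.1.4.1}). However, there are two concrete gaps in the execution.

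First, the direct term-by-term splitting you propose is not well-defined: for $P_u = \U'_V$ the ``manifestly $T$-independent'' piece is the untruncated integral $\int_{[\U'_V]} E(\varphi,\lambda)\theta^\vee(\phi)$, which diverges. The paper avoids this by the variational method: it compares $\Lambda_u^{T+T'}$ with $\Lambda_u^T$ via the combinatorial identity \cite{BLX1}*{Equation~(7.16)}, which expresses the integral at $T+T'$ as a sum over $R \in \cF_V$ (parabolic subgroups of the Jacobi group $J(V)$, not of $\U'_V$) of terms involving $\Gamma'_{R'}(\cdot,T')$ and the relative truncation $\Lambda_u^{R,T}$. The term $R = S(V)$ has $\Gamma'_{R'}\equiv 1$ and is the only one allowed to survive; independence in $T'$ then gives independence in $T$.

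Second, the vanishing mechanism you describe is not quite right. Because $\U'_V$ sits diagonally in $\U_V = \U(V)\times\U(V)$, the constant term of $E(\varphi,\lambda)$ along $R_\U = R'\times R'$, restricted to the diagonal Levi $M_{R'} = M_{R',\bullet}\times M_{R',*}$, gives a \emph{product} $E_1(m_\bullet)E_2(m_\bullet)$ of two Eisenstein series on the GL part. Moreover, the operator $\Lambda_u^{R,T}$ factors as Arthur's $\Lambda^T$ on the GL block times a Jacobi truncation on the unitary block. So the GL-factor integral is not a Flicker--Rallis or Rankin--Selberg period of cusp forms that one can read off from the cuspidal support, but the \emph{truncated inner product}
\[
\int_{[M_{R',\bullet}]^1} E(m_\bullet,\varphi_1,\lambda)\,\Lambda^T E(m_\bullet,\varphi_2,\lambda)\,\overline{\mu}(m_\bullet)\,\rd m_\bullet,
\]
which one evaluates by Langlands' inner product formula \cite{Arthur4}. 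That formula produces a sum over Weyl elements each of which requires $\overline{\mu}\tau_i \simeq (\tau_j')^\vee$ or $(\tau_j')^\sfc$ to be nonzero; $(\U'_V,\overline{\mu})$-regularity kills all of them. In particular, you do not need (and the proposition does not assume) the $\U_V$-regularity of $\chi_V$, $\chi_V'$ that you invoke.
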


\begin{proof}
    The proof is very similar to \cite{BPCZ}*{Proposition~5.1.4.1}, and we sketch the main steps for the convenience of the reader. Let $T$ and $T'$ be two truncation parameters such that $T$ and $T+T'$ are sufficiently positive. In \cite[Equation~(7.16)]{BLX1}, it is proved that we have an unfolding identity of the form
    \begin{align*}
         &\int_{[\U'_V]}  \Lambda_u^{T+T'} \left( E(\cdot,\varphi,\lambda)
    \cdot \theta^\vee(\cdot,\phi) \right)(h) \rd h \\
    =&\sum_{R \in \cF_V}  \int_{K'_V}  \int_{[M_{R'}]}  e^{\langle -2\rho_{R'},H_{R'}(m)
    \rangle} \Gamma'_{R'}(H_{R'}(m)-T_{R'},T'_{R'})
    \Lambda^{R,T}_u \left( E_{R_{\U}}(\cdot,\varphi,\lambda)
    \cdot {}_R \theta^\vee(\cdot,\phi) \right)(mk) \rd m \rd k.
    \end{align*}
    Here, $\cF_V$ is a set of parabolic subgroups of $S(V)$ (see \cite[Section~4.3]{BLX1}), $R'$ is a standard parabolic subgroup of $\U_V'$ determined by $R$, $R_{\U}$ is $R' \times R'$ seen as a standard parabolic subgroup of $\U_V$, and ${}_R \theta^\vee$ is defined in \cite[Section~4.4]{BLX1}. The truncation operator $\Lambda^{R,T}_{u}$ and the $\Gamma_{R'}$ function are also defined in \cite[Section~7.5]{BLX1} and \cite[Proof of Theorem~7.6]{BLX1} (the $\Gamma$ function comes from \cite[Section~2]{Arthur1}). If $R=S(V)$, $\Gamma'_{R'}$ is identically equal to $1$ so that the corresponding term in the sum is constant as a function of $T'$. Therefore, it is enough to prove that the other terms vanish.

    Let $R \in \cF_V$ which is not $S(V)$. Then by the description of \cite[Section~4.3]{BLX1} we have $R' \neq \U_V'$ and therefore $R_{\U}\neq \U_V$. If we go back to the definition of $\Lambda^{R,T}_{u}$ and use the description of the mixed model given in \cite[Section~4.4]{BLX1} (see also \S\ref{subsubsec:explicit_Weil}), we see that for every $a \in A_{R'}^\infty$ we have
    \begin{equation*}
    \Lambda^{R,T}_u \left( E_{R_{\U}}(\varphi,\lambda) \cdot
    {}_R \theta^\vee(\phi)\right)(amk) = e^{\langle \lambda + \rho_{R_{\U}}, H_{R_{\U}}(a) \rangle} \overline{\mu}(a)|a|^{\frac{1}{2}}  \Lambda^{R,T}_u \left( E_{R_{\U}}(\varphi,\lambda) \cdot
    {}_R \theta^\vee(\phi)\right)(mk).
\end{equation*}
    By the properties of $\Gamma'_{R'}$ from~\cite{Arthur1}*{Section~2} and the formula for the constant term of Eisenstein series recalled in \eqref{eq:constant_term_Eisenstein}, it is enough to prove that for every $w \in W(Q;R_U)$ we have  
    \begin{equation} \label{eq:IY_left_to_show_vanishing}
    \int_{[M_{R'}]^1} \Lambda^{R,T}_u \left( E^{R_{\U}}(\cdot,M(w,\lambda)\varphi,w\lambda)
    \cdot {}_R\theta^\vee(\cdot,\phi) \right)(m) \rd m=0
    \end{equation}
     Note that the representation $w \pi$ is also $(\U'_V,\mu^{-1})$-regular, so that we may assume that $w=1$, which implies $Q \subset R_{\U}$.

Form the decomposition $M_{R'}=M_{R',\bullet} \times M_{R',*}$, where
$M_{R',\bullet}$ is isomorphic to a product of $\GL_{n_i}$ and $M_{R',*}$ is
a unitary group. We write similarly $M_{R_{\U}}=M_{R_{\U},\bullet}\times M_{R_{\U},*}$ so that $M_{R_{\U},\bullet} \cong M_{R',\bullet}^2$ and the embedding $M_{R',\bullet} \subset M_{R_{\U},\bullet}$ is the diagonal inclusion. Write $Q_1 \times Q_2=M_{R_{\U},\bullet} \cap Q$ and the restriction of the representation $\sigma$ to $M_{Q_1}(\bA) \times M_{Q_2}(\bA)$ as $\sigma=\sigma_1 \boxtimes \sigma_2$. It is enough to show \eqref{eq:IY_left_to_show_vanishing} if 
$\varphi|_{[M_{R_{\U}}]} = (\varphi_1 \otimes \varphi_2) \otimes \varphi_*$, where
$\varphi_1 \otimes \varphi_2 \in \cA_{{Q_1 \times Q_2},\sigma_1 \otimes \sigma_2}(M_{R_{\U},\bullet})$ and
$\varphi_* \in \cA_{{Q \cap M_{R_{\U},*}}}(M_{R_{\U},*})$. 

By the description of the parabolic subgroups of $J(V)$ in \cite[Section~4.3]{BLX1}, it is easily shown that the operator $\Lambda_u^{R,T}$ is a product of
\begin{itemize}
    \item the usual Arthur operator $\Lambda^T$ attached to the product of general linear groups $M_{R',\bullet}$, seen as an operator of the space $\cT([M_{R_{\U},\bullet}])$ acting on the second component (see~\cite{Arthur4}),
    \item the truncation operator $\Lambda_u^T$ attached to unitary group $M_{R',*} \subset M_{R_{\U},*}$, seen as an operator of the space $\cT([M_{R_{\U},*}])$.
\end{itemize}

For $m \in [M_{R'}]^1$ write $m=m_\bullet m_*$ according to our decomposition.
Using the description of mixed model from \cite[Section~4.4]{BLX1}, we see that ${}_R
\theta^\vee(m,\phi)=\overline{\mu}(m_\bullet) \cdot {}_{R}\theta^\vee
(m_*,\phi)$. Thus the integral in ~\eqref{eq:IY_left_to_show_vanishing} has
\begin{equation}
\label{eq:truncated_inner_product}
        \int_{[M_{R',\bullet}]^1} E(m_\bullet ,\varphi_{1,},\lambda)
    \Lambda^T E(m_\bullet,\varphi_{2},\lambda) \overline{\mu}(m_\bullet)
    \rd m_\bullet
\end{equation}
as a factor. The hypothesis $R_{\U} \neq \U_V$ implies that $M_{R',\bullet}$ is not trivial. By Langlands' formula for the inner product of truncated Eisenstein series applied to each general linear group (\cite{Arthur4}), \eqref{eq:truncated_inner_product} is zero as $\chi$ is $(\U'_V,\overline{\mu})$-regular. This concludes the proof.
\end{proof}

\begin{remark}
    \label{rem:cuspi_case}
    The truncation operator $\Lambda_u^T$ involves taking alternating sum of constant terms. In particular, if $\sigma=\sigma_V \boxtimes \sigma_V'$ with either $\sigma_V$ or $\sigma_V'$ cuspidal, we simply have $\Lambda_u^T E(\varphi,\lambda) \cdot \theta^\vee(\phi) =E(\varphi,\lambda)\cdot \theta^\vee(\phi) $. It follows that $\cP_{\U'_V}(\varphi,\phi,\lambda)$ is simply the integral of this function along $[\U'_V]$. 
\end{remark}

For any level $J$, by \cite{Lap}*{Theorem~2.2}, Lemma~\ref{lem:theta_moderate_u} and Theorem~\ref{thm:truncated_prop_u}, there exist continuous
semi-norms $\aabs{\cdot}$ and $\aabs{\cdot}_{Y^\vee}$ on
$\mathcal{A}_{Q,\sigma,}(\U_V)^J$ and $\cS(Y^\vee(\bA))$ respectively such
that
    \begin{equation}    \label{eq:cP_strong_continuity}
    \valP{\cP_{\U'_V}(\varphi,\phi,\lambda)} \ll \aabs{\varphi} \aabs{\phi}_{Y^\vee}, \quad \varphi \in \mathcal{A}_{Q,\sigma}(\U_V)^J, \quad \phi \in \cS(Y^\vee(\bA)).
    \end{equation}

\subsection[The coarse spectral expansion on unitary groups]{The coarse spectral expansion of Liu's trace formula on unitary groups}
\label{sec:coarse_spectral_u}

In this section, we recall the main results of \cite{BLX1} on the regularization of the $\U$-side of Liu's relative trace formula.

Let $\chi \in \fX(\U_V)$. Let $f_V \in \cS(\U_V(\bA))$, $\phi_1, \phi_2 \in \cS(Y^\vee(\bA))$. For $\chi_0 \in \{ \emptyset, \chi\}$, we have the kernel function
\begin{equation*}
    K_{f_V\otimes \phi_1 \otimes \phi_2,\chi_0}(g_1,g_2)=K_{f_V,\chi_0}(g_1,g_2) \theta^\vee(\widetilde{g}_1,\phi_1)\overline{\theta}^\vee(\widetilde{g}_2,\phi_2), \quad g_1,g_2 \in [\widetilde{\U}_V].
\end{equation*}
The assignment $f_V \otimes \phi_1 \otimes \phi_2 \mapsto K_{f_V\otimes \phi_1 \otimes \phi_2,\chi_0}$ extends by continuity to $\cS(\U_{V,+}(\bA))$. For $T$ sufficiently positive a modified kernel $K^T_{f_{V,+},\chi_0}$ is also defined in \cite[Section~7.3]{BLX1}. It is a function on $\widetilde{\U}_V(\bA)^2$.

For fixed $g_1$, the function $K_{f_{V,+},\chi_0}(g_1,\cdot)$ belongs to $\cT([\widetilde{\U}_V],\psi)$. We can therefore apply $\Lambda_u^T$ and we denote by $K_{f_{V,+},\chi_0}\Lambda_u^T(\cdot,\cdot) $ the functions in two variables thus obtained. We now sum up the principal properties of these two modified kernels.

\begin{theorem}
    \label{thm:truncated_prop_u}
    The following properties hold for any $T$ positive enough and any $f_{V,+} \in \cS(\U_{V,+}(\bA))$.
    \begin{enumerate}
        \item \cite[Proposition~7.5]{BLX1} The integral 
        \begin{equation*}
            J_{V,\chi_0}^T(f_+)=\int_{[\U'_V]} \int_{[\U'_V]}K_{f_{V,+},\chi_0}^T(h_1,h_2) \rd h_1 \rd h_2,
        \end{equation*}
        is absolutely convergent. 
        \item \cite[Theorem~7.6]{BLX1} For $T$ sufficiently positive, the function $J^T_{V,\chi_0}(f_{V,+})$ is the
        restriction of an exponential-polynomial function whose purely polynomial part
        is constant. We denote is by $J_{V,\chi_0}(f_{V,+})$. It defines a continuous distribution on $\cS(\U_{V,+}(\bA))$  and we have
            \[
            \sum_{\chi \in \fX(G_{V})} J_{V,\chi}(f_{V,+}) = J_V(f_{V,+}),
            \]
        where the sum is absolutely convergent.
        \item \cite[Section~7.7]{BLX1} The operator $\Lambda_u^T : \cT([\widetilde{\U}_V],\psi) \to \cS^0([\U'_V])$ is continuous. 
        \item \cite[Section~7.8]{BLX1} For any $r>0$, there exists a continuous semi-norm $\aabs{\cdot}$ on $\cS(\U_{V,+}(\bA))$ such that 
        \begin{equation*}
            \valP{J_{V,\chi_0}^T (f_{V,+})-\int_{[\U'_V]} \int_{[\U'_V]} K_{f_{V,+},\chi_0} \Lambda_u^T(h_1,h_2) \rd h_1 \rd h_2 } \leq \aabs{f_{V,+}} e^{-r \aabs{T}}.
        \end{equation*}
        In particular, the absolutely convergent integral
        \begin{equation*}
            \int_{[\U'_V]}\int_{[\U'_V]} K_{f_{V,+},\chi_0} \Lambda_u^T(h_1,h_2) \rd h_1 \rd h_2
        \end{equation*} 
        is asymptotic to an exponential-polynomial in $T$, whose purely polynomial term is a constant that equals $J_{V,\chi_0}(f_{V,+})$.
    \end{enumerate}
\end{theorem}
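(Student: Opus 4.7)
The plan is to adapt the Arthur--Zydor truncation machinery to the Jacobi setting, following the strategy carried out in \cite{BLX1}. The four assertions are tightly interlocked: continuity of $\Lambda_u^T$ underlies absolute convergence, and the combinatorial behaviour of the Arthur truncation functions $\Gamma_Q'$ underlies both the coarse decomposition and the asymptotic comparison with the alternative integral.

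I would begin with assertion (3), the continuity of $\Lambda_u^T : \cT([\widetilde{\U}_V],\psi) \to \cS^0([\U'_V])$. Using the classification of parabolic subgroups of the Jacobi group $J(V)$ from \cite{BLX1}*{Section~4.3} and the alternating sum of constant terms implicit in the definition of $\Lambda_u^T$, the argument is an adaptation of Arthur's classical proof that his truncation kills growth. The standard estimates on constant terms of elements in $\cT_N$ combined with Arthur's lemma on the support of $\Gamma_Q'$ provide the necessary cancellations to give rapid decay on Siegel sets in $[\U'_V]$.

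Given (3), the absolute convergence in (1) follows as soon as one knows that the kernel $K_{f_{V,+},\chi_0}(g_1,g_2)$ lies in $\cT_N([\widetilde{\U}_V \times \widetilde{\U}_V],\psi \boxtimes \overline{\psi})$ for some $N>0$. This is established by combining the standard growth bounds on the automorphic kernel $K_{f_V,\chi_0}$ (as in Theorem~\ref{thm:kernel_spectral_expansion}) with Lemma~\ref{lem:theta_moderate_u}, which controls the two theta factors. Applying $\Lambda_u^T$ in each variable then produces a rapidly decreasing function on $[\U'_V]^2$, which is integrable.

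For (2), the main step, I would apply Arthur's combinatorial identity expressing the characteristic function of the complement of a large Siegel region as an alternating sum of $\Gamma_Q'$'s over standard parabolic subgroups. This allows $J_{V,\chi_0}^T(f_{V,+})$ to be rewritten as a finite sum of terms, each of which is analysed separately. The dominant terms are exponential-polynomials in $T$, whose purely polynomial parts can be extracted by Arthur's polynomial extraction argument; the resulting constant is, by definition, $J_{V,\chi_0}(f_{V,+})$. Continuity in $f_{V,+}$ follows from tracking continuous seminorms through all the estimates, and the sum over $\chi$ converges absolutely because it does so already at the level of the full (non-$\chi$-decomposed) kernel, as a consequence of Theorem~\ref{thm:kernel_spectral_expansion}. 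Finally, (4) would follow by noting that the difference between $J_{V,\chi_0}^T(f_{V,+})$ and its variant with $\Lambda_u^T$ inside the integrals is supported, after Arthur-type manipulation, on regions where $\Gamma_Q'$ vanishes for $T$ large; the remaining contributions decay exponentially in $\aabs{T}$ by standard estimates. The main obstacle is assertion (2): isolating and identifying the polynomial part requires iterated combinatorial manipulations of truncation operators whose parabolic combinatorics is substantially more intricate than the classical case because of the Heisenberg factor built into $\widetilde{\U}_V$.
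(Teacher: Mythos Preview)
This theorem carries no proof in the present paper: each of the four items is explicitly a citation to \cite{BLX1} (Proposition~7.5, Theorem~7.6, \S7.7, \S7.8), and the theorem functions purely as a summary of inputs established in the first paper of the series. There is therefore no ``paper's own proof'' to compare your sketch against here.

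That said, your outline is a plausible high-level description of the Arthur--Zydor style arguments that \cite{BLX1} presumably carries out, but it contains a conflation you should be aware of. The object $K_{f_{V,+},\chi_0}^T$ appearing in item~(1) is the modified kernel constructed in \cite{BLX1}*{\S7.3}, which is \emph{not} the same as $\Lambda_u^T K_{f_{V,+},\chi_0} \Lambda_u^T$; indeed, item~(4) is precisely the statement that these two truncations differ by an exponentially small quantity. Your derivation of (1) from (3) treats them as identical, so the absolute convergence in (1) needs its own argument tailored to the actual definition of $K^T$ (an alternating sum over parabolics with characteristic functions $\hat\tau_P$, in the Zydor style), not the $\Lambda_u^T$ operator. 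Your sketch of (4) is also too vague: the exponential decay does not come from $\Gamma_Q'$ vanishing but from bounding the non-trivial parabolic contributions in the difference, which requires the rapid-decay estimates on constant terms along the unipotent radicals together with the support properties of the truncation functions.
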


The second point of the theorem is the \emph{coarse spectral expansion} of Liu's trace formula on unitary groups.

\subsection{Expansion of relative characters on the $\U$-side}
\label{sec:expansion_spectral_u}
We henceforth assume that $\chi$ is a $(\U_V,\U'_V,\overline{\mu})$-regular cuspidal datum of $\U_V$. We fix $J$ a level of $\U_V$. We now write a spectral expansion of $J_{V,\chi}$ in terms of relative characters.

\subsubsection{The relative character \texorpdfstring{$J_{Q,\sigma}$}{}}
\label{subsec:J_Q_sigma}

By Proposition~\ref{prop:strong_K_basis}, \cite{Lap}*{Theorem~2.2} and \eqref{eq:cP_strong_continuity}, we can define for $\lambda \in  \mathfrak{a}_{Q,\C}^*$ in general position, $f_V \in \cS(\U_V(\bA))^J$ and $\phi_1, \phi_2 \in
\cS(Y^\vee(\bA))$ the relative character
    \begin{equation}    \label{eq:P_abs_conv}
    J_{Q,\sigma}(f_V \otimes \phi_1 \otimes \phi_2,\lambda) =
    \sum_{\varphi \in \mathcal{B}_{Q,\sigma}(J)}
    \mathcal{P}_{\U'_V}(I_Q(\lambda,f)\varphi,\phi_1,\lambda)
    \overline{\mathcal{P}_{\U'_V}(\varphi,\overline{\phi_2},\lambda)}.
    \end{equation}
For fixed $f_V$, $\phi_1$ and
$\phi_2$ it is meromorphic in $\lambda$ and holomorphic in the region $\lambda \in  i
\mathfrak{a}_Q^*$. For fixed $\lambda$, the map $f_V \otimes \phi_1 \otimes \phi_2 \mapsto J_{Q,\sigma}( f_V \otimes \phi_1 \otimes \phi_2,\lambda)$ extends by continuity to $\cS(\U_{V,+}(\bA))^J$. We need to bound it in families.

\begin{prop}    \label{prop:IY_sum_convergence}
There exist continuous semi-norms $\aabs{\cdot}$ on $\cS(\U_V(\bA))^J$ and $\aabs{\cdot}_{Y^\vee}$ on $\cS(Y^\vee(\bA))$ such that for any $f_V \in \cS(\U_V(\bA))^J$ and $\phi_1, \phi_2 \in \cS(Y^\vee(\bA))$
    \begin{equation}    \label{eq:sum_over_weyl_oribts}
    \sum_{(M_Q,\sigma)} \int_{ i \mathfrak{a}_Q^*}
    \valP{J_{Q,\sigma}( f_V \otimes \phi_1 \otimes \phi_2),\lambda} \rd \lambda
    \leq \aabs{f_V} \aabs{\phi_1}_{Y^\vee} \aabs{\phi_2}_{Y^\vee},
    \end{equation}
where $(M_Q,\sigma)$ ranges over a set of representatives of $(\U_V,\U'_V,\overline{\mu})$-regular cuspidal data
of $\U_V$. 
\end{prop}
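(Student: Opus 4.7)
My plan is to combine the bound \eqref{eq:cP_strong_continuity} on the Fourier--Jacobi period with an adaptation of the argument of Proposition~\ref{prop:strong_K_basis} to get integrability in $\lambda$. Since $\U_V$ has only finitely many standard parabolic subgroups $Q$, it suffices to fix $Q$ and prove the bound for the sum over cuspidal $\sigma \in \Pi_{\cusp}(M_Q)$ (representing $(\U_V,\U_V',\overline{\mu})$-regular data), followed by an integral over $\lambda \in i\fa_Q^*$.

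For fixed $Q$, I would apply \eqref{eq:cP_strong_continuity} to the two Fourier--Jacobi periods appearing in \eqref{eq:P_abs_conv}. The statement of \eqref{eq:cP_strong_continuity} is uniform in $\lambda$ over a fixed compact set (as it follows from Lapid's estimates \cite{Lap}*{Theorem~2.2} together with the continuity of $\Lambda_u^T$ and $\theta^\vee$), and the semi-norm $\aabs{\cdot}$ on $\cA_{Q,\sigma}(\U_V)^J$ can be taken of the form $\aabs{\cdot}_{-N,X}$ for some $N>0$ and $X \in \cU(\fg_\infty)$, as in the proof of Proposition~\ref{prop:strong_K_basis}. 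This yields, for $\lambda$ in a fixed compact set,
\begin{equation*}
\valP{J_{Q,\sigma}(f_V \otimes \phi_1 \otimes \phi_2,\lambda)}
\ll \aabs{\phi_1}_{Y^\vee}\aabs{\phi_2}_{Y^\vee}
\sum_{\varphi \in \cB_{Q,\sigma}(J)} \aabs{I_Q(\lambda,f_V)\varphi}_{-N,X}\aabs{\varphi}_{-N,X}.
\end{equation*}

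To handle both the summation over $\sigma$ and the integral over $\lambda$, I would rerun the Casimir argument in the proof of Proposition~\ref{prop:strong_K_basis} verbatim. With $\Delta_R = R \cdot \mathrm{Id} - \Omega_{\U_V} + 2 \Omega_{K_{V,\infty}}$ for $R$ large enough, the operator $\Delta_R$ acts on $\cA_{Q,\sigma_\lambda}(\U_V)^{\tau,J}$ by a scalar whose square dominates $1+\aabs{\lambda}^2+\lambda_\sigma^2+\lambda_\tau^2$, uniformly for $\lambda \in i\fa_Q^*$ (using again \cite{Mu2}*{Lemma~6.1} so that $\lambda_\tau \geq \lambda_\sigma$). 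Replacing $f_V$ by $\mathrm{R}(\Delta_R^{2N'})\mathrm{L}(\Delta^k)f_V$ gains a decay factor $(1+\aabs{\lambda}^2+\lambda_\sigma^2+\lambda_\tau^2)^{-N'}$ in the sum. For $N'$ large enough, \cite{Mu}*{Corollary~0.3} ensures convergence of
\begin{equation*}
\sum_{\sigma} \sum_{\tau \in \widehat{K}_{V,\infty}} \dim(\cA_{Q,\sigma}(\U_V)^{\tau,J})(1+\lambda_\sigma^2+\lambda_\tau^2)^{-N'/2},
\end{equation*}
and simultaneously $(1+\aabs{\lambda}^2)^{-N'/2}$ becomes integrable over $i\fa_Q^*$.

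Putting the estimates together bounds the left-hand side of \eqref{eq:sum_over_weyl_oribts} by $\aabs{f_V}'\aabs{\phi_1}_{Y^\vee}\aabs{\phi_2}_{Y^\vee}$ where $\aabs{\cdot}'$ is a continuous semi-norm on $\cS(\U_V(\bA))^J$ (arising from the right and left actions of $\cU(\fg_\infty)$), and the summation is absolutely convergent as required. The main technical point is that \eqref{eq:cP_strong_continuity} only provides a bound uniform on compact subsets of $i\fa_Q^*$ (not globally in $\lambda$), so the $(1+\aabs{\lambda}^2)^{-N'}$ decay produced by the Casimir trick is essential—the argument is robust because it simultaneously controls the $\sigma$-sum, the $\tau$-sum within each $\sigma$, and the $\lambda$-integral via the same scalar eigenvalue.
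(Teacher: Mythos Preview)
Your approach has a genuine gap. The bound \eqref{eq:cP_strong_continuity} is stated for a fixed cuspidal $\sigma$, and the semi-norm it produces on $\cA_{Q,\sigma}(\U_V)^J$ is obtained by composing the Eisenstein map $\varphi\mapsto E(\varphi,\lambda)$ with $\theta^\vee$, $\Lambda_u^T$, and integration. Your argument needs this semi-norm to be of the form $\aabs{\cdot}_{-N,X}$ with $N,X$ independent of $\sigma$, and with the implicit constant of at most polynomial growth in $\aabs{\lambda}$ on $i\fa_Q^*$, so that the Casimir decay $(1+\aabs{\lambda}^2)^{-N'}$ can absorb it. But ``uniform on compacts'' only gives that the constant is locally bounded in $\lambda$; it says nothing about the growth as $\aabs{\lambda}\to\infty$. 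Concretely, you would need a bound of the shape $\aabs{E(\varphi,\lambda)}_{-N_1,X_1}\ll (1+\aabs{\lambda})^M\aabs{\varphi}_{-N,X}$ uniformly over all cuspidal $\sigma$ on the unitary axis. Such polynomial bounds on cuspidal Eisenstein series are believable, but they are not what Lapid's holomorphicity theorem or \eqref{eq:cP_strong_continuity} assert, and you would have to supply them separately.

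The paper's proof takes an entirely different route that bypasses this issue. It first reduces (by the closed graph theorem) to showing finiteness for each fixed $f_V$; then it factorizes $f_V=f_1*f_2^\vee$ via Dixmier--Malliavin and applies Cauchy--Schwarz to reduce to the positive case $g_i=f_i*f_i^\vee$. For such $g_i$ the double sum--integral over $(\sigma,\lambda)$ is recognized, via the spectral expansion of Theorem~\ref{thm:kernel_spectral_expansion}, as the diagonal value $\Lambda_u^T(K_{g_i,\mathrm{reg}}\,\theta_x^\vee(\phi_i)\overline{\theta}_y^\vee(\phi_i))\Lambda_u^T(h,h)$ integrated over $[\U_V']$; finiteness then follows from the moderate growth of the kernel and the continuity of $\Lambda_u^T$ (Theorem~\ref{thm:truncated_prop_u}). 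The uniform bound in the $\phi_i$ is obtained at the end by the uniform boundedness principle. This ``positivity plus kernel'' argument packages all the needed $\lambda$-decay into the $L^2$-spectral decomposition and never requires pointwise polynomial control on individual Eisenstein series.
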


\begin{proof}
Fix $\phi_1$ and $\phi_2$. We claim that the map $f_V \mapsto \left( ((Q,\sigma),\lambda) \mapsto J_{Q,\sigma}(f_V\otimes\phi_1\otimes\phi_2,\lambda) \right)$, valued in the space of $L^1$ functions with variables $((Q,\sigma),\lambda)$ (given the product of the counting measure and the Lebesgue measure), is continuous. To prove this, it is enough to show that for every $f_V \in \cS(\U_V(\bA))^J$ the sum 
\begin{equation}
\label{eq:selberg_trick_0}
    \sum_{(M_Q,\sigma)} \int_{ i \mathfrak{a}_Q^*}
    \valP{J_{Q,\sigma}(f_V \otimes \phi_1 \otimes \phi_2),\lambda} \rd \lambda
\end{equation}
is finite. Indeed, if it is, then our functional is the pointwise limit of a sequence of continuous forms on $\cS(\U_V(\bA))$, hence it is continuous by the closed graph theorem. 

Let $f_V \in \cS(\U_V(\bA))^J$. By Proposition~\ref{prop:IY_stability}, for $T$ positive enough \eqref{eq:selberg_trick_0} is
\begin{equation*}
    \sum_{(M_Q,\sigma)} \int_{ i \mathfrak{a}_Q^*} 
    \valP{\sum_{\varphi \in \mathcal{B}_{Q,\sigma}(J)} \int_{[\U'_V]^2}   \Lambda_u^T \left( E(I_Q(\lambda,f_V) \varphi,\lambda)
     \theta^\vee(\phi_1) \right)(h_1) \overline{\Lambda_u^T \left( E(\varphi,\lambda)
     \theta^\vee(\overline{\phi_2}) \right)}(h_2) \rd h_i} \rd \lambda.
\end{equation*} 
By Proposition~\ref{prop:strong_K_basis} and Theorem~\ref{thm:truncated_prop_u}, we can switch the last sum and integral, so that this expression is bounded above by
\begin{equation}
\label{eq:selberg_trick}
    \sum_{(M_Q,\sigma)} \int_{ i \mathfrak{a}_Q^*} 
    \int_{[\U'_V]^2}  \valP{\sum_{\varphi \in \mathcal{B}_{Q,\sigma}(J)}  \Lambda_u^T \left( E(I_Q(\lambda,f_V) \varphi,\lambda)
     \theta^\vee(\phi_1) \right)(h_1) \overline{\Lambda_u^T \left( E(\varphi,\lambda)
     \theta^\vee(\overline{\phi_2}) \right)}(h_2)} \rd h_i \rd \lambda.
\end{equation} 
By the Dixmier--Malliavin theorem from \cite{DM}, we may assume that $f_V=f_1 * f_2^{\vee}$, where $f_2^{\vee}(g)=\overline{f_2}(g^{-1})$. By change of basis, it follows from the Cauchy--Schwarz inequality that, up to replacing $\overline{\phi_2}$ by $\phi_2$, \eqref{eq:selberg_trick} is bounded above by the square root of the product over $i=1,2$ of
\begin{equation}
\label{eq:CS_convergence}
    \sum_{(M_Q,\sigma)} \int_{ i \mathfrak{a}_Q^*} \int_{[\U'_V] \times [\U'_V]} \sum_{\varphi \in \mathcal{B}_{Q,\sigma}(J)} \valP{\Lambda_u^T \left( E(I_Q(\lambda,f_i) \varphi,\lambda)
    \cdot \theta^\vee(\phi_i) \right)(h_i)}^2  \rd h_1 \rd h_2 \rd \lambda.
\end{equation}
Set $g_i=f_i*f_i^\vee$. Using again a change of variable and the continuity of $\Lambda_u^T$ (Theorem~\ref{thm:truncated_prop_u}) we have
\begin{equation}
\label{eq:positive_sum}
     \sum_{\varphi} \valP{\Lambda_u^T \left( E(I_Q(\lambda,f_i) \varphi,\lambda)
     \theta^\vee(\phi_i) \right) }^2= \Lambda_u^T \left(\sum_{\varphi } E(I_Q(\lambda,g_i) \varphi,\lambda)
     \theta^\vee(\phi_i) \otimes \overline{E(\varphi,\lambda)
     \theta^\vee(\phi_i)} \right) \Lambda_u^T,
\end{equation}
where we use $\Lambda_u^T F \Lambda_u^T$ to denote truncation in the left and right variables of a function $F$. Moreover, we can switch the integrals by positivity and \eqref{eq:CS_convergence} becomes 
\begin{equation}
\label{eq:Selberg_before_continuity}
    \vol([\U'_V]) \int_{[\U'_V]}  \sum_{(M_Q,\sigma)} \int_{ i \mathfrak{a}_Q^*}\Lambda_u^T \left(\sum_{\varphi \in \mathcal{B}_{Q,\sigma}(J)} E(I_Q(\lambda,g_i) \varphi,\lambda)
     \theta^\vee(\phi_i) \otimes \overline{E(\varphi,\lambda)
     \theta^\vee(\phi_i)} \right) \Lambda_u^T(h,h)   \rd \lambda \rd h.
\end{equation}

Set $ K_{g_i,\mathrm{reg}}=\sum_{\chi } K_{g_i,\chi}$, where the sum ranges over $(\U_V,\U'_V,\overline{\mu})$-regular cuspidal data of $\U_V$. By \cite{BPCZ}*{Lemma~2.10.1.1} and Theorem~\ref{thm:kernel_spectral_expansion} $ K_{g_i,\mathrm{reg}}$ is well-defined, belongs to $\cT([\U_V \times \U_V])$ and we have the spectral expansion
\begin{equation*}
     K_{g_i,\mathrm{reg}}(x,y) \theta^\vee(x,\phi_i) \theta(y,\phi_i) 
     = \sum_{(M_Q,\sigma)} \int_{ i \mathfrak{a}_Q^*}\sum_{\varphi \in \mathcal{B}_{Q,\sigma}(J)} E(x,I_Q(\lambda,g_i) \varphi,\lambda)\theta^\vee(x,\phi_i) \overline{E(y,\varphi,\lambda)
     \theta^\vee(y,\phi_i)}  \rd \lambda. 
\end{equation*}
The double integral $\sum_{(M_Q,\sigma)} \int_{ i \mathfrak{a}_Q^*}$ is absolutely convergent for fixed $x$ and $y$ by Theorem~\ref{thm:kernel_spectral_expansion}. Therefore, for fixed $h \in [\U'_V]$ we have
\begin{align*}
    &\sum_{(M_Q,\sigma)} \int_{ i \mathfrak{a}_Q^*}\Lambda_u^T \left(\sum_{\varphi \in \mathcal{B}_{Q,\sigma}(J)} E(I_Q(\lambda,g_i) \varphi,\lambda)
    \cdot \theta^\vee(\phi_i) \otimes \overline{E(\varphi,\lambda)
    \cdot \theta^\vee(\phi_i)} \right) \Lambda_u^T(h,h)   \rd \lambda  \\
    &= \Lambda_u^T \left(K_{g_i,\mathrm{reg}} \theta^\vee_x(\phi_i)\overline{\theta}^\vee_y(\phi_i) \right)\Lambda_u^T(h,h),
\end{align*}
where we write $\theta_x^\vee(\phi_i)$ (resp. $\theta_y^\vee(\phi_i)$) for $\theta^\vee(\phi_i)$ seen as a function of the first variable (resp. $\theta^\vee(\phi_i)$ as a function of the second variable). Indeed, it follows from the definition of $\Lambda_u^T$ given in \cite[Section~7.5]{BLX1} and from~\cite{Arthur3}*{Lemma~5.1} that for any $F \in \cT([\widetilde{\U}_V \times \widetilde{\U}_V])$, $\Lambda_u^T F \Lambda_u^T F(h,h)$ is a finite sum of left-translates of constant terms of $F$ (independent of $F$) evaluated at $h$. These constant terms are integrals over compact subsets and therefore commute with the absolutely convergent double integral $\sum_{(M_Q,\sigma)} \int_{ i \mathfrak{a}_Q^*}$. This implies that
\begin{equation*}
    \eqref{eq:Selberg_before_continuity} \leq  \vol([\U'_V]) \int_{[\U'_V]} \Lambda_u^T \left(K_{g_i,\mathrm{reg}} \theta^\vee_x(\phi_i)\overline{\theta}^\vee_y(\phi_i) \right)\Lambda_u^T(h,h) \rd h,
\end{equation*}
and this expression is finite by Theorem~\ref{thm:truncated_prop_u}. Therefore, \eqref{eq:selberg_trick_0} is also finite and this concludes the proof of the claim.

It remains to obtain uniform bounds on the $\phi_i$. As the $g_i$ depend only on $f_V$, we see that our bound is uniform in $\phi_1$ and $\phi_2$, in the sense that there exists a continuous semi-norm $\aabs{.}_{Y^\vee}$ on $\cS(Y^\vee(\bA))$ such that for every $f_V$
\begin{equation*}
    \sup_{\substack{\phi_i \in \cS(Y^\vee(\bA)) \\ \aabs{\phi_i}_{Y^\vee} \leq 1}}\sum_{(M_Q,\sigma)} \int_{ i \mathfrak{a}_Q^*}
    \valP{J_{Q,\sigma}( f_V \otimes \phi_1 \otimes \phi_2,\lambda)} \rd \lambda < \infty.
\end{equation*}
The bound \eqref{eq:sum_over_weyl_oribts} now follows from the uniform boundedness principle.
\end{proof}

As a byproduct of the proof, we obtain the following lemma.
\begin{lemma}
\label{lem:intermediate_regular_expansion}
    Let $f_{V,+} \in \cS(\U_{V,+}(\bA))^J$. Then for $T_1$ and $T_2$ sufficiently positive we have
    \begin{equation}
    \label{eq:selberg_equality}
        \int_{ i \mathfrak{a}_Q^*}
    J_{Q,\sigma}(f_{V,+},\lambda) \rd \lambda=\int_{[\U'_V] \times [\U'_V]} \Lambda_u^{T_1} \left(K_{f_{V,+},\chi} \right)\Lambda_u^{T_2}(x,y) \rd x \rd y.
    \end{equation}
\end{lemma}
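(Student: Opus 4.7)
The plan is to expand the right-hand side of \eqref{eq:selberg_equality} using the spectral expansion of the kernel from Theorem~\ref{thm:kernel_spectral_expansion}, and then use the truncation-parameter independence from Proposition~\ref{prop:IY_stability} to identify the resulting expression with $\int_{i\fa_Q^*} J_{Q,\sigma}(f_{V,+},\lambda) \rd \lambda$.

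First, by continuity of both sides in $f_{V,+}$ (Proposition~\ref{prop:IY_sum_convergence} on the left, Theorem~\ref{thm:truncated_prop_u}~(3) on the right) together with density, I reduce to the case $f_{V,+}=f_V \otimes \phi_1 \otimes \phi_2$. Since $\chi$ is $(\U_V,\U'_V,\overline{\mu})$-regular it is in particular $\U_V$-regular, hence regular in the sense of \S\ref{subsec:reg_cusp_data}, so that $\cB_{Q,\chi}(J)=\cB_{Q,\sigma}(J)$ and Theorem~\ref{thm:kernel_spectral_expansion} yields
\begin{equation*}
K_{f_{V,+},\chi}(x,y)=\int_{i\fa_Q^*}\sum_{\varphi \in \cB_{Q,\sigma}(J)} \left( E(x,I_Q(\lambda,f_V)\varphi,\lambda)\theta^\vee(\widetilde{x},\phi_1)\right) \overline{\left( E(y,\varphi,\lambda)\theta^\vee(\widetilde{y},\phi_2)\right)} \rd \lambda.
\end{equation*}

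I then apply $\Lambda_u^{T_1}$ in $x$ and $\Lambda_u^{T_2}$ in $y$, and interchange these operators with the spectral double-integral. As noted in the proof of Proposition~\ref{prop:IY_sum_convergence}, by the definition of $\Lambda_u^T$ in \cite{BLX1}*{Section~7.5} together with \cite{Arthur3}*{Lemma~5.1}, $\Lambda_u^T F$ is a finite alternating sum of left-translates of certain constant terms of $F$, so it commutes with any absolutely convergent integration. Combined with the strong convergence of the spectral expansion guaranteed by Proposition~\ref{prop:strong_K_basis}, this justifies the exchange. The resulting integrand factors as a product of a function of $x$ alone and a (conjugated) function of $y$ alone, so the integral over $[\U'_V] \times [\U'_V]$ separates as a product of two single integrals over $[\U'_V]$.

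Finally, each of these single integrals is of the form~\eqref{eq:regularized_Fourier_Jacobi}. Because $\chi$ is $(\U'_V,\overline{\mu})$-regular, Proposition~\ref{prop:IY_stability} applies and shows that they are independent of the sufficiently positive parameters $T_1$, $T_2$, and equal to $\cP_{\U'_V}(I_Q(\lambda,f_V)\varphi,\phi_1,\lambda)$ and $\overline{\cP_{\U'_V}(\varphi,\overline{\phi_2},\lambda)}$ respectively (using that $\Lambda_u^T$ commutes with complex conjugation, being built from integrations over compact sets). Reassembling, the right-hand side of \eqref{eq:selberg_equality} coincides with $\int_{i\fa_Q^*} J_{Q,\sigma}(f_{V,+},\lambda) \rd \lambda$ by the definition~\eqref{eq:P_abs_conv}. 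The main technical obstacle is justifying the interchange of $\Lambda_u^{T_i}$ and the integrations over $[\U'_V]$ with the spectral double-integral, which is precisely what Proposition~\ref{prop:strong_K_basis} was designed to handle.
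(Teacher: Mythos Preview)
Your argument follows the same route as the paper's: reduce to pure tensors by continuity, insert the spectral expansion of Theorem~\ref{thm:kernel_spectral_expansion}, pass the truncation operators and the $[\U'_V]$-integrations through the spectral integral, and then invoke Proposition~\ref{prop:IY_stability} to identify each factor with the regularized period appearing in~\eqref{eq:P_abs_conv}.

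There is one imprecision. You claim that Proposition~\ref{prop:strong_K_basis} is ``precisely what was designed to handle'' the interchange of $\int_{[\U'_V]^2}$ with the spectral double-integral. That proposition only controls the sum over $\varphi$ at a \emph{fixed} $\lambda$, with uniformity on compact subsets of $\Re(\lambda)$; it says nothing about integrability over the noncompact domain $i\fa_Q^*$, so it cannot by itself justify the Fubini step. The required domination is exactly what is established in the proof of Proposition~\ref{prop:IY_sum_convergence} (the bound on~\eqref{eq:selberg_trick} via Cauchy--Schwarz and the positivity argument culminating in~\eqref{eq:Selberg_before_continuity}), and this is what the paper's own proof invokes. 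Once you replace the reference to Proposition~\ref{prop:strong_K_basis} by Proposition~\ref{prop:IY_sum_convergence} and its proof, your argument is complete and agrees with the paper's.
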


\begin{proof}
As both sides are continuous in $f_+$ by Proposition~\ref{prop:IY_sum_convergence} and Theorem~\ref{thm:truncated_prop_u}, we may assume that $f_+ = f \otimes \phi_1
\otimes \phi_2$. The lemma now follows from the spectral expansion of the kernel from Theorem~\ref{thm:kernel_spectral_expansion} as all the necessary manipulations on the integrals are justified by the dominated convergence theorem using Proposition~\ref{prop:IY_sum_convergence} and its proof.
\end{proof}

\subsubsection{Spectral expansion of \texorpdfstring{$J_{V,\chi}$}{the relative character}}

We come to the main result of this chapter.

\begin{theorem} \label{thm:Jchi_spectral}
Let $\chi \in \mathfrak{X}(\U_V)$ be a $(\U_V,\U'_V,\overline{\mu})$-regular cuspidal datum, represented by
$(M_Q, \sigma)$. Then for all $f_{V,+} \in \cS(\U_{V,+}(\bA))^J$, we have
    \begin{equation}    \label{eq:JPi_spectral}
    J_{V,\chi}(f_{V,+})
    =
    \int_{i \mathfrak{a}^*_Q} J_{Q,\sigma}
    (f_{V,+},\lambda) \rd\lambda.
    \end{equation}
\end{theorem}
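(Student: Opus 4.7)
The plan is to combine Lemma~\ref{lem:intermediate_regular_expansion}, which identifies the right-hand side of~\eqref{eq:JPi_spectral} with a doubly-truncated kernel integral, with Theorem~\ref{thm:truncated_prop_u}(4), which identifies $J_{V,\chi}(f_{V,+})$ as the purely polynomial constant part of the asymptotic expansion of a singly-truncated kernel integral. The remaining task is to bridge these two descriptions.

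More precisely, by Lemma~\ref{lem:intermediate_regular_expansion} applied with $T_1=T_2=T$, the quantity
\[
I_{\mathrm{dbl}}(T) := \int_{[\U'_V]^2} \Lambda_u^{T}(K_{f_{V,+},\chi})\Lambda_u^{T}(x,y)\,\rd x\,\rd y
\]
is constant in $T$ (for $T$ sufficiently positive) and equals the right-hand side of~\eqref{eq:JPi_spectral}. On the other hand, by Theorem~\ref{thm:truncated_prop_u}(4) the singly-truncated integral
\[
I_{\mathrm{sng}}(T) := \int_{[\U'_V]^2} K_{f_{V,+},\chi}\Lambda_u^{T}(h_1,h_2)\,\rd h_1\,\rd h_2
\]
agrees with the exponential-polynomial $J_{V,\chi}^{T}(f_{V,+})$ up to $O(e^{-r\|T\|})$ for any fixed $r>0$, and the purely polynomial part of the latter is the constant $J_{V,\chi}(f_{V,+})$.

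The key technical step is to establish $I_{\mathrm{dbl}}(T)-I_{\mathrm{sng}}(T)=O(e^{-r\|T\|})$ as $T\to\infty$ for some $r>0$. To do so I would expand $\mathrm{Id}-\Lambda_u^T$ in the first variable as an alternating sum, indexed by the proper parabolics $R\in\cF_V$ of $J(V)$, of $\Gamma'_{R'}$-type functions against the relevant constant terms (cf.~\cite[Section~7]{BLX1}). After integrating against $\Lambda_u^T$ in the second variable, the contribution of each proper $R$ reduces, via the mixed-model description of the Weil representation from~\cite[Section~4.4]{BLX1}, to a Langlands-style inner product of truncated Eisenstein series on the general-linear factor of the Levi subgroup of $R$. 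The $(\U_V,\U'_V,\overline{\mu})$-regularity of $\chi$ then forces this inner product to vanish, by exactly the same mechanism as in the proof of Proposition~\ref{prop:IY_stability}, yielding the claimed exponential decay.

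Combining the three ingredients, $I_{\mathrm{sng}}(T)$ is simultaneously equal to the constant $I_{\mathrm{dbl}}(T)$ up to $O(e^{-r\|T\|})$ and asymptotic to an exponential-polynomial whose purely polynomial part is $J_{V,\chi}(f_{V,+})$; matching the purely polynomial constant parts in both asymptotic descriptions yields~\eqref{eq:JPi_spectral}. The main obstacle will be the truncation-comparison estimate above: the presence of the theta series inside the kernel forces one to track the behaviour of the Heisenberg--Weil representation along parabolics via the mixed model, making the bookkeeping technically heavier than in the Bessel counterpart of~\cite{BPCZ}, though structurally parallel to it.
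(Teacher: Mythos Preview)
Your overall structure is sound, but you are working harder than necessary because you couple the two truncation parameters by setting $T_1=T_2=T$. The paper exploits the full strength of Lemma~\ref{lem:intermediate_regular_expansion}: since the doubly-truncated integral is independent of $T_1$ and $T_2$ \emph{separately}, one may freeze $T_2$ and send $T_1\to\infty$. By \cite{BPCZ}*{Lemma~2.10.1.1} and the continuity of $\Lambda_u^{T_2}$ from Theorem~\ref{thm:truncated_prop_u}(3), this limit exists and equals the singly-truncated integral $\int K_{f_{V,+},\chi}\Lambda_u^{T_2}$. Hence that singly-truncated integral is actually \emph{constant} in $T_2$ (not merely constant up to exponential decay), and Theorem~\ref{thm:truncated_prop_u}(4) then identifies this constant with $J_{V,\chi}(f_{V,+})$. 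No parabolic-by-parabolic vanishing analysis is needed.

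Your proposed route---expanding $\mathrm{Id}-\Lambda_u^T$ in the first variable and killing each proper $R$-term via the mechanism of Proposition~\ref{prop:IY_stability}---can be made to work, but it is genuinely harder: the kernel $K_{f_{V,+},\chi}$ is not a single Eisenstein series, so to invoke Langlands' inner-product formula on the general-linear factor you would first have to insert the spectral expansion of Theorem~\ref{thm:kernel_spectral_expansion}, apply the constant-term formula \eqref{eq:constant_term_Eisenstein} term by term, and then justify all the interchanges of sums and integrals. This essentially re-derives Lemma~\ref{lem:intermediate_regular_expansion} inside your difference estimate. Note also a minor inconsistency in your write-up: if the parabolic contributions vanish exactly, then $I_{\mathrm{dbl}}(T)=I_{\mathrm{sng}}(T)$ on the nose, not merely up to $O(e^{-r\|T\|})$; you should state the stronger conclusion. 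The paper's decoupling-and-limit trick buys you all of this for free.
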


\begin{proof}
    By Lemma~\ref{lem:intermediate_regular_expansion}, for $T_1$ and $T_2$ positive enough, we have
    \[
    \int_{[\U'_V] \times [\U'_V]} (\Lambda^{T_1}_u K_{f_{V,+}, \chi} \Lambda^{T_2}_u)
    (x, y) \rd x \rd y =
    \int_{i \fa_Q^*} J_{Q, \sigma}( f_{V,+},\lambda) \rd \lambda.
    \]
As the RHS is independent of $T_1$ and $T_2$, we conclude that so is the LHS. By \cite{BPCZ}*{Lemma~2.10.1.1} and Theorem~\ref{thm:truncated_prop_u}, we have
\begin{equation*}
    \lim_{T_1 \to \infty } \int_{[\U'_V] \times [\U'_V]} (\Lambda^{T_1}_u K_{f_{V,+}, \chi} \Lambda^{T_2}_u)
    (x, y) \rd x \rd y = \int_{[\U'_V] \times [\U'_V]} K_{f_{V,+}, \chi} \Lambda^{T_2}_u
    (x, y) \rd x \rd y
\end{equation*}
The theorem now follows from Theorem~\ref{thm:truncated_prop_u}.
\end{proof}

\section{Global comparison of relative characters for regular cuspidal data}
\label{sec:Global_comparison}

\label{sec:comparison}

In this chapter, we compare the spectral contributions of matching regular cuspidal data in Liu's trace formulae, using the results of \cite{BLX1} and \cite{BLX2}. We then translate this in terms of relative characters in Theorem~\ref{thm:global_identity}.

\subsection{Comparison of the geometric sides}
\label{sec:comparison_geom}

In this section, we recall the main result of \cite{BLX2} on the comparison of the geometric side of Liu's trace formula. These results ultimately rely on a infinitesimal reduction of Liu's trace formula to the Jacquet--Rallis trace formula where the results of \cite{Zhang1}, \cite{Xue1}, \cite{Xue3}, \cite{Yun}, \cite{BP3}, \cite{Zhang3} and \cite{CZ} can be used.

\subsubsection{Local transfer of test functions} Let $\tS$ be a finite set of places of $F$. In \cite{BLX2}*{Section~4.3}, we introduce a notion of matching between test functions in $\cS(\U_+(F_\tS))$ and tuples of elements in $\prod_{V \in \cH_\tS} \cS(\U_{V,+}(F_\tS))$. It is defined as an equality of regular semi-simple orbital integrals along matching orbits. 

We say that a $f_{+,\tS} \in \cS(\U_+(F_\tS))$ is transferable if there exists a collection $(f_{V,+,\tS})_{V \in \cH_\tS} \in \prod_{V \in \cH_\tS} \cS(\U_{V,+}(F_\tS))$ that matches it. We say that a collection $(f_{V,+,\tS})_{V \in \cH_\tS}$ is transferable if there exists a $f_{+,\tS}$ that matches it, and that for $V \in \cH_\tS$ a single $f_{V,+,\tS}$ is transferable if the collection $(f_{V,+,\tS},0,\hdots,0)$ is transferable. We now state the main theorem on transferable functions of \cite{BLX2}. 

\begin{theorem}[\cite{BLX2}*{Theorem~4.3}]
    \label{thm:matching}
    The subset of transferable test functions in $\cS(G_+(F_\tS))$ and $\cS(\U_{V,+}(F_\tS))$ are dense. 
\end{theorem}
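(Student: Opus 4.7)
The plan is to establish density one place at a time. Since $\cS(G_+(F_\tS))$ can be identified with $\widehat{\bigotimes}_{v \in \tS} \cS(G_+(F_v))$ (and similarly on the unitary side), and since the matching condition is local in nature---being an equality of regular semisimple orbital integrals whose transfer factors factor as products over $v \in \tS$---it suffices to prove the density at a single place $v \in \tS$.

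At a single place $v$, I would attack the statement via local smooth transfer. By definition, a test function is transferable precisely when the collection of its regular semisimple orbital integrals lies in the image of the corresponding map on the other side. Hence density of transferable functions is equivalent to density of their orbital integrals in the total space of smooth functions on regular semisimple orbits. This in turn follows once one knows that every orbital integral on one side is realized by a matching function on the other, i.e.\ once one has local smooth transfer in both directions.

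Concretely, following the infinitesimal reduction introduced by Liu, one passes from the group setting to the Lie algebra setting via Harish-Chandra's slice theorem and a Cayley transform near semisimple elements. On the Lie algebra side, after careful bookkeeping of the auxiliary Schwartz factors on $Y^\vee \times Y^\vee$ and $\bA_{E,n}$, the matching for Liu's Fourier--Jacobi RTFs reduces to the familiar Jacquet--Rallis infinitesimal matching. Smooth transfer for the Jacquet--Rallis matching is known by the combined works of Zhang (non-archimedean), Xue, and Chaudouard--Zhang (archimedean and refinements); density of transferable functions on the Lie algebra then propagates back to the group via a partition of unity and a descent along semisimple strata.

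The main obstacle will be handling the auxiliary ``plus'' factors uniformly. Both sides of the correspondence include Heisenberg--Weil and Epstein--Eisenstein data built from Schwartz functions on affine spaces, and one needs these to transfer compatibly with the group-theoretic part. Concretely, the transfer factor must intertwine the natural actions on these vector spaces in a way compatible with the partial Fourier transforms that appear in the definition of orbital integrals in positive corank. Verifying this compatibility---so that the reduction to Jacquet--Rallis is clean and the density propagates through the vector-space factors---is the technical core of the argument, and is particularly delicate at archimedean places where the relevant Schwartz spaces are infinite-dimensional Fr\'echet spaces.
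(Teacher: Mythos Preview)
The paper does not prove this theorem here; it is imported wholesale from \cite{BLX2}*{Theorem~4.3}, and the only hint given about its proof is the sentence in \S\ref{sec:comparison_geom}: ``These results ultimately rely on an infinitesimal reduction of Liu's trace formula to the Jacquet--Rallis trace formula where the results of \cite{Zhang1}, \cite{Xue1}, \cite{Xue3}, \cite{Yun}, \cite{BP3}, \cite{Zhang3} and \cite{CZ} can be used.'' Your outline---reduce to a single place, descend to the Lie algebra via a Cayley-type slice argument, absorb the auxiliary Schwartz factors into the Jacquet--Rallis setup, and invoke the known transfer results there---matches this description and is the correct strategy.

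One point of your write-up is logically muddled and would trip you up if you tried to fill in details. You write that density ``follows once one knows that every orbital integral on one side is realized by a matching function on the other, i.e.\ once one has local smooth transfer in both directions.'' But if full smooth transfer held, \emph{every} function would be transferable and density would be vacuous. At non-archimedean places this is indeed the situation (\cite{Zhang1}), but at archimedean places what is actually available (and what the paper cites) is precisely the density statement itself, namely \cite{Xue3}'s \emph{approximation} of smooth transfer for Jacquet--Rallis, not full transfer. So your reduction should read: after descent, the density statement for Liu's orbital integrals reduces to the density statement for Jacquet--Rallis orbital integrals, and the latter is \cite{Zhang1} (non-archimedean, where it is in fact surjectivity) together with \cite{Xue3} (archimedean). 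Also, the reference you have in mind for the archimedean side is Chaudouard--Zydor \cite{CZ}, not ``Chaudouard--Zhang.''

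Your identification of the genuine technical content---tracking the $\cS(\bA_{E,n})$ and $\cS(Y^\vee)\otimes\cS(Y^\vee)$ factors through the descent so that the reduction to Jacquet--Rallis is clean---is accurate and is indeed where most of the work in \cite{BLX2} lies.
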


\subsubsection{Fundamental lemma}
\label{subsubsec:FL}
For almost every place $v$ of $F$, the situation is unramified in the sense of \cite{BLX2}*{Theorem~4.8}. For these places, we have a fundamental lemma. 

\begin{theorem}[\cite{BLX2}*{Theorem~4.8}]
    \label{thm:FL_FJ}
    For almost every place $v$, the functions $\Delta_{H(F_v)} \Delta_{G'(F_v)} 1_{G_+(\cO_v)}$ and $\Delta_{\U(V)(F_v)}^2 1_{\U_{V,+}(\cO_v)}$ match. Here the $\Delta$ factors are the local constants described in \S\ref{subsec:tamagawa_measure_new}.
\end{theorem}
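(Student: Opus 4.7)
The plan is to reduce the Fourier--Jacobi fundamental lemma to the Jacquet--Rallis fundamental lemma, which is already known in the unramified setting for almost all places. The starting point is to make the matching explicit: both sides unfold into orbital integrals of the characteristic functions weighted by a Schwartz component coming from the extra affine factor ($\Res_{E/F}\mathbf{A}_{E,n}$ on the $G$-side, $Y^\vee \times Y^\vee$ on the $\U$-side). At an unramified place $v$, I would fix a self-dual lattice in $V$ over $\cO_v$ and choose compatible $\cO_v$-structures so that the characteristic functions on both sides are the "obvious" ones, and the Schwartz data can be taken to be indicator functions of the natural lattices in $E_v^n$ and in $Y^\vee(F_v)$ respectively.

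First I would perform a partial Fourier transform in the extra variable to pass to a mixed Schrödinger-type model, in which the Heisenberg--Weil data on the $\U$-side is traded for an integration against an additive character. This is the standard "linearization" trick that converts the Fourier--Jacobi setup into a Jacquet--Rallis-type setup. After this Fourier transform, the orbital integrals on the $\U$-side over $\U_V' \times \U_V'$ acquire an extra integration in one linear variable, and the inducted group identifies with $\U_V \times \U(V \oplus \langle e \rangle)$ for an anisotropic line, i.e., the pair appearing in Jacquet--Rallis for unitary groups. An analogous manipulation on the $G$-side, with the Eisenstein/Epstein series $\Theta$ replaced by its unfolded integral against the additive character $\psi_E$, puts the $G$-side in the Jacquet--Rallis shape $(H \times G', G)$ associated to $\GL_n \subset \GL_{n+1}$.

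Next I would compare the unramified orbital integrals obtained in this way with the Jacquet--Rallis fundamental lemma proved by Yun and Gordon (\cite{Yun} and Julia Gordon's transfer to characteristic zero). The identification of orbits and invariants under the linearization is by now well-understood; combined with the change of variables dictated by the partial Fourier transform, this matches the orbital integrals on the two sides up to an explicit constant. The final step is to chase the normalization: the factors $\Delta_{H(F_v)} \Delta_{G'(F_v)}$ and $\Delta_{\U(V)(F_v)}^2$ account precisely for the conversion between the "geometric" measures used in the Jacquet--Rallis statement and the Tamagawa-compatible measures $\rd g_v$ of \S\ref{subsec:measure_part1} (via \eqref{eq:Tamagawa_facto} and the formula \eqref{eq:delta_U_formula}); one checks the identity term by term using the standard volume computation $\vol(G(\cO_v),\rd_{\psi_v}g_v) = \Delta_{G,v}^{-1}$ from \eqref{eq:Delta}.

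The main obstacle, and the reason this requires genuine work rather than being a direct citation of Yun--Gordon, is the presence of the Weil representation and the Heisenberg group: one must verify that the partial Fourier transform used in the linearization genuinely carries the unramified Heisenberg--Weil vector $1_{Y^\vee(\cO_v)}$ to the Jacquet--Rallis unramified data with the correct normalization at \emph{all} split and inert unramified places simultaneously, and that the resulting $\Delta$ factors appear with the right exponents after accounting for the quadratic character $\eta_{E_v/F_v}$. Subtle sign and character issues (arising from the local splittings of the metaplectic cover determined by $\mu_v^{-1}$) must be tracked, which is where the hypothesis "for almost every place" enters: at the excluded finite set of bad places, the splittings and self-dual lattices are not canonical enough for the computation to be clean.
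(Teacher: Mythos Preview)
This paper does not contain a proof of Theorem~\ref{thm:FL_FJ}; it is stated purely as a citation to \cite{BLX2}*{Theorem~4.8}, and the argument lives entirely in that companion paper. So there is no in-text proof here to compare your proposal against.

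That said, the paper does hint at the strategy: \S\ref{sec:comparison_geom} says that the geometric results of \cite{BLX2} ``ultimately rely on an infinitesimal reduction of Liu's trace formula to the Jacquet--Rallis trace formula where the results of \cite{Zhang1}, \cite{Xue1}, \cite{Xue3}, \cite{Yun}, \cite{BP3}, \cite{Zhang3} and \cite{CZ} can be used.'' Your high-level plan---linearize via a partial Fourier transform, land in a Jacquet--Rallis-type situation, invoke Yun--Gordon, and then track the $\Delta$-normalizations---is consistent with that description. One point of divergence worth flagging: the word ``infinitesimal'' and the citation list (notably \cite{Zhang3}, \cite{CZ}) suggest that the actual reduction in \cite{BLX2} passes to the \emph{Lie-algebra} version of Jacquet--Rallis rather than to the group-level Bessel pair $\U(V)\times \U(V\oplus\langle e\rangle)$ you describe. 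These two routes are related but not identical, and the Lie-algebra route tends to make the Fourier transform step and the transfer-factor bookkeeping cleaner. Your sketch is a reasonable outline of the correct mechanism, but the specific target of the reduction is likely the tangent-space/symmetric-space orbital integrals rather than the corank-one group case.
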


\subsubsection{Global transfer}

We go back to the global setting. We say a test function $f_+ \in \cS(G_+(\bA))$ and a collection of test functions $(f_{V,+})_{V \in \cH}$, where $f_{V,+} \in \cS(\U_{V,+}(\bA))$, match, if there exists a finite set of places $\tS$ of $F$ containing all the Archimedean places and ramified places in $E$, such that:
\begin{itemize}
    \item $f_{V,+} = 0$ for $V \not \in \cH^{\tS}$;
    \item for each $V \in \cH^{\tS}$, $f_{V,+} = (\Delta_{\U(V)}^{\tS})^2 f_{V,+,\tS} \otimes 1_{\U_{V,+}(\cO^\tS)}$ where $ f_{V,+,\tS}
        \in \cS(\U_{V,+}(F_\tS))$;
    \item $f_+ = \Delta_H^{\tS,*} \Delta_{G'}^{\tS,*} f_{+,\tS} \otimes
        1_{G_+(\cO^\tS)}$, where $f_{+,\tS} \in \cS(G_+(F_\tS))$;
    \item $f_{+,\tS}$ and $(f_{V,+,\tS})_{V \in \cH_{\tS}}$ match.
\end{itemize}
By the fundamental lemma of Theorem~\ref{thm:FL_FJ}, matching is independent of the choice of $\tS$. 

\subsubsection{Matching of the geometric sides}

In \cite{BLX1}, we prove that the distributions $I$ and $J_V$ introduced in Theorem~\ref{thm:truncated_prop} and Theorem~\ref{thm:truncated_prop_u} admit geometric expansions. By comparing them for matching test functions, we prove the following result.

\begin{theorem}[\cite{BLX1}*{Theorem~6.7}, \cite{BLX1}*{Theorem~8.7}, \cite{BLX2}*{Theorem~4.10}]
    \label{thm:geom_comparison}
    If $f_+$ and $(f_{V,+})_V$ match, then we have
    \begin{equation*}
        I(f_+)=\sum_{V \in \cH}J_V(f_{V,+}).
    \end{equation*}
\end{theorem}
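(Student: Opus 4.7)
The plan is to run the standard Jacquet--Rallis style comparison: expand both distributions as (regularized) sums of orbital integrals indexed by a common set of geometric invariants, then use the local matching hypothesis on $f_+$ and $(f_{V,+})_V$ to identify these contributions term by term.

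First I would invoke the geometric expansions established in the companion paper \cite{BLX1}. On the general linear side, $I(f_+)$ admits a decomposition
\[
I(f_+) = \sum_{\alpha} I_\alpha(f_+),
\]
where $\alpha$ ranges over a suitable set of geometric classes (rational points of the categorical quotient for the $H \times G'$-action on $G_+$, together with the non--regular-semisimple contributions that come from boundary terms in the truncation and require additional regularization). Similarly, for each skew-Hermitian space $V \in \cH$, we have
\[
J_V(f_{V,+}) = \sum_{\beta} J_{V,\beta}(f_{V,+}),
\]
where $\beta$ ranges over geometric classes for the $\U'_V \times \U'_V$-action on $\U_{V,+}$. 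The continuity of both distributions, together with the bounds proved in \cite{BLX1}, is what guarantees that these expansions converge absolutely and remain valid on the full Schwartz spaces; in particular, as functionals of $f_+$ and the $f_{V,+}$'s, both sides of the desired identity are continuous, so the equality needs only to be verified on a dense subspace.

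Next I would set up the orbit correspondence. Standard invariant theory (carried out in detail in \cite{BLX2}, building on the Jacquet--Rallis picture) furnishes a bijection between the regular semisimple orbit set on the $G$-side and the disjoint union over $V \in \cH$ of the regular semisimple orbit sets on the $\U$-sides, realized as a common set of invariants. Using Theorem~\ref{thm:matching}, we may reduce by density and by continuity to test functions $f_+$ and $(f_{V,+})_V$ that are transferable at a finite set of places $\tS$ and equal to the unit elements outside $\tS$. For such functions, the definition of matching states precisely that the local regular semisimple orbital integrals agree (after a global product, the local factors $\Delta$ from Theorem~\ref{thm:FL_FJ} account for the normalization at unramified places). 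Thus for each regular semisimple $\alpha$ corresponding to $(V,\beta)$ we obtain $I_\alpha(f_+) = J_{V,\beta}(f_{V,+})$, and summing over $V$ takes care of the failure of the local-global Hasse principle.

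The main obstacle is the treatment of the non--regular-semisimple contributions. These terms involve singular orbital integrals that are defined only after a truncation / regularization procedure, and one cannot simply match them pointwise. This is precisely where the infinitesimal reduction mentioned above is used: by a Cayley-type transform and an induction on the semisimple part, the singular terms are expressed in terms of weighted orbital integrals for smaller Levis, which one can compare via the known transfer for the Jacquet--Rallis relative trace formula (using \cite{Zhang1,Xue1,Xue3,Yun,BP3,Zhang3,CZ}). Once this reduction is in place, the singular orbital integrals on both sides are shown to match in the same way as the regular semisimple ones, and summing all contributions produces the equality $I(f_+) = \sum_{V \in \cH} J_V(f_{V,+})$.
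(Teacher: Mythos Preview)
The paper does not give a proof of this statement; it is simply recorded with references to \cite{BLX1}*{Theorems~6.7,~8.7} and \cite{BLX2}*{Theorem~4.10}, where the geometric expansions and the matching of singular orbital integrals are actually carried out. So there is no ``paper's own proof'' to compare against here beyond those citations. Your high-level sketch---geometric expansions on both sides, bijection of regular semisimple orbits via common invariants, and an infinitesimal reduction to the Jacquet--Rallis situation for the singular contributions---is in line with what the introduction and \S\ref{sec:comparison_geom} say about the content of \cite{BLX1} and \cite{BLX2}.

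Two small corrections to your write-up. First, the appeal to Theorem~\ref{thm:matching} (density of transferable functions) is unnecessary and slightly misleading: the hypothesis of the theorem is already that $f_+$ and $(f_{V,+})_V$ match, and by the very definition of global matching given just above the statement, such functions are automatically of the form $\Delta_H^{\tS,*}\Delta_{G'}^{\tS,*} f_{+,\tS}\otimes 1_{G_+(\cO^\tS)}$ and $(\Delta_{\U(V)}^\tS)^2 f_{V,+,\tS}\otimes 1_{\U_{V,+}(\cO^\tS)}$ for a finite $\tS$. Density is used later, in the spectral comparison, not here. Second, your description of the singular-term step (``Cayley-type transform and induction on the semisimple part'') is a generic descent heuristic; what \cite{BLX2} actually does, as indicated at the start of \S\ref{sec:comparison_geom}, is reduce Liu's orbital integrals infinitesimally to Jacquet--Rallis ones so that the existing transfer and fundamental lemma results (\cite{Zhang1,Xue1,Xue3,Yun,BP3,Zhang3,CZ}) apply directly. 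If you want to keep your paragraph as a proof sketch, I would drop the density step and phrase the singular comparison as a reduction to Jacquet--Rallis rather than a generic descent argument.
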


\subsection{Spectral characterization of local transfer}
\label{sec:spectral_local_transfer}

In this section, we present a spectral characterization of local transfer proved in \cite{BLX2}. It is the Fourier--Jacobi version of \cite{BPLZZ}*{Lemma~4.9} and is proved by a comparison of local trace formulae which is analogous to \cite{BP1}. We henceforth fix $v$ a place of $F$.

\subsubsection{Local spherical characters on $G$}
\label{subsec:local_spheri}

Let $\pi_v$ be a smooth irreducible tempered representation of $G(F_v)$. Write $\cW(\pi_v,\psi_{N,v})$ for its space of Whittaker functions with respect to the local component $\psi_{N,v}$ of the automorphic character $\psi_N$ of $N$ defined in \S\ref{subsubsec:whittaker}. It is given an invariant inner-product $\langle \cdot,\cdot \rangle_{\mathrm{Whitt},v}$ by integrating along $N(F_v) \backslash P_{n,n}(F_v)$ (well-defined by \cite{Jac2}), where we recall that $P_{n,n}$ is the product of the mirabolic subgroups of $G_n$. For $W \in \cW(\pi_v,\psi_{N,v})$ and $\Phi \in \cS(E_{n,v})$, we set
\begin{equation}    \label{eq:local_RS}
    Z_v(W, \Phi,\overline{\mu},0) = \int_{N_H(F_v) \backslash H(F_v)}
    W(h) \Phi(e_n h) \overline{\mu_v}(h)
    \valP{\det h}_{E_v}^{\frac{1}{2}} \rd h,
    \end{equation}
and 
\begin{equation}    \label{eq:local_FR}
    \beta_{\eta,v}(W) = \int_{N'(F_v) \backslash P_{n,n}'(F_v)} W(p) \eta_{n+1,v}(p) \rd p.
\end{equation}
That these integrals are well-defined follows from \cite{BLX2}*{Lemma~6.5} and from \cite{BP1}*{Lemma~2.15.1}. If $f \in \cS(G(F_v))$, we write $\pi_v(f)$ for the action of $f$ on $\pi_v$. Then we set
\begin{equation}
    \label{eq:local_spheri_char_gln}
    I_{\pi_v}(f\otimes \Phi)=\sum_W Z_v(\pi_v(f) W, \Phi,\overline{\mu},0) \overline{\beta}_{\eta,v}(W),
\end{equation}
where the sum ranges over an orthonormal basis of $\cW(\pi_v,\psi_{N,v})$. By \cite{BLX2}*{Lemma~6.6} the map $I_{\pi_v}$ extends by continuity to $\cS(G_+(F_v))$.

\subsubsection{Local Fourier--Jacobi periods} \label{subsubsec:local_FJ} Let $V \in \cH_{\{v\}}$. Let $\sigma_v$ be an irreducible tempered representation of $\U_V(F_v)$. We define the \emph{local Fourier--Jacobi period} as the integral
\begin{equation*}
    \cP_{\U'_V,v}(\varphi_1 \otimes \phi_1,\varphi_2 \otimes \phi_2):=\int_{\U'_V(F_v)} \langle \sigma_v(h) \varphi_1,\varphi_2 \rangle \langle \omega_v^\vee(h) \phi_1,\phi_2 \rangle_{L^2} \rd h, \quad \varphi_1, \varphi_2 \in \sigma_v, \quad \phi_1, \phi_2 \in \omega_v.
\end{equation*}
This integral is absolutely convergent by \cite{Xue2}*{Proposition~1.1.1}. If $\varphi_1=\varphi_2=\varphi$ and $\phi_1=\phi_2=\phi$, we simply write $\cP_{\U'_V,v}(\varphi,\phi)$.

\subsubsection{Local spherical characters on $\U_V$}
We define the character on the unitary side. We keep $V$ and $\sigma_v$. Let $f_{V} \in \cS(\U_V(F_v))$ and $\phi_1, \phi_2 \in \cS(Y^\vee(F_v))$. We set 
\begin{equation}
    \label{eq:local_spheri_char_u}
    J_{\sigma_v}(f_V \otimes \phi_1 \otimes \phi_2)=\sum_{\varphi \in \sigma_v} \cP_{\U'_V,v}(\sigma_v(f_V)\varphi \otimes \phi_1,\varphi \otimes \overline{\phi_2}),
\end{equation}
where the sum ranges over an orthonormal basis of $\sigma_v$. By \cite{BLX2}*{Lemma~6.2}, the map $J_{\sigma_v}$ extends by continuity to $\cS(\U_{V,+}(F_v))$.

\subsubsection{Matching of test functions as equality of spherical characters}

We keep $V \in \cH_{\{v\}}$. We denote by $\Temp_{\U_V'}(\U_V(F_v))$ the subset of smooth irreducible tempered representations $\sigma_v$ of $\U_V(F_v)$ such that the dimension of the space of Fourier--Jacobi functionals $\Hom_{\U'_V(F_v)}(\sigma_v \otimes \omega_v^\vee,\C)$ is $1$. 

\begin{theorem}[\cite{BLX2}*{Theorem~7.11}]
    \label{thm:spectral_transfer}
    Let $\kappa_V$ be the constant defined by \cite{BLX2}*{Theorem~7.10}. Let $f_+ \in \cS(G_+(F_v))$ and $f_{V,+} \in \cS(\U_{V,+}(F_v))$. Then $f_+$ and $f_{V,+}$ match if and only if for all $\sigma_v \in \Temp_{\U_V'}(\U_V(F_v))$ we have
    \begin{equation*}
        \kappa_V I_{\BC(\sigma_v)}(f_+)=J_{\sigma_v}(f_{V,+}).
    \end{equation*}
\end{theorem}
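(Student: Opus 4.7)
My plan is to derive this spectral criterion from a comparison of two local relative trace formulae, one on each side, which is the strategy pioneered by Beuzart-Plessis in \cite{BP1} for Bessel periods and adapted in \cite{BPLZZ}*{Lemma~4.9}. On the unitary side, for each $V \in \cH_{\{v\}}$ one sets up a local Liu trace formula
\begin{equation*}
    J_V(f_{V,+}) = \sum_{V'} \int_{\U_{V'}'(F_v) \backslash \U_{V'}(F_v) \times \U_{V'}'(F_v) \backslash \U_{V'}(F_v)} \cdots
\end{equation*}
relating a regularized double integral of the local kernel of $f_{V,+}$ against the local theta function to a geometric expansion in regular semisimple orbital integrals. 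On the general linear side, one does the same with $I(f_+)$ built from the local analogue of the RTF of \S\ref{sec:coarse_spectral_Liu}, involving the local Rankin--Selberg period against $\Theta$ and the Flicker--Rallis period along $G'(F_v)$. The geometric expansions on both sides are, by definition of local matching, equal up to the factor $\kappa_V$ when $f_{+}$ and $f_{V,+}$ match. The main work is then to compute their spectral expansions.

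The spectral expansion on each side should come from first integrating against a Paley--Wiener function on the tempered dual, and then decomposing kernels à la Arthur. On the unitary side, the kernel of $f_{V,+}$ decomposes as a direct integral over tempered irreducible representations $\sigma_v$ of $\U_V(F_v)$, and the resulting spectral expansion is a direct integral of the relative characters $J_{\sigma_v}(f_{V,+})$ defined in \eqref{eq:local_spheri_char_u}; only those $\sigma_v$ in $\Temp_{\U_V'}(\U_V(F_v))$ survive since for the others the Fourier--Jacobi functional vanishes. On the general linear side, the analogous decomposition of the local kernel against the Rankin--Selberg and Flicker--Rallis models produces a direct integral of the characters $I_{\pi_v}(f_+)$ of \eqref{eq:local_spheri_char_gln}, supported on those generic tempered $\pi_v$ for which $\beta_{\eta,v}$ does not vanish identically on the Whittaker model, i.e.\ those in the image of base change from some unitary group. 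Putting the two spectral expansions side by side and using orthogonality of characters on the tempered dual, matching of geometric sides becomes equivalent, for matching $f_+$ and $f_{V,+}$, to
\begin{equation*}
    \kappa_V I_{\BC(\sigma_v)}(f_+) = J_{\sigma_v}(f_{V,+}), \quad \sigma_v \in \Temp_{\U_V'}(\U_V(F_v)),
\end{equation*}
which yields one implication directly.

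For the converse, I would argue by density. Suppose the spectral identity holds for all $\sigma_v \in \Temp_{\U_V'}(\U_V(F_v))$. By the direct implication applied to any pair $(f_+', f_{V,+}')$ that matches (which exists densely by Theorem~\ref{thm:matching}) and differencing, one reduces to showing that if $\kappa_V I_{\BC(\sigma_v)}(f_+) = J_{\sigma_v}(f_{V,+}) = 0$ for all such $\sigma_v$, then the geometric expansion of $f_+$ (restricted to orbits coming from $V$) and that of $f_{V,+}$ vanish identically, hence match the zero function. This requires the injectivity of the spectral transform on the space of smooth compactly supported (or Schwartz) test functions modulo functions with vanishing regular orbital integrals, which is essentially a local Plancherel/Harish-Chandra type statement on each side. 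Finally, the constant $\kappa_V$ is tracked by comparing the measure-theoretic normalizations: it collects the local $\Delta$-factors from \eqref{eq:delta_U_formula}, the Plancherel densities, and the transfer factors between the two geometric expansions, and is pinned down by the unramified fundamental lemma Theorem~\ref{thm:FL_FJ} applied at a single well-chosen place.

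The principal obstacle is the spectral expansion of the local $J_V$ and $I$ themselves: one must both regularize the local analogues of the noncompact integrations (the Flicker--Rallis integral over $G'(F_v)$ and the Fourier--Jacobi integral over $\U_V'(F_v)$) and prove that the resulting distributions decompose as explicit direct integrals of the characters $I_{\pi_v}$ and $J_{\sigma_v}$. This essentially amounts to a local Ichino--Ikeda type identity at the level of distributions, and it is here that the continuity of $I_{\pi_v}$ and $J_{\sigma_v}$ on $\cS(G_+(F_v))$ and $\cS(\U_{V,+}(F_v))$ proved in \cite{BLX2}*{Lemmas~6.2 and 6.6} together with the local multiplicity one theorem of \cite{LiuSun} intervene in a crucial way.
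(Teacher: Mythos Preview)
This theorem is not proved in the present paper; it is quoted from the companion paper \cite{BLX2}*{Theorem~7.11}, and the only information given here about its proof is the one-line remark preceding the statement: ``It is the Fourier--Jacobi version of \cite{BPLZZ}*{Lemma~4.9} and is proved by a comparison of local trace formulae which is analogous to \cite{BP1}.'' Your sketch correctly identifies this method and fills in a plausible outline consistent with \cite{BP1} and \cite{BPLZZ}, so at the level of strategy you are aligned with what the authors indicate.

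One point in your sketch is imprecise. For the converse direction you reduce, after differencing, to showing that vanishing of all the spectral characters forces vanishing of all regular semisimple orbital integrals, and you invoke ``injectivity of the spectral transform \ldots modulo functions with vanishing regular orbital integrals.'' In the actual \cite{BP1}-style argument this is not proved as an abstract injectivity statement; rather, the local trace formula itself (geometric side equals spectral side) is applied with the test function in question paired against an \emph{arbitrary} second test function, and one then separates orbits by choosing this second function appropriately (orbital integrals being locally constant and supported on arbitrarily small neighborhoods of a given regular semisimple orbit). So the mechanism is a two-variable local trace formula plus separation of orbits, not a one-variable Plancherel inversion. Your last paragraph also overstates the role of the fundamental lemma: $\kappa_V$ is determined purely locally in \cite{BLX2}*{Theorem~7.10} from the comparison of local spectral expansions and transfer factors, not by appealing to an unramified place.
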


If $V$ is now a skew-Hermitian space over the global field $F$, we obtain for each place $v$ the constant $\kappa_{V,v}$. If we choose the local measures as in \S\ref{subsec:measure_part1}, it follows from \cite{BLX2}*{Theorem~7.10} that they are almost all equal to $1$ and that $\prod_v \kappa_{V,v}=1$.

\subsection{Comparison of the spectral sides}
\label{sec:comparison_spectral}

We now obtain the desired comparison of relative characters.

\subsubsection{Levels}
Let $\tS$ be a finite set of places of $F$ containing $V_{F,\infty}$ as well as the ramified places of $E/F$. Let $V \in \cH^\tS$. For each finite place $v \in \tS \backslash V_{F,\infty}$, we take $J_v \subset G(F_v)$ and $J_{V,v} \subset \U_{V}(F_v)$ some open compact subgroups. Set
    \begin{equation*}
        J:= \prod_{v \in \tS \setminus V_{F,\infty}} J_v \prod_{v \notin \tS} K_v,  \quad \text{and} \quad J_{V}:= \prod_{v \in \tS \setminus V_{F,\infty}} J_{V,v}' \prod_{v \notin \tS} K_{V,v}.
    \end{equation*}
We denote by $\cS(G_+(\bA))^J$ and
$\cS(\U_{V,+}(\bA))^{J_V}$ the subsets of $J$ bi-invariant (resp. bi-$J_{V}$ invariant) functions, where the compact subgroup acts on the first factor. 

\subsubsection{Multipliers}

Let $\tT$ be the union of $\tS \setminus V_{F,\infty}$ and of the set of all finite places of $F$ that are inert in $E$. Denote by $\cM^{\tT}(G(\bA))$ the algebra of $\tT$-multipliers defined in~\cite{BPLZZ}*{Definition~3.5} relatively to $\prod_{v \not \in \tT} K_v$. Any $m \in \cM^{\tT}(G(\bA))$ gives rise to a continuous linear operator $m*$ of $\cS(G(\bA))^J$. They satisfy the following property: for every irreducible smooth representation $\rho$ of $G(\bA)$, there exists a complex number $m(\rho)$ such that for all $f \in \cS(G(\bA))^J$ we have
\begin{equation}
\label{eq:mult_fundamental}
    \rho(m*f)=m(\rho)\rho(f).
\end{equation}

At the level of restricted tensor products, there exists a finite set of places $\tS'$ containing $V_{F,\infty}$ and disjoint from $\tT$ such that, for any $f = f_{\tS'} \otimes f^{\tS'} \in \cS(G(\bA))^J$, we have
\begin{equation}
\label{eq:multiplier_finite_place}
    m*f= m_{\tS'}*f_{\tS'} \otimes f^{\tS'}
\end{equation}
where $m_{\tS'}*$ is a continuous linear operator of
$\cS(G(F_{\tS'}))^{J_{\tS'}}$ the subalgebra of $\cS(G(F_{\tS'}))$ of Schwartz
functions which are $\prod_{v \in \tS' \setminus V_{F,\infty}} K_v$ bi-invariant. We extend any multiplier $m \in \cM^{\tT}(G(\bA))$ to $\cS(G_+(\bA))^J$ by acting trivially on the last factor of $\cS(G(\bA))^J \otimes \cS(\bA_{E,n})$ and using density.

For $V \in \cH^\tS$, we denote by $\cM^{\tT}(\U_V(\bA))$ the algebra of $T$-multipliers of $\U_V(\bA)$ relatively to $\prod_{v \not \in \tT} K_{V,v}$. We have similar statements as above, and in particular we extend any $m_V \in \cM^\tT(\U_V(\bA))$ to $\cS(\U_{V,+} (\bA))^{J_V}$.

\subsubsection{Global Arthur-packet}
\label{subsec:cuspi_arthur} 
Let $P$ be a standard parabolic subgroup of $G$ with standard Levi factor $M_P$. Let $\pi \in \Pi_{\cusp}(M_P)$ and set $\Pi=I_{P}^{G} \pi$. Assume that $\Pi$ is a $(G,H,\overline{\mu})$-regular Hermitian Arthur parameter and denote by $\chi_0 \in \fX(G)$ the cuspidal datum represented by the pair $(M_P,\pi)$. Let $\Pi_0$ be the discrete component of $\Pi$ (see \S\ref{subsec:reg_herm_art}). Enlarging $\tS$ and shrinking the $J_v$ if necessary, we assume that $\Pi$ admits non-zero $J$-invariant vectors.

For $V \in \cH^\tS$, we denote by $\fX_\Pi(\U_V) \subset \fX(\U_V)$ the set of cuspidal data represented by pairs $(M_{Q},\sigma)$ such that $\Pi$ is a $(V_F \setminus (\tT \cup V_{F,\infty}))$-weak base change of $(Q,\sigma)$ and $\sigma$ is unramified outside of $\tS$. It follows that such a $(M_{Q},\sigma)$ represents a $(\U_V,\U_V',\overline{\mu})$-regular cuspidal datum. Moreover, we have a natural isomorphism
\begin{equation}
\label{eq:bc}
    bc : \fa_{P}^{L_\pi,*} \to \fa_{{Q}}^*.
\end{equation}
Indeed, if $M_Q=G^\sharp \times \U$ where $G^\sharp$ is a product of linear groups and $\U$ a product of two unitary groups, then $G^\sharp \times G^\sharp$ is a factor of $M_P$ so that $\fa_{Q}^* \oplus \fa_{Q}^*$ is a subspace of $\fa_{P}^*$. The map \eqref{eq:bc} is now the inverse
of $x \mapsto(x,-x)$ whose image is $\fa_{P}^{L_\pi,*}$. In particular, $\fa_{P}^{L_\pi,*}$ is isomorphic to $\fa_\Pi^*$ defined in \eqref{eq:fa_Pi}. Via $bc$,
the pullback of the measure on $i\fa_{Q}^*$ is
$2^{\dim(\fa_{P}^{L_\pi,*})}$ times the measure on
$i\fa_{P}^{L_\pi,*}$.

\subsubsection{Comparison of global relative characters}

Let $V \in \cH^\tS$. For $\lambda \in \fa_{M_P,\C}^{L_\pi,*}$ in general position we define a relative character
\begin{equation}
    \label{eq:J_Pi_big}
        J_{V,\Pi}( f_{V,+},\lambda):=\sum_{(M_{Q},\sigma)} J_{Q,\sigma}(f_{V,+},bc(\lambda)), \quad f_{V,+} \in \cS(\U_{V,+}(\bA))^{J_V},
    \end{equation}
where the sum ranges over representatives of classes in $\fX_\Pi(\U_V)$. Note that it is independent of the chosen representatives by the functional equation of Eisenstein series. This sum is absolutely convergent with uniform convergence on compact subsets by Proposition~\ref{prop:strong_K_basis}. In particular, it yields a meromorphic function in $\lambda$ which is holomorphic on $i \fa_{P}^{L_\pi,*}$.

\begin{theorem}
\label{thm:global_identity}
    Let $f_+ \in \cS(G_+(\bA))^J$ be of the form $f_+ = \Delta_H^{\tS,*} \Delta_{G'}^{\tS,*} f_{+,\tS} \times \prod_{v \notin \tS} 1_{G_+(\cO_v)}$ where $f_{+,\tS} \in \cS(G_+(F_\tS))$. For every $V \in \cH^\tS$, let $f_{V,+} \in \cS(\U_{V,+}(\bA))^{J_{V}}$, and for every $V \notin \cH^\tS$ set $f_+^V=0$. Assume that $f_+$ and $(f_{V,+})_V$ match. Then for every $\lambda_0 \in i \fa^{L_\pi,*}_{P}$ we have
    \begin{equation}
    \label{eq:global_identity}
        \sum_{V \in \cH^\tS} J_{V,\Pi}(f_{V,+},\lambda_0) = |S_\Pi|^{-1} I_{\Pi_{\lambda_0}}(f_+).
    \end{equation}
\end{theorem}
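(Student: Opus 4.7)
The plan is to start from the geometric identity $I(f_+) = \sum_{V \in \cH^\tS} J_V(f_{V,+})$ of Theorem~\ref{thm:geom_comparison}, combine it with the coarse spectral decompositions of Theorem~\ref{thm:truncated_prop}(2) and Theorem~\ref{thm:truncated_prop_u}(2) to obtain
\begin{equation*}
    \sum_{\chi \in \fX(G)} I_\chi(f_+) = \sum_{V \in \cH^\tS} \sum_{\chi_V \in \fX(\U_V)} J_{V,\chi_V}(f_{V,+}),
\end{equation*}
and then isolate on both sides only the contributions coming from the Arthur parameter $\Pi$. The main engine for this isolation is the multiplier algebras $\cM^\tT(G(\bA))$ and $\cM^\tT(\U_V(\bA))$ from \cite{BPLZZ}: each multiplier acts by a scalar on an irreducible representation via \eqref{eq:mult_fundamental}, and the idea is to choose multipliers $m, (m_V)_V$ killing all cuspidal data except the datum $\chi_0 = [(M_P,\pi)]$ on the $G$-side and those in $\fX_\Pi(\U_V)$ on each unitary side. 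The delicate requirement is that $m*f_+$ and $(m_V * f_{V,+})_V$ remain matching so that Theorem~\ref{thm:geom_comparison} continues to apply. This compatibility is verified place by place: at non-archimedean places outside $\tS$ one uses the Satake base-change morphism~\eqref{eq:Hecke_BC}, and at the remaining places one invokes the spectral characterization of matching of Theorem~\ref{thm:spectral_transfer}, which reduces the problem to arranging that the multiplier scalars satisfy a compatibility of the form $m_v(\BC(\sigma_v)) = m_{V,v}(\sigma_v)$ on tempered representations.

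Once the isolation is performed, I would apply the fine spectral expansions of Theorem~\ref{thm:Ichi_spectral} on the left and Theorem~\ref{thm:Jchi_spectral} on the right; the regularity hypotheses are satisfied since $\chi_0$ is $(G,H,\overline{\mu})$-regular Hermitian and each $\chi_V \in \fX_\Pi(\U_V)$ is $(\U_V,\U_V',\overline{\mu})$-regular by construction of $\fX_\Pi(\U_V)$. Using the isomorphism $bc$ of~\eqref{eq:bc} together with the measure factor $2^{\dim \fa_P^{L_\pi,*}}$, both sides are then rewritten as integrals over the common parameter space $i\fa_P^{L_\pi,*}$ of $I_{\Pi_\lambda}(f_+)$ and $\sum_V J_{V,\Pi}(f_{V,+},\lambda)$ respectively, weighted by the multiplier scalars. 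A second application of the multiplier machinery using a sequence that concentrates near $\lambda_0$ as an approximate delta function, combined with the absolute convergence provided by Propositions~\ref{prop:strong_K_basis} and~\ref{prop:IY_sum_convergence} needed to justify the dominated convergence argument, yields the desired pointwise identity. The numerical constant $|S_\Pi|^{-1}$ emerges from combining the factor $2^{-\dim \fa_{L_\pi}^*}$ of Theorem~\ref{thm:Ichi_spectral} with the measure comparison for $bc$, matching the formula~\eqref{eq:discrete_SPI_defi}.

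The main obstacle lies in the simultaneous fulfilment of three requirements on the multipliers: (i) spectral isolation of a single Arthur packet, (ii) compatibility with base-change at every place in order to preserve the matching of test functions, and (iii) concentration at the prescribed $\lambda_0 \in i\fa_P^{L_\pi,*}$. For (i) and (iii) at non-archimedean places, the Satake isomorphism provides sufficient freedom; the truly delicate case is the archimedean one, where one must ensure that a multiplier on the $G$-side separating the archimedean infinitesimal character of $\Pi$ admits a companion on each $\U_V$-side that performs the analogous separation for the base-changes of all $\chi_V \in \fX_\Pi(\U_V)$ and no others. This is where the $(G,H,\overline{\mu})$-regular Hermitian hypothesis on $\Pi$ plays its essential role, since it guarantees that the archimedean infinitesimal character of $\Pi$ -- up to the finite ambiguities encoded by $S_\Pi$ -- pins down exactly the cuspidal data that weak-base-change to $\Pi$, and no more.
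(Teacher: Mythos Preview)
Your overall architecture is correct and matches the paper's: start from the geometric identity, use multipliers from $\cM^\tT(G(\bA))$ and $\cM^\tT(\U_V(\bA))$ to kill all cuspidal data except $\chi_0$ and those in $\fX_\Pi(\U_V)$, verify that multipliers preserve matching via the spectral characterization of transfer (Theorem~\ref{thm:spectral_transfer}), and then feed in the fine spectral expansions of Theorems~\ref{thm:Ichi_spectral} and~\ref{thm:Jchi_spectral}. The constant $|S_\Pi|^{-1}$ indeed drops out of $2^{-\dim\fa_{L_\pi}^*}$ combined with the Jacobian $2^{\dim\fa_P^{L_\pi,*}}$ of $bc$.

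Where you diverge from the paper, and where your sketch becomes vague, is the final step: passing from the integrated identity
\[
\int_{i\fa_P^{L_\pi,*}} \Big( |S_\Pi|^{-1} I_{\Pi_\lambda}(m*f_+) - \sum_V J_{V,\Pi}(m_V*f_{V,+},\lambda)\Big)\,\rd\lambda = 0
\]
to the pointwise identity at $\lambda_0$. You propose doing this with a second multiplier that ``concentrates near $\lambda_0$ as an approximate delta function,'' and locate the difficulty at the archimedean places. The paper does something different and more concrete: it convolves with elements $g$ of the spherical Hecke algebra at a finite set $\tS^\circ$ of \emph{split} finite places (where matching is preserved automatically via the base-change morphism~\eqref{eq:Hecke_BC}), so that the integrand picks up the factor $\widehat g(\Pi_{\lambda,\tS^\circ})$. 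By Stone--Weierstrass on the compact unramified unitary dual of $G(F_{\tS^\circ})$, these factors are dense among continuous functions, which forces $\sum_{\mu\in\cT_{\tS^\circ}^{-1}(\Pi_{\lambda,\tS^\circ})} h(\mu)=0$ for all $\lambda$. The set $\tS^\circ$ is then chosen, using strong multiplicity one (\cite{Ram}) together with $G$-regularity and genericity of $\lambda_0$, so that the fibre over $\Pi_{\lambda_0,\tS^\circ}$ is the singleton $\{\lambda_0\}$; one concludes for general $\lambda_0$ by continuity.

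Your approximate-delta idea is not wrong in spirit, but it hides exactly this point: multiplier eigenvalues only see $\Pi_\lambda$ through local invariants (infinitesimal characters, Satake parameters), and these do \emph{not} a priori separate distinct $\lambda$ in $i\fa_P^{L_\pi,*}$ --- there is a finite Weyl-group ambiguity. You would still need an argument of strong-multiplicity-one type to collapse the fibre, which your sketch does not supply. Your claim that the $(G,H,\overline\mu)$-regular Hermitian hypothesis alone makes the archimedean infinitesimal character pin down the cuspidal datum is not what does the work here; that hypothesis is what makes Theorems~\ref{thm:Ichi_spectral} and~\ref{thm:Jchi_spectral} apply, but the separation of $\lambda$-values is genuinely a global (multiplicity-one) input, carried out at split non-archimedean places rather than at infinity.
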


\begin{proof}
    The proof is very similar to~\cite{BPC22}*{Theorem~7.1.6.1} and uses multipliers. We recall below the main steps. The following two lemmas are~\cite{BPC22}*{Lemma~7.1.7.1} and~\cite{BPC22}*{Lemma~7.1.7.2}.

    \begin{lemma}
    \label{lem:existence_mult_gln}
        Let $V \in \cH^\tS$ and $\lambda_0 \in i \fa_{P}^{L_\pi,*}$ be in general position. There exists a multiplier $m_V \in \cM^{\tT}(\U_V(\bA))$ such that we have the following conditions.
        \begin{enumerate}
            \item For all $f_V \in \cS(\U_V(\bA))^{J_V}$, the
                right convolution $m_V*f_V$ sends $L^2([\U_V])$ into
            \begin{equation*}
                \widehat{\bigoplus}_{\chi \in \fX_\Pi(\U_V)} L^2_\chi([\U_V]).
            \end{equation*}
            \item For $(M_{Q},\sigma) \in \fX_\Pi(\U_V)$, we have
                $m_V(I_{Q}^{\U_V} (\sigma_{bc(\lambda_0)}))=1$.
        \end{enumerate}
    \end{lemma}

     \begin{lemma}
         \label{lem:existence_mult_unitary}
        Let $\lambda_0 \in i \fa_{P}^{L_\pi,*}$. There exists a multiplier $m \in \cM^{\tT}(G(\bA))$ such that we have the following conditions.
        \begin{enumerate}
            \item For all $f \in \cS(G(\bA))^{J}$, the right
                convolution $m*f$ sends $L^2(G(F) A_G(\bA) \backslash
                G(\bA))$ into $L^2_{\chi_0}([G])$.
            \item We have $m(\Pi_{\lambda_0})=1$.
        \end{enumerate}
    \end{lemma}

    Let $\lambda_0 \in i \fa_{P}^{L_\pi,*}$ in general position, and take the multipliers $m$ and $m_V$ from Lemma~\ref{lem:existence_mult_gln} and Lemma~\ref{lem:existence_mult_unitary} for $V \in \cH^\tS$. By~\cite{BPLZZ}*{Lemma~4.12}, we may assume that the $m_V$ are base changes of $m$ in the sense of~\cite{BPLZZ}*{Equation~(4.7)}. This implies that for every large enough finite set of places $\tS'$ disjoint from $\tT$ and every $\sigma_{\tS'} \in \Temp_{\U_V'}(\U_V(F_{\tS'}))$ we have
    \begin{equation}
        \label{eq:base_change}
        m_{\tS'}(\BC(\sigma_{\tS'}))=m_{V,{\tS'}}(\sigma_{\tS'}).
    \end{equation}

    \begin{lemma}
    \label{lemma:multiplier_transfer}
        Let $f_+ \in \cS(G_+(\bA))^{J}$ and $f_{V,+} \in \cS(\U_{V,+}(\bA))^{J_{V}}$  for $V \in \cH^\tS$. Set $f_{V,+}=m_V* f_{V,+}=0$ for $V \notin \cH^\tS$, and assume that $f_+$ and $(f_{V,+})_V$ match. Then $m*f_+$ and $(m_V* f_{V,+})_V$ also match.
    \end{lemma}

    \begin{proof}
    For every $v \notin \tS$ that is inert we have $(m*f_+)_v=1_{G_+(\cO_v)}$. It follows from the fundamental lemma (Theorem~\ref{thm:FL_FJ}) that for every $V \notin \cH^\tS$ the function $m*f_+$ matches with $0=m_V*f_{V,+}$. Assume now that $V \in \cH^\tS$. Let $\tS'$ be a finite set of places such that $m$ and the $m_V$ act trivially outside of $\tS'$ as in \eqref{eq:multiplier_finite_place}. It is enough to show that $m$ and $m_V$ preserve the matching at the places in $\tS'$. By Theorem~\ref{thm:spectral_transfer}, if $f_{+,\tS'}$ and $f_{V,+,\tS'}$ match, then they have matching local relative characters. By \eqref{eq:mult_fundamental} and \eqref{eq:base_change}, $m_{\tS'}*f_{+,\tS'}$ and $m_{V,{\tS'}}*f_{V,+,\tS'}$ also have matching relative characters. It follows from Theorem~\ref{thm:spectral_transfer} again that they match, which concludes the proof.
    \end{proof}

By comparing the coarse spectral expansions on the $G$-side (Theorem~\ref{thm:truncated_prop}) and on the $\U$-side (Theorem~\ref{thm:truncated_prop_u}) for the matching test functions $m*f_+$ and $m_V*f_{V,+}$ (Theorem~\ref{thm:geom_comparison}), we obtain the
spectral identity
\begin{equation}
\label{eq:global_transfer_mult}
    \sum_{\chi \in \fX(G)} I_{\chi}(m*f_+)=\sum_{V \in \cH^\tS} \sum_{\chi \in \fX(\U_V)} J_{V,\chi}(m_V*f_+^V),
\end{equation}
By Theorem~\ref{thm:truncated_prop} and Lemma~\ref{lem:existence_mult_gln}
for the LHS, and by Theorem~\ref{thm:truncated_prop_u} and
Lemma~\ref{lem:existence_mult_unitary} for the RHS,
\eqref{eq:global_transfer_mult} reduces to
\begin{equation*}
    I_{\chi_0}(m*f_+)=\sum_{V \in \cH^\tS}  \sum_{\chi \in \fX_\Pi(\U_V)} J_{V,\chi}(m_V*f_{V,+}).
\end{equation*}
By Theorems~\ref{thm:Ichi_spectral} and~\ref{thm:Jchi_spectral}, and an easy
change of variables, \eqref{eq:global_transfer_mult} reads
\begin{equation}
    2^{-\dim(\fa^*_{L_\pi})} \int_{i \fa_{P}^{L_\pi, *}} I_{\Pi_\lambda}(m*f_+) \rd\lambda=2^{-\dim \fa_{P}^{L_\pi,*}}
 \sum_{V \in \cH^\tS} \int_{i \fa_{P}^{L_\pi, *}} J_{V,\Pi}( m_V*f_{V,+},\lambda) \rd\lambda, \label{eq:global_integral}
\end{equation}
where $(M_Q,\sigma)$ is a representative of $\chi \in \fX_\Pi(\U_V)$. Note that all the integrals are absolutely convergent by Theorem~\ref{thm:Ichi_spectral} and Proposition~\ref{prop:IY_sum_convergence}. Moreover, by \eqref{eq:discrete_SPI_defi} we have $|S_\Pi|=2^{\dim(\fa^*_{L_\pi})-\dim \fa_{P}^{L_\pi,*}}$. 

We take $v_1$ and $v_2$ two finite places outside of $\tT$ of distinct residual characteristics. Let $\tS^\circ$ be any finite set of finite places of $F$ containing $v_1$ and $v_2$ and disjoint from $\tT$. Therefore, any place of $\tS^\circ$ splits in $E$ and is unramified in the sense of \S\ref{subsubsec:FL}. Consider the map $\cT_{\tS^\circ}: \lambda \in i \fa_{\Pi}^* \mapsto \Pi_{\lambda,\tS^\circ}$, where we denote by $\Pi_{\lambda,\tS^\circ}$ the product of the local components of $\Pi_\lambda$ at the places in $\tS^\circ$. By our assumptions on $v_1$ and $v_2$, it has uniformly bounded fibers. Note that then $\cT_{\tS^{\circ}}^{-1}(\{\Pi_{\lambda_0,\tS^\circ}\}) \subset \cT_{\{v_1,v_2\}}^{-1}(\{\Pi_{\lambda_0,\tS^\circ}\})$ and that we know that this set is finite. By the strong multiplicity one theorem of \cite{Ram}, for every $\lambda$ such that $\Pi_\lambda \not\simeq \Pi_{\lambda_0}$ there exists a place $v$ outside of $\tT$ such that $\Pi_{\lambda,v} \not\simeq \Pi_{\lambda_0,v}$. Therefore, we can choose $\tS^\circ$ such that for any $\lambda \in \cT_{\tS^\circ}^{-1}(\{\Pi_{\lambda_0,\tS^\circ}\})$ we have $\Pi_\lambda=\Pi_{\lambda_0}$. By \cite{JS}*{Theorem~4.4} and because $\Pi$ is $G$-regular and $\lambda_0$ is in general position, this implies that $\pi_\lambda=\pi_{\lambda_0}$ and moreover $\lambda=\lambda_0$.

Let $\cA_{\tS^\circ}$ be the spherical Hecke algebra $\otimes_{v \in \tS^\circ} \cH(G(F_v),K_v)$. Let $g \in \cA_{\tS^\circ}$. For each $V \in \cH^\tS$, write $g_V$ for the unramified split base change $g_V=\otimes_{v \in \tS^\circ} \BC_{V,v}(g)$ defined in \eqref{eq:Hecke_BC}. It belongs to $\otimes_{v \in \tS^\circ} \cH(\U_V(F_v),K_{V,v})$. Then the functions $f_+*g$ and $f_{V,+}*g_V$ (where we act by convolution in the first variable) still satisfy the requirements of Theorem~\ref{thm:geom_comparison}. By definition we have
\begin{equation*}
    J_{V,\Pi}(m_V * (f_{V,+}*g_V),\lambda)=J_{V,\Pi}(m_V * f_{V,+},\lambda)\widehat{g}(\Pi_{\lambda,\tS^\circ}),
\end{equation*}
where $\widehat{g}(\Pi_{\lambda,\tS^\circ})$ is the scalar by which $g$ acts on the unramified representation $\Pi_{\lambda,\tS^\circ}$. Therefore, if we set
\begin{equation*}
    h : \lambda \in i \fa_{P}^{L_\pi,*} \mapsto I_{\Pi_\lambda}(m*f_+)- |S_\Pi|^{-1} \sum_{V \in \cH^\tS} J_{V,\Pi}(m_V * f_{V,+},\lambda),
\end{equation*}
we see that \eqref{eq:global_integral} reads
\begin{equation}
    \label{eq:int_is_zero}
    \int_{i \fa_{P}^{L_\pi,*}} h(\lambda)\widehat{g}(\Pi_{\lambda,\tS^\circ}) \rd\lambda=0.
\end{equation}
The unramified unitary dual of $G(F_{\tS^\circ})$ is compact for the topology induced from the space of unramified characters of $T(F_{\tS^\circ})$ by the Satake isomorphism, (see \cite{Ta88}) and by the Stone--Weierstrass theorem, the subspace $\{ \widehat{g} \; | \; g \in \cA_{\tS^\circ} \}$ is dense in its space of continuous functions. Because $h$ is absolutely integrable and the fibers of $\cT_{\tS^\circ}$ are uniformly bounded, we see by truncating \eqref{eq:int_is_zero} on compacts that for almost all $\lambda \in i \fa_{P}^{L_\pi,*}$ we have
\begin{equation*}
    \sum_{\mu \in \cT_{\tS^\circ}^{-1}(\{\Pi_{\lambda,\tS^\circ}\})} h(\mu)=0.
\end{equation*}
By continuity of $h$, this holds for all $\lambda$ and, by our choice of $\tS^\circ$, at $\lambda_0$ it reduces to $h(\lambda_0)=0$. Moreover, by Lemma~\ref{lem:existence_mult_gln} and Lemma~\ref{lem:existence_mult_unitary} we have $I_{\Pi_{\lambda_0}}(m*f_+)=I_{\Pi_{\lambda_0}}(f_+)$ and $J_{V,\Pi}(m_V * f_{V,+},\lambda_0)=J_{V,\Pi}(f_{V,+},\lambda_0)$. Our $\lambda_0$ was assumed to be in general position and we conclude the proof by continuity.
\end{proof}

\section{Proof of the Gan--Gross--Prasad conjecture: Eisenstein case}

\label{sec:proof_Eisenstein}

In this chapter, we prove the GGP and Ichino--Ikeda conjectures for $(\U_V,\U_V',\overline{\mu})$-regular Eisenstein series from Theorem~\ref{thm:GGP-IY}.

\subsection{The Gan--Gross--Prasad conjecture for regular Hermitian Arthur parameters}

\label{sec:proof_GGP}

\subsubsection{Statement of the result}

Let $\Pi=I_{P}^{G} \pi$ be a $(G,H,\overline{\mu})$-regular Hermitian Arthur parameter, and let $\lambda \in i\fa_{P}^{L_\pi,*}=i\fa_\Pi^*$. The statement that we have to prove is the following.
\begin{theorem} 
    \label{thm:ggp_intro}
    The following are equivalent:
    \begin{enumerate}
    \item $L(\frac{1}{2}, \Pi_\lambda \otimes  \overline{\mu}) \not=0$; 
    \item there exist $V \in \cH$, a standard parabolic subgroup $Q \subset \U_V$ and $\sigma \in \Pi_{\cusp}(M_Q)$ such that $\Pi$ is the weak base change of $(Q,\sigma)$ and the Fourier--Jacobi period $\cP_{\U_V'}(\cdot,\lambda)$ does not vanish identically on $\cA_{Q,\sigma}(\U_V) \otimes \omega^\vee$.
    \end{enumerate}
    \end{theorem}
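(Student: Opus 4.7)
The plan is to use the global comparison of relative characters from Theorem~\ref{thm:global_identity} as the bridge between the two conditions. The key observation is that the non-vanishing of each relative character encodes exactly one of the equivalent conditions of the theorem: $I_{\Pi_\lambda}$ is non-zero as a distribution on $\cS(G_+(\bA))$ if and only if $L(\tfrac{1}{2},\Pi_\lambda \otimes \overline{\mu}) \neq 0$, and $\sum_V J_{V,\Pi}(\cdot,\lambda)$ is non-zero if and only if the Fourier--Jacobi period is non-vanishing on some $\cA_{Q,\sigma}(\U_V) \otimes \omega^\vee$ with $\Pi$ the weak base change of $(Q,\sigma)$.

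For the analysis of $I_{\Pi_\lambda}$, its expression \eqref{eq:IPi} presents it as a bilinear pairing of Rankin--Selberg values $Z(E(\cdot,\lambda),\Phi,\overline{\mu},0)$ and conjugate Flicker--Rallis values $\overline{\beta_\eta(W^\psi(E(\cdot,\lambda)))}$. Using factorizable test data and the standard local unfolding, the Rankin--Selberg factor computes $L(\tfrac{1}{2},\Pi_\lambda \otimes \overline{\mu})$ times a product of non-vanishing local Rankin--Selberg integrals (for suitable choices of local Whittaker functions and Schwartz data). The Flicker--Rallis factor, via Theorem~\ref{thm:FR_expansion} applied to the contragredient cuspidal datum $\chi^\vee$ which is again Hermitian and $(G,H,\overline{\mu})$-regular, is non-zero on the induced representation because the relevant Asai $L$-functions have poles at $s=1$ by definition of a Hermitian Arthur parameter. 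Consequently $I_{\Pi_\lambda} \not\equiv 0$ is equivalent to $L(\tfrac{1}{2},\Pi_\lambda \otimes \overline{\mu}) \neq 0$.

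For the analysis of $\sum_V J_{V,\Pi}$, each summand $J_{Q,\sigma}(\cdot,\lambda)$ is built, as in \eqref{eq:P_abs_conv}, from squares of Fourier--Jacobi periods $\cP_{\U_V'}(\cdot,\lambda)$ on $\cA_{Q,\sigma}(\U_V) \otimes \omega^\vee$. The finite sum in \eqref{eq:J_Pi_big} involves distinct cuspidal data in $\fX_\Pi(\U_V)$, so the contributions live on orthogonal spectral pieces and cannot cancel. Thus $\sum_V J_{V,\Pi}(\cdot,\lambda) \not\equiv 0$ is equivalent to the existence of $V \in \cH$ and $(Q,\sigma)$ representing a cuspidal datum in $\fX_\Pi(\U_V)$ on which $\cP_{\U_V'}(\cdot,\lambda)$ does not vanish identically. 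By definition of $\fX_\Pi(\U_V)$, such $(Q,\sigma)$ has $\Pi$ as its weak base change. To conclude, since $I_{\Pi_\lambda}$ and the $J_{V,\Pi}(\cdot,\lambda)$ are continuous distributions and transferable test functions are dense by Theorem~\ref{thm:matching}, a non-vanishing relative character on either side is witnessed by a transferable test tuple; the global identity of Theorem~\ref{thm:global_identity} then transfers the non-vanishing to the other side, establishing both implications $(1) \Leftrightarrow (2)$.

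The main obstacle is the precise local analysis showing that non-vanishing of $I_{\Pi_\lambda}$ as a distribution is equivalent to (and not merely implied by) $L(\tfrac{1}{2},\Pi_\lambda \otimes \overline{\mu}) \neq 0$. This requires verifying that one can choose factorizable test data making every local Rankin--Selberg and Flicker--Rallis integral non-zero simultaneously, so that their product reproduces the global $L$-value up to a non-zero constant. The Flicker--Rallis non-vanishing is handled by the Hermitian hypothesis, but the Rankin--Selberg non-vanishing at ramified places requires invoking the local Rankin--Selberg theory of \cite{JPSS83} together with an analysis of the local zeta integrals in the style of \S\ref{subsec:local_spheri}.
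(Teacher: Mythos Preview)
Your overall strategy matches the paper's: reformulate (1) as non-vanishing of $I_{\Pi_\lambda}$ and (2) as non-vanishing of some $J_{Q,\sigma}(\cdot,bc(\lambda))$, then use the global identity of Theorem~\ref{thm:global_identity} together with density of transferable test functions (Theorem~\ref{thm:matching}) to pass between them. Your treatment of the $I_{\Pi_\lambda}$ side is essentially correct, though you overstate the difficulty: the non-vanishing of the Rankin--Selberg factor exactly when $L(\tfrac12,\Pi_\lambda\otimes\overline{\mu})\neq 0$ is in \cite{JPSS83} and \cite{Jac04}, and the non-vanishing of $\beta_\eta$ on $\cW(\Pi_\lambda,\overline{\psi}_N)$ is in \cite{GK75}, \cite{Jac10}, \cite{Kem15}; no new local analysis is needed.

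The real gap is in your direction $(2)\Rightarrow(1)$. You assert that the summands $J_{Q,\sigma}$ in \eqref{eq:J_Pi_big} ``live on orthogonal spectral pieces and cannot cancel.'' This is not justified as stated. All pairs $(Q,\sigma)\in\fX_\Pi(\U_V)$ share the same weak base change $\Pi$, so at every split unramified place their local components coincide; Hecke operators at such places therefore cannot separate them, and there is no obvious spectral projector in $\cS(\U_{V,+}(\bA))$ isolating one $(Q,\sigma)$ while remaining transferable. The paper circumvents this by a \emph{positivity} argument: given $g_{V,+}=g_V\otimes\phi_1\otimes\phi_2$ with $J_{Q,\sigma}(g_{V,+},bc(\lambda))\neq 0$, it replaces it by $f_{V,+}=g_V*g_V^\vee\otimes\phi_1\otimes\overline{\phi_1}$. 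For this choice, a change of basis in \eqref{eq:P_abs_conv} turns every $J_{Q',\sigma'}(f_{V,+},bc(\lambda))$ into a sum of squares $\sum_\varphi|\cP_{\U_V'}(I_{Q'}(\lambda,g_V)\varphi,\phi_1,\lambda)|^2\geq 0$, with strict positivity for $(Q,\sigma)$. Hence $J_{V,\Pi}(f_{V,+},\lambda)>0$, and setting $f_{V',+}=0$ for $V'\neq V$ kills the other terms in the sum over $\cH^\tS$. Density of transfer then yields a matching $f_+$ with $I_{\Pi_\lambda}(f_+)\neq 0$. Without this positivity trick (or an alternative isolation mechanism that you do not supply), your argument does not rule out cancellation in the sum defining $J_{V,\Pi}$.
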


Here, $\cP_{\U'_V}(\cdot,\lambda)$ is the regularized Fourier--Jacobi period defined in Proposition~\ref{prop:IY_stability}. The proof follows from the comparison of trace formulae carried out in Theorem~\ref{thm:global_identity}.

\subsubsection{A reformulation of Theorem~\ref{thm:ggp_intro}}

By continuity, the relative character $I_{\Pi_\lambda}$ defined in \eqref{eq:IPi} is non-zero if and only if it is non-zero on pure tensors, which happens if and only if the Zeta function $Z(\cdot, \cdot, \overline{\mu}, 0)$ and $\beta_\eta$ are non-zero on $\Pi_\lambda \otimes \cS(\bA_{E,n})$ and $\cW(\Pi_\lambda,\overline{\psi}_N)$ respectively. By~\cite{JPSS83} and~\cite{Jac04}, $Z(\cdot, \cdot, \overline{\mu}, 0)$ is non-zero if and only if $L(\frac{1}{2},\Pi_\lambda \otimes \overline{\mu}) \neq 0$. By~\cite{GK75}, ~\cite{Jac10}*{Proposition~5} and~\cite{Kem15}, $\beta_\eta$ is always non-zero on $\cW(\Pi_\lambda,\overline{\psi}_N)$.

On the unitary side, let $V \in \cH$. Let $Q$ be a standard parabolic subgroup of $\U_V$ and let $\sigma \in \Pi_{\cusp}(M_Q)$. Assume that $\Pi$ is the weak base change of $(Q,\sigma)$. It follows from \eqref{eq:P_abs_conv} that $J_{Q,\sigma}(\cdot,bc(\lambda))$ is non-zero if and only if the period $\cP_{\U_V'}(\cdot,\cdot,bc(\lambda))$ is non-zero on $\cA_{Q,\sigma}(\U_V) \otimes \cS(Y^\vee(\bA))$. Therefore, Theorem~\ref{thm:ggp_intro} amounts to the equivalence of the following two statements.
\begin{itemize}
    \item[(A)] The distribution $I_{\Pi_\lambda}$ is non-zero.
    \item[(B)] There exists a $V$, a parabolic subgroup $Q$ of $\U_V$ and $\sigma \in \Pi_{\cusp}(M_Q)$ such that $\Pi$ is the weak base change of $(Q,\sigma)$ and $J_{Q,\sigma}(\cdot,bc(\lambda))$ is non-zero.
\end{itemize}

\subsubsection{Proof of $(A)\implies (B)$}
Let $\tS$ be a sufficiently large finite set of places of $F$ containing the Archimedean and ramified ones. For each $v \in \tS \setminus V_{F,\infty}$ take $J_v \subset G(F_v)$ to be an open compact subgroup, and for each $v \notin \tS$ let $J_v$ be a maximal open compact subgroup of $G(F_v)$ Assume that $I_{\Pi_\lambda}$ is non-zero on $\cS(G_+(\bA))^J$ (which implies that $\Pi_\lambda$ has non-zero $J$-invariant vectors). As $I_{\Pi_\lambda}$ is continuous, by the existence of transfer in the non-Archimedean case and its approximation in the Archimedean case (Theorem~\ref{thm:matching}) and the fundamental Lemma (Theorem~\ref{thm:FL_FJ}), up to enlarging $\tS$ and shrinking $J$ we may choose $f_+ \in \cS(G_+(\bA))^J$ and a collection of $f_{V,+} \in \cS(G_{V,+}(\bA))^{J_{V}}$ for $V \in \cH^\tS$ which satisfy the hypotheses of Theorem~\ref{thm:global_identity} and such that $I_{\Pi_\lambda}(f_+) \neq 0$. The result now follows from the comparison of global relative trace formulae in Theorem~\ref{thm:global_identity}.

\subsubsection{Proof of $(B)\implies (A)$}
Choose $\tS$ as before, $V \in \cH^\tS$, $J_{V,v}' \subset \U_V(F_v)$ an open compact subgroup, $Q$ a standard parabolic subgroup of $\U_V$, $\sigma \in \Pi_{\cusp}(M_Q)^{J_V}$ and $g_{V,+}=g_{V} \otimes \phi_1 \otimes \phi_2 \in \cS(G_{V,+}(\bA)^{J_V}$ such that the cuspidal datum represented by $(M_Q,\sigma)$ belongs to $\mathfrak{X}_\Pi(\U_V)$ and $J_{Q,\sigma}(g_{V,+},bc(\lambda))\neq 0$. Set $f_{V,+}=g_V*g_{V}^\vee \otimes \phi_1 \otimes \overline{\phi_1}$. We get $J_{Q',\sigma'}(f_{V,+},bc(\lambda)) \geq 0$ for all pairs $(Q',\sigma')$ in $\mathfrak{X}_\Pi(\U_V)$, and moreover $J_{Q,\sigma}(f_{V,+},bc(\lambda))> 0$. This yields $J_{V,\Pi}(f_{V,+},\lambda)>0$. For $V' \in \cH^\tS$ different from $V$, set $f_{V',+}= 0$. Then we have 
\begin{equation*}
    \sum_{V' \in \cH^{\tS}} J_{V',\Pi}(f_{V',+},\lambda)>0.
\end{equation*}
As the LHS is continuous, by the existence of non-Archimedean transfer and the approximation of smooth-transfer (Theorem~\ref{thm:matching}) and the fundamental Lemma (Theorem~\ref{thm:FL_FJ}), we may assume that there exists a level $J$ of $G$ and $f_+ \in \cS(G_+(\bA))^{J}$ such that $f_+$ and $(f_{V',+})$ satisfy the hypotheses of Theorem~\ref{thm:global_identity} and $\sum_{V' \in \cH^{\tS}} J_{V',\Pi}(f_{V',+},\lambda) \neq 0$. The result now follows again from the comparison of global relative trace formulae in Theorem~\ref{thm:global_identity}.

\subsection[The Ichino--Ikeda conjecture]{The Ichino--Ikeda conjecture for regular Hermitian Arthur parameters}
\label{sec:proof_II}

\subsubsection{Statement of the result}

We fix the skew-Hermitian space $V$. Let $Q$ a standard parabolic subgroup of $\U_V$, and $\sigma \in \Pi_{\cusp}(M_Q)$. We write the factorization into local components $\sigma=\otimes'_v \sigma_v$ and assume that all the $\sigma_v$ are tempered. For any $\lambda \in i \fa_Q^*$, set $\Sigma_\lambda=I_Q^{\U_V}\sigma_\lambda$, so that for every $v$ the local component $\Sigma_{\lambda,v}$ is tempered. 

We assume that $(Q,\sigma)$ admits a weak base-change $\Pi$ which is a $(G,H,\overline{\mu})$-regular Hermitian Arthur parameter.

Set $\mathrm{As}^{n,n}=\mathrm{As}^{(-1)^{n}} \oplus \mathrm{As}^{(-1)^{n}}$. Consider the global $L$ factor 
\begin{equation*}
    \cL(s,\Sigma_\lambda)=(s-\frac{1}{2})^{-\dim \fa_\Pi^*}\frac{L(s,\Pi_\lambda \otimes \overline{\mu})}{L(s+\frac{1}{2},\Pi_\lambda,\mathrm{As}^{n,n})},
\end{equation*}

\begin{remark}
\label{rk:L_pole}
    If $\pi_1$ and $\pi_2$ are irreducible automorphic representations of $G_{n_1}(\bA)$ and $G_{n_2}(\bA)$ respectively with $n_1+n_2=n$, and if $P$ is a standard parabolic subgroup of $G_n$ with Levi factor $G_{n_1} \times G_{n_2}$, then we have the induction formula
    \begin{equation}
    \label{eq:induction_formula}
        L(s,I_P^{G_n}(\pi_1 \boxtimes \pi_2),\mathrm{As}^{(-1)^n})=L(s,\pi_1 \times \pi_2 ^{c})     L(s,\pi_1,\mathrm{As}^{(-1)^n})L(s,\pi_2,\mathrm{As}^{(-1)^n}).
    \end{equation}
    By \cite{BPCZ}*{Section~4.1.2}, if $\pi$ is an automorphic cuspidal representation which is non-isomorphic to $\pi^*$, then $L(s,\pi,\mathrm{As}^{(-1)^n})$ is regular at $s=1$. Moreover, if $\Pi_0$ is a discrete Hermitian Arthur parameter of $G_{n_0}$ with $n-n_0$ even, induction on \eqref{eq:induction_formula} and Remark \ref{rem:semi_discrete_regular} show that $L(s,\Pi_0,\mathrm{As}^{(-1)^n})$ is also regular at $s=1$. Therefore, our hypotheses on our Hermitian Arthur parameter $\Pi$ imply that the meromorphic function $L(s,\Pi_\lambda,\mathrm{As}^{n,n})$ has a pole of order $\dim \fa_\Pi^*$ at $s=1$. In particular, $\cL(s,\Sigma_\lambda)$ is holomorphic at $s=\frac{1}{2}$. 
\end{remark}

We denote by $\cL(s,\Sigma_{\lambda,v})$ the corresponding local $L$-factor, so that for $\Re(s)$ large enough we have
\begin{equation*}
    \cL(s,\Sigma_\lambda)=(s-\frac{1}{2})^{-\dim \fa_\Pi^*}\prod_{v \in V_F} \cL(s,\Sigma_{\lambda,v}),
\end{equation*}
As $\Sigma_{\lambda,v}$ is tempered, $\cL(s,\Sigma_{\lambda,v})$ has neither a pole nor a zero at $s=\frac{1}{2}$. 

Let $v$ be a place of $F$. In \S\ref{subsubsec:local_FJ}, we have introduced a local period $\cP_{\U_V',v}$ on $\Sigma_{\lambda,v} \otimes \omega^\vee_{v}$. It depends on a choice of measure on $\U_V'(F_v)$, and of inner products on $\Sigma_{\lambda,v}$ and $\omega^\vee_{v}$. We use the measure $\rd g_v$ defined in \S\ref{subsec:measure_part1}. Note that $ \rd g = (\Delta_{\bG}^*)^{-1} \prod_v \Delta_{\bG,v} \rd g_v$. For the inner product on $\Sigma_{\lambda,v}$, we take the one arising from a factorization of $\langle\cdot,\cdot \rangle_{Q,\Pet}$. For the one on $\omega^\vee_v$, we take a factorization of the inner product $\langle \cdot,\cdot \rangle_{L^2}$ on $\omega^\vee$ given by integration on $Y^\vee(\bA)$ using the Tamagawa measure. We now define for $\varphi_v \in \Sigma_{\lambda,v}$ and $\phi_v \in \omega^\vee_{v}$ the local normalized period
\begin{equation*}
    \cP^\sharp_{\U_V',v}(\varphi_v,\phi_v,\lambda):=\cL(\frac{1}{2},\Sigma_{\lambda,v})^{-1}  \cP_{\U_V',v}(\varphi_v,\phi_v,\lambda).
\end{equation*}

The version of the Ichino--Ikeda conjecture that we prove is the following.

\begin{theorem}
    \label{thm:II_regular_intro}
    Let $\Pi$ and $(Q,\sigma)$ be as above. For $\lambda \in i \fa_\Pi^*$ and every non-zero and factorizable $\varphi =\otimes_v' \varphi_v \in \cA_{Q,\sigma}(\U_V)$ and $\phi=\otimes_v' \phi_v \in \omega^\vee$, we have
    \begin{equation}
    \label{eq:IY_regular_intro}
        |\cP_{\U_V'}(\varphi,\phi,\lambda)|^2=|S_\Pi|^{-1} \cL(\frac{1}{2},\Sigma_\lambda) \prod_v \cP^\sharp_{\U_V',v}(\varphi_v,\phi_v,\lambda).
    \end{equation}
\end{theorem}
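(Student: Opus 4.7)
The strategy is to deduce the identity \eqref{eq:IY_regular_intro} from the global relative character identity of Theorem~\ref{thm:global_identity}, factorized over all places using the local spectral characterization of transfer provided by Theorem~\ref{thm:spectral_transfer}. First, I would recast \eqref{eq:IY_regular_intro} as a factorization of the global relative character $J_{Q,\sigma}(\cdot,\lambda)$ from \eqref{eq:P_abs_conv}. On factorizable $f_{V,+}=f_1*f_1^\vee \otimes \phi_1 \otimes \overline{\phi_1}$, the sesquilinear form $J_{Q,\sigma}$ specializes to $|\cP_{\U_V'}(\varphi,\phi,\lambda)|^2$ summed over an orthonormal basis, and by polarization plus a density argument it suffices to prove
\begin{equation*}
    J_{Q,\sigma}(f_{V,+},\lambda) \;=\; |S_\Pi|^{-1}\,\cL(\tfrac{1}{2},\Sigma_\lambda)\,\prod_v J^{\sharp}_{\sigma_v,\lambda_v}(f_{V,+,v})
\end{equation*}
for any factorizable $f_{V,+}$, where $J^{\sharp}_{\sigma_v,\lambda_v}$ is the local character built from $\cP^{\sharp}_{\U_V',v}$ via \eqref{eq:local_spheri_char_u}.

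Next I would choose matching data $(f_+, (f_{V',+})_{V'})$ as in Theorem~\ref{thm:global_identity} and spectrally isolate the single pair $(V,\sigma)$ on the unitary side. Local multiplicity-one for Fourier--Jacobi functionals (as invoked in \S\ref{sec:spectral_local_transfer}), combined with the multiplier technology already used in the proof of Theorem~\ref{thm:global_identity}, allows one to replace the sum $\sum_{V',\sigma'} J_{Q',\sigma'}$ appearing implicitly in $\sum_{V'} J_{V',\Pi}$ by the single term $J_{Q,\sigma}$: other $V'$ contribute trivially since their local packets do not realize the Whittaker model of $\BC(\sigma_v)$, and other cuspidal data mapping to $\Pi$ are ruled out by the rigidity in \S\ref{subsec:cuspi_arthur}. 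The identity of Theorem~\ref{thm:global_identity} then reads $J_{Q,\sigma}(f_{V,+},\lambda)=|S_\Pi|^{-1} I_{\Pi_\lambda}(f_+)$.

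Third, I would factorize $I_{\Pi_\lambda}(f_+)$ into an Euler product. By \cite{JPSS83}, \cite{Jac04} the Rankin--Selberg factor $Z(\cdot,\cdot,\overline{\mu},0)$ unfolds into $L(\frac{1}{2},\Pi_\lambda\otimes \overline{\mu})$ times a convergent product of normalized local integrals $Z_v^{\sharp}$, while by \eqref{eq:defi_beta} and \cite{Fli} the Flicker--Rallis factor $\beta_\eta$ unfolds into $L^*(1,\Pi_\lambda,\mathrm{As}^{n+1,n+1})^{-1}$ times a product of normalized local integrals $\beta_{\eta,v}^{\sharp}$. Collecting, $I_{\Pi_\lambda}(f_+)$ is a global $L$-ratio times $\prod_v I^{\sharp}_{\Pi_{\lambda,v}}(f_{+,v})$. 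Theorem~\ref{thm:spectral_transfer} identifies $\kappa_{V,v} I^{\sharp}_{\BC(\sigma_v)}(f_{+,v})=J^{\sharp}_{\sigma_v,\lambda_v}(f_{V,+,v})$ once the local $L$-factor is folded into the normalization of $\cP^{\sharp}_{\U_V',v}$, and $\prod_v \kappa_{V,v}=1$ by the last line of \S\ref{sec:spectral_local_transfer}. Matching the assembled global $L$-factor with $\cL(\frac{1}{2},\Sigma_\lambda)$ using Remark~\ref{rk:L_pole} yields the desired formula.

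The main obstacle is the precise bookkeeping of $L$-factors and measure constants: one must match the residue $L^*(1,\Pi_\lambda,\mathrm{As}^{n+1,n+1})$ arising globally from Flicker--Rallis with the denominator $L(1,\Pi_\lambda,\mathrm{As}^{n,n})$ of $\cL(s,\Sigma_\lambda)$, absorbing the pole of order $\dim\fa_\Pi^*$ cancelled by the $(s-\tfrac{1}{2})^{-\dim\fa_\Pi^*}$ factor of Remark~\ref{rk:L_pole}; track the $2$-powers between $|S_\Pi|=2^{\dim\fa_\Pi^*-\dim\fa_P^{L_\pi,*}}$ from \eqref{eq:discrete_SPI_defi}, the factor $2^{-\dim\fa_{L_\pi}^*}$ in Theorem~\ref{thm:Ichi_spectral}, and the measure distortion $2^{\dim\fa_P^{L_\pi,*}}$ in the identification $bc$ of \eqref{eq:bc}; and ensure the local normalized characters $J^{\sharp}_{\sigma_v,\lambda_v}$ are nonvanishing on unramified data (which is the content of the unramified Ichino--Ikeda computation of \cite{Boi} invoked in \S\ref{subsec:global_II}). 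A secondary technical point is justifying the extraction of pure-tensor values from the character identity, which uses the continuity estimate \eqref{eq:cP_strong_continuity} together with the Dixmier--Malliavin trick already deployed in Proposition~\ref{prop:IY_sum_convergence}.
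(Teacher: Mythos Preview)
Your proposal follows the same approach as the paper's proof: recast \eqref{eq:IY_regular_intro} as a character identity for $J_{Q,\sigma}$, apply Theorem~\ref{thm:global_identity}, factorize $I_{\Pi_\lambda}$ as an Euler product, and match the local factors via Theorem~\ref{thm:spectral_transfer} using $\prod_v\kappa_{V,v}=1$. The architecture is right; a few of your mechanisms and worries are off.

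\emph{Isolation.} You need no further multipliers beyond Theorem~\ref{thm:global_identity}. Other spaces $V'\neq V$ are eliminated by simply taking $f_{V',+}=0$ (Theorem~\ref{thm:matching} still supplies a matching $f_+$); your Whittaker-model argument is not the mechanism. Within the remaining sum over $(M_{Q'},\sigma')\in\fX_\Pi(\U_V)$, all candidates share the same local $L$-packet at every place, and the local GGP conjecture (\cite{GI2}, \cite{Xue6}) forces at most one member of a packet to support a nonzero Fourier--Jacobi functional, collapsing the sum to $J_{Q,\sigma}$.

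\emph{$L$-factor bookkeeping.} The mismatch you flag between $\mathrm{As}^{n+1,n+1}$ and $\mathrm{As}^{n,n}$ is resolved by the identity $L(s,\Pi_\lambda,\mathrm{Ad})=L(s,\Pi_\lambda,\mathrm{As}^{n+1,n+1})\,L(s,\Pi_\lambda,\mathrm{As}^{n,n})$: the $\mathrm{As}^{n+1,n+1}$ contribution from $\beta_\eta$ cancels against the adjoint $L$-function coming from the Petersson normalization (\cite{BPCZ}*{Theorem~8.1.2.1}), leaving exactly $L^{\tS,*}(1,\Pi_\lambda,\mathrm{As}^{n,n})$ in the denominator. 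The $2$-powers are already absorbed into Theorem~\ref{thm:global_identity} and require no separate accounting here. The unramified computation on the unitary side is \cite{Xue2}*{Proposition~1.1.1}, not \cite{Boi}.

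\emph{From characters to pure tensors.} The passage back to \eqref{eq:IY_regular_intro} is via \cite{Zhang2}*{Lemma~1.7}: once $\prod_{v\in\tS}J_{\Sigma_{\lambda,v}}$ is nonzero (guaranteed by \cite{Xue2}*{Proposition~1.1.1} when every $\Sigma_{\lambda,v}\in\Temp_{\U_V'}(\U_V(F_v))$; otherwise both sides of \eqref{eq:IY_regular_intro} vanish trivially), the character identity and the pointwise identity are equivalent. Dixmier--Malliavin is not needed.
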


\begin{remark}
    It follows from the unramified Ichino--Ikeda conjecture \cite{Xue2}*{Proposition~1.1.1} that almost all factors in the RHS are equal to $1$.
\end{remark}

The proof once again follows from the comparison of trace formulae carried out in Theorem~\ref{thm:global_identity}.

\subsubsection{Factorization of relative characters on the $G$-side}

Let $\varphi \in \Pi$ and $\lambda \in i \fa_P^*$. Write $W^\psi(E(\varphi,\lambda))=\otimes_v W_v$ where $W_v \in \cW(\Pi_{\lambda,v},\psi_{N,v})$ satisfies $W_v(1)=1$ for almost all $v$. Recall that we have equipped these local Whittaker spaces with an inner product $\langle \cdot,\cdot \rangle_{\mathrm{Whitt},v}$ in \S\ref{subsec:local_spheri}. By~\cite{BPCZ}*{Theorem~8.1.2.1}, there exists $\tS$ a finite set of places of $F$ such that
\begin{equation*}
    \langle \varphi,\varphi \rangle_{P,\mathrm{Pet}}=(\Delta_{G}^{\tS,*})^{-1}L^{\tS,*}(1,\Pi_{\lambda},\mathrm{Ad}) \prod_{v \in \tS} \langle W_v,W_v \rangle_{\mathrm{Whitt},v}.
\end{equation*}
Moreover, up to enlarging $\tS$, by the unramified computations of \cite{Zhang2}*{Section~3} and \cite{JS}, and by \cite{Fli}, we have for $\Phi=\otimes_v \Phi_v \in \cS(\bA_{E,n})$ 
\begin{align*}
     Z(W^\psi(E(\varphi,\lambda)), \Phi, \overline{\mu}, 0)&=(\Delta_{G}^{\tS,*})^{-1}(\Delta_H^{\tS,*})^{-1} L^{\tS}(\frac{1}{2},\Pi_\lambda \otimes \overline{\mu})\prod_{v \in \tS} Z_v(W_v,\Phi_v,\overline{\mu},0), \\
     \beta_\eta(W^\psi(E(\varphi,\lambda)))&=(\Delta_{G'}^{\tS,*})^{-1}L^{\tS,*}(1,\Pi_{\lambda},\mathrm{As}^{n+1,n+1}) \prod_{v \in \tS} \beta_{\eta,v}(W_v).
\end{align*}
It now follows from the equality of $L$ functions $L(s,\Pi_{\lambda},\mathrm{Ad})=L(s,\Pi_{\lambda},\mathrm{As}^{n+1,n+1})L(s,\Pi_{\lambda},\mathrm{As}^{n,n})$ and from the definition of the local spherical character in \eqref{eq:local_spheri_char_gln} that for a factorizable test function $f_+= (\Delta_H^{\tS,*} \Delta_{G'}^{\tS,*}) \prod_{v \in \tS}f_{+,v} \times \prod_{v \notin \tS} 1_{G_+(\cO_v)}$ we have
\begin{equation}
\label{eq:global_facto}
    I_{\Pi_\lambda}(f_+)=\frac{L^{\tS}(\frac{1}{2},\Pi_\lambda \otimes \overline{\mu})}{L^{\tS,*}(1,\Pi_\lambda,\mathrm{As}^{n,n})} \prod_{v \in \tS} I_{\Pi_\lambda,v}(f_{+,v}).
\end{equation}

\subsubsection{Unramified computations of spherical characters on unitary groups}

Up to enlarging $\tS$, by the unramified local Ichino--Ikeda conjecture \cite{Xue2}*{Proposition~1.1.1}, for $v \notin \tS$ and $f_{V,+,v}=f_{V,v} \otimes \phi_{1,v} \otimes \phi_{2,v}=1_{G_{V,+}(\cO_{F,v})}$ we have 
\begin{equation}
\label{eq:unram_J}
    J_{\Sigma_{\lambda,v}}(f_{V,+,v})=\Delta_{\U(V),v}^{-2} \frac{L(\frac{1}{2},\Pi_{\lambda,v} \otimes \overline{\mu})}{L(1,\Pi_{\lambda,v},\mathrm{As}^{n,n})}.
\end{equation}
Note that the square factor $\Delta_{\U(V),v}^{-2}$ comes from the fact that if $\varphi^\circ$ is a spherical vector in $\Sigma_{\lambda,v}$, we have $\Sigma_{\lambda,v}(f_{V,v}) \varphi^\circ= \Delta_{\U(V),v}^{-2} \varphi^\circ$ by our choice of measure as $\vol(\U_V(\cO_v),\rd g_v)=\Delta_{\U(V),v}^{-2}$.

\subsubsection{End of the proof}

If there exists a place $v \in \tS$ such that $\Sigma_{\lambda,v}$ doesn't have any non-zero continuous $\U_V'(F_v)$-invariant functional, then both sides of \eqref{eq:IY_regular_intro} are automatically zero. We now assume that it is not the case, i.e. that for all $v$ we have $\Sigma_{\lambda,v} \in \Temp_{\U_V'}(\U_V(F_v))$. It follows from \cite{Xue2}*{Proposition~1.1.1} that the product of distributions $\prod_{v \in \tS} J_{\Sigma_{\lambda,v}}$ doesn't vanish identically. By the unramified computation \eqref{eq:unram_J} and by the same argument as~\cite{Zhang2}*{Lemma~1.7}, Theorem~\ref{thm:II_regular_intro} is now equivalent to the following assertion: for every factorizable test function $f_{V,+} \in \cS(G_{V,+}(\bA))$ of the form $f_{V,+}=(\Delta_{\U(V)}^{\tS})^2 \prod_{v \in \tS} f_{V,+,v} \times \prod_{v \notin \tS} 1_{G_{V,+}(\cO_v)}$, we have
\begin{equation}
\label{eq:reformulation}
    J_{Q,\sigma}(f_{V,+},bc(\lambda))=\valP{S_\Pi}^{-1} \frac{L^{\tS}(\frac{1}{2},\Pi_\lambda \otimes \overline{\mu})}{L^{\tS,*}(1,\Pi_\lambda, \mathrm{As}^{n,n})}\prod_{v \in \tS}J_{\Sigma_{\lambda,v}}(f_{V,+,v}).
\end{equation}

Let $f_{V,+}$ be as in \eqref{eq:reformulation}. For each $V' \neq V$, set $f_{V',+}=0$. By Theorem~\ref{thm:matching} and because both sides of \eqref{eq:reformulation} are continuous, we may assume that for all $v \in \tS$ there exists $f_{+,v} \in \cS(G_+(F_v))$ such that $f_{+,v}$ matches with $(f_{V',+,v})_{V'}$. We now set $f_+= (\Delta_H^{\tS,*} \Delta_{G'}^{\tS,*}) \prod_{v \in \tS}f_{+,v} \times \prod_{v \notin \tS} 1_{G_+(\cO_v)}$. By Theorem~\ref{thm:global_identity}, we get
 \begin{equation}
 \label{eq:J_Pi=I_Pi}
 \sum_{(M_{Q'},\sigma') \in \fX_\Pi(\U_V)} J_{Q',\sigma'}(f_{V,+},bc(\lambda))= |S_\Pi|^{-1} I_{\Pi_\lambda}(f_+).
\end{equation}
By definition $J_{Q',\sigma'}$ is zero unless for every place $v$ the dimension of the space of Fourier--Jacobi functionals $\Hom_{\U'_V(F_v)}(I_{Q'}^{G_{V}} \sigma'_v \otimes \omega^\vee_v,\C)$ is $1$. By the local Gan--Gross--Prasad conjecture from \cite{GI2} and \cite{Xue6}, there is at most one element in each $L$-packet such that this is the case. But by the compatbility of base change with respect to the local Langlands correspondence, all the $I_{Q'}^{G_{V}} \sigma'_v$ arising from \eqref{eq:J_Pi=I_Pi} must be in the same $L$-packet at all places $v$. Therefore, all the terms in this sum must disappear except possibly $J_{Q,\sigma}(f_{V,+},bc(\lambda))$, so that \eqref{eq:J_Pi=I_Pi} reduces to
\begin{equation}
\label{eq:global_equality_characters}
    J_{Q,\sigma}(f_{V,+},bc(\lambda)) = |S_\Pi|^{-1} I_{\Pi_\lambda}(f_+).
\end{equation}
By Theorem~\ref{thm:spectral_transfer}, for all place $v$ there exist constants $\kappa_{V,v} \in \C^\times$ such that $\prod_{v \in \tS} \kappa_{V,v}=1$ and
\begin{equation}
\label{eq:local_equality_characters}
J_{\Sigma_\lambda,v}(f_{V,+,v})=\kappa_{V,v}I_{\Pi_{\lambda,v}}(f_{+,v}).
\end{equation}
Therefore, \eqref{eq:reformulation} follows from the global equality \eqref{eq:global_equality_characters}, the factorization \eqref{eq:global_facto} and the local equality \eqref{eq:local_equality_characters}. This concludes the proof of Theorem~\ref{thm:II_regular_intro}.

\part{Fourier--Jacobi periods in positive corank}
\label{part:positive}

In Part~\ref{part:positive}, we prove the GGP and Ichino--Ikeda conjectures for Fourier--Jacobi periods in positive corank (Theorems~\ref{thm:GGP} and \ref{thm:II}). The bulk of our work is devoted to the proof of unfolding identities. For local periods, it is carried out in \S\ref{sec:local_FJ}, while for the global ones it is the focus of \S\ref{sec:global_FJ}. We finally explain in \S\ref{sec:proofs_positive} how this reduces our main results to Theorem~\ref{thm:GGP-IY} proved in Part~\ref{part:corank_zero}.

\section{Groups and representations}

In this section, we describe precisely the setting of the Gan--Gross--Prasad conjecture in arbitrary corank. We will freely use the notations and notions introduced in \S\ref{sec:notation} for the corank-zero case. 

\subsection{Fourier--Jacobi groups}

\subsubsection{Skew-Hermitian spaces, unitary groups} \label{subsubsec:parabolic_subgroup}  Let $(V,q_V)$ be a $n$-dimensional nondegenerate skew-Hermitian space over $E/F$. Let $W \subset V$ be a subspace of dimension $m$ such that $n=m+2r$. We will refer to the integer $r$ as the \emph{corank} of the pair $W \subset V$. We will also always assume that $W^\perp$ is split. This means that there exists a basis $\{ x_i, \; x_i^*  \; | \; i=1,\hdots,r \}$ of $W^\perp$ as an $E$-vector space such that for all $1 \leq i,j \leq r$ we have
\begin{equation}
\label{eq:hyperbolic}
    q_V(x_i,x_j)=0, \quad q_V(x_i^*, x_j^*)=0, \quad q_V(x_i,x_j^*)= \delta_{i,j}.
\end{equation}
Set $X=\mathrm{span}_E(x_1,\hdots,x_r)$ and $X^*=\mathrm{span}_E(x_1^*,\hdots,x_r^*)$, so that $V=X \oplus W \oplus X^*$. We will also always assume that the corank $r$ is greater than $1$.

We will identify the unitary group $\U(W)$ as the subgroup of $\U(V)$ of $g \in \U(V)$ that act by the identity on $W^\perp$. Furthermore, we define the following products
\begin{equation*}
    \U_V:=\U(V) \times \U(V), \quad {\cU_W}:=\U(V) \times \U(W), \quad \U_V':=\U(V) \subset \U_V,
\end{equation*}
where the last embedding is the diagonal injection.

Let $P(X)=M(X)N(X)$ be the maximal parabolic subgroup of $\U(V)$ stabilizing $X$, where $M(X)$ is the Levi subgroup stabilizing $X^*$ and $N(X)$ is the unipotent radical of $P(X)$. We identify $M(X)$ with $G_r \times \U(W)$ by restricting to $X \oplus W$, where we recall that $G_r=\Res_{E/F} \GL_r$. We can write elements of $P(X)$ as $m(a) g_W  n(b) n(c)$ where $g_W \in
\U(W)$, $a \in \GL(X)$, $b \in \Hom(W, X)$, $c \in \mathrm{Herm}(X^*, X)$, and
    \[
    m(a) = \begin{pmatrix} a \\ & 1_{W} \\ && (a^*)^{-1} \end{pmatrix},
    \quad
    n(b) = \begin{pmatrix} 1_{X} & b & -\frac{1}{2} b b^* \\
    & 1_{W} & -b^* \\ && 1_{X^*} \end{pmatrix},
    \quad
    n(c) = \begin{pmatrix} 1_{X} & 0 &c \\ & 1_{W} & 0 \\
    && 1_{X^*} \end{pmatrix},
    \]
where we define
    \begin{equation}
        \label{eq:Herm_defi}
    \mathrm{Herm}(X^*, X) = \{ c \in \Hom(X^*, X) \mid c^* = -c \}.
    \end{equation}
Here $a^* \in \GL(X^*)$, $b^* \in \Hom(X^*, W)$ and $c^* \in
\Hom(X^*, X)$ are defined by
    \[
    q( ax, x^*)_{V} = q(x, a^* x^* )_{V}, \quad
    q(b v, x^*)_{V} = q( v, b^* x^*)_V, \quad
    q( c x^*, y^* )_V = q( x^*, c^* y^* )_V,
    \]
for $v \in V$, $x \in X$ and $x^*, y^* \in X^*$.

The subgroup of $\U(V)$ of $g \in \U(V)$ stabilizing $X$ and $X^*$ and being trivial on $W$ is identified with $G_r$ by restricting to $X$. Let $(T_r,B_r)$ be the standard Borel pair of diagonal and upper triangular matrices of $G_r$ with respect to $(x_1,\hdots,x_r)$. We denote by $P_r$ the mirabolic subgroup of $G_r$

For each $1 \leq k \leq r$, we embed the group $G_k$ into $G_r$ as the group of automorphisms of $\mathrm{span}_E(x_1,\hdots,x_k)$. We have the standard Borel pair $(T_k,N_k)$.

\subsubsection{Heisenberg, Jacobi and Fourier--Jacobi groups}
We take $\bW=Y\oplus Y^\vee$ a polarization of the symplectic space $(\bW,\Tr_{E/F} \circ q_V)$. We also obtain a polarization $\bV=(X \oplus Y) \oplus (X^* \oplus Y^\vee)$. Write $q(\cdot,\cdot)_\bV$ for the symplectic pairing of $\bV$. We will consider $X$ and $X^*$ as $E$-affine spaces, and $Y$ and $Y^\vee$ as $F$-affine spaces. 

We denote by $S(W)$ the Heisenberg group of $W$, and by $ J(W):=S(W) \rtimes \U(W)$ its Jacobi group. We have an injective morphism $h : S(W) \to \U(V)$ (see e.g. \cite{Boi}*{Equation~(2.1.5.1)}) so that we identify $S(W)$ as a subgroup of $\U(V)$. 

Let $U_{r-1}$ and $U_r$ be the unipotent radicals of the parabolic subgroups of $\U(V)$ stabilizing the flags $\mathrm{span}_E(x_1) \subset \mathrm{span}_E(x_1,x_2) \subset \hdots \subset \mathrm{span}_E(x_1,\hdots,x_{r-1})$, and the full flag $\mathrm{span}_E(x_1) \subset \hdots \subset X$ respectively. Define the \emph{Fourier--Jacobi group}
\begin{equation*}
    \cH_W:=U_{r-1} \rtimes J(W) \subset \cU_W,
\end{equation*}
where the embedding is the product of the inclusion $\cH_W \subset \U(V)$ and the projection $\cH_W \to \U(W)$. Note that $U_{r-1} \rtimes S(W)=U_r$, so that we have the alternative definition $\cH_W=U_r \rtimes \U(W)$. We will write $\cH'_W$ if we consider it as a subgroup of $\U_V'$ via the inclusion. 

Set $L:=\cU_W \times G_r$. It is a Levi subgroup of the parabolic $\mathrm{U}(V) \times P(X)$ of $\U_V$. Define
\begin{equation*}
    \cH_L=\cH_W \times N_r \subset L.
\end{equation*}

\subsection{Heisenberg--Weil representations}

\subsubsection{Definition}
Define a morphism $\lambda : U_{r-1} \to \bG_a$ by
\begin{equation}
\label{eq:lambda_N}
     \lambda(u)=\Tr_{E/F} \left(\sum_{i=1}^{r-1}  q_V(u(x_{i+1}),x_{i}^*) \right), \; u \in U_{r-1}.
\end{equation}
Note that the action of $J(W)$ by conjugation on $U_{r-1}$ is trivial on $\lambda$, so that we may extend $\lambda$ to $\cH_W=U_{r-1} \rtimes J(W)$.

Let $v$ be a place of $F$. Consider the character
\begin{equation}
\label{eq:psi_U}
    \psi_{U,v}(h):=\psi_{E,v} (\lambda(h)), \quad h \in \cH_W(F_v).
\end{equation}
Note that $U_r$ contains $N_r$. For each $1 \leq k \leq r$, we denote by $\psi_{k,v}$ the restriction of $\psi_{U,v}$ to $N_k(F_v)$. 

We have the local Heisenberg representation $\rho^\vee_{\psi,v}$ of $S(W)(F_v)$ defined in \S\ref{subsubsec:local_Heisenberg}. We also have the Weil representations $\omega_{W,v}^\vee$ and $\omega_{V,v}^\vee$ of $\U(W)(F_v)$ and $\U(V)(F_v)$ respectively from \S\ref{subsubsec:local_Heisenberg}. The first is realized on $\cS(Y^\vee(F_v))$, and the second on $\cS(X^*(E_v) \oplus Y^\vee(F_v))$. They are all defined with respect to the pair $(\psi_v^{-1},\mu_v^{-1})$.

We can define the representation $\nu^\vee_v$ of $\cH_W(F_v)$ by the rule
\begin{equation}
\label{eq:nu_defi2}
    \nu^\vee_v(u h g_W)=\overline{\psi_{U,v}}(u) \rho^\vee_{\psi,v}(h) \omega^\vee_{W,v}(g_W) , \quad u \in U_{r-1}(F_v), \quad g_W \in \U(W)(F_v), \quad h \in S(W)(F_v).
\end{equation}
This is the \emph{Heisenberg--Weil representation} of $\cH_W(F_v)$.

We will also consider $\nu_{L,v}^\vee$ the representation of $\cH_L(F_v)$ defined by
    \begin{equation}
        \label{eq:nu_L}
        \nu_{L,v}^\vee:=\nu_{v}^\vee \otimes \psi_{r,v}.
    \end{equation}

\subsubsection{Explicit formulae in the mixed model} 
\label{subsubsec:explicit_Weil}

Let $\phi \in \nu^\vee_v$. By \cite{Kudla}, for $y^\vee_0 \in Y^\vee(F_v)$, $y \in Y(F_v)$, $y^\vee \in Y^\vee(F_v)$ and $z \in F_v$ we have
    \begin{equation}
        \label{eq:Heisenberg_action}
        \rho_{\psi,v}^\vee((y+y^\vee,z))\phi(y_0^\vee)=\overline{\psi_v}\left(z+q(y_0^\vee,y) _{\bW}+\frac{1}{2}q(y^\vee,y )_{\bW}\right)\phi(y_0^\vee+y^\vee).
    \end{equation}

Let $\Phi \in \omega_{V,v}^\vee$. We identify $\Phi$ with a Schwartz function on $X^*(F_v)$ valued in $\omega_{W,v}^\vee$. If $a \in G_r(F_v)$, we write $\overline{\mu_v}(a)$ for $\overline{\mu_v}(\det a)$. Let $x^* \in X^*(F_v)$. With the same elements as in \S\ref{subsubsec:parabolic_subgroup}, we have the formulae

    \begin{align}
    \omega_{V,v}^\vee(g_W)\Phi(x^*) &= \omega_{W,v}^\vee(g_W)(\Phi(x^*)) \label{eq:weil_formula_P1},\\
    \omega^\vee_{V,v}(m(a)) \Phi(x^*) & = \overline{\mu_v}(a) \valP{\det a}^{\frac{1}{2}}
    \Phi(a^* x^*) \label{eq:weil_formula_P2},\\
    \omega^\vee_{V,v}(n(b))\Phi(x^*) &= \rho_{\psi,v}^\vee((b^* x^*, 0)) (\Phi(x^*)) \label{eq:weil_formula_P3} ,\\
    \omega_{V,v}^\vee(n(c))\Phi(x^*) & = \overline{\psi}_{E,v}(q(cx^*, x^* )_{V}) \Phi(x^*) \label{eq:weil_formula_P4}.
    \end{align}
    
Consider the map
    \begin{equation}
        \label{eq:Phi_W_defi}
        \Phi \in \omega_{V,v}^\vee \mapsto \Phi_{Y^\vee} := \Phi(x_r^*) \in \nu_v^\vee.
    \end{equation}
    Using the formulae \eqref{eq:Heisenberg_action} to \eqref{eq:weil_formula_P4}, we see that we have for $n_r, n_r' \in N_r(F_v), \; n \in N(X)(F_v), \; g_W \in \mathrm{U}(W)(F_v)$, and $ g_V \in \mathrm{U}(V)(F_v)$ the relation 
    \begin{equation}
        \label{eq:nu_relation}
            \psi_{r,v}(n_r') \overline{\psi}_{r,v}(n_r) (\omega_{V,v}^\vee(n_r' ng_Wg_V) \Phi)_{Y^\vee}=\nu_{L,v}^\vee (n_rng_W,n_r')(\omega_{V,v}^\vee(g_V) \Phi)_{Y^\vee}.
        \end{equation}

\subsubsection{Automorphic Heisenberg--Weil representations} As in \S\ref{subsubsec:automorphic_Weil}, we obtain automorphic representations $\omega_V^\vee$, $\omega_W^\vee$, $\nu^\vee$ and $\nu_L^\vee$ of $\U(V)(\bA)$, $\U(W)(\bA)$, $\cH_W(\bA)$ and $\cH_L(\bA)$ by taking restricted tensor product. We also have the automorphic characters $\psi_U$ and $\psi_r$.

\begin{remark}
    At split places $v$, the representation $\omega^\vee_{V,v}$ is realized on $\cS(X_0^*(E_v) \oplus Y^\vee(F_v))=\cS(F_v^n)$ with the action described in \S\ref{subsubsec:explicit_Weil}, which comes from our choice of global polarization. In particular, it is not the usual left-regular representation twisted by $\mu \valP{\cdot}^{1/2}$ considered for example in \cite{GGP}*{Section~13}. 
\end{remark}

\subsection{Measures}
\label{subsec:final_measures}

For every algebraic group $\bG$ over $F$ we have considered so far, we let $\rd g$ be the Tamagawa measure on $\bG(\bA)$ defined in Subsection~\ref{subsec:tamagawa_measure_new}. We fix a factorization $\rd g = \prod_v \rd g_v$ on $\bG(\bA)$ such that for almost all $v$ the volume of $\bG(\cO_v)$ is $1$. Note that this differs from the choice made in \S\ref{subsec:tamagawa_measure_new} and \S\ref{subsec:measure_part1}, but it is more convenient for this section (in particular the unramified computations of \S\ref{subsec:unram_unfold}).

For unipotent groups, it is convenient to make a specific choice for the local measures $\rd g_v$. For every place $v$ of $E$, we set $\rd_{\psi_v} x_v$ to be the unique Haar measure on $E_v$ that is self-dual with respect to $\psi_{E,v}$. This yields a measure $\rd x = \prod_v \rd_{\psi_v} x_v$ on $\bA_E$. More generally, for every $k$ we equip $N_k(F_v)$ with the product measure
\begin{equation*}
    \rd n_v= \prod_{1 \leq i < j \leq k} d_{\psi_v} n_{i,j},
\end{equation*}
and set $\rd n = \prod_v \rd n_v$. Then it is well known that $\rd n_v$ gives volume $1$ to $N_k(\cO_v)$ for almost all $v$, and that $\rd n$ is indeed the Tamagawa measure on $N_k(\bA)$.

For every $k$, the measure $\rd_{\psi_v} g_v$ on $G_k(F_v)$ from \S\ref{subsec:tamagawa_measure_new} can be taken to be
\begin{equation*}
    \rd_{\psi_v} g_v := \frac{\prod_{1 \leq i,j \leq k} \rd_{\psi_v} g_{i,j}}{\valP{\det g}_v^k}.
\end{equation*}
We denote by $\upsilon(G_{k,v}) >0$ the quotient of Haar measures $\rd g_v (\rd_{\psi_v} g_v)^{-1}$. It follows from \eqref{eq:Delta} that for $\tS$ any sufficiently large finite set of places of $F$ we have $\upsilon(G_{k,v})=\Delta_{G_k,v}$ for $v \notin \tS$, and that
\begin{equation}
\label{eq:normalization_measures}
    \prod_{v \in \tS} \upsilon(G_{k,v})=(\Delta_{G_k}^{\tS,*})^{-1}.
\end{equation}

By integration along $X^*(\bA) \oplus Y^\vee(\bA)$ and $Y^\vee(\bA)$ against the Tamagawa measure, we obtain invariant inner products on $\omega_V^\vee$ and $\nu^\vee$ which we both denote by $\langle \cdot , \cdot\rangle_{L^2}$. For every place $v$, we have the local versions $\langle \cdot , \cdot \rangle_{L^2,v}$.

\section{Local Fourier--Jacobi periods}

\label{sec:local_FJ}
In this section, we start by discussing the local theory of Fourier--Jacobi periods $\cP_{\cH_W,v}$. This extends the period $\cP_{\U'_V,v}$ from \S\ref{subsubsec:local_FJ} to the positive corank setting. In that case, the naive integral of matrix coefficients doesn't converge and a more refined construction is needed. We also prove some unfolding results to connect $\cP_{\cH_W,v}$ to its corank-zero analogue $\cP_{\U'_V,v}$. In particular, we prove the tempered intertwining property of Theorem~\ref{thm:temp_int}. 

We henceforth fix $v$ a place of $F$. Because we will always be in the local setting in this section, we erase the place $v$ in this section, so that $E/F$ is either a quadratic extension of local fields (the \emph{inert case}), or $E=F \times F$ (the \emph{split case}). Moreover, if $\bG$ is an algebraic group over $F$, we will simply write $\bG$ for $\bG(F)$.

\subsection{Definition of Fourier--Jacobi periods}
\label{sec:FJ_periods}

\subsubsection{Spaces of tempered functions}
\label{subsubsec:Harish_Chandra_Schwartz}
Let $\bG$ be a connected reductive group over $F$. Let $\varsigma$ be a (class of) logarithmic height function on $\bG$, as in~\cite{BP2}*{Section~1.2}. Note that if $\bG' \leq \bG$ we may take $\varsigma_{| \bG'}$ as the logarithmic height function on $\bG'$, hence the absence of reference to the group in the notation. Denote by $\Xi^\bG$ the Harish-Chandra special spherical function on $\bG$ from \cite{Wald}*{Section~II.1}. Let $\cC^w(\bG)$ be the space of tempered functions on $\bG$ (see~\cite{BP1}*{Section~2.4}). We will not use the precise definition in this paper, but for all $f \in \cC^w(\bG)$ there exists $d>0$ such that 
  \begin{equation}
        \label{eq:HC_Schwartz}
            \valP{f(g)} \ll \Xi^\bG(g) \varsigma(g)^{d}, \quad g \in \bG;
    \end{equation}
The topological vector space $\cC^w(\bG)$ contains $C_c^\infty(\bG)$ as a dense subset.

By \cite{CHH88}, it is known that if $\pi \in \Temp(\bG)$, then the matrix coefficients of $\pi$ belong to $\cC^w(\bG)$.

\subsubsection{Estimates}
\label{subsec:Defi_FJ_periods}

In this section, we gather some estimates on matrix coefficients of the Weil representations and of the Harish-Chandra special spherical function.

\begin{lemma}
\label{lem:estimatesnew}
We have the following assertions.
\begin{enumerate}
    \item There exists $\varepsilon>0$ such that 
\begin{equation*}
   \valP{\langle\omega^\vee_V(g_V) \Phi_1, \Phi_2\rangle_{L^2}} \ll_{\Phi_1,\Phi_2} e^{- \varepsilon \varsigma(g_V)}, \quad g_V \in \U(V), \quad \Phi_1, \Phi_2 \in \omega^\vee_V, 
\end{equation*}
    \item There exists $\varepsilon>0$ such that 
\begin{equation*}
  \valP{ \langle \nu^\vee(g_Wu) \phi_1, \phi_2 \rangle_{L^2}} \ll_{\phi_1,\phi_2} e^{- \varepsilon \varsigma(g_W)}, \quad u \in U_r, \quad g_W \in \U(W), \quad \phi_1, \phi_2 \in \nu^\vee.
\end{equation*}
\item For any non-degenerate skew-Hermitian space $V_0$ and for all $\varepsilon >0$ we have 
\begin{equation*}
    \int_{\U(V_0)} \Xi^{\U(V_0)\times \U(V_0)}(h) e^{-\varepsilon \varsigma(h)} \rd h < \infty.
\end{equation*}
\item For all $\delta>0$ there exists $\varepsilon >0$ such that for every $\phi_1, \phi_2 \in \nu^\vee$ we have
\begin{equation*}
    \int_{\cH_W} \Xi^{\cU_W}(h) e^{\varepsilon \varsigma(h)} \valP{ \langle \nu^\vee(h) \phi_1, \phi_2 \rangle_{L^2}} (1+\valP{\lambda(h)})^{-\delta} \rd h < \infty,
\end{equation*}
where we recall that $\lambda$ was defined in \eqref{eq:lambda_N}.
\end{enumerate}
\end{lemma}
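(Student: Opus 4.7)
The proof of this lemma splits into four assertions, which I will address in turn.

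For (1), the plan is to use the explicit formulae \eqref{eq:weil_formula_P1}--\eqref{eq:weil_formula_P4} in the mixed model together with the Cartan decomposition $\U(V) = K_V A_V K_V$. Writing $g_V = k_1 a k_2$ with $a \in A_V$ in the positive chamber, and replacing $\Phi_1$ and $\Phi_2$ by $\omega_V^\vee(k_2)\Phi_1$ and $\omega_V^\vee(k_1^{-1})\Phi_2$ (which remain in a $K_V$-bounded family of Schwartz vectors since $K_V$ is compact), we are reduced to the case $g_V = a$. The operator $\omega_V^\vee(a)$ acts on $\cS(X^*(E) \oplus Y^\vee(F))$ by a dilation up to a unitary character, so the matrix coefficient becomes an integral of the form $\valP{a}^\alpha \int \Phi_1(a \cdot y) \overline{\Phi_2(y)} \rd y$; repeated use of the Schwartz decay of $\Phi_1$ then yields rapid decay in $\varsigma(a)$, which is comparable to $\varsigma(g_V)$.

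For (2), I apply the same strategy to $\omega_W^\vee$ after factoring $\nu^\vee(g_W u) = \omega_W^\vee(g_W)\,\nu^\vee(u)$. Writing $u = u_- s$ with $u_- \in U_{r-1}$ and $s = (y_s + y_s^\vee, z) \in S(W)$, formula~\eqref{eq:Heisenberg_action} shows that $\nu^\vee(u)\phi_1$ differs from $\phi_1$ by a unitary character prefactor and by the modulation-translation $y_0^\vee \mapsto \overline{\psi}(q(y_0^\vee, y_s)_{\bW})\phi_1(y_0^\vee + y_s^\vee)$, whose $L^1$- and $L^\infty$-norms (or any translation-invariant Schwartz seminorm based on these) are bounded uniformly in $u$. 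Running the dilation argument of (1) for $\omega_W^\vee$ with $\phi_1$ replaced by $\nu^\vee(u)\phi_1$ and using these uniform $L^1/L^\infty$ bounds yields the desired estimate with a constant independent of $u$.

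For (3), we have $\Xi^{\U(V_0)\times \U(V_0)}(h,h) = \Xi^{\U(V_0)}(h)^2$ on the diagonal; by Harish-Chandra's estimates (see \cite{Wald}*{Lemme~II.1.5}) the function $(\Xi^{\U(V_0)})^2$ is integrable against any power of $\varsigma$, hence a fortiori against $e^{-\varepsilon \varsigma}$.

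For (4), I will use the decomposition $\cH_W = U_{r-1} \rtimes S(W) \rtimes \U(W)$ to write the integral as an iterated one over the three factors. On the $\U(W)$ factor, the exponential decay from (2) together with the bound $\Xi^{\cU_W}(g_W) \ll \Xi^{\U(W)}(g_W)$ up to polynomial factors (by Harish-Chandra estimates for $\Xi$ on products and embeddings) reduces integrability to (3) after shrinking $\varepsilon$. On $S(W)$, the matrix coefficient of $\rho_\psi^\vee$ is a Schwartz function in the directions transverse to the center (essentially a Weyl-type transform of $\phi_1 \overline{\phi_2}$), and the polynomial decay of $\Xi^{\cU_W}$ on the one-parameter subgroup $Z(S(W))$ is of sufficient order to ensure absolute convergence in that direction. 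On $U_{r-1}$, where $\nu^\vee$ restricts to the unitary character $\overline{\psi_U}$ so that no decay is gained from the matrix coefficient, the weight $(1+\valP{\lambda(h)})^{-\delta}$ is used: choosing $\delta$ large enough suppresses the polynomial growth of $\Xi^{\cU_W}$ along the coordinate read by $\lambda$, while the remaining root directions of $U_{r-1}$ are handled by the intrinsic polynomial decay of $\Xi^{\cU_W}$ on unipotent directions. A suitable choice of $\varepsilon$ and $\delta$ then closes the argument.

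The main obstacle is (4), which requires precise Iwasawa-type bounds on $\Xi^{\cU_W}$ restricted to $\cH_W$ and a careful bookkeeping of the three regimes: exponential decay coming from the Weil representation on $\U(W)$, Schwartz decay on $S(W)/Z(S(W))$, and character-only behaviour on $U_{r-1}$ and $Z(S(W))$, where the decay of the spherical function $\Xi^{\cU_W}$ and the weight $(1+\valP{\lambda(h)})^{-\delta}$ must jointly provide integrability.
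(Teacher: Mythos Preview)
Your treatment of (1)--(3) is essentially the paper's: Cartan decomposition plus the explicit dilation action for (1), the reduction of (2) to a translation-uniform version of (1) via the Heisenberg action, and the citation to Harish-Chandra's integrability of $(\Xi^G)^2$ for (3). One wording issue in (2): the ``$L^1/L^\infty$ bounds'' of $\nu^\vee(u)\phi_1$ are not by themselves enough to run the dilation argument, which needs Schwartz control. What actually works (and what the paper does) is to prove directly the translation-uniform estimate $\valP{\langle\omega^\vee_{V_0}(g)\Phi_1,\Phi_2(\cdot+y^\vee)\rangle}\ll e^{-\varepsilon\varsigma(g)}$ by splitting coordinates according to $\valP{\alpha_i}\lessgtr 1$ and using Schwartz bounds on whichever of $\Phi_1,\Phi_2$ is favorable in each coordinate.

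For (4) there are genuine gaps. First, the quantifier: the statement is ``for all $\delta>0$ there exists $\varepsilon>0$'', so you are not free to ``choose $\delta$ large enough''; proving convergence for large $\delta$ says nothing for small $\delta$, since the weight $(1+\valP{\lambda(h)})^{-\delta}$ is monotone in $\delta$. Second, the vague appeal to ``intrinsic polynomial decay of $\Xi^{\cU_W}$ on unipotent directions'' is not enough: $\lambda$ reads only one linear coordinate, so on the remaining directions of $U_{r-1}$ (and on $Z(S(W))$, where the matrix coefficient has constant modulus and $\lambda\equiv 0$) you need a precise statement. The paper handles this by first integrating over all of $U_r$ for fixed $g_W$ and invoking \cite{BP2}*{Lemma~B.3.2}: the key input is that $\lambda$ extends to a \emph{non-degenerate} character of the unipotent radical $N_{\min}$ of a minimal parabolic of $\U(V)$, which is exactly what makes $\int_{U_r}\Xi^{\cU_W}(u)e^{c\varsigma(u)}(1+\valP{\lambda(u)})^{-\delta}\rd u$ converge for every $\delta>0$ once $c$ is small. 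A cut-off trick ($\varsigma(u)\le b$ versus $\varsigma(u)>b$, with $b$ tied to $\varsigma(g_W)$) then yields the bound $I^0_{\varepsilon,\delta}(g_W)\ll \Xi^{\U(W)\times\U(W)}(g_W)e^{\varepsilon_0\varsigma(g_W)}$, after which the $\U(W)$-integration follows from (2) and (3). Your three-factor decomposition could be made to work, but not without this non-degeneracy input or an equivalent; the Schwartz decay of the Heisenberg coefficients on $S(W)/Z(S(W))$ helps in those directions but does nothing on $Z(S(W))$ or on $U_{r-1}$.
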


\begin{proof}
    Let us prove the first and second points together. As the Heisenberg representation acts by translation and multiplication by a unitary character by \eqref{eq:Heisenberg_action}
    it is enough to prove the following: if $V_0$ is any non-degenerate skew-Hermitian space of dimension $n_0$ with a polarization $\bV_0 = Y_0 \oplus Y^\vee_0$ then there exists $\varepsilon>0$ such that for all $\Phi_1, \Phi_2 \in \cS(Y_0^\vee)$ we have
\begin{equation}
\label{eq:estimate_goal}
    \valP{ \langle \omega^\vee_{V_0}(g)\Phi_1, \Phi_2(\cdot+y^\vee) \rangle_{L^2}} \ll e^{- \varepsilon \varsigma(g)}, \quad g \in \U(V_0), \quad y^\vee \in Y_0^\vee,
\end{equation}
where $\omega^\vee_{V_0}$ is the Weil representation of $\U(V_0)$ associated to $(\psi^{-1},\mu^{-1})$. Note that if we can prove that \eqref{eq:estimate_goal} holds for one polarization $\bV_0 = Y_0 \oplus Y_0^\vee$, then it holds for all. Indeed, change of Lagrangian induces an isomorphism of $\omega_{V_0}^\vee$ which is also an isomorphism the Heisenberg representation. As the latter is irreducible, the inner products are equal up to multiplication by a non-zero constant. 

Assume first that $E=F\times F$ so that $\U(V_0)=\GL_{n_0}$. Identify $V_0 \cong E^{n_0}=F^{n_0} \times F^{n_0}$ and take $Y^\vee_0=\{0\} \times F^{n_0}$. By the Cartan decomposition it is enough to show that (\ref{eq:estimate_goal}) holds for $\alpha=\mathrm{diag}(\alpha_1, \hdots, \alpha_{n_0})$ with $\valP{\alpha_1} \leq  \valP{\alpha_s} \leq 1 \leq \valP{\alpha_{s+1}} \leq \hdots \leq \valP{\alpha_{n_0}}$. But we have 
\begin{equation*}
\langle\omega^\vee_{V_0}(\alpha)\Phi_1,\Phi_2(\cdot+y^\vee)\rangle_{L^2}=\overline{\mu}(\alpha)\valP{ \det \alpha}^{\frac{1}{2}} \int_{F^{n_0}} \Phi_1(\alpha^* x^\vee) \overline{\Phi_2(y_0^\vee+x^\vee)} \rd x^\vee.
\end{equation*}
Write $(x_i)$ the coordinates of any $x^\vee$ in the canonical basis, and $(y_i)$ the ones of $y^\vee$. Choose $d>0$ such that $\int_{F} \frac{1}{1+\valP{x}^d} \rd x < \infty$ and let $P=\prod_i (1+\valP{x_i}^d)$ seen as a polynomial function on $Y^\vee_0$. Then $\sup_{x^\vee} \valP{\Phi_i(x^\vee)P(x^\vee)} < \infty$ and
\begin{equation*}
    \valP{\langle \omega^\vee_{V_0}(\alpha)\Phi_1,\Phi_2(\cdot+y_0^\vee)\rangle_{L^2}} \ll \prod_{i=1}^s \valP{\alpha_i}^{\frac{1}{2}}\prod_{i=s+1}^{n'} \valP{\alpha_i}^{-\frac{1}{2}} \prod_{i=1}^{s}\int_{F} \frac{1}{1+\valP{x+y_i}^d} \rd x \prod_{i=s+1}^{n'}\int_{F} \frac{1}{1+\valP{x}^d} \rd x.
\end{equation*}
The product of integrals is finite and does not depend on $y_0^\vee$, so that $\varepsilon=\frac{1}{2}$ works.

In the case where $E/F$ is a quadratic extension, let $(z_1, \hdots, z_{r_0}, z_1^*, \hdots, z_{r_0}^*)$ be a maximal hyperbolic family of $V_0$, that is a family such that we have $ q(z_i,z_j)_{V_0}=q(z_i^*, z_j^* )_{V_0}=0$ and $ q(z_i,z_j^*)_{V_0}= \delta_{i,j}$, and set $Z:=\mathrm{span}_E(z_1,\hdots,z_{r_0})$ and $Z^*=\mathrm{span}_E(z_1^*,\hdots,z_{r_0}^*)$. By the Cartan decomposition, it is enough to prove that (\ref{eq:estimate_goal}) holds for $\alpha=\mathrm{diag}(\alpha_1, \hdots, \alpha_{r_0}) \in \GL(Z) \subset \U(V_0)$, with $\valP{\alpha_1} \leq \hdots \leq \valP{\alpha_{r_0}} \leq 1$. Let $Y_1 \oplus Y_1^\vee$ be a polarization of $(Z\oplus Z^*)^\perp$. We take the polarization $(Z \oplus Y_1) \oplus (Z^* \oplus Y_1^\vee)$ of $\bV_0$, and do the same proof as in the split case, using the description of the mixed model given in~\eqref{eq:weil_formula_P2}.

The third point is \cite{Wald}*{Lemma~II.1.5} in the $p$-adic case and \cite{Va}*{Proposition~31} in the Archimedean case.

We finally show the fourth point. The proof is very close to \cite{BP2}*{Lemma~6.5.1~(iii)}, so we only recall the main steps. Since $\varsigma(g_W u ) \ll \varsigma(g_W)+ \varsigma(u)$ for $g_W \in \U(W)$ and $u \in U_r$, by (2) and (3) applied to $V_0=W$ and the equality $\cH_W=U_r \rtimes \U(W)$, it is enough to prove that for all $\delta>0$ and $\varepsilon_0>0$ there exists $\varepsilon>0$ such that 
\begin{equation*}
    I^0_{\varepsilon, \delta}(g_W):= \int_{U_r} \Xi^{\cU_W}(u g_W) e^{\varepsilon \varsigma(u)} (1+\valP{\lambda(u)})^{-\delta} \rd u
\end{equation*}
is absolutely convergent for all $g_W \in \U(W)$ and satisfies
\begin{equation*}
    I^0_{\varepsilon, \delta}(g_W) \ll \Xi^{\U(W) \times \U(W)}(g_W) e^{\varepsilon_0 \varsigma(g_W)}.
\end{equation*}
The proof in \cite{BP2}*{Lemma~6.5.1~(iii)} introduces two intermediate integrals, for $b>0$:
\begin{align*}
       &I^0_{\varepsilon, \delta, \leq b}(g_W):=\int_{U_r}  1_{\varsigma(u) \leq b} \Xi^{\cU_W}(ug_W) e^{\varepsilon \varsigma(u)} (1+\valP{\lambda(u)})^{-\delta}\rd u, \\
     &I^0_{\varepsilon, \delta, >b}(g_W):=\int_{U_r}  1_{\varsigma(u) > b} \Xi^{\cU_W}(u g_W) e^{\varepsilon \varsigma(u)} (1+\valP{\lambda(u)})^{-\delta}\rd u.
\end{align*}
On the one hand it is easy to see as in the proof of loc. cit. that there exists $d>0$ such that 
\begin{equation}
\label{eq:compact_estimate}
    I^0_{\varepsilon, \delta, \leq b}(g_W) \ll e^{\varepsilon b} b^d \Xi^{\U(W) \times \U(W)}(g_W),
\end{equation}
for all $\varepsilon >0$, $b>0$ and $g_W \in \U(W)$. But on the other hand, there exists $\alpha >0$ such that $\Xi^{\cU_W}(g_1 g_2) \ll e^{\alpha \varsigma(g_2)} \Xi^{\cU_W}(g_1)$ for all $g_1, g_2 \in \cU_W$. Therefore
\begin{equation}
\label{eq:non_compact_estimate}
     I^0_{\varepsilon, \delta, > b}(g_W) \ll e^{\alpha \varsigma(g_W) - \sqrt{\varepsilon}b} \int_{U_r} \Xi^{\cU_W}(u) e^{(\varepsilon + \sqrt{\varepsilon}) \varsigma(u)} (1+\valP{\lambda(u)})^{-\delta}\rd u,
\end{equation}
for all $g_W \in \U(W)$ and $b>0$. That the last integral converges for $\varepsilon$ small is a consequence of \cite{BP2}*{Lemma~B.3.2} as $\lambda$ is the restriction of a non-degenerate additive character of $N_{\mathrm{min}}$ the unipotent radical of a minimal parabolic of $\U(V)$ which stabilizes a maximal flag obtained by completing $\mathrm{span}_E(x_1) \subset \mathrm{span}_E(x_1,x_2) \subset \hdots \subset X$ with lines in $W$. The result now follows for a small $\varepsilon$ by the same trick as in \cite{BP2}*{Lemma~6.5.1~(iii)}.
\end{proof}

\begin{prop}
    \label{prop:extension_continuity}
    We have the following assertions.
    \begin{enumerate}
    \item For every $\phi_1$ and $\phi_2 \in \nu^\vee$, the linear form
    \begin{equation*}
    f \in C_c^\infty(\cU_W) \mapsto \int_{\cH_W} f(h) \langle\nu^\vee(h) \phi_1, \phi_2\rangle_{L^2} \rd h
    \end{equation*}
    extends by continuity to $\cC^w(\cU_W)$.
    \item The linear form
    \begin{equation*}
        f \in C_c^\infty(G_r) \mapsto \int_{N_r} f(n_r) \psi_r(n_r) \rd n_r
    \end{equation*}
    extends by continuity to $\cC^w(G_r)$.
    \end{enumerate}
    \end{prop}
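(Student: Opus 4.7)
Fix $\phi_1, \phi_2 \in \nu^\vee$ and write $L(f) := \int_{\cH_W} f(h) \langle \nu^\vee(h)\phi_1, \phi_2\rangle_{L^2}\,\rd h$ for $f \in C_c^\infty(\cU_W)$. To extend $L$ by continuity to the larger space $\cC^w(\cU_W)$, the plan is to produce a bound $|L(f)| \ll \aabs{f}$ for some continuous semi-norm on $\cC^w(\cU_W)$; density of $C_c^\infty(\cU_W)$ in $\cC^w(\cU_W)$ then gives the extension.

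The central input is Lemma~\ref{lem:estimatesnew}(4): for $\varepsilon>0$ small enough and any $\delta>0$,
\begin{equation*}
\int_{\cH_W} \Xi^{\cU_W}(h)\, e^{\varepsilon \varsigma(h)}\, (1+\valP{\lambda(h)})^{-\delta}\,
\valP{\langle \nu^\vee(h)\phi_1,\phi_2\rangle_{L^2}}\, \rd h < \infty.
\end{equation*}
An arbitrary $f \in \cC^w(\cU_W)$ only satisfies $\valP{f(h)} \ll \Xi^{\cU_W}(h)\, e^{\varepsilon \varsigma(h)}$ (absorbing polynomial growth into a small exponential) but lacks a priori any decay in $\lambda(h)$. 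The plan is to manufacture this missing decay by exploiting the $(U_{r-1}, \overline{\psi_U})$-equivariance
\begin{equation*}
\langle \nu^\vee(uh)\phi_1,\phi_2\rangle_{L^2} = \overline{\psi_U}(u)\, \langle \nu^\vee(h)\phi_1,\phi_2\rangle_{L^2}, \quad u \in U_{r-1},
\end{equation*}
combined with an oscillatory integration-by-parts argument. Concretely, using $\cH_W = U_{r-1} \rtimes J(W)$, pick a one-parameter subgroup $Z \subset U_{r-1}$ on which $\lambda$ induces an isomorphism $Z \simeq F$. For $f \in C_c^\infty(\cU_W)$, the matrix coefficient transforms as the character $\overline{\psi_U}$ under $Z$; integration by parts along $Z$ (by differentiation in the Archimedean case, by differences of right-translates in the non-archimedean case) transfers the derivative from the oscillating character onto $f$ and produces a factor $(1+\valP{\lambda(h)})^{-1}$. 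Iterating $N$ times and then applying Lemma~\ref{lem:estimatesnew}(4) with $\delta = N$ bounds $\valP{L(f)}$ by a continuous semi-norm of $f$ (involving finitely many derivatives, respectively translates, in the $Z$-direction).

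Part (2) follows the same template, but considerably more easily, with $(\cH_W, U_{r-1}, \psi_U)$ replaced by $(G_r, N_r, \psi_r)$ and Lemma~\ref{lem:estimatesnew}(4) replaced by the well-known integrability
\begin{equation*}
\int_{N_r} \Xi^{G_r}(n)\, e^{\varepsilon \varsigma(n)}\, (1+\valP{\lambda(n)})^{-\delta}\, \rd n < \infty
\end{equation*}
valid for any $\delta>0$ and $\varepsilon>0$ small (cf.~\cite{Wald}*{Section~II} or \cite{BP1}*{Lemma~2.4.1}); this is a consequence of the non-degeneracy of $\psi_r$ on $N_r$ together with standard bounds on $\Xi^{G_r}$ along unipotent radicals. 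The same oscillatory integration by parts, now along a root subgroup of $N_r$ on which $\psi_r$ is non-trivial, produces the continuous bound.

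The main obstacle is the Archimedean case of (1): the Fréchet topology of $\cC^w(\cU_W)$ involves an unbounded but finite supply of seminorms controlling derivatives of all orders, and one must verify that after $N$ integrations by parts the derivatives of $f$ that appear remain controlled by finitely many defining semi-norms of $\cC^w(\cU_W)$ (rather than the $C^\infty$-supremum topology of $C_c^\infty$). This is standard and essentially parallel to the Bessel computation of \cite{BP2}*{Lemma~6.5.1}; the key point is that only derivatives along a fixed one-parameter subgroup $Z$ of $U_{r-1}$ are produced, so finitely many Schwartz-type seminorms suffice.
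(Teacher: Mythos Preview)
Your overall strategy — bound $|L(f)|$ by a $\cC^w$-seminorm using Lemma~\ref{lem:estimatesnew}(4), after manufacturing the missing factor $(1+|\lambda(h)|)^{-\delta}$ — is exactly right, and your treatment of part~(2) is fine. But the mechanism you propose in part~(1) to produce that factor does not work. Left (or right) translation by $z\in Z\subset U_{r-1}$ gives $M(zh)=\overline{\psi_U}(z)M(h)$, where $\overline{\psi_U}(z)=\overline{\psi}(\lambda(z))$ is a \emph{fixed} unitary character in $z$, with no dependence on $h$. Integration by parts along $Z$ therefore produces the constant factor $(\text{conductor of }\psi)^{-1}$, not $(1+|\lambda(h)|)^{-1}$: in the oscillatory-integral paradigm $\int f(x)e^{i\xi x}\rd x$, integration by parts yields decay in the frequency $\xi$, whereas here the frequency is fixed and $\lambda(h)$ never enters. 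The same objection applies to your non-archimedean ``differences of right-translates'': the identity $L(R(z)f)=\psi_U(z)L(f)$ is just an equivariance relation and carries no decay in $h$.

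The paper supplies the missing idea: instead of translating inside $U_{r-1}$, one conjugates by the torus one-parameter subgroup $a(t)=\diag(t^{r-1},t^{r-2},\ldots,1)\in\GL(X)\subset\U(V)$, which normalizes $\cH_W$ and \emph{scales} $\lambda$ by $t$. The formulae \eqref{eq:Heisenberg_action} and \eqref{eq:weil_formula_P2} give
\[
\langle\nu^\vee(a(t)ha(t)^{-1})\phi_1,\phi_2\rangle_{L^2}=\overline{\psi_U}(t\lambda(h))\,\langle\nu^\vee(h)\phi_1,\phi_2\rangle_{L^2},
\]
so after the change of variables $h\mapsto a(t)ha(t)^{-1}$ the integrand acquires the oscillating factor $\overline{\psi}(t\lambda(h))$ in which $\lambda(h)$ is now genuinely the frequency of oscillation in the auxiliary variable $t$. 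Averaging in $t$ (differentiation along $a$ in the Archimedean case, convolution by a suitable bump on $F^\times$ in the $p$-adic case) then yields the factor $(1+|\lambda(h)|)^{-N}$ you need, and the rest of your argument goes through. This is exactly the device of \cite{BP2}*{Proposition~7.1.1}, to which the paper defers; your proposal had the right destination but the wrong vehicle — a unipotent translation instead of a toral conjugation.
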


    \begin{proof}
        For the first point, we fix the one-parameter subgroup parametrized by $a : t \in \bG_m \mapsto \mathrm{diag}(t^{r-1},t^{r-2}, \hdots, 1) \in \GL(X) \subset \U(V)$. By the formulae \eqref{eq:Heisenberg_action} and \eqref{eq:weil_formula_P2} we have for all $t \in F^\times$ and $h \in \cH_W$
        \begin{equation*}
            \langle \nu^\vee(a(t)ha(t)^{-1}) \phi_1, \phi_2 \rangle_{L^2}=\overline{\psi_U}(t \lambda(h)) \langle \nu^\vee(h) \phi_1, \phi_2 \rangle_{L^2}.
        \end{equation*}
        The proof is now exactly the same as in \cite{BP2}*{Proposition~7.1.1}, using Lemma~\ref{lem:estimatesnew} (4) instead of (iii) of \cite{BP2}*{Lemma~6.5.1}.

        The second point is proved as in \cite{BP2}*{Proposition~7.1.1} thanks to \cite{BP2}*{Lemma~B.3.2}.
    \end{proof}

    \subsubsection{Local Fourier--Jacobi periods}
    \label{subsec:local_FJ_periods_defi}

    Let us denote by
    \begin{equation*}
        \cP_{\cH_W} : f \otimes \phi_1 \otimes \phi_2 \in \cC^w(\cU_W) \otimes \nu^\vee \otimes \nu^\vee \to \C, \quad \cP_{N_r} : f \in \cC^w(G_r) \to \C 
    \end{equation*}
    the linear forms obtained by extending the integrals over $\cH_W$ and $N_r$, which exist by Proposition~\ref{prop:extension_continuity}. The first is the \emph{Fourier--Jacobi period}, while the second is the \emph{Whittaker period}. 

For $\phi_1, \phi_2 \in \nu^\vee_L$, set
\begin{equation*}
    \mathcal{P}_{\cH_L}(f \otimes \phi_1 \otimes \phi_2) := \int_{\cH_L} f(h) \langle\nu^\vee_L(h) \phi_1, \phi_2\rangle_{L^2} \rd h, \quad f \in C_c^{\infty}(L),
\end{equation*}
which we extend by continuity to $\cC^w(L)$. Note that $\mathcal{P}_{\cH_L}=\mathcal{P}_{\cH_W} \otimes \mathcal{P}_{N_r}$.

For $(\mathbb{G},\mathbb{H}) \in \{ (\U_V, \U_V'), (\cU_W,\cH_W),(L,\cH_L) \}$, if $\sigma$ is a smooth irreducible tempered representation of $\mathbb{G}$ equipped with an invariant inner product $\langle\cdot,\cdot\rangle$, then for every $\varphi_1, \varphi_2 \in \sigma$ the map $c_{\varphi_1,\varphi_2} : g \mapsto \langle \sigma(g) \varphi_1, \varphi_2\rangle$ belongs to $\mathcal{C}^w(\mathbb{G})$ by \cite{CHH88}. We set for $\Phi_i \in \omega^\vee_V$, $\nu^\vee$ or $\nu^\vee_L$
\begin{equation}
\label{eq:integral_of_coefficients}
    \mathcal{P}_{\mathbb{H}}(\varphi_1 \otimes \Phi_1, \varphi_2 \otimes \Phi_2):= \mathcal{P}_{\mathbb{H}}(c_{\varphi_1,\varphi_2} \otimes \Phi_1 \otimes \Phi_2).
\end{equation}
If $\varphi_1=\varphi_2$ and $\Phi_1=\Phi_2$, we will simply write $\mathcal{P}_{\mathbb{H}}(\varphi, \Phi)$ for $\mathcal{P}_{\mathbb{H}}(\varphi_1 \otimes \Phi_1, \varphi_2 \otimes \Phi_2)$. We also define, for $\tau$ a tempered representation of $G_r$ and $\varphi \in \tau$, the Whittaker period $\cP_{N_r}(\varphi):=\cP_{N_r}(c_{\varphi,\varphi})$.

\subsection{Unfolding equalities}
\label{sec:unfold_eq}

\subsubsection{Representations}
\label{subsubsec:reps}
Let $\sigma_V$, $\sigma_W$ and $\tau$ be smooth irreducible representations of $\U(V)$, $\U(W)$ and $G_r$ respectively, equipped with invariant inner products $\langle \cdot, \cdot \rangle$. Define $\sigma=\sigma_V \boxtimes \sigma_W$, an irreducible representation of $\cU_W$. For $s \in \C$, set $\tau_s=\tau \valP{\det}^s$ and $\Sigma_s=I_{P(X)}^{\U(V)} \sigma_W \boxtimes \tau_s$ the normalized parabolic induction, equipped with its canonical inner product $\langle \cdot, \cdot \rangle$ given by integration along $P(X) \backslash \U(V)$. These representations are fixed for the rest of this section. 

\subsubsection{Unfolding of functionals}
Let
\begin{equation*}
    \cL_{\cH_L} \in \Hom_{\cH_L}(( \sigma \boxtimes \tau) \otimes \nu^\vee_L, \C).
\end{equation*}
By multiplicity one results of~\cite{Sun}, \cite{GGP} and \cite{LiuSun}, we know that $\cL_{\cH_L}$ factors as $\cL_{\cH_L}=\cL_{N_r} \otimes \cL_{\cH_W}$ with $\cL_{N_r} \in \Hom_{N_r}(\tau , \overline{\psi_r})$ and $\cL_{\cH_W} \in \Hom_{\cH_W}(\sigma \otimes \nu^\vee,\C)$.

\begin{prop}
\label{prop:corank_reduction}
    There exists $c \in \R$ such that for $\Re(s)>c$ the functional 
    \begin{equation}
    \label{eq:L_U_defi}
        \cL_{\U_V',s}:  \varphi_V \otimes \varphi_s \otimes \Phi \in  \sigma_V \otimes \Sigma_s\otimes \omega^\vee_V \mapsto \int_{\cH_W' \backslash \U_V'} \cL_{\cH_L}(\sigma_V(h) \varphi_V \otimes \varphi_s(h) , (\omega^\vee_V(h)\Phi)_{Y^\vee}) \rd h
    \end{equation}
    converges absolutely. If all representations are tempered, we may take $c=-\frac{1}{2}$. Furthermore, for $\Re(s)>c$ the following assertions are equivalent.
\begin{enumerate}
    \item There exist $\varphi_V, \varphi_W, \varphi_{\tau}$ and $\phi$ such that $\cL_{\cH_L}(\varphi_V \otimes \varphi_W \otimes \varphi_\tau ,  \phi) \neq 0$.
    \item There exist $\varphi_V, \varphi_s$ and $\Phi$ such that $\cL_{\U_V',s}(\varphi_V \otimes \varphi_s , \Phi) \neq 0$.
\end{enumerate}
\end{prop}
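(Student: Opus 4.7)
The plan is to use the Iwasawa decomposition $\U(V) = P(X) K$ together with the identification $\cH_W \backslash P(X) \cong N_r \backslash G_r$—which comes from writing $\cH_W = N(X) N_r \rtimes \U(W)$ inside $P(X) = N(X)\cdot(G_r \times \U(W))$—to realize $\cL_{\U_V',s}$ as an iterated integral over the compact set $(K \cap P(X)) \backslash K$ and over $N_r \backslash G_r$, up to explicit modular characters. On each slice, the $P(X)$-equivariance of the section $\varphi_s \in \Sigma_s$ together with the multiplicity-one factorization $\cL_{\cH_L} = \cL_{\cH_W} \otimes \cL_{N_r}$ splits the integrand as the product of a matrix-coefficient-type expression on $\cU_W$ fed into $\cL_{\cH_W}$, and a Whittaker-type expression of the form $\delta_{P(X)}^{1/2}(a) \valP{\det a}^s\, \cL_{N_r}(\tau(a)\xi)$ on $N_r \backslash G_r$.

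For absolute convergence, I would bound the first factor by $\Xi^{\cU_W}$ times a Harish-Chandra Schwartz factor by \eqref{eq:HC_Schwartz} and Lemma~\ref{lem:estimatesnew}, noting that $(\omega^\vee_V(\cdot)\Phi)_{Y^\vee}$ itself contributes exponential decay by the same lemma. The second factor is controlled by the standard Jacquet--Piatetski-Shapiro--Shalika bounds on Whittaker functions (and, in the tempered case, by Casselman--Wallach growth estimates). Combining the decay of $\Xi^{\cU_W}$ with the polynomial blow-up of Whittaker functions and the $\valP{\det a}^s$ twist yields convergence whenever $\Re(s) > c$ for some $c$ depending on the exponents of $\sigma$ and $\tau$; in the tempered setting, a careful tracking shows that the combined decay from the Weil-representation factor and the matrix coefficients of $\sigma$ is enough to push the convergence threshold down to $c = -1/2$, the same threshold appearing in the classical Rankin--Selberg zeta integrals.

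The equivalence $(1) \Leftrightarrow (2)$ is then handled as follows. The implication $(2) \Rightarrow (1)$ is automatic, since if $\cL_{\cH_L}$ vanishes on all decomposable tensors it also vanishes on $\varphi_s(h)$ for every $h$. For $(1) \Rightarrow (2)$, I would start from vectors $\varphi_V, \varphi_W, \varphi_\tau, \phi$ with $\cL_{\cH_L}(\varphi_V \otimes \varphi_W \otimes \varphi_\tau, \phi) \neq 0$, and construct $\varphi_s \in \Sigma_s$ with $K$-support concentrated near the identity and value $\varphi_W \otimes \varphi_\tau$ at $1$, together with $\Phi \in \omega^\vee_V$ such that $\Phi_{Y^\vee} = \phi$. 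Via the Iwasawa computation, the integral $\cL_{\U_V',s}(\varphi_V \otimes \varphi_s \otimes \Phi)$ then reduces to a smooth integral over $N_r \backslash G_r$ against a bump-function factor on $K$; a density/localization argument for Whittaker functions of $\tau_s$ combined with the continuity of $\cL_{\cH_L}$ shows this can be arranged to be non-zero.

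The main obstacle will be two-fold: first, the careful bookkeeping of modular characters and the Haar measure quotient $\rd h$ on $\cH_W \backslash \U(V)$ in the Iwasawa reduction, particularly ensuring that the product of $\delta_{P(X)}^{1/2}$ and $\delta_{\cH_W}^{-1}$ combine correctly to produce the expected convergence threshold; and second, justifying the sharp exponent $c = -1/2$ in the tempered case, which requires delicately matching the Harish-Chandra Schwartz decay of $\cL_{\cH_W}$-integrated matrix coefficients with the Casselman-type growth estimates on tempered Whittaker functionals for $\tau$. The $(1) \Rightarrow (2)$ construction is conceptually standard but one must guard against the localized $\varphi_s$ accidentally producing a vanishing Whittaker integral, which will be handled by appealing to the non-vanishing of $\cL_{N_r}$ on an open set of $\tau_s$-vectors.
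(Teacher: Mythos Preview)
Your convergence sketch and the trivial direction $(2)\Rightarrow(1)$ match the paper's approach. The paper likewise uses the Iwasawa decomposition (phrased as $\U_V'=T_r\cH_W'K_V$) together with the Weil-representation formula \eqref{eq:weil_formula_P2} and the estimates of Lemma~\ref{lem:estimatesnew}, deferring details to \cite{BPC22}*{Proposition~8.6.1.1}.

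The direction $(1)\Rightarrow(2)$, however, has a genuine gap as written. After your $K$-localization you are left with an integral over $N_r\backslash G_r$, and you propose to handle it by ``density/localization for Whittaker functions of $\tau_s$''. But Whittaker functions of a fixed $\tau$ do \emph{not} exhaust test functions on $N_r\backslash G_r$; Kirillov surjectivity (\cite{GK75}, \cite{Kem15}) only lets you prescribe the restriction to the mirabolic $P_r$, i.e.\ to $N_r\backslash P_r\cong N_{r-1}\backslash G_{r-1}$. A bump-shaped $W|_{P_r}$ does not make $W$ small on $G_r\setminus P_r$, so you cannot directly localize the full $N_r\backslash G_r$ integral this way.

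The missing ingredient, which the paper exploits, is the freedom in $\Phi$ as a Schwartz function on $X^*$. By \eqref{eq:weil_formula_P2} one has $(\omega^\vee_V(g)\Phi)_{Y^\vee}=\overline{\mu}(g)\valP{\det g}^{1/2}\Phi(g^* x_r^*)$ for $g\in G_r$, and the map $g\mapsto g^* x_r^*$ identifies $P_r\backslash G_r$ with an orbit in $X^*$. Choosing $\Phi$ appropriately on $X^*$ (your condition $\Phi_{Y^\vee}=\phi$ fixes $\Phi$ only at the single point $x_r^*$) therefore localizes the $P_r\backslash G_r$ part of the integral. Once that is done, the remaining integral lives on $N_r\backslash P_r\cong N_{r-1}\backslash G_{r-1}$, and \emph{there} Kirillov surjectivity lets you choose $\varphi_\tau$ so that $g\mapsto\cL_{N_r}(\tau(g)\varphi_\tau)$ is an arbitrary test function, reducing to the nonvanishing of $\cL_{\cH_W}$. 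So your two-step plan (localize on $K$, then Whittaker density) needs to become a three-step plan: localize on $P(X)\backslash\U(V)$ via $\varphi_s$, localize on $P_r\backslash G_r$ via $\Phi$, and localize on $N_r\backslash P_r$ via Kirillov.
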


\begin{proof}
    The convergence can be proved as in~\cite{BPC22}*{Proposition~8.6.1.1} by the Iwasawa decomposition $\U_V'=T_r \cH_W' K_V$, where $K_V$ is a maximal compact subgroup, using \eqref{eq:weil_formula_P2}, and Lemma~\ref{lem:estimatesnew} (1) and (2). The implication $(2)\implies(1)$ is automatic. For $(1)\implies(2)$, we see that for $\Re(s)>c$ we have by \eqref{eq:weil_formula_P2}, 
    \begin{align*}
         &\cL_{\U_V',s}(\varphi_V \otimes \varphi_s , \Phi)= \\
         &\int_{P(X) \backslash \U(V)} \int_{N_r \backslash G_r}  \cL_{\cH_L}\left((\sigma_V(gh)\otimes \tau(g))(\varphi_V \otimes \varphi_{s}(h)) , (\omega^\vee_V(h)\Phi)(g^* x_r^*)\right) (\valP{\cdot}^{s+\frac{1}{2}} \overline{\mu} \delta^{-\frac{1}{2}}_{P(X)})(g) \rd g \rd h.
    \end{align*}
    As $\Sigma_s$ is stable by multiplication by $C^\infty(P(X) \backslash \U(V))$, it is enough to show that there exists $\varphi_V \in \sigma_V$, $\varphi_W \in \sigma_W$, $\varphi_\tau \in \tau$, $\Phi \in \cS(X^*)$ and $\phi \in \nu^\vee$ such that
    \begin{equation*}
        \int_{N_r \backslash G_r} \Phi(g^* x_r^*)  \cL_{\cH_L}\left(\sigma_V(g) \varphi_V \otimes \varphi_W  \otimes \tau(g) \varphi_\tau ,  \phi \right) (\valP{\cdot}^{s+\frac{1}{2}} \overline{\mu} \delta_{P(X)}^{-\frac{1}{2}})(g)\rd g \neq 0.
    \end{equation*}
    Recall that $P_r$ is the mirabolic subgroup of $G_r$. This condition can be rewritten as
    \begin{equation*}
         \int_{P_r \backslash G_r} \Phi(g^* x_r^*) \int_{N_r \backslash P_r} \cL_{\cH_L}\left(\sigma_V(pg) \varphi_V  \otimes \varphi_W \otimes\tau(pg) \varphi_\tau , \phi \right) (\valP{\cdot}^{s+\frac{1}{2}} \overline{\mu} \delta_{P(X)}^{-\frac{1}{2}})(pg)\valP{p}^{-1} \rd p \rd g \neq 0.
    \end{equation*}
    The map $g \in P_r \backslash G_r \mapsto  g^* x_r^* \in X^*$ induces an embedding $\cS(P_r \backslash G_r) \hookrightarrow \cS(X^*)$, so it is enough to prove that we have elements such that 
    \begin{equation*}
         \int_{N_{r-1} \backslash G_{r-1}} \cL_{\cH_W}(\varphi_W \otimes \sigma_V(g) \varphi_V , \phi) \cL_{N_r}(\tau(g) \varphi_\tau) (\valP{\cdot}^{s-\frac{1}{2}} \overline{\mu} \delta_{P(X)}^{-\frac{1}{2}})(g) \rd g \neq 0.
    \end{equation*}
    By~\cite{GK75}*{Theorem~6} and~\cite{Kem15}*{Theorem~1}, for every $f \in C_c^\infty(N_{r-1} \backslash G_{r-1},\overline{\psi_r})$ there exists $\varphi_\tau \in \tau$ such that $\cL_{N_r}(\tau(g) \varphi_\tau)=f(g)$ for every $g \in N_{r-1} \backslash G_{r-1}$. The claim now follows from the non-vanishing of $\cL_{\cH_W}$.
\end{proof}

\subsubsection{Unfolding of tempered periods}

The following is \cite{Boi}*{Proposition~6.3.1}.
\begin{prop}
\label{prop:tempered_computations}
    Assume that $\sigma_V$, $\sigma_W$ and $\tau$ are all tempered. Then for $\varphi^1_V, \varphi^2_V \in \sigma_V$,  $\varphi^1_\Sigma, \varphi^2_\Sigma \in \Sigma$,  and $\Phi_1, \Phi_2 \in \omega^\vee_V$ we have
    \begin{equation}
        \mathcal{P}_{\U_V'}(\varphi^1_V \otimes \varphi^1_\Sigma \otimes \Phi_1, \varphi^2_V \otimes \varphi^2_\Sigma \otimes \Phi_2) \nonumber 
        =\int_{(\cH_W' \backslash \U_V')^2} \frac{\mathcal{P}_{\cH_L}\left(
        (\sigma_V(h_i) \varphi^i_V \otimes \varphi^i_\Sigma(h_i) \otimes (\omega^\vee_V(h_i) \Phi_i)_{Y^\vee})_{i=1,2}\right)}{\upsilon(G_r)} \rd h_i. \label{eq:to_prove_temp}
    \end{equation}
\end{prop}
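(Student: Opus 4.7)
The statement is an unfolding equality of tempered periods: the Fourier--Jacobi period on $\U_V'$ evaluated on vectors of the induced representation $\Sigma_s$ equals (at $s=0$) an integral over $\cH_W'\backslash\U_V'$ squared of the Fourier--Jacobi period on the smaller Levi-type group $\cH_L$. My approach is to mimic the scheme used by~\cite{BPC22} in the Bessel setting and reduce it to an abstract unfolding of functionals as in Proposition~\ref{prop:corank_reduction}.

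The plan is as follows. First I would unpack the definition of the LHS as
$$\cP_{\U_V'}(\varphi^1_V\otimes\varphi^1_\Sigma\otimes\Phi_1,\varphi^2_V\otimes\varphi^2_\Sigma\otimes\Phi_2)=\cP_{\cH_W}\left(c_{\Sigma,\varphi^1_\Sigma,\varphi^2_\Sigma}\cdot c_{\sigma_V,\varphi^1_V,\varphi^2_V}\otimes\Phi_1\otimes\Phi_2\right),$$
where the matrix coefficient on $\Sigma$ is, by definition, an integral over $P(X)\backslash\U(V)$ of a matrix coefficient on $\sigma_W\boxtimes\tau$. By Lemma~\ref{lem:estimatesnew} and the temperedness of all three representations $\sigma_V,\sigma_W,\tau$, this double matrix-coefficient-times-Weil-coefficient lies in a space to which $\cP_{\cH_W}$ extends continuously. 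To perform the unfolding I would introduce the parameter $s\in\C$ and form $\Sigma_s=I_{P(X)}^{\U(V)}(\sigma_W\boxtimes\tau_s)$, defining analogous periods $\cP_{\U_V',s}$ by inserting a factor $\valP{\det}^s$ along the $G_r$-direction, with the goal of shifting from $\Re(s)\gg 0$ (where everything is absolutely convergent) down to $s=0$ by analytic continuation.

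The main body of the argument is a Fubini-type manipulation. Since $\cH_L=\cH_W\times N_r$ is a product and the Weil representation respects the mixed model through the identity $(\omega_V^\vee(n_r'ng_Wg_V)\Phi)_{Y^\vee}=\nu_L^\vee(n_rng_W,n_r')(\omega_V^\vee(g_V)\Phi)_{Y^\vee}$ from \eqref{eq:nu_relation}, the integration against the Heisenberg--Weil coefficients factors through $\cH_W\backslash\U(V)$-times-$N_r\backslash G_r$. For $\Re(s)\gg 0$, one can use the Iwasawa-type decomposition $\U_V'=\cH_W'\cdot A(X)K_V'$ implicit in Proposition~\ref{prop:corank_reduction}, apply the explicit Weil formulae \eqref{eq:weil_formula_P1}--\eqref{eq:weil_formula_P4}, and rewrite
$$\cP_{\U_V',s}(\ldots)=\int_{(\cH_W'\backslash\U_V')^2}\upsilon(G_r)^{-1}\cP_{\cH_L,s}\bigl(\sigma_V(h_i)\varphi_V^i\otimes\varphi^i_{\Sigma_s}(h_i)\otimes(\omega_V^\vee(h_i)\Phi_i)_{Y^\vee}\bigr)\rd h_1\rd h_2,$$
where in turn $\cP_{\cH_L,s}$ is an $s$-twisted version of $\cP_{\cH_L}$. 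The factor $\upsilon(G_r)^{-1}$ arises from the compatibility of the measures chosen in \S\ref{subsec:final_measures} between the Tamagawa-type normalization and the $\rd_{\psi_v}$ normalization on $G_r$. For $\Re(s)\gg 0$ all integrals are absolutely convergent by the estimates of Lemma~\ref{lem:estimatesnew}~(1)--(2), so the exchange of orders is valid.

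The final step is to continue in $s$ and specialize at $s=0$. By Proposition~\ref{prop:corank_reduction} applied to the tempered case, the RHS is absolutely convergent for $\Re(s)>-\frac{1}{2}$, in particular at $s=0$; analogously, the LHS is defined at $s=0$ because $\Sigma$ is still tempered and Proposition~\ref{prop:extension_continuity} applies. The hard part will be justifying that both sides define holomorphic (or at least continuous) functions of $s$ on a common half-plane containing $0$ and that equality propagates from $\Re(s)\gg 0$ down to $s=0$; this requires uniform bounds in $s$ on compact subsets, which one gets from the proof of Proposition~\ref{prop:extension_continuity} (controlled via the conjugation action of the one-parameter subgroup $a(t)$, which introduces the factor $\overline{\psi}_U(t\lambda(h))$ and hence yields the integrability). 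Having established this, analytic continuation delivers the identity at $s=0$, completing the proof.
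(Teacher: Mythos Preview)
The paper does not give a proof of this proposition: it simply records that the statement is \cite{Boi}*{Proposition~6.3.1}. So there is no ``paper's own proof'' to compare against here, and your proposal is an attempt to reconstruct an argument that the present paper outsources entirely.

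Your overall strategy---introduce the $s$-twist $\Sigma_s$, unfold for $\Re(s)\gg 0$ via the Iwasawa decomposition and the mixed-model identities \eqref{eq:nu_relation}, \eqref{eq:weil_formula_P1}--\eqref{eq:weil_formula_P4}, then continue to $s=0$ using the tempered convergence range $\Re(s)>-\tfrac12$ from Proposition~\ref{prop:corank_reduction}---is the natural one and parallels the Bessel-case treatment in \cite{BPC22}. A couple of points deserve care. First, your opening display is mis-stated: $\cP_{\U_V'}$ is by definition the integral of the product matrix coefficient against the Weil coefficient over $\U_V'$, not an application of $\cP_{\cH_W}$; the latter lives on $\cU_W$ and integrates over $\cH_W$, which is the wrong group here. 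Second, the step where you ``shift'' from $\Re(s)\gg 0$ to $s=0$ is the genuine content: you need to show that the unfolded double integral over $(\cH_W'\backslash\U_V')^2$ is already absolutely convergent at $s=0$ in the tempered case (so no analytic continuation is strictly needed on the RHS), and that on the LHS the $\U_V'$-integral defining $\cP_{\U_V'}$ is likewise absolutely convergent because $\sigma_V\boxtimes\Sigma$ is tempered and Lemma~\ref{lem:estimatesnew}(1),(3) give the required $\Xi$-bound with exponential Weil decay. Once both sides are known to be absolutely convergent integrals at $s=0$, the unfolding is a direct Fubini computation rather than an analytic continuation, and the measure constant $\upsilon(G_r)^{-1}$ drops out from comparing the quotient measure on $N_r\backslash G_r$ in the two normalizations of \S\ref{subsec:final_measures}.
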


\subsubsection{Non vanishing of Fourier--Jacobi periods}

We now deduce from our unfolding identities the following proposition which is an explicit version of the Gan--Gross--Prasad conjecture for tempered representation. It is an extension of \cite{Xue2}*{Proposition~1.1.1} which worked for $r=0$.

\begin{theorem}
    \label{thm:explicit_GGP_FJ}
    Let $\sigma_V$ and $\sigma_W$ be smooth irreducible tempered representations of $\U(V)$ and $\U(W)$ respectively. For every $\varphi_V \in \sigma_V$, $\varphi_W \in \sigma_W$ and $\phi \in \nu^\vee$ we have 
    \begin{equation*}
        \cP_{\cH_W}(\varphi_V \otimes \varphi_W , \phi) \geq 0.
    \end{equation*}
    Moreover, if $\Hom_{\cH_W}(\sigma_V \otimes \sigma_W \otimes \nu^\vee,\C) \neq \{0\}$, then $\cP_{\cH_W}$ is not identically zero.
\end{theorem}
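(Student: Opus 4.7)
The approach is to reduce both the non-negativity of $\cP_{\cH_W}$ and the equivalence between its non-vanishing and that of $\Hom_{\cH_W}(\sigma_V \otimes \sigma_W \otimes \nu^\vee, \C)$ to the corank-zero case $r=0$ established in \cite{Xue2}*{Proposition~1.1.1}. The bridges are the unfolding identity of Proposition~\ref{prop:tempered_computations} and the functional-level unfolding of Proposition~\ref{prop:corank_reduction}. Fix a tempered unitary representation $\tau$ of $G_r = \Res_{E/F}\GL_r$ (for instance a full unitary principal series). Every irreducible admissible representation of $G_r$ is generic, so $\tau$ carries a non-zero Whittaker functional $\cL_{N_r}$, and by a standard Kirillov/Whittaker-model computation the Whittaker period $\cP_{N_r}$ is a non-zero positive-semidefinite hermitian form on $\tau$. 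Pick $\varphi_\tau \in \tau$ with $\cP_{N_r}(\varphi_\tau, \varphi_\tau) > 0$ and set $\Sigma = I_{P(X)}^{\U(V)}(\sigma_W \boxtimes \tau)$, which is tempered and irreducible, so that the corank-zero case applies to the tempered pair $(\sigma_V \boxtimes \Sigma, \omega_V^\vee)$.

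For the non-negativity, fix $\xi = \varphi_V \otimes \varphi_W \otimes \phi$ and pick $\Phi \in \omega_V^\vee$ with $\Phi_{Y^\vee} = \phi$. Via the Iwasawa decomposition $\U_V' = \cH_W' T_r K_V'$, construct a sequence of smooth sections $\varphi_\Sigma^{(n)} \in \Sigma$ supported in shrinking open neighborhoods $\Omega_n \subset K_V'$ of $1$ with common value $\varphi_\Sigma^{(n)}(1) = \varphi_W \otimes \varphi_\tau$, and set $\Psi^{(n)} = \varphi_V \otimes \varphi_\Sigma^{(n)} \otimes \Phi$ and $F^{(n)}(h) = \sigma_V(h)\varphi_V \otimes \varphi_\Sigma^{(n)}(h) \otimes (\omega_V^\vee(h)\Phi)_{Y^\vee}$. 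Since $\cP_{\cH_L} = \cP_{\cH_W} \otimes \cP_{N_r}$ is continuous in its inputs (a consequence of Proposition~\ref{prop:extension_continuity}) and $F^{(n)}$ varies smoothly, the right-hand side of \eqref{eq:to_prove_temp} applied to $\Psi^{(n)}$ is asymptotic as $n \to \infty$ to
\[
\upsilon(G_r)^{-1}\, \vol(\Omega_n)^2\, \cP_{\cH_W}(\xi, \xi)\, \cP_{N_r}(\varphi_\tau, \varphi_\tau).
\]
The left-hand side $\cP_{\U_V'}(\Psi^{(n)}, \Psi^{(n)})$ is $\geq 0$ by Xue's corank-zero result. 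Dividing by the strictly positive factor $\upsilon(G_r)^{-1}\vol(\Omega_n)^2 \cP_{N_r}(\varphi_\tau, \varphi_\tau)$ and passing to the limit yields $\cP_{\cH_W}(\xi, \xi) \geq 0$.

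For the non-vanishing equivalence, the backward direction is formal: since $\cP_{\cH_W}$ defines a $\cH_W$-invariant hermitian sesquilinear form on $\sigma_V \otimes \sigma_W \otimes \nu^\vee$, any non-zero value produces an element of $\Hom_{\cH_W}(\sigma_V \otimes \sigma_W \otimes \nu^\vee, \C)$ by fixing one of the arguments. For the forward direction, assume this Hom space is non-zero and pick $\cL_{\cH_W}$ non-zero in it; form $\cL_{\cH_L} = \cL_{\cH_W} \otimes \cL_{N_r}$, a non-zero element of $\Hom_{\cH_L}((\sigma_V \otimes \sigma_W \boxtimes \tau) \otimes \nu_L^\vee, \C)$. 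Proposition~\ref{prop:corank_reduction} applied at $s=0$ (permitted since in the tempered case $c = -\tfrac{1}{2}$) yields a non-zero $\cL_{\U_V', 0} \in \Hom_{\U_V'}(\sigma_V \otimes \Sigma \otimes \omega_V^\vee, \C)$. By \cite{Xue2}*{Proposition~1.1.1}, the period $\cP_{\U_V'}$ does not vanish identically on $\sigma_V \otimes \Sigma \otimes \omega_V^\vee$, so by the unfolding identity $\cP_{\cH_L} \not\equiv 0$. Since $\cP_{\cH_L} = \cP_{\cH_W} \otimes \cP_{N_r}$ with $\cP_{N_r} \not\equiv 0$, we conclude $\cP_{\cH_W} \not\equiv 0$. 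The main technical subtlety lies in the concentration step above: one must verify that the prescribed-value bump sections $\varphi_\Sigma^{(n)}$ exist (immediate from the smoothness of sections of parabolic induction) and that the joint continuity of $\cP_{\cH_L}$ established in Proposition~\ref{prop:extension_continuity} is sufficient to replace $\cP_{\cH_L}(F^{(n)}(h_1), F^{(n)}(h_2))$ by $\cP_{\cH_L}(F^{(n)}(1), F^{(n)}(1))$ up to $o(\vol(\Omega_n)^2)$ on the support of $\varphi_\Sigma^{(n)}(h_1) \varphi_\Sigma^{(n)}(h_2)$.
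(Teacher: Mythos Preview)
Your non-vanishing argument is correct and essentially matches the paper's: both apply Proposition~\ref{prop:corank_reduction} to transport a non-zero $\cL_{\cH_W}$ to a non-zero $\cL_{\U_V',0}$, invoke \cite{Xue2}*{Proposition~1.1.1} to get $\cP_{\U_V'}\not\equiv 0$, and then read off $\cP_{\cH_W}\not\equiv 0$ from Proposition~\ref{prop:tempered_computations} and $\cP_{\cH_L}=\cP_{\cH_W}\otimes\cP_{N_r}$.

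Your non-negativity argument, however, has a genuine gap. Localizing $\varphi_\Sigma^{(n)}$ near $1\in K_V'$ does \emph{not} concentrate the integral over $\cH_W'\backslash \U_V'$: writing $\U_V'=\cH_W'\cdot(N_r\backslash G_r)\cdot K_V'$ (as in the proof of Proposition~\ref{prop:corank_reduction}), the section $\varphi_\Sigma^{(n)}$ only controls the compact $P(X)\backslash \U(V)$-direction, while the non-compact $N_r\backslash G_r$-direction remains. After your $K_V'$-concentration the right-hand side of \eqref{eq:to_prove_temp} is asymptotic to $\vol(\Omega_n)^2$ times a double integral over $(N_r\backslash G_r)^2$ whose integrand involves $\cP_{\cH_W}\bigl(\sigma_V(g_1)\varphi_V\otimes\varphi_W\otimes\Phi(g_1^*x_r^*),\,\sigma_V(g_2)\varphi_V\otimes\varphi_W\otimes\Phi(g_2^*x_r^*)\bigr)$; this does \emph{not} equal $\cP_{\cH_W}(\xi,\xi)\,\cP_{N_r}(\varphi_\tau,\varphi_\tau)$, so your asymptotic is false as stated.

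The paper closes this gap by running all three localization steps of Proposition~\ref{prop:corank_reduction} in reverse: besides the $K_V$-bump in $\varphi_\Sigma$, one also concentrates $\Phi$ in the $X^*$-direction (via $\cS(P_r\backslash G_r)\hookrightarrow\cS(X^*)$) and chooses $\varphi_\tau$ via the Kirillov model so that its Whittaker function is concentrated near $1$. The remaining key input, which you only allude to, is the factorization $\cP_{N_r}=\cL_{N_r}\otimes\overline{\cL_{N_r}}$ from \cite{SV}*{Theorem~6.3.4}; this is what guarantees that the residual Whittaker contribution is strictly positive and hence that the sign of $\cP_{\U_V'}$ after full localization agrees with that of $\cP_{\cH_W}(\xi,\xi)$. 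Your argument can be repaired along exactly these lines, but as written the single $K_V'$-concentration step is insufficient.
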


\begin{proof}
    For all $f \in C_c^\infty(\cU_W)$ satisfying $f(g)=\overline{f(g^{-1})}$, we have $\cP_{\cH_W}(f\otimes \phi \otimes \phi) \in \R$. Therefore it follows by continuity that $\cP_{\cH_W}(\varphi_V \otimes \varphi_W , \phi)$ is real. By multiplicity one for Whittaker functionals (\cite{Ro}) and by \cite{SV}*{Theorem~6.3.4}, for any smooth irreducible tempered representation $\tau$ of $G_r$ (hence generic by \cite{Zel}*{Theorem~9.7}) we have $\cP_{N_r}= \cL_{N_r} \otimes \overline{\cL_{N_r}}$ for some $\cL_{N_r} \in \Hom_{N_r}(\tau,\overline{\psi_r})$ non-zero. Using Proposition \ref{prop:tempered_computations} and following the same steps as in the proof of Proposition~\ref{prop:corank_reduction} in reverse order, one shows that the existence of $\varphi_V, \varphi_W$ and $\phi$ such that $\cP_{\cH_W}(\varphi_V \otimes \varphi_W , \phi)<0$ implies the existence of $\varphi_\Sigma \in \Sigma:=I_{P(X)}^{\U(V)} \sigma_W \boxtimes \tau$ and $\Phi \in \omega^\vee_V$ satisfying
    \begin{equation*}
        \cP_{\U_V'}(\varphi_V \otimes \varphi_\Sigma , \Phi)<0.
    \end{equation*}
    This leads to a contradiction by \cite{Xue2}*{Proposition~1.1.1~(2)}, and the first point follows. Note that \cite{Xue2} uses the local Theta correspondence to reduce the positivity of $\cP_{\U_V'}$ to that of the Bessel period which is known to satisfy this property by \cite{SV}*{Theorem~6.2.1}. 

    For the second point, assume that $\cP_{\cH_W}=0$. By Proposition \ref{prop:tempered_computations}, this implies that $\cP_{\U_V'}=0$, which in turn implies that $\Hom_{\U_V'}(\sigma_V \otimes \Sigma \otimes \omega^\vee_V,\C)=\{0\}$ by \cite{Xue2}*{Proposition~1.1.1~(2)}. It now follows from Proposition~\ref{prop:corank_reduction} that $\Hom_{\cH_W}(\sigma_V \otimes \sigma_W \otimes \nu^\vee,\C) = \{0\}$.
\end{proof}

\subsection{Unramified unfolding}
\label{subsec:unram_unfold}

We now assume that the situation is unramified in the sense of \cite{Boi}. This will happen as soon as the place $v$ which we have picked at the beginning of \S\ref{sec:local_FJ} lies outside of the a finite $\tS$ of places of the global field $F$. We have a maximal compact subgroup $K_V$ as in \S\ref{subsubsec:max_compact}.

We still drop the place $v$ from the notation. Let $\sigma_{V}$, $\sigma_{W}$ and $\tau$ be smooth unramified irreducible representations of $\U(V)$, $\U(W)$ and $G_{r}$ respectively. Set $\Sigma:=I_{P(X)}^{\U(V)} \sigma_{W} \boxtimes \tau$ and $\Sigma_{s} = I_{P(X)}^{\U(V)} \sigma_{W} \boxtimes \tau_{s}$. Because of our unramified assumption, the space $(\nu^\vee)^{K_{V} \cap \cH_W'}$ has dimension one. 

\begin{theorem}
\label{thm:L_function_equality}
Let $\varphi_{V}^\circ \in \sigma_{V}^{K_{V}}$, $\varphi_{s}^\circ \in \Sigma_{s}^{K_{V}}$ and $\phi^\circ \in (\nu^\vee)^{K_{V} \cap \cH_W}$. Set $\Phi^\circ := 1_{X^*(\cO_{E})} \otimes \phi^\circ \in \omega^\vee_{V}$. For $\Re(s)$ sufficiently large we have
\begin{equation}
    \cL_{\U_V',s}(\varphi_{V}^\circ \otimes \varphi_{s}^\circ , \Phi^\circ)=  \frac{L(\frac{1}{2}+s,\tau \times \sigma_{V} \otimes \overline{\mu})}{L(1+s,\tau^{\mathsf{c}} \times \sigma_{W})L(1+2s,\tau,\mathrm{As}^{(-1)^{m}})} \cL_{\cH_L}(\varphi_{V}^\circ \otimes\varphi_s^\circ(1) ,  \phi^\circ). \label{eq:un_unfolding}
\end{equation}
where $\tau^{\mathsf{c}}=\tau \circ \mathsf{c}$ and $m$ is the dimension of $W$. Moreover, if $\sigma_{V}$, $\sigma_{W}$ and $\tau$ are tempered, \eqref{eq:un_unfolding} holds for $\Re(s)>-\frac{1}{2}$.
\end{theorem}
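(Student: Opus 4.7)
The plan is to evaluate the integral $\cL_{\U_V',s}(\varphi_V^\circ \otimes \varphi_s^\circ, \Phi^\circ)$ by an Iwasawa unfolding, reducing it to a summation over a split torus that we can compute via the Casselman--Shalika and Macdonald formulae, and then match it with the claimed quotient of unramified $L$-factors.

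First, I would use the inclusion $\cH_W \subset P(X)$ together with the product description $\cH_W \cap P(X) = (N_r \ltimes N(X)) \rtimes \U(W)$ to identify $\cH_W \backslash P(X) \simeq N_r \backslash G_r$. Combined with the Iwasawa decomposition $\U_V' = P(X) K_V'$ and the $K_V'$-invariance of all the spherical data (so that $\mathrm{vol}(K_V') = 1$ drops out), the defining integral \eqref{eq:L_U_defi} collapses to an integral over $N_r \backslash G_r$. Using the explicit formulae \eqref{eq:weil_formula_P1}--\eqref{eq:weil_formula_P4} for the action of $(a,1) \in M(X)$ on $\Phi^\circ = 1_{X^*(\cO_E)} \otimes \phi^\circ$, the Iwasawa evaluation of the spherical section $\varphi_s^\circ$ on $a \in G_r \subset P(X)$, and the factorization $\cL_{\cH_L} = \cL_{N_r} \otimes \cL_{\cH_W}$, the integrand becomes
\[
\overline{\mu}(a)\valP{\det a}_E^{s+\tfrac12} \cdot \delta_{P(X)}(a)^{-\tfrac12} \cdot 1_{X^*(\cO_E)}(a^* x_r^*) \cdot \cL_{N_r}(\tau(a)\varphi_\tau^\circ) \cdot \cL_{\cH_W}(\sigma_V(a)\varphi_V^\circ \otimes \varphi_W^\circ, \phi^\circ),
\]
where $\varphi_s^\circ(1) = \varphi_W^\circ \boxtimes \varphi_\tau^\circ$.

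Second, using the further Iwasawa decomposition $G_r = N_r T_r (K_V' \cap G_r)$, I would convert the integral over $N_r \backslash G_r$ into a summation over the Satake lattice $T_r/(T_r \cap K_V')$. The Casselman--Shalika formula for $\tau$ expresses $\cL_{N_r}(\tau(t) \varphi_\tau^\circ)$ as a Weyl-symmetrized polynomial in the Satake parameters of $\tau$, and an analogous Macdonald-type formula --- which at corank zero amounts to the unramified Ichino--Ikeda computation carried out in \cite{Boi} --- expands $\cL_{\cH_W}(\sigma_V(t) \varphi_V^\circ \otimes \varphi_W^\circ, \phi^\circ)$ in the Satake parameters of $\sigma_V$ and $\sigma_W$. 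The lattice constraint $1_{X^*(\cO_E)}(t^* x_r^*)$ restricts the sum to dominant Satake indices.

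Third, I would combine these expansions into a formal generating series in the Satake parameters of $\sigma_V$, $\sigma_W$, and $\tau$, and identify it with the claimed Euler expansion of $L(s+\tfrac12, \tau \times \sigma_V \otimes \overline{\mu})/\bigl(L(s+1, \tau^{\mathsf{c}} \times \sigma_W) L(2s+1, \tau, \As^{(-1)^m})\bigr)$ multiplied by $\cL_{\cH_L}(\varphi_V^\circ \otimes \varphi_s^\circ(1), \phi^\circ)$. The main obstacle is this combinatorial identification: three distinct Langlands--Shahidi $L$-factors must emerge simultaneously from the interaction of the three Satake expansions with the twist by $\overline{\mu}$ and the Asai-type summation coming from the lattice constraint $t^* x_r^* \in X^*(\cO_E)$. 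This matching is ultimately a formal consequence of the standard unramified Rankin--Selberg identities of Jacquet--Piatetski-Shapiro--Shalika, combined with the corank-zero unramified Ichino--Ikeda formula from \cite{Boi}. The extension of the identity to $\Re(s) > -1/2$ in the tempered case follows from the absolute-convergence estimates established in the proof of Proposition~\ref{prop:corank_reduction}, together with the temperedness bounds on the Satake parameters.
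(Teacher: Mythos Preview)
Your approach is correct and matches the paper's, which is much terser: the paper directly cites \cite{Boi}*{Proposition~7.3.1.1} as a ready-made identity between the torus sum
\[
\sum_{\lambda_r \in T_r(F)/T_r(\cO_F)} \cL_{\cH_L}\bigl(\sigma_V(\lambda_r)\varphi_V^\circ \otimes \varphi_s^\circ(\lambda_r),\,(\omega_V^\vee(\lambda_r)\Phi^\circ)_{Y^\vee}\bigr)\,(\delta_{P(X)}^{1/2}\delta_P^{-1})(\lambda_r)
\]
and the $L$-function ratio, and then simply observes that this sum equals $\cL_{\U_V',s}(\varphi_V^\circ\otimes\varphi_s^\circ,\Phi^\circ)$ via the Iwasawa decomposition $\U(V)=T_r\,\cH'_W\,K_V$. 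Your first two steps reproduce this Iwasawa reduction; your third step sketches the internal mechanism of \cite{Boi}*{Proposition~7.3.1.1} rather than invoking it as a black box.

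One imprecision worth flagging: the input you need from \cite{Boi} is not ``the corank-zero unramified Ichino--Ikeda'' but the full Whittaker--Shintani function $t\mapsto \cL_{\cH_W}(\sigma_V(t)\varphi_V^\circ\otimes\varphi_W^\circ,\phi^\circ)$ for varying $t\in T_r$ (equivalently, its summation identity against the Casselman--Shalika Whittaker function of $\tau$). The value at $t=1$ alone does not suffice. This is exactly what \cite{Boi}*{Proposition~7.3.1.1} packages, so you should cite it directly rather than attempt to rederive the combinatorics.
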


\begin{proof}
    By \cite{Boi}*{Proposition~7.3.1.1}, we have the unramified unfolding equality for $\Re(s)$ large enough
    \begin{align}
\label{eq:L_unfold}
       &\sum_{\lambda_r \in \Lambda_{r}} \cL_{\cH_L}\left(\sigma_{V}(\lambda_r)\varphi^\circ_{V}  \otimes \varphi_{s}^\circ(\lambda_r) , (\omega^\vee_{V}(\lambda_r)\Phi^\circ)_{Y^\vee} \right)(\delta_{P(X)}^{\frac{1}{2}} \delta_{P}^{-1})(\lambda_r) \nonumber  \\
       =  &\frac{L(\frac{1}{2}+s,\tau \times \sigma_{V} \otimes \overline{\mu})}{L(1+s,\tau^{\mathsf{c}} \times \sigma_{W})L(1+2s,\tau,\mathrm{As}^{(-1)^{m}})}\cL_{\cH_L}(\varphi_{V}^\circ \otimes \varphi_{s}^\circ(1) ,\phi^\circ),
\end{align}
where $P$ is the parabolic subgroup of $\U(V)$ stabilizing the flag $0 \subset E x_1 \subset \hdots \subset X$ and we have set $\Lambda_{r}=T_r(F) / T_r(\cO_F)$. Note that with our choices of measures, the volumes $\mathrm{vol}(K_{V})$ and $\mathrm{vol}(K_{V} \cap \cH_W)$ are $1$. The result now follows from the definition of $\cL_{\U_V',s}$ given in \eqref{eq:L_U_defi} and the Iwasawa decomposition $\U(V)=T_r \cH'_W K_{V}$.
\end{proof}

\section{Global Fourier--Jacobi periods}

\label{sec:global_FJ}

We now go back to the global setting and prove an unfolding identity.

\subsection{Definition of global periods}
\label{subsubsection:global_FJ_notations}
For $(\bH,\omega^\vee,Z^\vee) \in \{ (\U'_V,\omega^\vee_V,X^* \oplus Y^\vee), (\cH_W,\nu^\vee,Y^\vee),(\cH_L,\nu^\vee_L,Y^\vee)\}$, write the theta series
\begin{equation*}
    \theta^\vee_\bH(h,\Phi)=\sum_{z \in Z^\vee(F)} (\omega^\vee(h)\Phi)(z), \quad h \in \bH(\bA),  \quad \Phi \in \omega^\vee.
\end{equation*}
In the first case, this is the series $\theta^\vee$ from \eqref{eq:Theta_unitary_definition}. We will also make use of
\begin{equation*}
    \theta_W^{V,\vee}(h,\Phi):= \sum_{y^\vee \in  Y^\vee(F)} (\omega^\vee_{V}(h) \Phi)_{Y^\vee}(y^\vee), \quad h \in \U'_V(\bA), \quad \Phi \in \omega^\vee_V.
\end{equation*}
These theta series are of moderate growth by variants of Lemma~\ref{lem:theta_moderate_u}.

By the unfolding identity \eqref{eq:nu_relation}, we have for $u \in U_{r-1}(\bA), \; n \in N(X)(\bA), \; \gamma \in P_r(F)$ (the mirabolic group of $G_r$), $g_W \in \U(W)(\bA)$ and $g \in \U(V)(\bA)$ the formula 
\begin{equation}
\label{eq:theta_relation}
    \overline{\psi}_{U}(u)\theta_W^{V,\vee}(n \gamma g_Wg,\Phi)=\theta_{\cH_W}^{\vee}(un,(\omega^\vee_V(\gamma g_Wg)\Phi)_{Y^\vee}).
\end{equation}
For triples ${(\bG,\bH,\omega^\vee) \in \{(\U_V,\U'_V,\omega^\vee_V), (\cU_W,\cH_W,\nu^\vee),(L,\cH_L,\nu^\vee_L)\}}$, consider the integral
\begin{equation}
    \label{eq:global_period_defi}
    \cP_{\bH}(\varphi,\Phi):=\int_{[\bH]} \varphi(h) \theta_{\bH}^\vee(h,\Phi) \rd h, \quad \varphi \in \cA(\bG), \quad \Phi \in \omega^\vee,
\end{equation}
at least if the integral converges. Finally, for $\varphi \in \cA(G_r)$ set 
\begin{equation*}
        \cP_{N_r}(\varphi):= \int_{[N_r]} \varphi(h) \psi_r(n) \rd n.
\end{equation*}
Then for $\varphi=\varphi_{V,W} \otimes \varphi_{r} \in \cA(L)$ we have $ \cP_{\cH_L}(\varphi , \phi)=\cP_{N_r}(\varphi_r)\cP_{\cH_W}(\varphi_{V,W}, \phi)$.

\subsection{Unfolding of global periods}

Let $\sigma_V \in \Pi_{\cusp}(\U(V))$ and $\sigma_W \in \Pi_{\cusp}(\U(W))$. Let $\tau$ be an Arthur parameter of $G_r(\bA)$. For $s \in \C$ we set $\Sigma_{s}:=I_{P(X)}^{\U(V)} (\tau_s \boxtimes \sigma_W)$. Write tensor product decompositions $\sigma_V=\otimes'_v \sigma_{V,v}$, $\sigma_W=\otimes'_v \sigma_{W,v}$ and $\tau=\otimes'_v \tau_{v}$.

We prove the global counterpart of Proposition~\ref{prop:tempered_computations}.

\begin{prop}
\label{prop:global_corank_reduction}
Let $\varphi_V \in \cA_{\sigma_V}(\U(V))$, $\varphi \in \cA_{P(X),\tau \boxtimes \sigma_W}(\U(V))$ and $\Phi \in \omega^\vee_V$. There exists $c>0$ such that for all $s$ with $\Re(s)>c$ we have the equality
\begin{equation*}
    \cP_{\U'_V}\left(\varphi_V \otimes E_{P(X)}^{\U(V)}(\varphi,s) , \Phi \right) = \int_{\cH_W'(\bA) \backslash \U'_V(\bA)} \cP_{\cH_L} \left( \mathrm{R}(h)(\varphi_V \otimes \varphi_s) , (\omega^\vee(h) \Phi)_{Y^\vee} \right) \rd h,
\end{equation*}
where the right-hand side is absolutely convergent.
\end{prop}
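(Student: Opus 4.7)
The strategy is to establish the identity by a Rankin--Selberg style unfolding, valid for $\Re(s)>c$ sufficiently large, that reduces the global period on $\U'_V$ to an integral over $\cH'_W(\bA) \backslash \U'_V(\bA)$ of the Levi-period $\cP_{\cH_L}$ on $L=\cU_W \times G_r$. The key structural input is that $G_r=\GL(X)$ acts transitively on $X^*(F) \setminus \{0\}$ with stabilizer the mirabolic subgroup $P_r$, so that the theta series $\theta^\vee(h,\Phi)$ decomposes naturally according to the two $M(X)(F)$-orbits in $X^*(F)$, namely $\{0\}$ and $X^*(F)\setminus\{0\}$. The threshold $c$ will emerge from explicit bounds produced along the way.

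The unfolding proceeds in three steps. First, unfold the Eisenstein series in the defining integral of $\cP_{\U'_V}$: for $\Re(s)$ large, absolute convergence (from the rapid decay of $\varphi_V$ against the polynomial growth of $E(\varphi,s)$ and $\theta^\vee(\cdot,\Phi)$, the latter by a variant of Lemma~\ref{lem:theta_moderate_u}) allows one to interchange sum and integral and obtain
\begin{equation*}
    \int_{P(X)(F) \backslash \U'_V(\bA)} \varphi_V(h) \varphi_s(h) \theta^\vee(h,\Phi) \, \rd h.
\end{equation*}
Second, decompose $\theta^\vee(h,\Phi)$ according to the $M(X)(F)$-orbits on $X^*(F)$. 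The $\{0\}$-contribution yields an integral involving the constant term of $\varphi_V$ along $N(X)$, and vanishes by cuspidality of $\varphi_V$ on $\U(V)$. The contribution from $X^*(F)\setminus\{0\}$ can be written, after reindexing, as a sum over $P_r(F) \backslash G_r(F)$ of right translates of the partial theta series $\theta_W^{V,\vee}$; folding these translates into the integration domain produces an integral over $(P_r \cdot N(X) \cdot \U(W))(F) \backslash \U'_V(\bA)$. Third, invoke relation~\eqref{eq:theta_relation} to convert $\theta_W^{V,\vee}$ on $\U'_V$ into the theta series $\theta^\vee_{\cH_L}$ on $\cH_L$ evaluated against $(\omega^\vee_V(h)\Phi)_{Y^\vee}$, and interpret $\varphi_s$ via its restriction to the Levi $M(X)$ as a function on $\cU_W \times G_r$. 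Together with the cuspidality of $\sigma_W$ and the Whittaker integration on $G_r$ attached to $\tau$, the inner integration becomes $\cP_{\cH_L}(\mathrm{R}(h)(\varphi_V \otimes \varphi_s), (\omega^\vee_V(h)\Phi)_{Y^\vee})$ and the outer integration becomes the integral over $\cH'_W(\bA) \backslash \U'_V(\bA)$ asserted in the statement.

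The main technical obstacle is the absolute convergence of the right-hand side for $\Re(s)>c$, which simultaneously justifies the intermediate Fubini manipulations. By the Iwasawa decomposition $\U'_V(\bA) = T_r(\bA) \cH'_W(\bA) K_V$ (where $T_r \subset G_r \subset \U(V)$), the outer integral reduces, modulo a compact integration over $K_V$, to one over $T_r(\bA)$. Under this action the function $\varphi_s$ contributes a factor of shape $\delta_{P(X)}^{1/2}$ twisted by $s$, the vector $(\omega^\vee_V(h)\Phi)_{Y^\vee}$ contributes a $|\det|^{1/2}\overline{\mu}$ factor via formula~\eqref{eq:weil_formula_P2}, and the inner Levi-period $\cP_{\cH_L}$ decays rapidly along $T_r$ by cuspidality of $\sigma_V,\sigma_W$ and the analogous local bounds from Lemma~\ref{lem:estimatesnew}. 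Combining these modular-character shifts yields absolute convergence for $\Re(s)$ beyond an explicit threshold $c$. The careful book-keeping of absolute convergence at each step---in particular verifying that each of the three unfolding manipulations preserves it---is the most delicate point of the argument.
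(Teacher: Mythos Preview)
Your overall strategy matches the paper's: unfold the Eisenstein series, split the theta series along the $M(X)(F)$-orbits on $X^*(F)$, kill the $x^*=0$ term by cuspidality of $\varphi_V$, and re-index the remainder by $P_r(F)\backslash G_r(F)$. The convergence argument via Iwasawa on $T_r$ is also along the right lines.

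However, there is a genuine gap in your third step. After step two, the inner integration is over $[N(X)]$ and the sum is over $P_r(F)\backslash G_r(F)$; the target period $\cP_{\cH_L}$, on the other hand, involves integration over $[\cH_W]\times[N_r]$ against a Whittaker character $\psi_r$ on $N_r$. This character does \emph{not} come from $\tau$ or from cuspidality of $\sigma_W$ as you suggest---cuspidality of $\sigma_W$ plays no role anywhere in the argument. Rather, it is produced by a \emph{second} application of the cuspidality of $\varphi_V$: after writing $N(X)=S(W)\ltimes(N(X)\cap N(X_{r-1}))$, one performs a Fourier expansion of the function $p\mapsto\int_{[N(X)\cap N(X_{r-1})]}\varphi_V(nph\gamma g)\,\rd n$ on $[P_r]$ along the abelian quotient $[N_r]$ (noting that $N_r\ltimes(N(X)\cap N(X_{r-1}))=U_{r-1}$), and cuspidality of $\varphi_V$ along the unipotent radicals of the intermediate flag parabolics kills all but the generic Fourier coefficients, yielding
\[
\sum_{\gamma'\in N_r(F)\backslash P_r(F)}\int_{[U_{r-1}]}\varphi_V(uh\gamma'\gamma g)\,\overline{\psi_U(u)}\,\rd u.
\]
Only after this step does relation~\eqref{eq:theta_relation} apply to convert $\overline{\psi_U}(u)\cdot\theta_W^{V,\vee}$ into $\theta^\vee_{\cH_W}$ and hence into the $\cH_L$-theta series. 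Without this Fourier expansion you cannot pass from $P_r$ to $N_r$, and the Whittaker character $\psi_r$ needed to form $\cP_{N_r}$ (and thus $\cP_{\cH_L}=\cP_{\cH_W}\otimes\cP_{N_r}$) simply has no source.
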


\begin{remark}
    \label{rem:same_period}
    In \S\ref{subsec:reg_FJ}, we have defined a regularized period $\cP_{\U'_V}$ defined on Eisenstein series induced from automorphic cuspidal representations of Levi subgroups of $\U_V$ representing a $(\U_V,\U'_V,\overline{\mu})$-regular cuspidal data. Because $\sigma_V$ is cuspidal, the construction applies here. Moreover, by Remark~\ref{rem:cuspi_case} the period in this case reduces to the absolutely convergent integral along $[\U'_V]$.
\end{remark}

\begin{proof}
For $\Re(s)$ large enough, the Eisenstein series is also absolutely convergent and we have
\begin{align}
\label{eq:global_period}
     \cP_{\U'_V}\left(\varphi_V \otimes E_{P(X)}^{\U(V)}(\varphi,s) , \Phi \right)
    &=\int_{[\U(V)]}\sum_{ \gamma \in P(X)(F) \backslash \U(V)(F)} \varphi_s( \gamma g) \varphi_V(g) \theta_{\U'_V}^\vee(g,\Phi) \rd g \nonumber \\
    &=\int_{M(X)(F)N(X)(\bA) \backslash \U(V)(\bA)} \varphi_s(g)  \int_{[N(X)]}\varphi_V(ng) \theta_{\U'_V}^\vee(ng,\Phi) \rd n \rd g.
\end{align}
For a fixed $g$ the inner integral over $[N(X)]$ is given by
\begin{equation*}
    \int_{[N(X)]}\varphi_V(ng) \sum_{x^* \in X^*(F)} \sum_{y^\vee \in Y^\vee(F)}\omega^\vee_V(ng) \Phi(x^*,y^\vee) \rd n.
\end{equation*}
Note that this triple integral is absolutely convergent. It follows from the description of the mixed model given in~\eqref{eq:weil_formula_P2} and~\eqref{eq:weil_formula_P3} that $N(X)$ only acts on the $y^\vee$ coordinate, so that for every $x^*$ the map $n \mapsto  \sum_{y^\vee \in Y^\vee(F)}\omega^\vee_V(ng) \Phi(x^*,y^\vee)$ is $N(X)(F)$ left-invariant, and that $n \mapsto  \sum_{y^\vee \in Y^\vee(F)}\omega^\vee_V(ng) \Phi(0,y^\vee)$ is constant on $[N(X)]$. Then by cuspidality of $\varphi_V$ and~\eqref{eq:weil_formula_P2} we have
\begin{align*}
    \int_{[N(X)]}\varphi_V(ng) \theta_{\U'_V}^\vee(ng,\Phi) \rd n&
    =\sum_{x^* \in X^* \setminus \{0\}} \int_{[N(X)]}\varphi_V(ng) \sum_{y^\vee \in Y^\vee(F)}\omega^\vee_V(ng) \Phi(x^*,y^\vee) \rd n \\
    &=\sum_{\gamma \in P_r(F) \backslash G_r(F)} \int_{[N(X)]}\varphi_V(n\gamma g) \theta_W^{V,\vee}(n \gamma g, \Phi) \rd n,
\end{align*}
where the last equality is obtained thanks to the change of variables $n \mapsto \gamma n \gamma^{-1}$. Now fix $\gamma$. Set $X_{r-1}=\mathrm{span}_E(x_1,\hdots,x_{r-1}) \subset X$. Let $N(X_{r-1})$ be the unipotent radical of the parabolic of $\U(V)$ stabilizing $X_{r-1}$. We have $N(X)=S(W) \ltimes (N(X) \cap N(X_{r-1}))$. Then
\begin{equation}
\label{eq:global_Fourier}
    \int_{[N(X)]}\varphi_V(n\gamma g) \theta_W^{V,\vee}(n \gamma g, \Phi) \rd n  
    =\int_{[S(W)]}\int_{[N(X) \cap N(X_{r-1})]} \varphi_V(nh \gamma g)
    \theta_W^{V,\vee}(n h\gamma g, \Phi) \rd n \rd h. 
\end{equation}
For $h$ fixed, consider $p \in [P_r] \mapsto \int_{[N(X) \cap N(X_{r-1})]} \varphi_V( nph \gamma g) \rd n$. This is well defined as $P_r(F)$ normalizes $(N(X) \cap N(X_{r-1}))(\bA)$. But $N_r \ltimes (N(X) \cap N(X_{r-1}))=U_{r-1}$  and $S(W)$ commutes with $P_r$ so that by cuspidality of $\varphi_V$ we have the Fourier expansion along $[N_r]$
\begin{equation}
\label{eq:cuspidal_fourier}
\int_{[N(X) \cap N(X_{r-1})]} \varphi_V( nh \gamma g) \rd n=\sum_{\gamma' \in N_r(F) \backslash P_r(F)} \int_{[U_{r-1}]} \varphi_V( u  h \gamma'\gamma g) \overline{\psi_{U}(u)} \rd u.
\end{equation}
As $ U_{r-1} \rtimes S(W) =U_r$, using \eqref{eq:theta_relation} and \eqref{eq:cuspidal_fourier} we see that \eqref{eq:global_Fourier} reduces to
\begin{equation*}
    \int_{[N(X)]}\varphi_V(n\gamma g) \theta_W^{V,\vee}(n \gamma g, \Phi) \rd n =\sum_{\gamma' \in N_r(F) \backslash P_r(F)}  \int_{[U_r]} \varphi_V(u \gamma' \gamma g)\theta_{\cH_W}^{\vee}(u,(\omega^\vee_V(\gamma' \gamma g)\Phi)_{Y^\vee}) \rd u.
\end{equation*}
Hence by going back to \eqref{eq:global_period} we obtain
\begin{align}
    \label{eq:triple_integral}
    &\cP_{\U'_V}\left(E_{P(X)}^{\U(V)}(\varphi,s) \otimes \varphi_V , \Phi \right) \nonumber \\
    =&\int_{M(X)(F)N(X)(\bA) \backslash \U(V)(\bA)}  \sum_{\gamma \in N_r(F) \backslash G_r(F)} \varphi_s(\gamma g)  \int_{[U_r]}\varphi_V(u\gamma g) \theta_{\cH_W}^{\vee}(u,(\omega^\vee_V( \gamma g)\Phi)_{Y^\vee}) \rd u \rd g.
\end{align}
Assume for the moment that the double integral $\int_{M(X)(F)N(X)(\bA) \backslash \U(V)(\bA)}  \sum_{\gamma \in N_r(F) \backslash G_r(F)}$ is absolutely convergent. Then we see that 
\begin{align*}
    & \cP_{\U'_V}\left(\varphi_V \otimes E_{P(X)}^{\U(V)}(\varphi,s) , \Phi \right) \\
    =& \int_{\cH_W'(\bA) \backslash \U_V' (\bA)} \int_{[N_r]} \int_{[\U(W)]}   \int_{[U_r]} \varphi_s(n_r g_W g)  \varphi_V(u n_r g_W g)\theta_{\cH_W}^{\vee}(u,(\omega^\vee_V(n_r g_W g)\Phi)_{Y^\vee})\rd u \rd g_W \rd n_r \rd g \\
    =& \int_{\cH_W'(\bA) \backslash \U_V'(\bA)} \int_{[N_r]} \int_{[\U(W)]}   \int_{[U_r]}\varphi_s(n_r g_W g) 
 \varphi_V(ug_W g)  \psi_r(n_r) \theta_{\cH_W}^{\vee}(ug_W,(\omega^\vee_V(g)\Phi)_{Y^\vee}) \rd u \rd g_W \rd n_r \rd g \\
  =& \int_{\cH_W'(\bA) \backslash \U'_V(\bA)} \int_{[\cH_L]}  \varphi_s(hg) 
 \varphi_V(hg)  \theta_{\cH_L}^{\vee}(h,(\omega^\vee_V(g)\Phi)_{Y^\vee}) \rd h \rd g \\
 =&\int_{\cH_W'(\bA) \backslash \U'_V(\bA)} \cP_{\cH_L} \left( \mathrm{R}(g)(\varphi_V \otimes \varphi_s) , (\omega^\vee(g) \Phi)_{Y^\vee} \right) \rd g,
\end{align*}
where we have used a change of variable $u \mapsto u n_r$ and \eqref{eq:theta_relation} to go from the second to the third line.

It remains to show that \eqref{eq:triple_integral} is absolutely convergent. This can be shown by similar techniques as the one used in \cite{BPC22}*{Proposition~8.5.1.1} and Lemma~\ref{lem:convTau}, using the formulae for the Weil representation from \eqref{eq:weil_formula_P1}. This completes the proof of the proposition.
\end{proof}

\begin{coro}
\label{cor:non_vanishing_of_periods}
    The following are equivalent.
    \begin{enumerate}
        \item There exist $\varphi_V \in \sigma_V$, $\varphi_W \in \sigma_W$  and $\phi \in \nu^\vee$ such that $\cP_{\cH_W} \left( \varphi_V \otimes \varphi_W , \phi \right) \neq 0$.
        \item There exist $\varphi_V \in \sigma_V$, $\varphi_\Sigma \in \Sigma$, $\Phi \in \omega^\vee_{V}$ and $s \in \C$ such that $E_{P(X)}^{\U(V)}(\varphi,.)$ has no pole at $s$ and $\cP_{\U'_V}\left( \varphi_V \otimes E_{P(X)}^{\U(V)}(\varphi_\Sigma,s) , \Phi \right) \neq 0$.
    \end{enumerate}
\end{coro}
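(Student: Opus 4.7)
The plan is to derive the equivalence from the global unfolding identity of Proposition~\ref{prop:global_corank_reduction} together with the factorization $\cP_{\cH_L} = \cP_{N_r} \otimes \cP_{\cH_W}$. Since $\tau$ is an Arthur parameter of $G_r$, the Whittaker period $\cP_{N_r}$ is non-vanishing on $\tau$ (for $\GL_n$ every discrete Arthur packet consists of a single generic representation). Consequently, the non-vanishing of $\cP_{\cH_W}$ on $\sigma_V \otimes \sigma_W \otimes \nu^\vee$ is equivalent to the non-vanishing of $\cP_{\cH_L}$ on $(\sigma_V \boxtimes \sigma_W) \otimes \tau \otimes \nu_L^\vee$, and this reduces the corollary to comparing $\cP_{\cH_L}$ with $\cP_{\U'_V}$ via Proposition~\ref{prop:global_corank_reduction}.

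For the direction $(2) \Rightarrow (1)$ I would argue by contrapositive. Assuming that $\cP_{\cH_W}$ vanishes identically, the factorization forces $\cP_{\cH_L}$ to vanish identically. By Proposition~\ref{prop:global_corank_reduction} the right-hand side then vanishes for every choice of $\varphi_V, \varphi_\Sigma, \Phi$ and every $s$ with $\Re(s)$ large enough, which forces $\cP_{\U'_V}(\varphi_V \otimes E_{P(X)}^{\U(V)}(\varphi_\Sigma, s), \Phi) = 0$ on that right half-plane. By Remark~\ref{rem:same_period}, this period is given by the absolutely convergent integral over $[\U'_V]$ of the cusp form $\varphi_V$ against the Eisenstein series, so it is meromorphic in $s$ and vanishing on a right half-plane propagates to all $s$ where the Eisenstein series is regular, contradicting (2).

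For the direction $(1) \Rightarrow (2)$ the strategy is to produce non-vanishing of the right-hand side of Proposition~\ref{prop:global_corank_reduction} by localization, in the spirit of the proof of Proposition~\ref{prop:corank_reduction}. Choose $\varphi_V^0 \in \sigma_V$, $\varphi_W^0 \in \sigma_W$, $\phi^0 \in \nu^\vee$ with $\cP_{\cH_W}(\varphi_V^0 \otimes \varphi_W^0, \phi^0) \neq 0$, and $\varphi_\tau^0 \in \tau$ with $\cP_{N_r}(\varphi_\tau^0) \neq 0$, so that $\cP_{\cH_L}(\varphi_V^0 \otimes \varphi_W^0 \otimes \varphi_\tau^0, \phi^0) \neq 0$. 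For $\Re(s)$ large, construct a smooth section $\varphi_\Sigma$ of $\Sigma_s = I_{P(X)}^{\U(V)}(\tau_s \boxtimes \sigma_W)$ supported, in the $P(X) \backslash \U(V)$-direction, on a small $K_V$-neighborhood of the trivial coset, with $\varphi_\Sigma(1) = \varphi_\tau^0 \otimes \varphi_W^0$. Choose $\Phi = f \otimes \phi^0 \in \omega^\vee_V \cong \cS(X^*(\bA)) \otimes \nu^\vee$ with $f$ a non-negative bump function peaked at $x_r^*$, so that by \eqref{eq:Phi_W_defi} $\Phi_{Y^\vee}$ is close to $\phi^0$ near $h = 1$ and the integrand of the right-hand side of Proposition~\ref{prop:global_corank_reduction} becomes concentrated near the trivial coset of $\cH'_W(\bA) \backslash \U'_V(\bA)$ while remaining approximately a non-zero constant multiple of $\cP_{\cH_L}(\varphi_V^0 \otimes \varphi_W^0 \otimes \varphi_\tau^0, \phi^0)$; by continuity the integral is non-zero.

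The main technical obstacle is controlling the localization precisely enough to avoid cancellations: choosing $\varphi_\Sigma$ compatibly with the equivariance requirements on $P(X)$ while being supported near the identity, and ensuring that the bump factors in $\Phi$ and $\varphi_\Sigma$ combine to produce a real positive weighting that preserves the non-zero value of the integrand at $h=1$. An alternative route, perhaps cleaner, is to adapt the unfolding steps in the proof of Proposition~\ref{prop:corank_reduction} directly to the global setting, using the global Iwasawa decomposition $\U'_V(\bA) = T_r(\bA)\cH'_W(\bA)K_V$ and the global Kirillov theory for $\tau$ (via \cite{GK75}, \cite{Kem15} and their archimedean analogues applied at finitely many ramified places) to realize arbitrary compactly supported data on $N_{r-1}(\bA)\backslash G_{r-1}(\bA)$ as $g \mapsto \cP_{N_r}(\tau(g)\varphi_\tau)$, exactly as in the local proof.
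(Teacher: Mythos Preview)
Your overall plan and the direction $(2)\Rightarrow(1)$ via contrapositive and meromorphic continuation are correct and match the paper.

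For $(1)\Rightarrow(2)$ there is a real gap. Both of your routes work directly with the adelic integral over $\cH_W'(\bA)\backslash\U'_V(\bA)$, but the integrand involves the \emph{global} period $\cP_{\cH_L}$, which does not factor over places, so neither a bump-function localization nor a ``global Kirillov'' argument goes through as stated: at almost all places you are forced to use spherical data, the local quotient $\cH_W'(F_v)\backslash\U'_V(F_v)$ is noncompact, and you have no control over those infinitely many local contributions. In particular, there is no global Kirillov theory realizing arbitrary compactly supported test data --- global Whittaker functions are restricted tensor products, pinned to the spherical Whittaker at almost all places.

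The paper supplies the missing step: combine Proposition~\ref{prop:global_corank_reduction} with the unramified unfolding computation (Theorem~\ref{thm:L_function_equality}, i.e.\ \cite{Boi}*{Proposition~7.3.1.1}) to obtain~\eqref{eq:L-function_equality}, which rewrites the adelic integral as a partial $L$-function ratio --- nonzero for $\Re(s)\gg 0$ --- times an integral over $\cH_W'(F_\tS)\backslash\U'_V(F_\tS)$ for a finite set $\tS$. One then applies Proposition~\ref{prop:corank_reduction} verbatim, taking the abstract functional $\cL_{\cH_L}$ there to be the global period $\cP_{\cH_L}$ regarded as an $\cH_L(F_\tS)$-equivariant functional on the $\tS$-components (spherical data outside $\tS$ held fixed). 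This is precisely the reduction your approach (b) was groping toward, but it only becomes available once the unramified places have been cleared away.
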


\begin{proof}
    Let $\varphi_V \in \sigma_V$, $\varphi_\Sigma \in \Sigma$ and $\Phi \in \cS(X^*(\bA_E) \oplus Y^\vee(\bA))$. By \cite{Boi}*{Proposition~7.3.1.1} and Proposition~\ref{prop:global_corank_reduction}, up to enlarging $\tS$ we have 
\begin{align}
\label{eq:L-function_equality}
        \cP_{\U'_V}\left(\varphi_V \otimes E_{P(X)}^{\U(V)}(\varphi_\Sigma,s)  , \Phi \right) =  &\frac{L^{\tS}(\frac{1}{2}+s, \sigma_V \times \tau \otimes \overline{\mu})}{L^{\tS}(1+s,\tau^{\sfc} \times \sigma_W)L^{\tS}(1+2s,\tau,\mathrm{As}^{(-1)^{m}})} \\
        &\times \int_{\cH_W'(F_\tS) \backslash \U'_V(F_\tS)} \cP_{\cH_L} \left( \mathrm{R}(h)(\varphi_V \otimes \varphi_{\Sigma,s} ) , (\omega^\vee_{V}(h) \Phi)_{Y^\vee} \right) \rd h, \nonumber
\end{align}
provided that $\Re(s) \gg 0$. It follows from the hypothesis on $\tau$ that the Whittaker period $\cP_{N_r}$ is non-zero on $\tau$ as it is generic. By the factorization $\cP_{\cH_L}= \cP_{\cH_W} \otimes \cP_{N_r}$, (1) is equivalent to the non vanishing of $\cP_{\cH_L}$ on $\sigma_V  \otimes \tau \otimes \sigma_W \otimes  \nu^\vee$, which is itself equivalent to (2) by \eqref{eq:L-function_equality} and Proposition~\ref{prop:corank_reduction}.
\end{proof}

\section{Proof of the main theorems}

In this chapter, we prove our main results Theorem~\ref{thm:GGP} and \ref{thm:II}.

\label{sec:proofs_positive}
\subsection{Gan--Gross--Prasad conjecture in positive corank}
\label{sec:GGP_positive_corank}

We start with the GGP conjecture. The statement we prove is the following.

\begin{theorem} \label{thm:GGP_arbitrary_corank}
Let $\Pi$ be a discrete Hermitian Arthur parameter of $G_n \times G_m$. The
following assertions are equivalent:
    \begin{enumerate}
        \item $L(\frac{1}{2}, \Pi \otimes \overline{\mu}) \neq 0$;
        \item there exist non-degenerate skew-Hermitian spaces $W\subset V$ and $\sigma$ a cuspidal automorphic representation of $\U(V)(\bA) \times \U(W)(\bA)$ such that $W^\perp$ is split, the weak base change of $\sigma$ to
            $G_n \times G_m$ is $\Pi$ and $\cP_{\cH_W}$ does not vanish identically on $\sigma \otimes
            \nu^\vee$.
    \end{enumerate}
\end{theorem}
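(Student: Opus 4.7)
The plan is to reduce Theorem~\ref{thm:GGP_arbitrary_corank} to the corank-zero statement Theorem~\ref{thm:ggp_intro} by means of the unfolding identity of Corollary~\ref{cor:non_vanishing_of_periods}, using an auxiliary cuspidal representation of $G_r(\bA)$. The point is that $\cP_{\cH_W}$ on $\sigma_V \otimes \sigma_W \otimes \nu^\vee$ unfolds to a corank-zero period $\cP_{\U_V'}$ on $\sigma_V \otimes E_{P(X)}^{\U(V)}(\varphi_\Sigma, s) \otimes \omega_V^\vee$ with $\Sigma := I_{P(X)}^{\U(V)}(\tau \boxtimes \sigma_W)$, while on the dual side the discrete base-change $\Pi = \Pi_n \boxtimes \Pi_m$ is thickened into
\begin{equation*}
    \Pi'_s := \Pi_n \boxtimes I_P^{G_n}(\tau_s \boxtimes \tau_{-s}^* \boxtimes \Pi_m)
\end{equation*}
on $G_n \times G_n$, where $P$ is the parabolic of Levi $G_r \times G_r \times G_m$, with $L$-function factoring as
\begin{equation*}
    L(\tfrac{1}{2}, \Pi'_s \otimes \overline{\mu}) = L(\tfrac{1}{2}, \Pi \otimes \overline{\mu}) \, L(\tfrac{1}{2}+s, \Pi_n \otimes \overline{\mu} \times \tau) \, L(\tfrac{1}{2}-s, \Pi_n \otimes \overline{\mu} \times \tau^*).
\end{equation*}
I would pick $\tau$ generic enough so that $\tau \not\simeq \tau^*$ are distinct from the cuspidal components of $\Pi_n$ and $\Pi_m$, so that $\overline{\mu} \pi_i \not\simeq \tau^\vee, (\tau^*)^\vee$ for each component $\pi_i$ of $\Pi_n$ (which makes $\Pi'_s$ a $(G, H, \overline{\mu})$-regular Hermitian parameter in the sense of \S\ref{subsec:reg_herm_art}), and so that the two outer $L$-factors are non-zero for $s$ in a prescribed dense subset of $i\R$; the last condition is automatic because those $L$-functions are entire and not identically zero on vertical lines.

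For the implication (1) $\Rightarrow$ (2), the factorization gives $L(\frac{1}{2}, \Pi'_0 \otimes \overline{\mu}) \neq 0$, and Theorem~\ref{thm:ggp_intro} applied to $\Pi'_0$ at $\lambda = 0$ produces a skew-Hermitian space $V$, a standard parabolic $Q \subset \U_V$, and a cuspidal $\sigma_Q$ on $M_Q$ with weak base-change $\Pi'_0$ and non-vanishing regularized period $\cP_{\U_V'}(\cdot, 0)$. The discreteness of both $\Pi_n$ and $\Pi_m$ combined with conjugate self-duality of their components forces $Q = \U(V) \times P(X)$ and $\sigma_Q = \sigma_V \boxtimes (\tau' \boxtimes \sigma_W)$, where $\sigma_V, \sigma_W$ are cuspidal on $\U(V), \U(W)$ with weak base-changes $\Pi_n, \Pi_m$ respectively and $\tau' \in \{\tau, \tau^*\}$ by strong multiplicity one. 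Applying Corollary~\ref{cor:non_vanishing_of_periods} then converts the non-vanishing of $\cP_{\U_V'}(\cdot, 0)$ into the non-vanishing of $\cP_{\cH_W}$ on $\sigma_V \otimes \sigma_W \otimes \nu^\vee$, which is (2). The converse (2) $\Rightarrow$ (1) runs the same pipeline in reverse: starting from a cuspidal $\sigma = \sigma_V \boxtimes \sigma_W$ with weak base-change $\Pi$ and $\cP_{\cH_W}$ non-zero, Corollary~\ref{cor:non_vanishing_of_periods} yields some $s_0 \in \C$ with $\cP_{\U_V'}(\cdot, s_0) \neq 0$; since $\cP_{\U_V'}(\cdot, s)$ is meromorphic in $s$, it is non-zero on a dense open subset of $\C$, and one may choose $s \in i\R$ where both the period and the two outer $L$-factors survive. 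Theorem~\ref{thm:ggp_intro} applied to $\Pi'_s$ at this $s$ gives $L(\frac{1}{2}, \Pi'_s \otimes \overline{\mu}) \neq 0$, and the factorization above forces $L(\frac{1}{2}, \Pi \otimes \overline{\mu}) \neq 0$.

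The main obstacle is the $(G, H, \overline{\mu})$-regularity bookkeeping for $\Pi'_s$ when $\Pi$ itself fails the condition $\overline{\mu}\pi_i \not\simeq (\pi'_j)^\vee$ for some pair of components $(\pi_i, \pi'_j)$ of $(\Pi_n, \Pi_m)$, since then $\Pi'_s$ inherits this irregularity regardless of how $\tau$ is chosen. One can circumvent this by absorbing the degenerate pairs into several extra auxiliary cuspidal factors $\tau_1, \ldots, \tau_t$ on larger $G_{r_j}$'s arranged to restore regularity, or by an appropriate deformation argument that avoids the non-regular locus. Aside from this bookkeeping step, the proof is essentially mechanical, combining the unfolding identity of Corollary~\ref{cor:non_vanishing_of_periods}, strong multiplicity one for general linear groups, the structure theory of discrete Hermitian Arthur parameters, and the factorization of Rankin--Selberg $L$-functions.
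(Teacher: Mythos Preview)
Your strategy is the same as the paper's: thicken $\Pi$ to a $(G,H,\overline{\mu})$-regular Hermitian parameter on $G_n\times G_n$ using an auxiliary $\tau$ on $G_r$, and pass between $\cP_{\cH_W}$ and $\cP_{\U_V'}$ via Corollary~\ref{cor:non_vanishing_of_periods} so that Theorem~\ref{thm:ggp_intro} applies. The paper takes $\tau$ to be a principal series $I_{B_r}^{G_r}(\alpha_1\boxtimes\cdots\boxtimes\alpha_r)$ built from characters with $\alpha_1,\ldots,\alpha_r,\alpha_1^*,\ldots,\alpha_r^*$ pairwise distinct, rather than a single cuspidal $\tau$; either choice works and the overall argument is the same.

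Your ``main obstacle'' does not exist. The worry is that some cuspidal components $\pi_i$ of $\Pi_n$ and $\pi'_j$ of $\Pi_m$ might satisfy $\overline{\mu}\pi_i\simeq(\pi'_j)^\vee$, breaking $(H,\overline{\mu})$-regularity of $\Pi'_s$ regardless of $\tau$. But this is exactly what Remark~\ref{rem:semi_discrete_regular} rules out: since $\Pi_n$ is a \emph{discrete} Hermitian parameter, any regular Hermitian parameter with first factor $\Pi_n$ is automatically $(G,H,\overline{\mu})$-regular, because such a coincidence would force $\pi_i$ to have both $\As^{(-1)^n}$ and $\As^{(-1)^{n+1}}$ poles at $s=1$. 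So once you arrange $\tau\not\simeq\tau^*$ (and hence $\tau$ is not conjugate self-dual, so automatically distinct from the components of $\Pi_m$), the regularity is free; no extra auxiliary factors or deformation arguments are needed.

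One small slip: in your $(1)\Rightarrow(2)$ you work at $s=0$, which requires the two auxiliary Rankin--Selberg factors to be nonzero there---not automatic for a fixed $\tau$. The paper avoids this by phrasing both sides as ``there exists $s\in i\R$'' (its statements $(1')$ and $(2')$) and letting $s$ vary, which you also suggest but do not use consistently.
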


\begin{proof}
    Let $\alpha_1, \hdots, \alpha_r$ be automorphic characters of $\bA_E^\times$ such that the $\alpha_1, \hdots, \alpha_r, \alpha_1^*, \hdots, \alpha_r^*$ are two by two distinct. Let $Q_n$ be the standard parabolic of $G_n$ with Levi factor $G_1^{2r} \times G_m$, and define for all $s \in i \R$
    \begin{equation*}
        \widetilde{\Pi}_s:= \Pi_n \boxtimes I_{Q_n}^{G_n}( \alpha_1\valP{.}^s_E \boxtimes \alpha_1^*\valP{.}^{-s}_E \boxtimes \hdots \boxtimes \alpha_r\valP{.}^s_E \boxtimes \alpha_r^*\valP{.}^{-s}_E \boxtimes \Pi_m ).
    \end{equation*}
    Note that the $\widetilde{\Pi}_s$ are $(G,H,\overline{\mu})$-regular Hermitian Arthur parameters. By elementary properties of the Rankin--Selberg $L$-function, assertion (1) of Theorem~\ref{thm:GGP_arbitrary_corank} is equivalent to the following assertion.
    \begin{enumerate}
        \item[(1')] There exists an $s \in i\R$ such that $L(\frac{1}{2},\widetilde{\Pi}_s) \neq 0$. 
    \end{enumerate}
    By the Gan--Gross--Prasad conjecture for $(G,H,\overline{\mu})$-regular Hermitian Arthur parameters from Theorem~\ref{thm:ggp_intro}, this implies (2). 
    
    Conversely, let $\sigma=\sigma_V \boxtimes \sigma_W$ be a cuspidal automorphic representation of $\cU_W$ whose weak base change to $G_n \times G_m$ is $\Pi$. Let $P$ be the parabolic subgroup of $\U(V)$ stabilizing the flag $E x_1 \subset \hdots \subset X$, with Levi factor $T_r \times \U(W)$. 
    \begin{equation}
        \label{eq:kappa_defi}
    \kappa:=\alpha_1 \boxtimes \hdots \boxtimes \alpha_r,
    \end{equation}
    which is an automorphic character of $T_r(\bA)$.
    Then for all $s \in i \R$ the representation $\widetilde{\Pi}_s$ is the weak base change of $(\U(V) \times P,  \sigma_V \boxtimes \kappa_s \boxtimes \sigma_W)$.
    
    Define $\tau=I_{B_r}^{G_r}(\kappa)$ so that $I_P^{\U(V)}(\kappa_s \boxtimes \sigma_W)=I_{P(X)}^{\U(V)} (\tau_s \boxtimes \sigma_W)$. Then by Corollary~\ref{cor:non_vanishing_of_periods}, (2) of Theorem~\ref{thm:GGP_arbitrary_corank} is equivalent to the following assertion.
    \begin{enumerate}
        \item[(2')] There exists an $s \in i \R$ such that the bilinear form $\cP_{\U'_V}$ is non-zero on $\sigma_V \otimes (I_{P(X)}^{\U(V)} \tau_s \boxtimes \sigma_W) \otimes \omega^\vee_{V}$.
    \end{enumerate}
    But (2') implies (1') by Theorem~\ref{thm:ggp_intro}. This concludes the proof of Theorem~\ref{thm:GGP_arbitrary_corank}.

\end{proof}

\subsection{The Ichino--Ikeda conjecture for Fourier--Jacobi periods}
\label{sec:proof_II_corank}

In this section, we prove the Ichino--Ikeda conjecture for Fourier--Jacobi periods in arbitrary corank. Let $\sigma$ be an automorphic cuspidal representation of $\cU_W=\U(V) \times \U(W)$ and assume that for all $v$ the local component $\sigma_v$ is tempered. We use the same normalizations and notations as in \S\ref{subsec:global_II}. We need to prove the following. 

\begin{theorem}
    \label{thm:factorization_intro}
    Let $\sigma$ and $\Pi$ be as above. For every factorizable vectors $\varphi =
\otimes_v^{'} \varphi_v \in \sigma$ and $\phi=\otimes_v^{'} \phi_v \in
\nu^\vee$ we have
    \begin{equation}
    \label{eq:IY_facto}
        |\cP_{\cH_W}(\varphi,\phi)|^2=|S_\Pi|^{-1} \cL(\frac{1}{2},\sigma) \prod_v \cP^\sharp_{\cH_W,v}(\varphi_v,\phi_v).
    \end{equation}
\end{theorem}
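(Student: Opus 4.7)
The strategy is to reduce Theorem~\ref{thm:factorization_intro} to the corank-zero Theorem~\ref{thm:II_regular_intro} by combining the global and local unfolding identities from Sections~\ref{sec:local_FJ}--\ref{sec:global_FJ}. This is the Fourier--Jacobi analogue of the reduction carried out in~\cite{BPC22} for the Bessel case. First, I would pick unitary Hecke characters $\alpha_1,\hdots,\alpha_r$ of $\bA_E^\times$ so that, setting $\kappa=\alpha_1 \boxtimes \hdots \boxtimes \alpha_r$ and $\tau=I_{B_r}^{G_r}(\kappa)$, the representation $\tau$ is tempered and generic, and the base change of the pair $(\U(V) \times P(X), \sigma_V \boxtimes (\tau \boxtimes \sigma_W))$ is a $(G,H,\overline{\mu})$-regular Hermitian Arthur parameter $\widetilde{\Pi}$ (exactly as in the proof of Theorem~\ref{thm:GGP_arbitrary_corank}). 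Then $\Sigma:=I_{P(X)}^{\U(V)}(\tau \boxtimes \sigma_W)$ is generic and everywhere tempered, and $\sigma_V \boxtimes \Sigma$ falls under the scope of Theorem~\ref{thm:II_regular_intro}.

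The next step is to translate the factorization between the $\cH_W$ and $\U_V'$ settings through unfolding. Globally, Proposition~\ref{prop:global_corank_reduction} (and Remark~\ref{rem:same_period}) gives, for factorizable $\varphi_V \in \sigma_V$, $\varphi_\Sigma \in \Sigma$ and $\Phi \in \omega_V^\vee$,
\begin{equation*}
    \cP_{\U_V'}\bigl(\varphi_V \otimes E(\varphi_\Sigma,0),\Phi\bigr) = \int_{\cH_W'(\bA)\backslash \U_V'(\bA)} \cP_{\cH_L}\bigl(\mathrm{R}(h)(\varphi_V \otimes \varphi_\Sigma), (\omega_V^\vee(h)\Phi)_{Y^\vee}\bigr) \rd h,
\end{equation*}
where $\cP_{\cH_L} = \cP_{\cH_W} \otimes \cP_{N_r}$. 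Locally, for tempered data, Proposition~\ref{prop:tempered_computations} gives at each place $v$ an analogous identity linking $\cP_{\U_V',v}$ to $\cP_{\cH_L,v} = \cP_{\cH_W,v} \otimes \cP_{N_r,v}$; taking absolute squares of the global unfolding and comparing with the local counterpart, one sees that the desired factorization for $\cP_{\cH_W}$ is implied by the one for $\cP_{\U_V'}$, up to tracking $\tau$-dependent factors.

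Applying Theorem~\ref{thm:II_regular_intro} to $\sigma_V \boxtimes \Sigma$ produces
\begin{equation*}
    \bigl|\cP_{\U_V'}(\varphi_V \otimes E(\varphi_\Sigma,0),\Phi)\bigr|^2 = |S_{\widetilde{\Pi}}|^{-1}\, \cL(\tfrac{1}{2},\Sigma) \prod_v \cP^\sharp_{\U_V',v}(\varphi_{V,v}\otimes \varphi_{\Sigma,v},\Phi_v).
\end{equation*}
The final step is a bookkeeping argument. Using the unramified unfolding identity Theorem~\ref{thm:L_function_equality} and its ramified analogue of \cite{Boi}, the local normalized periods $\cP^\sharp_{\U_V',v}$ convert into products of $\cP^\sharp_{\cH_W,v}$ and normalized local Whittaker factors attached to $\tau_v$. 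Simultaneously, the Rankin--Selberg induction formula decomposes $\cL(\tfrac{1}{2},\Sigma)$ into $\cL(\tfrac{1}{2},\sigma)$ times $L$-function factors involving $\tau$ against $\sigma_V$, $\sigma_W^\sfc$ and itself (Asai), and $|S_{\widetilde{\Pi}}|/|S_\Pi|=2^r$ accounts for the extra blocks. Globally integrating the Whittaker periods of $\tau$ gives $L^*(1,\tau,\mathrm{Ad})$ through the Rankin--Selberg theory for $\GL_r$, which exactly cancels the $\tau$-dependent $L$-ratios coming from the Eisenstein factorization and the factors of $2$ from $S_{\widetilde\Pi}$. Dividing out the $\tau$-contributions on both sides and using that factorizable vectors for $\sigma \otimes \nu^\vee$ can be produced from factorizable vectors for $\sigma_V \otimes \Sigma \otimes \omega_V^\vee$ via the local unfolding yields the claimed identity.

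\textbf{Main obstacle.} The crux of the argument is the precise cancellation of $\tau$-contributions in Step~3: one must check that the local Whittaker $L^2$-norms assembled from the local unfolding formulas, the adjoint $L$-function $L(1,\tau,\mathrm{Ad})$ arising globally, and the Asai/Rankin--Selberg factors hidden in $\cL(\tfrac{1}{2},\Sigma)/\cL(\tfrac{1}{2},\sigma)$ match exactly, including the Tamagawa-style volume constants $\upsilon(G_{r,v})$ from the local unfolding formula. This is a careful but standard computation in the spirit of~\cite{BPC22}; the only substantive analytic input is that the $\cP^\sharp_{\U_V',v}$ are well-defined at $s=0$ for tempered data, which follows from temperedness of $\Sigma$ at every place.
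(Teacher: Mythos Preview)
Your plan is exactly the paper's approach: unfold globally (Proposition~\ref{prop:global_corank_reduction} and the unramified computation~\eqref{eq:L-function_equality}), apply the corank-zero Ichino--Ikeda Theorem~\ref{thm:II_regular_intro} to $\sigma_V\boxtimes\Sigma$, unfold locally (Proposition~\ref{prop:tempered_computations}), compare, and then divide out the Whittaker factorization~\eqref{eq:Whittaker_facto} for $\tau$.

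One correction to your bookkeeping: by the convention~\eqref{eq:discrete_SPI_defi}, for a \emph{regular} Hermitian Arthur parameter one sets $S_{\widetilde{\Pi}}:=S_{\widetilde{\Pi}_0}$, and the discrete component $\widetilde{\Pi}_0$ is precisely $\Pi$; hence $|S_{\widetilde{\Pi}}|=|S_\Pi|$, not $2^r|S_\Pi|$. No such factor of $2^r$ is needed --- the $\tau$-dependent contributions cancel exactly through the ratio $\Delta_{G_r}^{\tS,*}/L^{\tS,*}(1,\tau,\mathrm{Ad})$ from~\eqref{eq:Whittaker_facto} against the extra $L$-factors in $\cL(\tfrac{1}{2},\Sigma)/\cL(\tfrac{1}{2},\sigma)$ and the volume constants $\upsilon(G_{r,v})$, as in~\eqref{eq:facto1}--\eqref{eq:factorization_of_periods}.
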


\begin{proof}

Write $\sigma=\sigma_V \boxtimes \sigma_W$. We keep $\tau=I_{B_r}^{G_r} \kappa$, where $\kappa$ was the character defined in \eqref{eq:kappa_defi}, which is tempered at each place. If $s \in \C$, we write $\Sigma_s=I_{P(X)}^{\U(V)} \tau_s \boxtimes \sigma_W$. 

Let $\varphi_V \in \sigma_V$, $\varphi_W \in \sigma_W$, $\varphi_\tau \in \tau$ and $\varphi_\Sigma \in \Sigma$. We also pick some Schwartz functions $\Phi \in \omega^\vee_{V}$  and $\phi \in \nu^\vee$. We assume that all these vectors are factorizable. We take a large enough set of places $\tS$ so that the unramified Ichino--Ikeda conjecture of \cite{Boi}*{Theorem~1.0.3.1} holds outside of $\tS$. 

We have the equality of $L$-functions $ L^{\tS}(1,\sigma_V,\mathrm{Ad})L^{\tS}(1,\sigma_W,\mathrm{Ad})=L^{\tS}(1,\Pi,\mathrm{As}^{n,m})$. Therefore, we want to prove that

\begin{equation}
    \label{eq:IY_facto_proof}
        |\cP_{\cH_W}(\varphi,\phi)|^2=|S_\Pi|^{-1} \Delta_{\U(V)}^{\tS}\frac{L^{\tS}(1/2,\sigma_V \times \sigma_W \otimes \overline{\mu})}{ L^{\tS}(1,\sigma_V,\mathrm{Ad})L^{\tS}(1,\sigma_W,\mathrm{Ad})} \prod_{v \in \tS} \cP_{\cH_W,v}(\varphi_v,\phi_v).
    \end{equation}

We use the same notation as in section~\ref{sec:GGP_positive_corank}, so that $\widetilde{\Pi}_s$ is the weak base change of $(\U(V) \times P(X), \sigma_V \boxtimes \tau_s \boxtimes \sigma_W )$. On the one hand, \eqref{eq:L-function_equality} reads
\begin{align}
\label{eq:facto1}
     &\valP{\cP_{\U'_V}\left( \varphi_V \otimes E_{P(X)}^{\U(V)}(\varphi_\Sigma,s) ,\Phi \right)}^2 = \valP{ \frac{L^{\tS}(\frac{1}{2}+s,\sigma_V \times \tau \otimes \overline{\mu})}{L^{\tS}(1+s,\tau^{\sfc} \times \sigma_W)L^{\tS}(1+2s,\tau,\mathrm{As}^{(-1)^{m}})}}^2  \times \\
     & \int_{(\cH_W'(F_\tS) \backslash \U'_V(F_\tS))^2} \cP_{\cH_L} \left( \mathrm{R}(h_1)(\varphi_V \otimes \varphi_{\Sigma,s} ) , (\omega^\vee_{V}(h_1) \Phi)_{Y^\vee} \right) \overline{\cP_{\cH_L} \left( \mathrm{R}(h_2)(\varphi_V \otimes \varphi_{\Sigma,s}) , (\omega^\vee_{V}(h_2) \Phi)_{Y^\vee} \right)} \rd h_i, \nonumber
\end{align}
But on the other hand, by the Ichino--Ikeda conjecture in corank-zero from Theorem~\ref{thm:II_regular_intro} and thanks to the explicit formulae for $\Delta_{\U(V)}$ and $\Delta_{\U(V),v}$ in \eqref{eq:delta_U_formula}, we have
\begin{equation}
    \label{eq:facto_proof_II}
\valP{\cP_{\U'_V}\left( \varphi_V \otimes E_{P(X)}^{\U(V)}(\varphi_\Sigma,s) ,\Phi \right)}^2 =\frac{\Delta_{\U(V)}^\tS}{|S_\Pi|} \frac{L^\tS(\frac{1}{2},\sigma_V \times \Sigma_s \otimes \overline{\mu})}{L^{\tS,*}(1,\Sigma_s,\mathrm{Ad})L^{\tS}(1,\sigma_V,\mathrm{Ad})}  \prod_{v \in \tS} \cP_{\U'_V,v}\left( \varphi_{V,v} \otimes \varphi_{\Sigma,s,v} , \Phi_v \right).
\end{equation}
By the local computation for tempered periods from Proposition~\ref{prop:tempered_computations} applied at the place in $\tS$ and thanks to \eqref{eq:normalization_measures}, we see that \eqref{eq:facto_proof_II} is
\begin{align}
\label{eq:facto2}
\frac{\Delta_{\U(V)}^\tS \Delta_{G_r}^{\tS,*}}{|S_\Pi|}&\frac{L^\tS(\frac{1}{2},\sigma_V \times \Sigma_s \otimes \overline{\mu})}{L^{\tS,*}(1,\Sigma_s,\mathrm{Ad})L^{\tS}(1,\sigma_V,\mathrm{Ad})} \times \\
&\prod_{v \in \tS}  \int_{(\cH_W'(F_v) \backslash \U'_V(F_v))^2} \cP_{\cH_{L,v}}\left((\sigma_{V,v}(h_i) \varphi^i_{V,v} \otimes \varphi^i_{\Sigma,s,v}(h_i) , (\omega^\vee_{V,v}(h_i) \Phi_{i,v})_{Y^\vee})_{i=1,2}\right) \rd h_i. \nonumber
\end{align}
From \eqref{eq:facto1}, \eqref{eq:facto2}, Proposition~\ref{prop:corank_reduction} and the equality of partial $L$-functions: 
\begin{equation*}
    \frac{L^\tS(\frac{1}{2},\sigma_V \times \Sigma_s \otimes \overline{\mu})}{L^{\tS,*}(1,\Sigma_s,\mathrm{Ad})}=\frac{L^\tS(\frac{1}{2},\sigma_V \times \sigma_W \otimes \overline{\mu})}{L^{\tS}(1,\sigma_W,\mathrm{Ad})L^{\tS,*}(1,\tau,\mathrm{Ad})}\valP{\frac{L^\tS(\frac{1}{2}+s,\sigma_V \times \tau  \otimes \overline{\mu})}{L^\tS(1+s, \tau^{c} \times  \sigma_W  )L^{\tS}(1+2s,\tau,\mathrm{As}^{(-1)^{m}})}}^2,
\end{equation*}
we see that 
\begin{align}
\label{eq:factorization_of_periods}
    \valP{\cP_{\cH_L}(\varphi_V \otimes \varphi_\tau \otimes \varphi_W  , \phi)}^2 & =\frac{\Delta_{\U(V)}^\tS \Delta_{G_r}^{\tS,*}}{|S_\Pi|}  \frac{L^\tS(\frac{1}{2},\sigma_V \times \sigma_{W}  \otimes \overline{\mu})}{L^\tS(1, \sigma_{V}, \mathrm{Ad})L^{\tS,*}(1, \tau, \mathrm{Ad})L^\tS(1, \sigma_{W}, \mathrm{Ad})} \\
    & \; \; \; \; \;  \times  \prod_{v\in \tS} \cP_{\cH_L,v}(\varphi_{V,v} \otimes  \varphi_{\tau,v} \otimes  \varphi_{W,v},  \phi_v). \nonumber
\end{align}
Recall that $\cP_{\cH_L}=\cP_{\cH_W} \otimes \cP_{N_r}$. It is know from~\cite{FLO12} and \cite{CS80} that we have
\begin{equation}
\label{eq:Whittaker_facto}
    \valP{\cP_{N_r}(\varphi_\tau)}^2=\frac{\Delta_{G_r}^{\tS,*}}{L^{\tS,*}(1,\tau,\mathrm{Ad})} \prod_{v \in \tS }\cP_{N_r,v}(\varphi_{\tau,v}),
\end{equation}
so that \eqref{eq:IY_facto_proof} follows by dividing \eqref{eq:factorization_of_periods} by \eqref{eq:Whittaker_facto}, which is non-zero for some $\varphi_\tau \in \tau$. This concludes the proof.

\end{proof}

\begin{bibdiv}
\begin{biblist}

\bib{Arthur1}{article}{
   author={Arthur, James},
   title={The trace formula in invariant form},
   journal={Ann. of Math. (2)},
   volume={114},
   date={1981},
   number={1},
   pages={1--74},
   issn={0003-486X},
   review={\MR{625344}},
   doi={10.2307/1971376},
}

\bib{Arthur3}{article}{
   author={Arthur, James G.},
   title={A trace formula for reductive groups. I. Terms associated to
   classes in $G({\bf Q})$},
   journal={Duke Math. J.},
   volume={45},
   date={1978},
   number={4},
   pages={911--952},
   issn={0012-7094},
   review={\MR{0518111}},
}

\bib{Arthur4}{article}{
    author = {Arthur, James G.},
    title = {{On the inner product of truncated Eisenstein series}},
    volume = {49},
    journal = {Duke Mathematical Journal},
    number = {1},
    publisher = {Duke University Press},
    pages = {35 -- 70},
    year = {1982},
    doi = {10.1215/S0012-7094-82-04904-3},

}

\bib{Be2}{article}{
    title = {On the support of Plancherel measure},
    journal = {Journal of Geometry and Physics},
    volume = {5},
    number = {4},
    pages = {663-710},
    year = {1988},
    doi = {https://doi.org/10.1016/0393-0440(88)90024-1},
    author = {Bernstein, J.},
}

\bib{BK}{article}{
   author={Bernstein, Joseph},
   author={Kr\"{o}tz, Bernhard},
   title={Smooth Fr\'{e}chet globalizations of Harish-Chandra modules},
   journal={Israel J. Math.},
   volume={199},
   date={2014},
   number={1},
   pages={45--111},
   issn={0021-2172},
   review={\MR{3219530}},
   doi={10.1007/s11856-013-0056-1},
}

\bib{Boi}{article}{
 author={Boisseau, Paul},
 issn={1073-7928},
 issn={1687-0247},
 doi={10.1093/imrn/rnaf271},
 review={Zbl 08092265},
 title={Whittaker-Shintani functions for Fourier-Jacobi models on unitary groups},
 journal={IMRN. International Mathematics Research Notices},
 volume={2025},
 number={17},
 pages={36},
 note={Id/No rnaf271},
 date={2025},
 publisher={Oxford University Press, Cary, NC},
}

\bib{BLX1}{article}{
    author={Boisseau, Paul},
    author={Lu, Weixiao},
    author={Xue, Hang},
    title={The global Gan--Gross--Prasad conjecture for Fourier--Jacobi periods on unitary groups I: coarse spectral expansions},
    note={preprint},
}

\bib{BLX2}{article}{
    author={Boisseau, Paul},
    author={Lu, Weixiao},
    author={Xue, Hang},
    title={The global Gan--Gross--Prasad conjecture for Fourier--Jacobi periods on unitary groups II: comparison of the relative trace formulae},
    note={preprint},
}

\bib{BP2}{article}{
   author={Beuzart-Plessis, Rapha\"{e}l},
   title={A local trace formula for the Gan-Gross-Prasad conjecture for
   unitary groups: the Archimedean case},
   language={English, with English and French summaries},
   journal={Ast\'{e}risque},
   number={418},
   date={2020},
   pages={viii + 299},
   issn={0303-1179},
   isbn={978-2-85629-919-7},
   review={\MR{4146145}},
   doi={10.24033/ast},
}

\bib{BP}{article}{
   author={Beuzart-Plessis, Rapha\"{e}l},
   title={Comparison of local relative characters and the Ichino-Ikeda
   conjecture for unitary groups},
   journal={J. Inst. Math. Jussieu},
   volume={20},
   date={2021},
   number={6},
   pages={1803--1854},
   issn={1474-7480},
   review={\MR{4332778}},
   doi={10.1017/S1474748019000707},
}

\bib{BP1}{article}{
   author={Beuzart-Plessis, Rapha\"{e}l},
   title={Plancherel formula for ${\rm GL}_n(F)\backslash {\rm GL}_n(E)$ and
   applications to the Ichino-Ikeda and formal degree conjectures for
   unitary groups},
   journal={Invent. Math.},
   volume={225},
   date={2021},
   number={1},
   pages={159--297},
   issn={0020-9910},
   review={\MR{4270666}},
   doi={10.1007/s00222-021-01032-6},
}

\bib{BP3}{article}{
   author={Beuzart-Plessis, Rapha\"{e}l},
   title={A new proof of the Jacquet-Rallis fundamental lemma},
   journal={Duke Math. J.},
   volume={170},
   date={2021},
   number={12},
   pages={2805--2814},
   issn={0012-7094},
   review={\MR{4305382}},
   doi={10.1215/00127094-2020-0090},
}

\bib{BPC22}{article}{
 author={Beuzart-Plessis, Rapha{\"e}l},
 author={Chaudouard, Pierre-Henri},
 issn={2050-5086},
 doi={10.1017/fmp.2025.8},
 review={Zbl 08047525},
 title={The global Gan-Gross-Prasad conjecture for unitary groups. II: From Eisenstein series to Bessel periods},
 journal={Forum of Mathematics, Pi},
 volume={13},
 pages={98},
 note={Id/No e16},
 date={2025},
 publisher={Cambridge University Press, Cambridge},
}

\bib{BPCZ}{article}{
   author={Beuzart-Plessis, Rapha\"{e}l},
   author={Chaudouard, Pierre-Henri},
   author={Zydor, Micha\l },
   title={The global Gan-Gross-Prasad conjecture for unitary groups: the
   endoscopic case},
   journal={Publ. Math. Inst. Hautes \'{E}tudes Sci.},
   volume={135},
   date={2022},
   pages={183--336},
   issn={0073-8301},
   review={\MR{4426741}},
   doi={10.1007/s10240-021-00129-1},
}

\bib{BPLZZ}{article}{
   author={Beuzart-Plessis, Rapha\"{e}l},
   author={Liu, Yifeng},
   author={Zhang, Wei},
   author={Zhu, Xinwen},
   title={Isolation of cuspidal spectrum, with application to the
   Gan-Gross-Prasad conjecture},
   journal={Ann. of Math. (2)},
   volume={194},
   date={2021},
   number={2},
   pages={519--584},
   issn={0003-486X},
   review={\MR{4298750}},
   doi={10.4007/annals.2021.194.2.5},
}

\bib{CS80}{article}{
     author = {Casselman, W.},
     author={Shalika, J.},
     title = {The unramified principal series of $p$-adic groups. {II.} {The} {Whittaker} function},
     journal = {Compositio Mathematica},
     pages = {207--231},
     publisher = {Sijthoff et Noordhoff International Publishers},
     volume = {41},
     number = {2},
     year = {1980},
     language = {en},
     url = {http://www.numdam.org/item/CM_1980__41_2_207_0/}
}

\bib{Ch}{misc}{
 author={Chaudouard, P.-H.},
 review={arXiv:2204.06069},
 title={A spectral expansion for the symmetric space $\mathrm{GL}_n(E)/\mathrm{GL}_n(F)$},
 date={2025},
}

\bib{CZ}{article} {
    title={Le transfert singulier pour la formule des traces de Jacquet–Rallis},
    volume={157},
    DOI={10.1112/S0010437X20007599},
    number={2},
    journal={Compositio Mathematica},
    publisher={London Mathematical Society},
    author={Chaudouard, Pierre-Henri},
    author={Zydor, Micha\l},
    year={2021},
    pages={303–434},
}

\bib{CHH88}{article}{
   author={Cowling, M.},
   author={Haagerup, U.},
   author={Howe, R.},
   title={Almost $L^2$ matrix coefficients},
   journal={J. Reine Angew. Math.},
   volume={387},
   date={1988},
   pages={97--110},
   issn={0075-4102},
   review={\MR{946351}},
}

\bib{DM}{article}{
 author={Dixmier, J.},
 author={Malliavin, P.},
 issn={0007-4497},
 review={Zbl 0392.43013},
 language={French},
 title={Factorisations de fonctions et de vecteurs indefiniment diff{\'e}rentiables},
 journal={Bulletin des Sciences Math{\'e}matiques. Deuxi{\`e}me S{\'e}rie},
 volume={102},
 pages={305--330},
 date={1978},
 publisher={Gauthier-Villars, Paris},
}

\bib{FLO12}{article}{
     author = {Feigon, Brooke},
     author = {Lapid, Erez},
     author = {Offen, Omer},
     title = {On representations distinguished by unitary groups},
     journal = {Publications Math\'ematiques de l'IH\'ES},
     pages = {185--323},
     publisher = {Springer-Verlag},
     volume = {115},
     year = {2012},
     doi = {10.1007/s10240-012-0040-z},
     language = {en},
     url = {http://www.numdam.org/articles/10.1007/s10240-012-0040-z/}
}

\bib{Fli}{article}{
     author = {Flicker, Yuval Z.},
     title = {Twisted tensors and {Euler} products},
     journal = {Bulletin de la Soci\'et\'e Math\'ematique de France},
     pages = {295--313},
     publisher = {Soci\'et\'e math\'ematique de France},
     volume = {116},
     number = {3},
     year = {1988},
     doi = {10.24033/bsmf.2099},
     language = {en},
}

\bib{GGP}{article}{
   author={Gan, Wee Teck},
   author={Gross, Benedict H.},
   author={Prasad, Dipendra},
   title={Symplectic local root numbers, central critical $L$ values, and
   restriction problems in the representation theory of classical groups},
   language={English, with English and French summaries},
   note={Sur les conjectures de Gross et Prasad. I},
   journal={Ast\'{e}risque},
   number={346},
   date={2012},
   pages={1--109},
   issn={0303-1179},
   isbn={978-2-85629-348-5},
   review={\MR{3202556}},
}

\bib{GI2}{article}{
   author={Gan, Wee Teck},
   author={Ichino, Atsushi},
   title={The Gross-Prasad conjecture and local theta correspondence},
   journal={Invent. Math.},
   volume={206},
   date={2016},
   number={3},
   pages={705--799},
   issn={0020-9910},
   review={\MR{3573972}},
   doi={10.1007/s00222-016-0662-8},
}

\bib{GK75}{article}{
  title={{Representations of the group $GL(n,K)$ where $K$ is a local field}},
  author={Gelfand, I.M.},
  author={Kajdan, D.A.},
  year={1975},
}

\bib{Gro97}{article}{
  title={{On the motive of a reductive group}},
  author={Gross, B.},
  journal={Invent. Math},
  number={130},
  year={1997},
  pages={287--313}
}

\bib{GP1}{article}{
   author={Gross, B.},
   author={Prasad, D.},
   title={On the decomposition of a representation of ${\rm SO}_n$ when
   restricted to ${\rm SO}_{n-1}$},
   journal={Canad. J. Math.},
   volume={44},
   date={1992},
   number={5},
   pages={974--1002},
   issn={0008-414X},
   review={\MR{1186476}},
   doi={10.4153/CJM-1992-060-8},
}

\bib{GP2}{article}{
   author={Gross, B.},
   author={Prasad, D.},
   title={On irreducible representations of ${\rm SO}_{2n+1}\times{\rm
   SO}_{2m}$},
   journal={Canad. J. Math.},
   volume={46},
   date={1994},
   number={5},
   pages={930--950},
   issn={0008-414X},
   review={\MR{1295124}},
   doi={10.4153/CJM-1994-053-4},
}

\bib{HT}{book}{ author={Harris, Michael}, author={Taylor, Richard},
   title={The geometry and cohomology of some simple Shimura varieties},
   series={Annals of Mathematics Studies}, volume={151}, note={With an
   appendix by Vladimir G. Berkovich}, publisher={Princeton University
   Press, Princeton, NJ}, date={2001}, pages={viii+276},
   isbn={0-691-09090-4}, review={\MR{1876802}},
}

\bib{NHarris}{article}{
   author={Harris, R. Neal},
   title={The refined Gross-Prasad conjecture for unitary groups},
   journal={Int. Math. Res. Not. IMRN},
   date={2014},
   number={2},
   pages={303--389},
   issn={1073-7928},
   review={\MR{3159075}},
   doi={10.1093/imrn/rns219},
}

\bib{II}{article}{
   author={Ichino, Atsushi},
   author={Ikeda, Tamutsu},
   title={On the periods of automorphic forms on special orthogonal groups
   and the Gross-Prasad conjecture},
   journal={Geom. Funct. Anal.},
   volume={19},
   date={2010},
   number={5},
   pages={1378--1425},
   issn={1016-443X},
   review={\MR{2585578}},
   doi={10.1007/s00039-009-0040-4},
}

\bib{IY2}{article}{
   author={Ichino, A.},
   author={Yamana, S.},
   title={Periods of automorphic forms: the case of $({\rm
   U}_{n+1}\times{\rm U}_n,{\rm U}_n)$},
   journal={J. Reine Angew. Math.},
   volume={746},
   date={2019},
   pages={1--38},
   issn={0075-4102},
   review={\MR{3895624}},
   doi={10.1515/crelle-2015-0107},
}

\bib{Jac2}{article}{
 author={Jacquet, H.},
 review={Zbl 0357.22010},
 title={Generic representations},
 date={1977},
}

\bib{Jac04}{inproceedings}{
   author={Jacquet, H.},
   title={Integral representation of Whittaker functions},
   booktitle={Contributions to Automorphic Forms, Geometry, and Number Theory},
   editor={Johns Hopkins Univ. Press},
   date={2004},
   pages={373--419},
   address={Baltimore},
}

\bib{Jac10}{article}{
   author={Jacquet, H.},
   title={Distinction by the quasi-split unitary group},
   journal={Israel J. Math.},
   volume={178},
   date={2010},
   pages={269--324},
}

\bib{JPSS83}{article}{
   author={Jacquet, H.},
      author={Piatetskii-Shapiro, I. I.},
   author={Shalika, J. A.},
   title={Rankin-Selberg convolutions},
   journal={Amer. J. Math.},
   volume={105(2)},
   date={1983},
   pages={367--464},
}

\bib{JR}{article}{
   author={Jacquet, H.},
   author={Rallis, S.},
   title={On the Gross-Prasad conjecture for unitary groups},
   conference={
      title={On certain $L$-functions},
   },
   book={
      series={Clay Math. Proc.},
      volume={13},
      publisher={Amer. Math. Soc., Providence, RI},
   },
   isbn={978-0-8218-5204-0},
   date={2011},
   pages={205--264},
   review={\MR{2767518}},
}

\bib{JS}{article}{
   author={Jacquet, H.},
   author={Shalika, J. A.},
   title={On Euler products and the classification of automorphic
   representations. I},
   journal={Amer. J. Math.},
   volume={103},
   date={1981},
   number={3},
   pages={499--558},
   issn={0002-9327},
   review={\MR{618323}},
   doi={10.2307/2374103},
}

\bib{KMSW}{article}{
    author={Kaletha, Tasho},
    author={Minguez, Alberto},
    author={Shin, Sug Woo},
    author={White, Paul-James},
    title={Endoscopic Classification of Representations: Inner Forms of Unitary
    Groups},
    note={arXiv:1409.3731v3},
}

\bib{Kem15}{article}{
   author={Kemarsky, Alexander},
   title={A note on the Kirillov model for representations of ${\rm
   GL}_n(\mathbb{C})$},
   language={English, with English and French summaries},
   journal={C. R. Math. Acad. Sci. Paris},
   volume={353},
   date={2015},
   number={7},
   pages={579--582},
   issn={1631-073X},
   review={\MR{3352025}},
   doi={10.1016/j.crma.2015.04.002},
}

\bib{Kn}{book}{
 author={Knapp, A.},
 isbn={0-691-09089-0},
 book={
 title={Representation theory of semisimple groups. An overview based on examples. With a new preface by the author.},
 publisher={Princeton, NJ: Princeton University Press},
 },
 review={Zbl 0993.22001},
 title={Representation theory of semisimple groups. An overview based on examples. With a new preface by the author.},
 edition={Paperback ed.},
 pages={xix + 773},
 date={2001},
}

\bib{Kudla}{article}{
   author={Kudla, Stephen S.},
   title={Splitting metaplectic covers of dual reductive pairs},
   journal={Israel J. Math.},
   volume={87},
   date={1994},
   number={1-3},
   pages={361--401},
   issn={0021-2172},
   review={\MR{1286835}},
   doi={10.1007/BF02773003},
}

\bib{Langlands}{article}{ author={Langlands, R. P.}, title={On the
   classification of irreducible representations of real algebraic groups},
   conference={
      title={Representation theory and harmonic analysis on semisimple Lie
      groups},
   }, book={
      series={Math. Surveys Monogr.}, volume={31}, publisher={Amer. Math.
      Soc., Providence, RI},
   }, isbn={0-8218-1526-1}, date={1989}, pages={101--170},
   review={\MR{1011897}}, doi={10.1090/surv/031/03},
}

\bib{Lap}{book}{
    author={Lapid, Erez M.},
    editor={Gan, Wee Teck
            and Kudla, Stephen S.
            and Tschinkel, Yuri},
    title={A Remark on Eisenstein Series},
    bookTitle={Eisenstein Series and Applications},
    year={2008},
    publisher={Birkh{\"a}user Boston},
    address={Boston, MA},
    pages={239--249},
    isbn={978-0-8176-4639-4},
    doi={10.1007/978-0-8176-4639-4_8},
}

\bib{Lap2}{article}{
    author={Lapid, E.},
    title={{On the Harish-Chandra Schwartz space of $G(F)\backslash G(\bA)$}},
    year={2013},
    journal={Automorphic representations and L-functions},
    volume={22},
    pages={335--377},
    note={With an appendix by F. Brumley}
}

\bib{Liu}{article}{
   author={Liu, Y.},
   title={Relative trace formulae toward Bessel and Fourier-Jacobi periods
   on unitary groups},
   journal={Manuscripta Math.},
   volume={145},
   date={2014},
   number={1-2},
   pages={1--69},
   issn={0025-2611},
   review={\MR{3244725}},
   doi={10.1007/s00229-014-0666-x},
}

\bib{Liu2}{article}{
   author={Liu, Y.},
   title={Refined global Gan-Gross-Prasad conjecture for Bessel periods},
   journal={J. Reine Angew. Math.},
   volume={717},
   date={2016},
   pages={133--194},
   issn={0075-4102},
   review={\MR{3530537}},
   doi={10.1515/crelle-2014-0016},
}

\bib{LiuSun}{article}{
   author={Liu, Yifeng},
   author={Sun, Binyong},
   title={Uniqueness of Fourier-Jacobi models: the Archimedean case},
   journal={J. Funct. Anal.},
   volume={265},
   date={2013},
   number={12},
   pages={3325--3344},
   issn={0022-1236},
   review={\MR{3110504}},
   doi={10.1016/j.jfa.2013.08.034},
}

\bib{Mok}{article}{
   author={Mok, Chung Pang},
   title={Endoscopic classification of representations of quasi-split
   unitary groups},
   journal={Mem. Amer. Math. Soc.},
   volume={235},
   date={2015},
   number={1108},
   pages={vi+248},
   issn={0065-9266},
   isbn={978-1-4704-1041-4},
   isbn={978-1-4704-2226-4},
   review={\MR{3338302}},
   doi={10.1090/memo/1108},
}

\bib{MVW}{book}{ series={Lecture Notes in Mathematics},
   title={Correspondances de Howe sur un corps p-adique},
   doi={10.1007/BFb0082712},
   publisher={Springer Berlin, Heidelberg},
   author={Mœglin, C.},
   author={Vignéras, M-F.},
   author={Waldspurger, J-L.},
   year={1987}
}

\bib{MW95}{book}{
    place={Cambridge},
    series={Cambridge Tracts in Mathematics},
    title={Spectral Decomposition and Eisenstein Series: A Paraphrase of the Scriptures},
    DOI={10.1017/CBO9780511470905},
    publisher={Cambridge University Press}, author={Moeglin, C},
    author= {Waldspurger, J. L.},
    year={1995}
}

\bib{Mu}{article}{
 author={M{\"u}ller, W.},
 issn={0003-486X},
 issn={1939-8980},
 doi={10.2307/1971453},
 review={Zbl 0701.11019},
 title={The trace class conjecture in the theory of automorphic forms},
 journal={Annals of Mathematics. Second Series},
 volume={130},
 number={3},
 pages={473--529},
 date={1989},
 publisher={Princeton University, Mathematics Department, Princeton, NJ},
}

\bib{Mu2}{article}{
 author={M{\"u}ller, W.},
 issn={1016-443X},
 issn={1420-8970},
 doi={10.1007/s00039-002-8263-7},
 review={Zbl 1169.11313},
 title={On the spectral side of the Arthur trace formula},
 journal={Geometric and Functional Analysis. GAFA},
 volume={12},
 number={4},
 pages={669--772},
 date={2002},
 publisher={Springer (Birkh{\"a}user), Basel},
}\bib{Ram}{misc}{
 author={Ramakrishnan, D.},
 review={arXiv:1806.08429},
 title={A Theorem on GL(n) a la Tchebotarev},
 date={2018},
}

\bib{Ro}{article}{
 author={Rodier, F.},
 review={Zbl 0339.22014},
 language={French},
 title={Mod{\`e}le de Whittaker et caract{\`e}res de repr{\'e}sentations},
 date={1975},
}

\bib{SV}{book}{
 author={Sakellaridis, Y.},
 author={Venkatesh, A.},
 isbn={978-2-85629-871-8},
 issn={0303-1179},
 book={
 title={Periods and harmonic analysis on spherical varieties},
 publisher={Paris: Soci\'et\'e Math\'ematique de France (SMF)},
 },
 review={Zbl 1479.22016},
 title={Periods and harmonic analysis on spherical varieties},
 series={Ast{\'e}risque},
 volume={396},
 pages={viii + 360},
 date={2017},
 publisher={Soci{\'e}t{\'e} Math{\'e}matique de France (SMF), Paris},
}

\bib{Sun}{article}{
   author={Sun, Binyong},
   title={Multiplicity one theorems for Fourier-Jacobi models},
   journal={Amer. J. Math.},
   volume={134},
   date={2012},
   number={6},
   pages={1655--1678},
   issn={0002-9327},
   review={\MR{2999291}},
   doi={10.1353/ajm.2012.0044},
}

\bib{Ta88}{article}{
 author={Tadi{\'c}, M.},
 issn={0021-7670},
 issn={1565-8538},
 doi={10.1007/BF02791122},
 review={Zbl 0664.22010},
 title={Geometry of dual spaces of reductive groups (non-Archimedean case)},
 journal={Journal d'Analyse Math{\'e}matique},
 volume={51},
 pages={139--181},
 date={1988},
 publisher={Springer, Berlin/Heidelberg; Hebrew University Magnes Press, Jerusalem},
}

\bib{Va}{book}{
    author={Varadarajan, V.S.},
    title={Harmonic analysis on real reductive groups},
    series={Lecture Notes in Mathematics},
    publisher={Springer Berlin, Heidelberg},
    volume={576},
    year={1977},
    pages={VIII, 524},
}

\bib{Wald}{article}{
   author={Waldspurger, J.-L.},
   title={La formule de Plancherel pour les groupes $p$-adiques (d'apr\`es
   Harish-Chandra)},
   language={French, with French summary},
   journal={J. Inst. Math. Jussieu},
   volume={2},
   date={2003},
   number={2},
   pages={235--333},
   issn={1474-7480},
   review={\MR{1989693}},
   doi={10.1017/S1474748003000082},
}

\bib{Wei}{book}{
   author={Weil, A.},
   title={Adeles and Algebraic Groups},
   series={Progress in Mathematics},
   publisher={Birkhäuser Boston, MA},
   date={1982},
   issn={0743-1643},
   doi={https://doi.org/10.1007/978-1-4684-9156-2},
   volume={23},
}

\bib{Yun}{article}{
   author={Yun, Zhiwei},
   title={The fundamental lemma of Jacquet and Rallis},
   note={With an appendix by Julia Gordon},
   journal={Duke Math. J.},
   volume={156},
   date={2011},
   number={2},
   pages={167--227},
   issn={0012-7094},
   review={\MR{2769216}},
   doi={10.1215/00127094-2010-210},
}

\bib{Xue1}{article}{
   author={Xue, Hang},
   title={The Gan-Gross-Prasad conjecture for ${\rm U}(n)\times{\rm U}(n)$},
   journal={Adv. Math.},
   volume={262},
   date={2014},
   pages={1130--1191},
   issn={0001-8708},
   review={\MR{3228451}},
   doi={10.1016/j.aim.2014.06.010},
}

\bib{Xue2}{article}{
   author={Xue, Hang},
   title={Fourier--Jacobi periods and the central value of Rankin--Selberg
   L-functions},
   journal={Israel J. Math.},
   volume={212},
   date={2016},
   number={2},
   pages={547--633},
   issn={0021-2172},
   review={\MR{3505397}},
   doi={10.1007/s11856-016-1300-2},
}

\bib{Xue7}{article}{
   author={Xue, H.},
   title={Refined global Gan-Gross-Prasad conjecture for Fourier-Jacobi
   periods on symplectic groups},
   journal={Compos. Math.},
   volume={153},
   date={2017},
   number={1},
   pages={68--131},
   issn={0010-437X},
   review={\MR{3622873}},
   doi={10.1112/S0010437X16007752},
}

\bib{Xue3}{article}{
   author={Xue, Hang},
   title={On the global Gan-Gross-Prasad conjecture for unitary groups:
   approximating smooth transfer of Jacquet-Rallis},
   journal={J. Reine Angew. Math.},
   volume={756},
   date={2019},
   pages={65--100},
   issn={0075-4102},
   review={\MR{4026449}},
   doi={10.1515/crelle-2017-0016},
}

\bib{Xue6}{article}{
 author={Xue, Hang},
 issn={0022-1236},
 issn={1096-0783},
 doi={10.1016/j.jfa.2024.110645},
 review={Zbl 1556.22010},
 title={Fourier-Jacobi models for real unitary groups},
 journal={Journal of Functional Analysis},
 volume={287},
 number={12},
 pages={41},
 note={Id/No 110645},
 date={2024},
 publisher={Elsevier, Amsterdam},
}

\bib{Zel}{article}{
 author={Zelevinsky, A. V.},
 issn={0012-9593},
 issn={1873-2151},
 doi={10.24033/asens.1379},
 review={Zbl 0441.22014},
 title={Induced representations of reductive {{\(p\)}}-adic groups. II: On irreducible representations of {{\(GL(n)\)}}},
 journal={Annales Scientifiques de l'{\'E}cole Normale Sup{\'e}rieure. Quatri{\`e}me S{\'e}rie},
 volume={13},
 pages={165--210},
 date={1980},
 publisher={Soci{\'e}t{\'e} Math{\'e}matique de France (SMF) c/o Institut Henri Poincar{\'e}, Paris},
 eprint={https://eudml.org/doc/82048},
}

\bib{Zhang1}{article}{
   author={Zhang, Wei},
   title={Fourier transform and the global Gan-Gross-Prasad conjecture for
   unitary groups},
   journal={Ann. of Math. (2)},
   volume={180},
   date={2014},
   number={3},
   pages={971--1049},
   issn={0003-486X},
   review={\MR{3245011}},
}

\bib{Zhang2}{article}{
   author={Zhang, Wei},
   title={Automorphic period and the central value of Rankin-Selberg
   L-function},
   journal={J. Amer. Math. Soc.},
   volume={27},
   date={2014},
   number={2},
   pages={541--612},
   issn={0894-0347},
   review={\MR{3164988}},
   doi={10.1090/S0894-0347-2014-00784-0},
}

\bib{Zhang3}{article}{
   author={Zhang, W.},
   title={Weil representation and arithmetic fundamental lemma},
   journal={Ann. of Math. (2)},
   volume={193},
   date={2021},
   number={3},
   pages={863--978},
   issn={0003-486X},
   review={\MR{4250392}},
   doi={10.4007/annals.2021.193.3.5},
}

\bib{Zydor3}{article}{
   author={Zydor, M.},
   title={Les formules des traces relatives de Jacquet-Rallis grossi\`eres},
   language={French, with English and French summaries},
   journal={J. Reine Angew. Math.},
   volume={762},
   date={2020},
   pages={195--259},
   issn={0075-4102},
   review={\MR{4195660}},
   doi={10.1515/crelle-2018-0027},
}

\end{biblist}
\end{bibdiv}

\end{document}